\newtheorem{thm}{Theorem}[section]
\newtheorem{corollary}[thm]{Corollary}
\newtheorem{lemma}[thm]{Lemma}
\newtheorem{proposition}[thm]{Proposition}
\newtheorem{prop}[thm]{Proposition}
\newtheorem{thm-dfn}[thm]{Theorem-Definition}
\theoremstyle{definition}
\newtheorem{definition}[thm]{Definition}
\newtheorem{remark}[thm]{Remark}
\newtheorem{example}[thm]{Example}
\numberwithin{equation}{section}
\newcommand{\fg}{{\mathfrak g}}
\newcommand{\ft}{{\mathfrak t}}
\newcommand{\fb}{{\mathfrak b}}
\newcommand{\fu}{{\mathfrak u}}
\newcommand{\fp}{{\mathfrak p}}
\newcommand{\fa}{{\mathfrak a}}
\newcommand{\fc}{{\mathfrak c}}
\newcommand{\rW}{{\mathrm W}}
\newcommand{\bC}{{\mathbb C}}
\newcommand{\bG}{{\mathbb G}}
\newcommand{\bZ}{{\mathbb Z}}
\newcommand{\mE}{\mathcal{E}}
\newcommand{\mF}{\mathcal{F}}
\newcommand{\mO}{\mathcal{O}}
\newcommand{\mL}{\mathcal{L}}
\newcommand{\on}{\operatorname}
\newcommand{\D}{\on{D}}
\newcommand{\lra}{\longrightarrow}
\newcommand{\ra}{\rightarrow}
\newcommand{\la}{\leftarrow}
\newcommand{\is}{\simeq}
\newcommand{\Loc}{\on{LocSys}}
\newcommand{\Bun}{\on{Bun}}
\newcommand{\quash}[1]{}  %%Anything in \quash is ignored
\newcommand{\nc}{\newcommand}
\newcommand{\bbC}{{\mathbb C}}
\newcommand{\bbH}{{\mathbb H}}
\newcommand{\bbP}{{\mathbb P}}
\newcommand{\bbR}{{\mathbb R}}
\newcommand{\bbZ}{{\mathbb Z}}
\newcommand{\calB}{{\mathcal B}}
\newcommand{\calC}{{\mathcal C}}
\newcommand{\calF}{{\mathcal F}}
\newcommand{\calS}{{\mathcal S}}
\newcommand{\calT}{{\mathcal T}}
\newcommand{\calY}{{\mathcal Y}}
\nc{\al}{{\alpha}} \nc{\be}{{\beta}} \nc{\ga}{{\gamma}}
\nc{\ve}{{\varepsilon}} \nc{\Ga}{{\Gamma}} %\nc{\la}{{\lambda}}
\nc{\La}{{\Lambda}}
\nc{\RHom}{\on{RHom}}
\nc{\ad }{{\on{ad }}}
\nc{\aff}{{\on{aff}}} \nc{\Aff}{{\mathbf{Aff}}}
\nc{\der}{{\on{der}}}
\nc{\diag}{{\on{diag}}}
\newcommand{\End}{{\on{End}}}
\nc{\Fl}{{\calF\ell}}
\newcommand{\Gr}{{\on{Gr}}}
\nc{\Hg}{{\on{Higgs}}}
\newcommand{\Hom}{{\on{Hom}}}
\nc{\Id}{{\on{Id}}}
\nc{\Ind}{{\on{Ind}}}
\newcommand{\Lie}{{\on{Lie}}}
\nc{\Op}{{\on{Op}}}
\nc{\res}{{\on{res}}}
\nc{\tr}{{\on{tr}}}
\newcommand{\GL}{{\on{GL}}}
\nc{\GSp}{{\on{GSp}}} \nc{\GU}{{\on{GU}}} \nc{\SL}{{\on{SL}}}
\nc{\SU}{{\on{SU}}} \nc{\SO}{{\on{SO}}}
\nc{\nh}{{\Loc_{J^p}(\tau')}}
\nc{\bnh}{{\Loc_{\breve J^p}(\tau')}}
\nc{\bU}{{\overline{U}}} \nc{\IC}{{\on{IC}}}
\newcommand{\dota}{\ddot{\text a}}
\newcommand{\beqn}{\begin{equation*}}
\newcommand{\eeqn}{\end{equation*}}
\newcommand{\beq}{\begin{equation}}
\newcommand{\eeq}{\end{equation}}
\nc{\QM}{QM}
\nc{\eval}{\textup{ev}}
\begin{document}
\title{Quaternionic Satake equivalence
%The derived geometric Satake equivalence 
%for quaternionic group and symmetric variety $\on{GL}_{2n}/\on{Sp}_{2n}$
}

       \author{Tsao-Hsien Chen} 
        
       \address{School of Mathematics, University of Minnesota, Minneapolis, Vincent Hall, MN, 55455}
       \email{chenth@umn.edu}

       \author{Mark Macerato} 
        
        \address{Department of Mathematics, UC Berkeley, Evans Hall,
Berkeley, CA 94720}
        \email{macerato@berkeley.edu}
        
        \author{David Nadler} 
        
        \address{Department of Mathematics, UC Berkeley, Evans Hall,
Berkeley, CA 94720}
        \email{nadler@math.berkeley.edu}

         \author{John O'Brien} 
        
       \address{School of Mathematics, University of Minnesota, Minneapolis, Vincent Hall, MN, 55455}
       \email{obri0741@umn.edu}
       
\maketitle
\begin{abstract} 
We establish a derived geometric Satake equivalence for 
the quaternionic general linear group $\GL_{n}(\bbH)$. By applying the 
real-symmetric correspondence for affine Grassmannians, we obtain a 
derived geometric Satake equivalence for the symmetric variety 
$\GL_{2n}/\on{Sp}_{2n}$. We explain how these equivalences fit into 
the general framework of a
geometric Langlands correspondence for real groups and 
the relative Langlands duality conjecture.
As an application, we compute the stalks of the IC-complexes 
for spherical orbit closures in 
the quaternionic affine Grassmannian and the loop space 
of $\GL_{2n}/\on{Sp}_{2n}$. We show the stalks are given by 
the Kostka-Foulkes polynomials for $\GL_n$ but with all degrees doubled. 

\end{abstract}
\setcounter{tocdepth}{2} 
\tableofcontents

\newcommand{\bA}{{\mathbb A}}

\nc{\qQM}{\mathscr{QM}}

\nc{\sw}{\textup{sw}}
\nc{\inverse}{\textup{inv}}
\nc{\negative}{\textup{neg}}

\nc{\Sym}{\textup{Sym}}
\nc{\sym}{\mathit{sym}}
\nc{\Orth}{\textup{O}}
\nc{\Maps}{\textup{Maps}}
\nc{\sm}{\textup{sm}}

\nc{\image}{\textup{image}}

\nc{\risom}{\stackrel{\sim}{\to}}

\section{Introduction}

\subsection{Real-symmetric correspondence}
Let $G_\bbR$ be a real form of a  connected complex reductive group $G$.
Let  $X=G/K$ be the associated symmetric variety under 
Cartan's bijection, where $K$ is the complexification of a maximal compact subgroup $K_\bbR\subset G_\bbR$.

A fundamental 
feature of the representation theory 
of the real group $G_\bbR$
is that 
many results of an analytic nature 
have equivalent purely algebraic formulations in terms of the corresponding 
symmetric variety $X$.
We will call this broad phenomenon the 
\textit{real-symmetric 
correspondence}.
It allows one to 
use  algebraic tools on the symmetric side to 
study questions on the real side, and conversely, to use analytic tools on the 
real side to study questions on the symmetric side.
Famous examples include 
Harish-Chandra's reformulation of 
admissible representations of real groups in terms of $(\fg,K)$-modules, the
Kostant-Sekiguchi correspondence between real and symmetric nilpotent 
orbits, and the Matsuki correspondence between 
$G_\bbR$ and $K$-orbits on the flag manifold of $G$.

In \cite{CN1}, the first and third authors established a real-symmetric correspondence 
relating the dg derived category 
%$D(\frak L^+G_\bbR\backslash\Gr_{G_\bbR})$
of  spherical constructible sheaves on the 
real affine Grassmannian $\Gr_{G_\bbR}$
 of $G_\bbR$ and the dg derived category 
 %$D(\frak L^+G\backslash\frak LX)$
 of 
spherical constructible sheaves on the loop space $\frak LX$ of $X$.
%This equivalence, to be called the affine Matsuki correspondence, can be viewed as an affine version of the classical Matsuki correspodence for flag manifod.
We are interested in 
applying this 
real-symmetric correspondence 
to study questions in the real and relative geometric Langlands programs.

In the present paper, we consider the question
of a geometric Satake equivalence for real groups and symmetric varieties.
We focus on the case where the real group 
is the quaternionic group $G_\bbR=\GL_n(\bbH)$ with associated symmetric variety 
 $X=\GL_{2n}/\on{Sp}_{2n}$.
We prove the
derived geometric Satake equivalence for $\GL_n(\bbH)$
relating the dg constructible derived category of the quaternionic affine Grassmannian 
with the dg coherent derived category of a  quotient stack 
associated to the Gaitsgory-Nadler dual group $\check G_X$
of $X$ (which is $\check G_X=\GL_n$ in this case).
Via the real-symmetric correspondence, we 
obtain a derived geometric Satake equivalence for the symmetric variety $\GL_{2n}/\on{Sp}_{2n}$. 
As an application, we compute the stalks of the IC-complexes 
for spherical orbit closures in 
the quaternionic affine Grassmannian and the loop space 
of $\GL_{2n}/\on{Sp}_{2n}$. We show the stalks are given by 
the Kostka-Foulkes polynomials for $\GL_n$ but with all degrees doubled.

We explain how these equivalences fit into 
the general framework of 
a geometric Langlands correspondence for real groups,  
due to Ben-Zvi and the third author, 
and 
of the relative Langlands duality conjecture, due to 
Ben-Zvi, Sakellaridis, and Venkatesh.

From the point of view of real groups, 
the quaternionic group $\GL_n(\bbH)$ offers in some sense the simplest possible geometry: just as complex Grassmannians are simpler than real Grassmannians (Schubert cells are $2d$ versus $d$ real-dimensional), quaternionic Grassmannians are  simpler still than complex Grassmannians  (Schubert cells are $4d$ versus $2d$ real-dimensional).   
 On the other hand,
the  geometry of the symmetric variety $\GL_{2n}/\on{Sp}_{2n}$ is more complicated than that of $\GL_{2n}$.
The real-symmetric correspondence allows us to use the
simpler quaternionic geometry of $\GL_n(\bbH)$ to 
answer questions about the more complicated  
geometry of $\GL_{2n}/\on{Sp}_{2n}$.

We now describe the paper in more details.

\subsection{Reminder on derived Satake for $\GL_{2n}$}
Let $\frak L\GL_{2n}$ and $\frak L^+\GL_{2n}$ be the Laurent loop group and Taylor arc group of $\GL_{2n}$.
The affine Grassmannian
$\Gr_{2n}=\frak L\GL_{2n}/\frak L^+\GL_{2n}$ for $\GL_{2n}$
is the ind-variety classifying $\bC[[t]]$-lattices in $\bC((t))^{n}$.
The arc group $\frak L^+\GL_{2n}$  acts naturally on $\Gr_{2n}$,
and we denote by 
$D^b(\frak L^+\GL_{2n}\backslash\Gr_{2n})$  the monoidal dg-category of $\frak L^+\GL_{2n}$-equivariant constructible complexes
on $\Gr_{2n}$ with monoidal structure given by convolution.

Let
$\frak{gl}_{2n}$ be the Lie algebra of $\GL_{2n}$. 
Write $\on{Sym}(\frak{gl}_{2n}[-2])$ for the symmetric algebra of $\frak{gl}_{2n}[-2]$
viewed as a dg-algebra with trivial differential.
The group $\GL_{2n}$ acts on $\on{Sym}(\frak{gl}_{2n}[-2])$ via the adjoint action,
and  we
denote by $D^{\GL_{2n}}_{\on{perf}}(\on{Sym}(\frak{gl}_{2n}[-2]))$
the monoidal dg-category of perfect $\GL_{2n}$-equivariant 
dg-modules over $\on{Sym}(\frak{gl}_{2n}[-2]))$ with monoidal structure given by 
(derived) tensor product of dg-modules.

One of the versions of 
the derived Satake equivalence in \cite{BF} says that there is 
an equivalence of monoidal dg-categories
\[\Psi:D^b(\frak L^+\GL_{2n}\backslash\Gr_{2n})\is D^{\GL_{2n}}_{\on{perf}}(\on{Sym}(\frak{gl}_{2n}[-2]))\]
extending the  geometric Satake equivalence 
$\on{Perv}(\Gr_{2n})\is\on{Rep}(\GL_{2n})$ where 
$\on{Perv}(\Gr_{2n})\subset D^b(\frak L^+\GL_{2n}\backslash\Gr_{2n})$ is the subcategory
$\frak L^+\GL_{2n}$-equivariant perverse sheaves on $\Gr_{2n}$
and $\on{Rep}(\GL_{2n})\subset D^{\GL_{2n}}_{\on{perf}}(\on{Sym}(\frak{gl}_{2n}[-2]))$
is the subcategory of representations of $\GL_{2n}$.\footnote{
The embedding $\on{Rep}(\GL_{2n})\subset D^{\GL_{2n}}_{\on{perf}}(\on{Sym}(\frak{gl}_{2n}[-2])$ is given by 
$\on{V}\mapsto \on{Sym}(\frak{gl}_{2n}[-2])\otimes_\bC\on{V}$.
}
\quash{
Let $\Gr_G=\frak L G/\frak L^+G$ be the affine Grassmannian of $G$
where $\frak LG$ and $\frak L^+G$ are the loop group and arc group of $G$.
We have a natural action of the arc group $\frak L^+G$  on $\Gr_G$ 
and we let 
$D^b(\frak L^+G\backslash\Gr_G)$ be the monoidal dg-category of $\frak L^+G$-equivariant constructible complexes
on $\Gr_G$ with monoidal structure given by convolution product.
Let $\check G$ be the complex Langlands dual group of $G$ and let 
$\check\fg$ be the Lie algebra of $\check G$. 
Write $\on{Sym}(\check\fg[-2])$ for the symmetric algebra of $\check\fg[-2]$
viewed as dg-algebra with trivial differential.
The group $\check G$ acts on $\on{Sym}(\check\fg[-2])$ via the adjoint action 
and we 
denote by $D^{\check G}_{\on{perf}}(\on{Sym}(\check\fg[-2]))$.
the monoidal dg-category of perfect $\check G$-equivariant 
dg-modules over $\on{Sym}(\check\fg[-2]))$ with monoidal structure given by 
(derived) tensor product of dg-modules.
One of the versions of 
the derived Satake equivalence in \cite{BF} says that there is 
an equivalence of monoidal dg-categories
$D^b(\frak L^+G\backslash\Gr_G)\is D^{\check G}_{\on{perf}}(\on{Sym}(\check\fg[-2]))$
extending the celebrated geometric Satake equivalence 
$\on{Perv}(\Gr_G)\is\on{Rep}(\check G)$ where 
$\on{Perv}(\Gr_G)$ is the subcategory
$\frak L^+G$-equivariant perverse sheaves on $\Gr_G$
and $\on{Rep}(\check G)$
is subcategory of representation of $\check G$.\footnote{
The embedding $\on{Rep}(\check G)\subset D^{\check G}_{\on{perf}}(\on{Sym}(\check\fg[-2])$ is ginen by 
$\on{V}\to \on{Sym}(\check\fg[-2])\otimes_\bC\on{V}$.
}
}

\subsection{Derived  Satake for the quaternionic group $\GL_n(\bbH)$}
%In \cite{Nad}, the third author established a version of geometric Satake 
%equivalence for real reductive groups relating 
%the category perverse sheaves on the real affine Grassmannian  
 %with the category of representation of the so-called Gaitsgory-Nadler dual group 
 %$\check G_X$ associated to the symmetric variety $X$.
Let $\GL_{n}(\bbH)\subset\GL_{2n}$ be the real form given by the 
 rank $n$ quaternionic group.
Let  
$\frak L\GL_n(\bbH)$ and $\frak L^+\GL_n(\bbH)$ be the 
real Laurent loop group and Taylor arc group for $\GL_n(\bbH)$.
By the real affine Grassmannian 
for the quaternionic group $\GL_n(\bbH)$, we will mean the ind semi-analytic variety
$\Gr_{n,\bbH}=\frak L\GL_n(\bbH)/\frak L^+\GL_n(\bbH)$ 
classifying $\bbH[[t]]$-lattices in $\bbH((t))^{n}$.\footnote{
By definition a $\bbH[[t]]$-lattice $\Lambda$ in $\bbH((t))^{n}$ is  a finitely generated 
right $\bbH[[t]]$-submodule of $\bbH((t))^{n}$ such that 
$\Lambda\otimes_{\bbH[[t]]}\bbH((t))=\bbH((t))^{n}$.
}
The real arc group $\frak L^+\GL_n(\bbH)$ acts naturally on 
$\Gr_{n,\bbH}$, and we denote by 
$D^b(\frak L^+\GL_{n}(\bbH)\backslash\Gr_{n,\bbH})$  the monoidal dg-category of $\frak L^+\GL_{n}(\bbH)$-equivariant constructible complexes
on $\Gr_{n,\bbH}$ with monoidal structure given by convolution.
The $\frak L^+\GL_n(\bbH)$-obits on $\Gr_{n,\bbH}$ are all  even real-dimensional
(in fact, $4d$ real-dimensional; see Section \ref{affine grassmannian}), and 
hence middle perversity makes sense.
We denote by 
$\on{Perv}_{}(\Gr_{n,\bbH})$ the category of $\frak L^+\GL_{n}(\bbH)$-equivariant perverse sheaves on $\Gr_{n,\bbH}$.
In \cite{Na}, the third  author established a real geometric Satake equivalence, giving an equivalence of 
monoidal abelian categories 
$\on{Perv}(\Gr_{n,\bbH})\is\on{Rep}(\GL_n)$ in the case at hand.

The first main result of this paper is the following equivalence of monoidal dg derived categories, to be called  
derived Satake for $\GL_{n}(\bbH)$:
\begin{thm}[see Theorem \ref{main}]\label{quaternionic intro}
There is an equivalence of monoidal dg-categories
\[\Psi_\bbH:D^b(\frak L^+\GL_{n}(\bbH)\backslash\Gr_{n,\bbH})\is D^{\GL_n}_{\on{perf}}(\on{Sym}(\mathfrak{gl}_n[-4]))\]
extending the real geometric Satake equivalence 
$\on{Perv}(\Gr_{n,\bbH})\is\on{Rep}(\GL_n)$.
\end{thm}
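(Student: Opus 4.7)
The plan is to adapt the strategy of Bezrukavnikov--Finkelberg \cite{BF} to the quaternionic real setting, using as input the real geometric Satake of \cite{Na} and the real-symmetric correspondence of \cite{CN1}. The abelian equivalence $\on{Perv}(\Gr_{n,\bbH})\simeq\on{Rep}(\GL_n)$ is already in place; promoting it to an equivalence of monoidal dg-categories reduces, by a Tannakian / Barr--Beck argument, to exhibiting a $\GL_n$-equivariant quasi-isomorphism of dg-algebras between the derived endomorphisms of a pro-generator $\calR_\bbH$ (whose underlying object corresponds to the regular representation of $\GL_n$ under real Satake) and $\on{Sym}(\frak{gl}_n[-4])\otimes_\bC\End_\bC(R_{\GL_n})$.

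First I would establish parity vanishing. Since every $\frak L^+\GL_n(\bbH)$-orbit on $\Gr_{n,\bbH}$ is real of dimension $4d$, and orbit closures stratify with strata whose real dimensions are divisible by $4$, a cell-decomposition argument yields $\on{Ext}^i(\IC_\lambda,\IC_\mu)=0$ for all odd $i$. This parity forces every higher $A_\infty$-operation on $\calA_\bbH=\on{REnd}(\calR_\bbH)$ to vanish for degree reasons, so $\calA_\bbH$ is formal, following the Beilinson--Ginzburg--Soergel paradigm.

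Next I would identify the underlying graded algebra. The most direct route is to compute the equivariant cohomology $\h^*_{\frak L^+\GL_n(\bbH)}(\on{pt})$, which should recover $\on{Sym}(\frak{gl}_n[-4])^{\GL_n}=\bC[c_1,\ldots,c_n]$ with generators in degrees $4,8,\ldots,4n$ (essentially Pontryagin classes of the compact quaternionic group). Combined with the real Satake fiber functor of \cite{Na}, this upgrades to a $\GL_n$-equivariant identification with $\on{Sym}(\frak{gl}_n[-4])$. Alternatively, one can transport the question via \cite{CN1} to the loop space of $X=\GL_{2n}/\on{Sp}_{2n}$, and extract the identification by nearby-cycles degeneration from the derived Satake for $\GL_{2n}$ \cite{BF}, tracking how the grading doubles under the real specialization (each degree-$2$ class of the complex side is supported on a $4d$-real-dimensional quaternionic orbit).

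The main obstacle, I expect, will be the canonical identification of the $\GL_n$-equivariant structure on the Ext-algebra with the adjoint action on $\on{Sym}(\frak{gl}_n[-4])$. Parity gives formality and the cohomology computation yields the correct Poincar\'e series, but producing a $\GL_n$-equivariant isomorphism, not merely an abstract graded one, requires realizing the dual group action geometrically and verifying its compatibility with the fusion commutativity constraint of \cite{Na}. This is where the Gaitsgory--Nadler dual group formalism for $X=\GL_{2n}/\on{Sp}_{2n}$, together with a careful analysis of the real specialization in \cite{CN1}, will play the decisive role.
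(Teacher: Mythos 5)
Your outline correctly identifies the two ingredients the proof needs — computation of the de-equivariantized Ext-algebra (via parity/fully-faithfulness) and formality — and your use of parity in place of Ginzburg's Hodge-theoretic argument for the first ingredient matches the paper's strategy (Section~4, Propositions~\ref{faithfulness} and~\ref{computation of Ext}). However, your formality argument has a genuine gap.

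You claim that because the $\frak L^+\GL_n(\bbH)$-orbits are $4d$-real-dimensional, one gets $\on{Ext}^i(\IC_\lambda,\IC_\mu)=0$ for odd $i$, and that ``this parity forces every higher $A_\infty$-operation to vanish for degree reasons.'' That inference is false. The operation $m_k$ on an $A_\infty$-algebra has cohomological degree $2-k$; if the algebra is concentrated in even degrees, only the $m_k$ for odd $k$ are forced to vanish, and even mod-$4$ concentration (which is what actually holds here, since $\on{Sym}(\fg_n[-4])$ sits in degrees divisible by $4$) still leaves $m_6, m_{10}, \ldots$ unobstructed. The Beilinson--Ginzburg--Soergel paradigm you cite gets formality from an auxiliary \emph{weight} grading splitting the cohomological one, which is exactly the mixed-Hodge / Frobenius-weight structure the paper emphasizes is \emph{not} available in the real-analytic setting. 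Parity alone is not a substitute. The paper's actual route to formality (Proposition~\ref{formality}) is different and essential: it uses the real nearby cycles functor to produce a map of dg-algebras from the complex-side endomorphism algebra $\on{RHom}(\IC_0,\IC_0\star\mO(G_{2n}))$ — known to be formal by Bezrukavnikov--Finkelberg — which is surjective on cohomology after restricting to the zero $\bG_m$-weight space; formality of the target then follows from a general transfer criterion (Lemma~\ref{criterion}). Note also that your parity vanishing for IC-stalks does not come ``for free'' from the orbit dimensions: the paper proves it (Lemma~\ref{affinepaving}, Proposition~\ref{IC parity}) by constructing quaternionic-affine-space pavings of convolution fibers, which is a nontrivial geometric input.

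A secondary gap: computing $H^*_{\frak L^+\GL_n(\bbH)}(\on{pt})\simeq\bC[c_1,\ldots,c_n]$ with $\deg c_i=4i$ gives only the $\GL_n$-invariants of the de-equivariantized Ext-algebra, not the algebra itself nor its $\GL_n$-equivariant structure. The paper's Proposition~\ref{computation of Ext} obtains the full $\GL_n$-equivariant isomorphism with $\on{Sym}(\fg_n[-4])$ from the description of $\on{Spec} H_*^{T_c}(\Gr_{n,\bbH})$ as the regular-centralizer group scheme $J_n\times_{\fc_n}\ft$, together with the regular-semisimple torsor $\nu\colon G_n\times\ft\to\fg_n^{\mathrm{reg}}\times_{\fc_n}\ft$; your proposal is missing this identification. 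Your hunch that the nearby-cycles degeneration from the $\GL_{2n}$ side ``will play the decisive role'' is correct, but it enters the proof of formality, not merely the grading bookkeeping you describe.
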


\quash{
An interesting and important property of the 
the quaternionic geometric Satake equivalence is its compatibility with
the complex geometric Sateke equivalence (see Proposition \ref{abelian Satake}):
We have a 
commutative diagram
\[\xymatrix{\on{Perv}(\Gr_{2n})
\ar[r]^{\mathrm R\ \ }\ar[d]^{\simeq}&\on{Perv}(\Gr_{n,\bbH})_\bbZ\ar[d]^{\simeq}\\
\on{Rep}(\GL_{2n})
\ar[r]^{\Phi\ \ }&\on{Rep}(\GL_n\times\bG_m)
}
\]
where $\Phi:\on{Rep}(\GL_{2n})\to\on{Rep}(\GL_n\times\bG_m)$ is the restriction functor 
along the map
 \beq\label{Arthur intro}
\GL_n\times\bG_m\subset \GL_n\times\on{SL}_2\stackrel{\psi_X}\to\GL_{2n}
 \eeq 
 where $\psi_X$ is the "Arthur parameter" associated to $X$, see~\eqref{SL_2},
 the category
$\on{Perv}(\Gr_{n,\bbH})_\bbZ:=\oplus_i\on{Perv}(\Gr_{n,\bbH})[i]$
is the subcategory of $
D^b(\frak L^+\GL_{n}(\bbH)\backslash\Gr_{n,]\bbH})$
consisting of 
direct sum of shifts of equivalent perverse sheaves on $\Gr_{n,\bbH}$,
and the functor 
\beq\label{nearby intro}
\mathrm R:D^b(\frak L^+\GL_{2n}\backslash\Gr_{2n})\to D^b(\frak L^+\GL_{n}(\bbH)\backslash\Gr_{n,]\bbH})
\eeq
is the 
nearby cycles functor 
along  a real form of the Beilinson-Drinfeld Grassimannian with generic fibers isomorphic to
the 
complex affine Grassimannian $\Gr_{2n}$
and special fiber the
the quaternionic affine Grassimannian $\Gr_{n,\bbH}$ (see Section \ref{Real nearby cycles}).
Note that, unlike the complex algebraic setting, 
the nearby cycles functor $\mathrm R$ is not $t$-exact: it 
maps perverse sheaves to direct sum of shifts of perverse sheaves. 
}

A key ingredient in the proof of Theorem \ref{quaternionic intro} (as in the proof of the abelian quaternionic 
geometric Satake) is 
a nearby cycles functor 
\beq\label{nearby intro}
\mathrm R:D^b(\frak L^+\GL_{2n}\backslash\Gr_{2n})\to D^b(\frak L^+\GL_{n}(\bbH)\backslash\Gr_{n,\bbH})
\eeq
associated to a real form of the Beilinson-Drinfeld Grassmannian with generic fibers isomorphic to
the 
complex affine Grassimannian $\Gr_{2n}$
and special fiber isomorphic to the
quaternionic affine Grassimannian $\Gr_{n,\bbH}$ (see Section \ref{Real nearby cycles}).
Note that, unlike the complex algebraic setting, 
the nearby cycles functor $\mathrm R$ is not $t$-exact: it 
maps perverse sheaves to direct sums of shifts of perverse sheaves (see Proposition~\ref{abelian Satake}).
As a corollary of the proof, we obtain the following spectral description of the  
nearby cycles functor.

Consider the  graded scheme 
\[\widetilde{\frak{gl}}_{2n}=\{\begin{pmatrix}
A[0]&B[-2]\\
C[2]&D[0]
\end{pmatrix}
|A,B,C,D\in\frak{gl}_n\}.\]
We have the natural embedding of (even graded) schemes 
\beq\label{tau}
\tau:\frak{gl}_n[4]\to\widetilde{\frak{gl}}_{2n}[2]\ \ \ \ \tau(C[4])=\begin{pmatrix}
0&I_n\\
C[4]&0
\end{pmatrix}
\eeq
where $I_n$ is the rank $n$ identity matrix. 
Note  the map $\tau$ is $\GL_n$ adjoint-equivariant where 
$\GL_n$ acts on $\widetilde{\frak{gl}}_{2n}[2]$ via the diagonal embedding
$\GL_n\to\GL_{2n}$. Hence  pullback along $\tau$ provides a functor 
\[\tau^*:D^{\GL_{n}}_{\on{perf}}(\on{Sym}(\widetilde{\frak{gl}}_{2n}[-2]))\to D^{\GL_{2n}}_{\on{perf}}(\on{Sym}(\frak{gl}_{2n}[-4]))\]
Here we view the rings of functions 
on $\frak{gl}_n[4]$ and $\widetilde{\frak{gl}}_{2n}[2]$
 as the dg symmetric algebras $\on{Sym}(\frak{gl}_{2n}[-4])$ and
$\on{Sym}(\widetilde{\frak{gl}}_{2n}[-2])$ with trivial differential.
Introduce the  functor
\[\Phi:D^{\GL_{2n}}_{\on{perf}}(\on{Sym}(\frak{gl}_{2n}[-2]))\stackrel{}\lra D^{\GL_{n}}_{\on{perf}}(\on{Sym}(\widetilde{\frak{gl}}_{2n}[-2]))\stackrel{\tau^*}\lra D^{\GL_{2n}}_{\on{perf}}(\on{Sym}(\frak{gl}_{2n}[-4]))\]
where the first functor  
is the sheared forgetful functor 
associated to the $\bG_m$-action on 
$\frak{gl}_{2n}[-2]$ via the co-character 
$2\rho_L:\bG_m\to\GL_{2n}$ (see~\eqref{spectral}). Here $L$ is the complexification of the Levi subgroup of the minimal parabolic subgroup
of $\GL_{n}(\bbH)$.

\begin{thm}[see Theorem \ref{spectral nearby cycle}]\label{spectral nearby cycles, intro}
The following square is naturally commutative
\[\xymatrix{D^b(\frak L^+\GL_{2n}\backslash\Gr_{2n})
\ar[r]^{\mathrm R}\ar[d]_{\Psi}^{\simeq}&D^b(\frak L^+\GL_{n}(\bbH)\backslash\Gr_{n,\bbH})\ar[d]_{\Psi_\bbH}^{\simeq}\\
D^{\GL_{2n}}_{\on{perf}}(\on{Sym}(\frak{gl}_{2n}[-2]))
\ar[r]^{\Phi}&D^{\GL_n}_{\on{perf}}(\on{Sym}(\frak{gl}_{n}[-4]))
}
\]
where 
$\Psi$ and $\Psi_\bbH$ are the complex and quaternionic derived Satake equivalences respectively.
\end{thm}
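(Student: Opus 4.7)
The plan is to reduce commutativity of the square to two independent checks, in the spirit of Bezrukavnikov--Finkelberg~\cite{BF}: first, commutativity on the abelian Satake subcategory, and second, a compatibility of the induced maps on self-endomorphisms of the monoidal unit. The monoidal dg category $D^{\GL_{2n}}_{\on{perf}}(\on{Sym}(\frak{gl}_{2n}[-2]))$ is generated (as a cocomplete monoidal dg category) by the abelian subcategory $\on{Rep}(\GL_{2n})$ together with the action on morphism complexes of the self-Ext algebra of the unit, $\on{Sym}(\frak{gl}_{2n}[-2])^{\GL_{2n}}$; the analogous statement holds on the quaternionic side with $\on{Sym}(\frak{gl}_n[-4])^{\GL_n}$. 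Since both $\Psi_\bbH \circ \mathrm R$ and $\Phi \circ \Psi$ are monoidal dg functors, to identify them it will suffice to compare (i) their restrictions to $\on{Perv}(\Gr_{2n}) \simeq \on{Rep}(\GL_{2n})$, and (ii) the induced maps on $\on{End}^\bullet(\IC_0)$.

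For (i), I would invoke the abelian compatibility Proposition~\ref{abelian Satake}, which identifies $\mathrm R$ restricted to perverse sheaves with restriction along the Arthur homomorphism $\GL_n \times \bG_m \hookrightarrow \GL_n \times \SL_2 \xrightarrow{\psi_X} \GL_{2n}$. The cohomological shifts produced by the non-$t$-exact functor $\mathrm R$ are recorded by the $\bG_m$-grading, and the sheared forgetful functor in the definition of $\Phi$ (shearing by $2\rho_L$) produces exactly this grading on the spectral side, yielding a $\GL_n$-equivariant $\on{Sym}(\widetilde{\frak{gl}}_{2n}[-2])$-module structure on the restriction.

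For (ii), the geometric side computes $\on{End}^\bullet(\IC_0)$ as the equivariant cohomology $H^\ast_{\frak L^+\GL_{2n}}(\on{pt}) \cong \on{Sym}(\frak{gl}_{2n}^\ast[-2])^{\GL_{2n}}$ on the complex side, and $H^\ast_{\frak L^+\GL_n(\bbH)}(\on{pt}) \cong \on{Sym}(\frak{gl}_n^\ast[-4])^{\GL_n}$ on the quaternionic side; the doubling of degrees reflects the fact that $\frak L^+\GL_n(\bbH)$-orbits on $\Gr_{n,\bbH}$ are $4d$-dimensional. The map induced by $\mathrm R$ is pullback in equivariant cohomology along the classifying map associated to the real form of the Beilinson--Drinfeld Grassmannian of Section~\ref{Real nearby cycles}; a direct calculation in this geometric model should identify that pullback with the composition of the sheared forgetful map and $\tau^\ast$ appearing in the definition of $\Phi$.

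The main obstacle will be step (ii), and specifically identifying geometrically the constant off-diagonal block $I_n$ appearing in the formula $\tau(C[4]) = \begin{pmatrix} 0 & I_n \\ C[4] & 0 \end{pmatrix}$. The expectation is that this reflects the logarithm of monodromy on nearby cycles: on the image of a $\GL_{2n}$-representation under $\mathrm R$, the monodromy acts through the principal nilpotent of the Arthur $\SL_2$-parameter $\psi_X$, which lives in the upper-right off-diagonal block of $\frak{gl}_{2n}$ relative to the $\GL_n \times \bG_m$-decomposition. Once this geometric-to-algebraic dictionary is pinned down, matching it with the algebraic formula for $\tau$ reduces to bookkeeping of gradings through $\Psi$ and $\Psi_\bbH$.
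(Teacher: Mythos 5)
Your reduction of commutativity to the two checks (i) agreement on $\on{Perv}(\Gr_{2n})\simeq\on{Rep}(\GL_{2n})$ and (ii) agreement on $\on{End}^\bullet(\IC_0)$ has a genuine gap. The invariant you propose to compare in (ii) is the center $\on{End}^\bullet_{D^b}(\IC_0)\cong H^*_{\frak L^+\GL_{2n}}(\on{pt})\cong\on{Sym}(\fg_{2n}[-2])^{\GL_{2n}}\cong\mO(\fc_{2n})$ (respectively $\mO(\fc_n)$ on the quaternionic side). But the dg-algebra that actually controls each category via Barr--Beck is the de-equivariantized endomorphism algebra $\RHom(\IC_0,\IC_0\star\mO(\check G))\cong\on{Sym}(\check\fg[-2])$, taken with its full $\check G$-equivariant algebra structure, of which the center is only the invariants. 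At the level of centers, the map induced by $\mathrm R$ is just $\tau:\fc_n\to\fc_{2n}$, which is far less data than the map $\tau:\fg_n[4]\to\widetilde{\fg}_{2n}[2]$ appearing in the definition of $\Phi$. In fact the proof in the paper needs Proposition~\ref{key prop} (the comparison of regular centralizer group schemes $J_n\to J_{2n}|_{\fc_n}$), which is precisely the refinement of your (ii) to the de-equivariantized level, and which is nontrivial to establish (it is derived from the equivariant cohomology of the twistor map $\bbP^{2n-1}\to\bbH\bbP^{n-1}$ and Kostant sections). Even granting (i) and a corrected (ii), you have not produced a natural transformation $\Phi\circ\Psi\Rightarrow\Psi_\bbH\circ\mathrm R$---you have only described where to verify agreement on objects. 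The paper constructs the natural transformation directly: for each $\mF$, functoriality of $\mathrm R$ gives $\Psi(\mF)\to\RHom(\IC_0,\mathrm R(\mF)\star\mathrm R(\IC_{\mO(G_{2n})}))$; the $\bG_m$-shear via $2\rho_L$ is then applied to turn this into a map of $\widetilde A$-modules; Proposition~\ref{abelian Satake}(3) decomposes $\mathrm R(\IC_{\mO(G_{2n})})$; the algebra map $\on{Res}^{G_{2n}}_{G_n}\mO(G_{2n})\to\mO(G_n)$ then gives the target; and the universal property of tensor product produces $\Phi\circ\Psi(\mF)\to\Psi_\bbH\circ\mathrm R(\mF)$. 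Isomorphy is then checked on $\IC_V$. This constructive step is not a formality you can wave away.

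Your heuristic that the off-diagonal $I_n$ block in $\tau(C)=\begin{pmatrix}0&I_n\\C&0\end{pmatrix}$ should be ``the logarithm of monodromy on nearby cycles'' is suggestive but does not match how the block actually arises. The real nearby cycles functor used here (over $i\bbR_{\geq0}$, cf.\ Section~\ref{Real nearby cycles}) has no monodromy operator in the usual complex-algebraic sense. In the paper, the block $I_n$ emerges from Lemma~\ref{matrix presentation}: the cup product $c_1^{T_n}(\mO(1))\cup(-)$ on $H^*_{T_n}(\bbP^{2n-1})$, expressed in the basis coming from the decomposition of $f_*\bC_{\bbP^{2n-1}}$ over $\bbH\bbP^{n-1}$, is the matrix $\tau\circ e^T_X$ --- the $I_n$ is the ``shift by $[2]$'' map between the two copies of $H^*_{T_n}(\bbH\bbP^{n-1})$. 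This feeds into the comparison of regular centralizers via the Kostant section~\eqref{diagram ks}, which is what ultimately produces the commutativity of the square. Identifying $I_n$ with a monodromy logarithm would require establishing a Beilinson-gluing type statement for the real Beilinson--Drinfeld family, which is not carried out in the paper and would be a substantial independent project.
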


Later in Section~\ref{ss:intro rel}, we will discuss how Theorem \ref{spectral nearby cycles, intro} fits into 
the general framework of duality for Hamiltonian spaces.

\subsection{Derived  Satake  for the symmetric variety $\GL_{2n}/\on{Sp}_{2n}$}
Let $\frak L\on{Sp}_{2n}$ be the Laurent loop group of the symmetric subgroup $\on{Sp}_{2n}\subset \GL_{2n}$.
There is a natural action of $\frak L\on{Sp}_{2n}$ on $\Gr_{2n}$, and we denote by
$D^b(\frak L\on{Sp}_{2n}\backslash\Gr_{2n})$ the 
dg-category of $\frak L\on{Sp}_{2n}$-equivariant constructible complexes on 
$\Gr_{2n}$.

In \cite[Theorem 8.1]{CN1} it is shown that there is an equivalence 
of dg-categories 
\beq\label{real-symmetric equ, intro}
D^b(\frak L\on{Sp}_{2n}\backslash\Gr_{2n})\is D^b(\frak L^+\GL_{n}(\bbH)\backslash\Gr_{n,\bbH})
\eeq
compatible with the 
natural monoidal
actions of 
$D^b(\frak L^+\GL_{2n}\backslash\Gr_{2n})$, where
the action on
the right hand side 
is through the nearby cycles functor~\eqref{nearby intro}. 
One can view the above equivalence as an example of the real-symmetric correspondence for 
the affine Grassmannian $\Gr_{2n}$.
Combining this with Theorem \ref{quaternionic intro}, we obtain a derived Satake equivalence for $\GL_{2n}/\on{Sp}_{2n}$:

\begin{thm}\label{derived Satake for X, intro}
There is an equivalence of dg-categories 
\[\Psi_X:D^b(\frak L\on{Sp}_{2n}\backslash\Gr_{2n})\is D^{\GL_n}_{\on{perf}}(\on{Sym}(\frak{gl}_{n}[-4]))\]
compatible with the monoidal actions of 
$D^b(\frak L^+\GL_{2n}\backslash\Gr_{2n})\is D^{\GL_n}_{\on{perf}}(\on{Sym}(\frak{gl}_{2n}[-2]))$. 
\end{thm}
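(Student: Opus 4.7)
The plan is to obtain $\Psi_X$ as a composition of two equivalences already established, and then verify the claimed module-category compatibility by combining the intertwining statements that come with them.

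Concretely, the real-symmetric correspondence~\eqref{real-symmetric equ, intro} of \cite[Theorem 8.1]{CN1} supplies a dg-equivalence
\[\Theta:D^b(\frak L\on{Sp}_{2n}\backslash\Gr_{2n})\is D^b(\frak L^+\GL_{n}(\bbH)\backslash\Gr_{n,\bbH}),\]
while the quaternionic derived Satake equivalence of Theorem~\ref{quaternionic intro} supplies a monoidal dg-equivalence
\[\Psi_\bbH:D^b(\frak L^+\GL_{n}(\bbH)\backslash\Gr_{n,\bbH})\is D^{\GL_n}_{\on{perf}}(\on{Sym}(\frak{gl}_{n}[-4])).\]
We take $\Psi_X:=\Psi_\bbH\circ\Theta$, which is automatically an equivalence of dg-categories.

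It remains to match the two module structures over $D^b(\frak L^+\GL_{2n}\backslash\Gr_{2n})\is D^{\GL_{2n}}_{\on{perf}}(\on{Sym}(\frak{gl}_{2n}[-2]))$. On the geometric side, the very content of \cite[Theorem 8.1]{CN1} recalled in~\eqref{real-symmetric equ, intro} is that $\Theta$ intertwines the convolution action of $D^b(\frak L^+\GL_{2n}\backslash\Gr_{2n})$ on $D^b(\frak L\on{Sp}_{2n}\backslash\Gr_{2n})$ with its action on $D^b(\frak L^+\GL_{n}(\bbH)\backslash\Gr_{n,\bbH})$ through the nearby cycles functor $\mathrm R$ of~\eqref{nearby intro}. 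Transporting further through the monoidal equivalence $\Psi_\bbH$, this becomes the action of $D^{\GL_{2n}}_{\on{perf}}(\on{Sym}(\frak{gl}_{2n}[-2]))$ on $D^{\GL_n}_{\on{perf}}(\on{Sym}(\frak{gl}_{n}[-4]))$ through the composite $\Psi_\bbH\circ\mathrm R\circ\Psi^{-1}$. By Theorem~\ref{spectral nearby cycles, intro} this composite is naturally isomorphic to the spectral functor $\Phi$, i.e. pullback along $\tau$ composed with the $2\rho_L$-sheared forgetful functor. But this is exactly the module structure that equips $D^{\GL_n}_{\on{perf}}(\on{Sym}(\frak{gl}_{n}[-4]))$ with an action of $D^{\GL_{2n}}_{\on{perf}}(\on{Sym}(\frak{gl}_{2n}[-2]))$ in the statement of the theorem, so the two actions agree up to the natural isomorphism.

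In summary, the theorem is a formal consequence of Theorems~\ref{quaternionic intro} and~\ref{spectral nearby cycles, intro} together with~\eqref{real-symmetric equ, intro}; no additional geometric input is needed. The only step that requires any care is to verify that the module compatibility claimed in the last sentence of the theorem is precisely the one produced by chaining the equivariance properties of $\Theta$, $\Psi_\bbH$ and $\mathrm R$, and this is handled by invoking Theorem~\ref{spectral nearby cycles, intro} at the spectral level to convert $\mathrm R$ into $\Phi$. There is therefore no separate obstacle beyond those already resolved in the earlier theorems.
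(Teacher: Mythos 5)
Your proposal is correct and is essentially the paper's own argument: the paper introduces Theorem \ref{derived Satake for X, intro} with the single sentence ``Combining this with Theorem \ref{quaternionic intro}, we obtain a derived Satake equivalence for $\GL_{2n}/\on{Sp}_{2n}$,'' relying on the module-compatibility in~\eqref{real-symmetric equ, intro} and the spectral description of $\mathrm R$ from Theorem \ref{spectral nearby cycles, intro}, exactly as you do. Your write-up is if anything a bit more explicit than the paper in pinning down that the spectral-side module structure is the one through $\Phi$, but there is no substantive difference in the argument.
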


\begin{remark}
In general, the $\frak L\on{Sp}_{2n}$-orbits on $\Gr_{2n}$ are 
neither finite-dimensional nor finite-codimensional.
Thus there is not a naive
approach to sheaves on  $\frak L\on{Sp}_{2n}\backslash\Gr_{2n}$
with traditional methods.
To overcome this, we use the observation in \cite{CN1} that
the based loop group $\Omega\on{Sp}(n)$ of the compact real form $\on{Sp}(n)$ 
of $\on{Sp}_{2n}$
acts freely on 
$\Gr_{2n}$ and 
the quotient $\Omega \on{Sp}(n)\backslash\Gr_{2n}$ 
is 
a semi-analytic space of ind-finite type, i.e.,
an inductive limit of 
real analytic schemes of finite type. We define
$D^b(\frak L\on{Sp}_{2n}\backslash\Gr_{2n})$ to be  
the category of  sheaves on $\Omega \on{Sp}(n)\backslash\Gr_{2n}$ constructible
with respect to the stratification coming from the descent of the 
$\frak L\on{Sp}_{2n}$-orbits stratification on $\Gr_{2n}$, see \cite[Definition 1.3]{CN1} and also Remark \ref{def of IC-Stalks}.
\end{remark}

%\subsection{IC-stalks and Kostka-Foulkes polynomials}
%As an application of the proof of Theorem \ref{quaternionic, intro} 

\quash{
Let $\frak LX$ be the loop space of the symmetric variety $X=\GL_{2n}/\on{Sp}_{2n}$.
There is a natural action of $\frak L^+\GL_{2n}$ on $\frak LX$ and we denote by
$D^b(\frak L^+\GL_{2n}\backslash\frak LX)$ the 
dg-category of $\frak L^+\GL_{2n}$-equivariant constructible complexes on 
$\frak LX$.
It follows from \cite[Theorem 8.1]{CN1} that there is an equivalence 
of dg-categories $D^b(\frak L^+\GL_{2n}\backslash\frak LX)\is D^b(\frak L^+\GL_{n}(\bbH)\backslash\Gr_{n,\bbH})$
compatible with the natural monoidal action of 
$D^b(\frak L^+\GL_{2n}\backslash\Gr_{2n})$ (where the action on $D^b(\frak L^+\GL_{n}(\bbH)\backslash\Gr_{n,\bbH})$
is through the nearby cycles functor~\eqref{nearby intro}). Now combining with Theorem \ref{quaternionic intro} and Theorem \ref{spectral nearby cycles, intro}, we obtain the following equivalence of categories, to be called the derived Satake for $\GL_{2n}/\on{Sp}_{2n}$:

\begin{thm}
There is an equivalence of dg-categories 
\[\Psi_X:D^b(\frak L^+\GL_{2n}\backslash\frak LX)\is D^{\GL_n}_{\on{perf}}(\on{Sym}(\frak{gl}_{n}[-4])).\]
such that the following diagram is commutative
\[\xymatrix{D^b(\frak L^+\GL_{2n}\backslash\Gr_{2n})
\ar[r]^{\mathrm R_X}\ar[d]_{\Psi}^{\simeq}&D^b(\frak L^+\GL_{2n}\backslash\frak LX)\ar[d]_{\Psi_X}^{\simeq}\\
D^{\GL_{2n}}_{\on{perf}}(\on{Sym}(\frak{gl}_{2n}[-2]))
\ar[r]^{\Phi}&D^{\GL_n}_{\on{perf}}(\on{Sym}(\frak{gl}_{n}[-4]))
}
\]
where the functor $\mathrm R_X$ is the functor given by
$\mathrm R_X(\mF)=\mF\star\IC_{\frak L^+X}$ where $\IC_{\frak L^+X}$ is the 
$\on{IC}$-complex on the arc space $\frak L^+X$ of $X$. 

\end{thm} 
 }

\subsection{Geometric Langlands correspondence for real groups} We discuss here how one might interpret 
our results in terms of geometric Langlands for real groups~\cite{BZN2}. Our results specifically relate to the curve $\mathbb P^1$ with its standard real structure with real points $\mathbb R\mathbb P^1$ (whereas connections to Langlands parameters have been explored~\cite{BZN1}  for $\mathbb P^1$ with its antipodal real structure with no real points).

For complex reductive groups, it is known that 
 the derived Satake equivalence implies the
 geometric Langlands correspondence over the projective line $\mathbb P^1$ via a Radon transform.
To state a version of this in the setting at hand,  let $\Bun_{\GL_{2n}}(\mathbb P^1)$ be the moduli stack of $\GL_{2n}$-bundles over $\mathbb P^1$,
 and let $\on{Loc}_{\GL_{2n}}(\on{S}^2)$ be the moduli 
  stack of  Betti $\GL_{2n}$-local systems on the two sphere $\on{S}^2$.
  Let
  $D_!(\Bun_{2n}(\mathbb P^1))$ be the dg-category of 
constructible complexes on $\Bun_{2n}(\mathbb P^1)$ that are extensions by zero off of a finite-type 
substack, and let $\on{Coh}(\Loc_{2n}(\on{S}^2))$ be the 
dg-category of coherent complexes on $\on{Loc}_{\GL_{2n}}(\on{S}^2)$
with bounded cohomology.

In this setting, the 
geometric Langlands correspondence for $\mathbb P^1$ constructed in  \cite{La} takes the form of an equivalence
\beq\label{GL}
\xymatrix{
D_!(\Bun_{\GL_{2n}}(\mathbb P^1))\ar[r]^-\sim & \on{Coh}(\on{Loc}_{\GL_{2n}}(\on{S^2}))
}\eeq
Moreover, it fits into a 
 commutative diagram of equivalences
\beq\xymatrix{
D_!(\Bun_{\GL_{2n}}(\mathbb P^1))\ar[r]^{\simeq}\ar[d]^{\simeq}&\on{Coh}(\on{Loc}_{\GL_{2n}}(\on{S^2}))\ar[d]^{\simeq}\\
D^b(\frak L^+\GL_{2n}\backslash\Gr_{2n})\ar[r]^{\simeq}&D^{\GL_{2n}}_{\on{perf}}(\on{Sym}(\frak{gl}_{2n}[-2]))
}
\eeq
where the left vertical equivalence 
\beq\label{Radon}
D_!(\Bun_{\GL_{2n}}(\mathbb P^1))\is D^b(\frak L^+\GL_{2n}\backslash\Gr_{2n})
\eeq
is given by the 
Radon transform (see \cite[Proposition 2.1]{La}), and the right vertical equivalence is given by the 
the Koszul duality equivalence 
\beq
\on{Coh}(\frak{gl}_{2n}[-1]/\GL_{2n})\stackrel{}\is
D^{\GL_{2n}}_{\on{perf}}(\on{Sym}(\frak{gl}_{2n}[-2]))
\eeq 
under the isomorphisms
$\on{Loc}_{\GL_{2n}}(\on{S}^2)\is\on{pt}/\GL_{2n}\times_{\frak{gl}_{2n}/\GL_{2n}}\on{pt}/\GL_{2n}\is\frak{gl}_{2n}[-1]/\GL_{2n}$.

As a special case of the affine Matsuki correspondence established in~\cite{CN1},
we have a real group version of the equivalence~\eqref{Radon} taking the form 
\beq\label{affine Matsuki}
D_!(\Bun_{\GL_{n}(\bbH)}(\mathbb{RP}^1))\is D^b(\frak L^+\on{Sp}_{2n}\backslash\Gr_{2n})
\eeq
Here $\Bun_{\GL_{n}(\bbH)}(\mathbb{RP}^1)$ is the real form of 
$\Bun_{\GL_{2n}}(\mathbb{P}^1)$ classifying 
$\GL_{n}(\bbH)$-bundles on the real projective line $\mathbb{RP}^1$.
Combining this with the derived Satake equivalence 
for $\GL_{2n}/\on{Sp}_{2n}$ in Theorem \ref{derived Satake for X, intro}, we obtain the following  
geometric Langlands correspondence for $\GL_n(\bbH)$.

Let $\on{Loc}_{\GL_n}(\on{S^4})$ be the moduli stack of Betti  $\GL_n$-local systems on
the $4$-sphere $\on{S}^4$.
Note that the
presentation $\on{S}^4=\on{D}^4\cup_{\on{S}^3}\on{D}^4$
(where $\on{D}^4$ is the 4-dimensional disk in $\mathbb R^4$)
gives an isomorphism of dg-stacks:
\[\on{Loc}_{\GL_n}(\on{S^4})\is\on{pt}/\GL_{n}\times_{\frak{gl}_{n}[-2]/\GL_{n}}\on{pt}/\GL_{n}\is\frak{gl}_{n}[-3]/\GL_{n}.\]
From  the Koszul duality $\on{Coh}(\frak{gl}_{n}[-3]/\GL_{n})\stackrel{}\is D^{\GL_{n}}_{\on{perf}}(\on{Sym}(\frak{gl}_{n}[-4]))$, 
we obtain 
\beq\label{spectral S^4}
\on{Coh}(\on{Loc}_{\GL_n}(\on{S^4}))\is \on{Coh}(\frak{gl}_{n}[-3]/\GL_{n})\is D^{\GL_{n}}_{\on{perf}}(\on{Sym}(\frak{gl}_{n}[-4]))
\eeq

\begin{thm}\label{quaternionic GL, intro}
There is an equivalence 
\[
D_!(\Bun_{\GL_{n}(\bbH)}(\mathbb{RP}^1))\is\on{Coh}(\on{Loc}_{\GL_n}(\on{S^4}))
\]
that fits into a 
 commutative diagram of equivalences
\beq\xymatrix{
D_!(\Bun_{\GL_{n}(\bbH)}(\mathbb{RP}^1))\ar[r]^{\simeq}\ar[d]_{}^{\simeq}&\on{Coh}(\on{Loc}_{\GL_n}(\on{S^4}))\ar[d]^{\simeq}_{}\\
D^b(\frak L^+\on{Sp}_{2n}\backslash\Gr_{2n})\ar[r]^{\simeq}&D^{\GL_{n}}_{\on{perf}}(\on{Sym}(\frak{gl}_{n}[-4]))}
\eeq
Here the left and right vertical equivalence are the 
affine Matsuki correspondence~\eqref{affine Matsuki}
and Koszul duality~\eqref{spectral S^4} respectively, and
the bottom equivalence is the derived Satake equivalence for 
$\GL_{2n}/\on{Sp}_{2n}$.
\end{thm}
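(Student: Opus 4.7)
The strategy is to define the top horizontal arrow as the composite of the three other equivalences already present in the diagram, so that commutativity is tautological. Concretely, I would set the top arrow to be
\[
D_!(\Bun_{\GL_n(\bbH)}(\mathbb{RP}^1)) \xrightarrow{\sim} D^b(\frak L^+\on{Sp}_{2n}\backslash\Gr_{2n}) \xrightarrow{\sim} D^{\GL_n}_{\on{perf}}(\on{Sym}(\frak{gl}_n[-4])) \xrightarrow{\sim} \on{Coh}(\on{Loc}_{\GL_n}(\on{S}^4))
\]
using in order the affine Matsuki equivalence \eqref{affine Matsuki} from \cite{CN1}, the derived Satake equivalence for $\GL_{2n}/\on{Sp}_{2n}$ of Theorem \ref{derived Satake for X, intro}, and the inverse of the Koszul-duality identification \eqref{spectral S^4}.

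The only input not spelled out explicitly in the excerpt is the identification $\on{Loc}_{\GL_n}(\on{S}^4)\simeq\frak{gl}_n[-3]/\GL_n$ used in \eqref{spectral S^4}. For this I would iterate Mayer-Vietoris: the presentation $\on{S}^k=\on{D}^k\cup_{\on{S}^{k-1}}\on{D}^k$ together with the contractibility of $\on{D}^k$ yields in Betti local systems
\[
\on{Loc}_{\GL_n}(\on{S}^k)\simeq \on{pt}/\GL_n \times_{\on{Loc}_{\GL_n}(\on{S}^{k-1})}\on{pt}/\GL_n.
\]
Starting from $\on{Loc}_{\GL_n}(\on{S}^1)\simeq \frak{gl}_n/\GL_n$, iterated derived self-intersection produces $\frak{gl}_n[-1]/\GL_n$, $\frak{gl}_n[-2]/\GL_n$, $\frak{gl}_n[-3]/\GL_n$ for $k=2,3,4$. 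The standard $\GL_n$-equivariant Koszul duality then supplies $\on{Coh}(\frak{gl}_n[-3]/\GL_n)\simeq D^{\GL_n}_{\on{perf}}(\on{Sym}(\frak{gl}_n[-4]))$, completing the right vertical arrow.

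With the top arrow defined as the composite, the square commutes by construction and the theorem is established. The main (minor) obstacle to watch for is a coherence/normalization issue: one must verify that the Koszul duality that realizes the bottom side of the square coming from Theorem \ref{derived Satake for X, intro} agrees with the Koszul duality used to present the right column as $\on{Coh}(\on{Loc}_{\GL_n}(\on{S}^4))$. This reduces to matching the shift conventions on $\frak{gl}_n[-4]$ and $\frak{gl}_n[-3]/\GL_n$ and is routine, but should be checked once explicitly so that the promised equivalence is canonical rather than merely existing up to rescaling.
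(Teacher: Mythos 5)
Your proposal is correct and takes the same route as the paper: the top equivalence is defined as the composite of the affine Matsuki equivalence, the derived Satake equivalence of Theorem \ref{derived Satake for X, intro}, and the Koszul-duality identification \eqref{spectral S^4}, so the square commutes by construction. The iterated Mayer--Vietoris computation of $\on{Loc}_{\GL_n}(\on{S}^k)$ you supply is consistent with the paper's one-step version for $S^4$, though you should flag that strictly speaking $\on{Loc}_{\GL_n}(\on{S}^1)\simeq \GL_n/\GL_n$ in the Betti setting, and the passage to $\fg_n/\GL_n$ is a linearization (formal completion at the trivial local system) -- a convention the paper itself adopts implicitly in writing $\on{pt}/\GL_{2n}\times_{\fg_{2n}/\GL_{2n}}\on{pt}/\GL_{2n}$ for $\on{Loc}_{\GL_{2n}}(\on{S}^2)$, so this matches.
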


\begin{remark}
The appearance of the $4$-sphere $\on{S}^4$ in the above version of 
geometric Langlands for $\GL_n(\bbH)$
is quite mysterious (at least to the authors of this paper).
It suggests a connection with  twistor theory 
but at the moment we do not have a good explanation. One could note that 
the twistor fibration 
$\mathbb P^3\to\on{S}^4$ arises naturally in the proof of Theorem \ref{quaternionic intro},
(see Section \ref{twistor}).
From the perspective of geometric Langlands for real groups, we expect the spectral side to be expressible in terms of
$\GL_{2n}$-connections on a disk with a partial oper structure along the boundary. This should reflect the usual Satake $\GL_{2n}$-Hecke operators in the bulk and the real Satake $\GL_{n}$-Hecke operators along the boundary.

\end{remark}
\begin{remark}
More generally, the real-symmetric correspondence~\eqref{real-symmetric equ, intro}
 and affine Matsuki correspondence~\eqref{affine Matsuki}
hold for any real group $G_\bbR$.  
It follows that 
a derived Satake equivalence for real groups
or symmetric varieties will imply a version of geometric Langlands correspondence   over $\bbR \bbP^1$ for 
real groups and vice versa.
\end{remark}

\subsection{Relative Langlands duality conjectures}\label{ss:intro rel}
A far-reaching program of Ben-Zvi, Sakellaridis and Venkatesh  proposes 
relative Langlands duality conjectures 
between periods and L-functions (see, e.g., \cite{S}).
A fundamental conjecture in the program predicts  that given a 
complex reductive group $G$ and a 
homogeneous
spherical $G$-variety $X$, there exists 
a (graded) Hamiltonian $\check G$-variety 
$\check M$ together with a moment map 
$\mu:\check M\to\check\fg^*$ equipped with 
a commuting $\bG_m$-action of weight $2$, and an equivalence 
\beq\label{eq:rel sat}
D^b(\frak LX/\frak L^+G)\is \on{Coh}(\check M/\check G)
\eeq
where $\on{Coh}(\check M/\check G)$ is the dg-category 
of $\check G$-equivariant perfect dg-modules over the 
ring of functions on $\check M$ viewed as a dg-algebra
with trivial differential and grading given by the above $\bG_m$-action. 
Moreover, this equivalence  should be 
compatible with the derived Satake equivalence 
$D^b(\frak L^+G\backslash\Gr_G)\is D^{\check G}_{\on{perf}}(\on{Sym}(\check\fg[-2]))\is
\on{Coh}(\check\fg^*[2]/\check G)$, in the sense that
the  right convolution action of $D^b(\frak L^+G\backslash\Gr_G)$
on  $D^b(\frak LX/\frak L^+G)$ should correspond to the 
tensor product action of $\on{Coh}(\check\fg^*[2]/\check G)$ on $\on{Coh}(\check M/\check G)$  through the moment map $\mu$.

We now explain how the derived Satake equivalence for the symmetric variety
$X=\GL_{2n}/\on{Sp}_{2n}$ fits into the general setting of relative Langlands duality.
On the one hand, there are canonical bijections between orbits posets
\[|\frak LX/\frak L^+\GL_{2n}|\leftrightarrow|\frak L\on{Sp}_{2n}\backslash\frak L\GL_{2n}/\frak L^+\GL_{2n}|\leftrightarrow|\frak L\on{Sp}_{2n}\backslash\Gr_{2n}|\]
One can show that there an upgrade of this to an equivalence of categories
\beq\label{identification}
D^b(\frak LX/\frak L^+\GL_{2n})\is D^b(\frak L\on{Sp}_{2n}\backslash\Gr_{2n})
\eeq
where $D^b(\frak LX/\frak L^+\GL_{2n})$ is the dg-category of 
$\frak L^+\GL_{2n}$-equivariant constuctible complexes on the loop space 
$\frak LX$ of $X$, see \cite{CN3}.

On the other hand,
it is expected that the Hamiltonian $\check G$-space 
$\check M$ associated to the symmetric variety $X=\GL_{2n}/\on{Sp}_{2n}$ (note that symmetric varieties are 
spherical) is given by $\check M=T^*(\GL_{2n}/\GL_n^{\on{}}\ltimes U,\psi)$, 
the partial Whittaker reduction of $T^*\GL_{2n}$ with respect to 
the generic homomorphism $\psi$ of the Shalika subgroup $\GL^{\on{}}_n\ltimes U$
of $\GL_{2n}$:
\beq\label{Shalika}
\GL^{\on{}}_n\ltimes U=\{\begin{pmatrix}
A&0\\
0&A
\end{pmatrix}\begin{pmatrix}
I_n&0\\
C&I_n
\end{pmatrix}|\ A\in\GL_n, C\in\frak{gl}_n\},\ \ \ \psi(\begin{pmatrix}
A&0\\
0&A
\end{pmatrix}\begin{pmatrix}
I_n&0\\
C&I_n
\end{pmatrix})=-\tr(C)
\eeq
(see the list of examples of relative duality in \cite{W}).
   
 By Lemma \ref{partial reduction}, there is an isomorphism 
$\check M\is\GL_{2n}\times^{\GL_n^{\on{}}}\frak{gl}_n$
such that the induced isomorphism
$\check M/\GL_{2n}\is\frak{gl}_n/\GL_n$
fits into a commutative diagram
\[\xymatrix{\check M/\GL_{2n}\ar[r]^{\simeq}\ar[d]^{\mu}&\frak{gl}_n/\GL_n\ar[d]^\tau\\
\frak{gl}^*_{2n}/\check\GL_{2n}\ar[r]^{\simeq}&\frak{gl}_{2n}/\GL_{2n}}\]
Here $\mu$ is the moment map,  $\tau$ is the embedding
in~\eqref{tau} (disregarding the cohomological grading), and 
we identify $\frak{gl}^*_{2n}/\GL_{2n}\is\frak{gl}_{2n}/\GL_{2n}$ via the trace form. Thus in view of~\eqref{identification}, the equivalence of Theorem \ref{derived Satake for X, intro}
gives an instance of~\eqref{eq:rel sat} of the form\footnote{
Here we have not been precise about cohomological degrees on the right hand side.} \[D^b(\frak LX/\frak L^+\GL_{2n})\is
\on{Coh}(\check M/\GL_{2n}).\]

\begin{remark}
Our work suggests an interesting relationship between 
real groups and
periods of automorphic forms associated to 
the corresponding symmetric varieties
and we plan to investigate this relationship in more details.
The case of quaternionic group $\on{GL}_n(\mathbb H)$ is related to
 the so-called Symplectic Periods and Jacquet-Shalika Periods
 \cite{JR, JS}.

\end{remark}

\subsection{IC-stalks and Kostka-Foulkes polynomials}
As an application of the proof of Theorem \ref{quaternionic intro}, 
we determine the stalk cohomology of
the IC-complexes 
for the $\frak L^+\GL_n(\bbH)$-orbit closures in 
the quaternionic affine Grassmannian 
$\Gr_{n,\bbH}$
and the 
$\frak L\on{Sp}_{2n}$-orbit closures in the comlplex
affine Grassmannian $\Gr_{2n}$.

The $\frak L^+\GL_n$-orbits (resp.
$\frak L^+\GL_n(\bbH)$ and $\frak L\on{Sp}_{2n}$-orbits)
on $\Gr_n$ (resp. $\Gr_{n,\bbH}$ and $\Gr_{2n}$) are in bijection  with the 
 set 
dominant  co-weights $\Lambda^+_n$
of $\GL_n$, 
see Section \ref{affine grassmannian}. For any 
$\lambda\in\Lambda^+_n$ we denote 
by $\Gr_n^\lambda$ (resp.
$\Gr_{n,\bbH}^\lambda$ and $\Gr^\lambda_{2n,X}$) the corresponding 
orbit and 
by $\IC(\overline{\Gr_n^\lambda})$ (resp. $\IC(\overline{\Gr_{n,\bbH}^\lambda})$ and $\IC(\overline{\Gr^\lambda_{2n,X}})$) the  intersection cohomology complex 
on the orbit closure.
We will write $\mathscr H^i(\mF)$
for the $i$-th cohomology sheaf of 
a complex $\mF$ 
and $\mathscr H_x^i(\mF)$
its stalk at a point $x$.
 
For any pair of dominant coweights $\lambda,\mu\in\Lambda^+_n$, 
we denote by 
$K_{\lambda,\mu}(q)$ the assoicated Kostka-Foulkes polynomial with variable 
$q$. Denote by $\rho_n$ the half-sum of positive roots of $\GL_n$. 
A well-known result of Lusztig \cite{Lu} says that 
we have 
$\mathscr H^{i-2\langle\lambda,\rho_n\rangle}(\IC(\overline{\Gr_n^\lambda}))=0$ for $2\nmid i$
and 
\[\sum_i\on{dim}\mathscr H^{2i-2\langle\lambda,\rho_n\rangle}_x(\IC(\overline{\Gr_n^\lambda}))q^i
=q^{\langle\lambda-\mu,\rho_n\rangle}K_{\lambda,\mu}(q^{-1}) \ \ \ \ 
\text{for}\ \ x\in\Gr_{n}^\mu.\]
We have the following real and symmetric analogue:
\begin{thm}[see Corollary \ref{parity vanishing}, Theorem \ref{IC-stalks}]\label{IC stalks, intro}
Let  $\lambda,\mu\in\Lambda_n^+$. For any $x\in\Gr_{n,\bbH}^\mu$ and $y\in\Gr_{2n,X}^\mu$, we have 
\begin{enumerate}
\item
$\mathscr H^{i-4\langle\lambda,\rho_n\rangle}(\IC(\overline{\Gr^\lambda_{n,\bbH}}))=
\mathscr H^{i-4\langle\lambda,\rho_n\rangle}(\IC(\overline{\Gr_{2n,X}^\lambda}))= 0$
for $4\nmid i$,
\item
$\underset{i}\sum\on{dim}\mathscr H^{4i-4\langle\lambda,\rho_n\rangle}_x(\IC(\overline{\Gr^\lambda_{n,\bbH}}))q^i=
\underset{i}\sum\on{dim}\mathscr H^{4i-4\langle\lambda,\rho_n\rangle}_y(\IC(\overline{\Gr^\lambda_{2n,X}}))q^i=q^{\langle\lambda-\mu,\rho_n\rangle}K_{\lambda,\mu}(q^{-1})$.
\end{enumerate}
\end{thm}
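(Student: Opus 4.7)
The plan is to deduce the IC-stalk formula from the derived Satake equivalence $\Psi_\bbH$ of Theorem~\ref{quaternionic intro} by transplanting Lusztig's spectral argument \cite{Lu} to the graded dg-category $D^{\GL_n}_{\on{perf}}(\on{Sym}(\frak{gl}_n[-4]))$. The sole difference from the complex setting is the replacement of $\on{Sym}(\frak{gl}_n[-2])$ by $\on{Sym}(\frak{gl}_n[-4])$, and this replacement alone produces the advertised doubling of cohomological degrees.

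First I would establish that under $\Psi_\bbH$, the IC-sheaf $\IC(\overline{\Gr^\lambda_{n,\bbH}})$ corresponds to the free dg-module $\on{Sym}(\frak{gl}_n[-4]) \otimes V_\lambda$, where $V_\lambda$ denotes the irreducible $\GL_n$-representation of highest weight $\lambda$. This is immediate from the fact that $\Psi_\bbH$ extends the abelian real Satake equivalence $\on{Perv}(\Gr_{n,\bbH}) \simeq \on{Rep}(\GL_n)$ via the embedding $V \mapsto \on{Sym}(\frak{gl}_n[-4]) \otimes V$. Since this dg-module is concentrated in cohomological degrees divisible by $4$, the parity vanishing in~(1) will follow once the stalk functor has been translated to the spectral side.

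Next I would translate $\mathscr H^*_x$ for $x \in \Gr^\mu_{n,\bbH}$ into a spectral computation, following the Bezrukavnikov-Finkelberg template: after equivariantly localizing with respect to the loop-rotation $\bG_m$-action, the stalk reduces to extracting the graded $T$-weight-$\mu$ component of $\on{Sym}(\frak{gl}_n[-4]) \otimes V_\lambda$. By Lusztig's identity, the analogous procedure applied to $\on{Sym}(\frak{gl}_n[-2]) \otimes V_\lambda$ in the complex setting yields $q^{\langle\lambda-\mu,\rho_n\rangle} K_{\lambda,\mu}(q^{-1})$, with the variable $q$ recording a cohomological shift by $[2]$. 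In the quaternionic setting the grading lives in cohomological degree $4$, so $q$ now records a shift by $[4]$; the generating function is thus indexed as $\mathscr H^{4i-4\langle\lambda,\rho_n\rangle}$ in place of $\mathscr H^{2i-2\langle\lambda,\rho_n\rangle}$, which is exactly~(2). The symmetric-variety statement for $\overline{\Gr^\lambda_{2n,X}}$ then follows from the same spectral computation run through $\Psi_X$ of Theorem~\ref{derived Satake for X, intro}, or equivalently by transport through the real-symmetric correspondence~\eqref{real-symmetric equ, intro}.

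The principal obstacle is making precise the spectral description of the \emph{stalk} functor under $\Psi_\bbH$, since the derived Satake equivalence directly controls only Ext-groups of equivariant complexes. As in the complex framework, this is handled by introducing loop-rotation equivariance and using fixed-point localization to express stalks as residue calculations on the spectral side, where Lusztig's combinatorial identity applies verbatim once the degree-$4$ versus degree-$2$ regrading has been accounted for.
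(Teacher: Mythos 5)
Your plan runs in the opposite logical direction from the paper, and the mechanism you sketch for extracting stalk data from the spectral side is not quite right.

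On the logical direction: the paper establishes both parity vanishing (Corollary~\ref{parity vanishing}) and the IC-stalk formula (Theorem~\ref{IC-stalks}) in Section~\ref{constructible side}, entirely \emph{before} the derived Satake equivalence of Theorem~\ref{main}. Indeed the parity vanishing is an essential \emph{input} to the fully-faithfulness argument (Propositions~\ref{faithfulness and parity} and~\ref{faithfulness}) that replaces Ginzburg's Hodge-theoretic argument in the real setting, and hence to the proof of $\Psi_\bbH$ itself. Part~(1) is proved directly: Lemma~\ref{affinepaving} shows that fibers of quaternionic convolution maps (essentially quaternionic Springer fibers) are paved by quaternionic affine spaces, forcing cohomology into degrees divisible by~$4$, and Proposition~\ref{IC parity} then deduces that each $\IC_\lambda$ is even as a parity complex. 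Reversing this --- inferring stalk parity of $\IC_\lambda$ from the fact that the spectral module $\on{Sym}(\fg_n[-4]) \otimes V_\lambda$ sits in degrees divisible by~$4$ --- is not a formal consequence of the \emph{equivariant} equivalence, which sees only Ext-groups of equivariant objects. You flag this yourself as the principal obstacle, but it is not a routine loose end: it is the main content, and the paper resolves it by working on the constructible side from the start.

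For part~(2), the step ``the stalk reduces to extracting the graded $T$-weight-$\mu$ component of $\on{Sym}(\fg_n[-4]) \otimes V_\lambda$'' is incorrect as stated; the weight-$\mu$ component only records the multiplicity $\dim V_\lambda(\mu) = K_{\lambda,\mu}(1)$. The Kostka-Foulkes polynomial encodes the Brylinski-Kostant filtration $F_i V = \ker(e_n^{i+1})$ by a regular nilpotent, which is extra structure not visible in the naive weight decomposition. The paper's proof of Theorem~\ref{IC-stalks} uses equivariant cohomology $H_{T_c}^*(\Gr_{n,\bbH},-)$: after specializing at a point $t \in \ft$ chosen so that $e_X^T(t) = e_n + 2\rho_n$, one applies the localization theorem to split $H_t(\Gr_{n,\bbH},\mF)$ over the $T_c$-fixed points, and then matches the filtration by equivariant cohomological degree with the Brylinski-Kostant filtration via an argument of Ginzburg's type (\cite{G1},\cite{Z}). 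The factor of~$4$ enters both through $\langle\lambda,\rho_{2n}\rangle = 4\langle\lambda,\rho_n\rangle$ and through the real MV filtration jumping only in degrees divisible by~$4$. If you want to route through the derived Satake equivalence you will still have to reproduce this localization-and-filtration comparison; the paper's direct approach is both shorter and avoids presupposing the equivalence.
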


In other words, the theorem above says that 
the IC-complex for the $\frak L^+\GL_{n}(\bbH)$ and $\frak LK$-orbit closures 
$\overline{\Gr^\lambda_{n,\bbH}}$ and $\overline{\Gr^\lambda_{2n,X}}$
 have the same stalk cohomology
as the ones $\overline{\Gr_n^\lambda}$ for $\GL_n$, but with all degrees doubled.

\begin{remark}\label{def of IC-Stalks}
To define the  IC-stalk $\mathscr H_y^i\IC(\overline{\Gr_{2n,X}^\lambda})$ at 
$y\in\Gr_{2n,X}^\mu$
we use the observation that $\Gr_{2n,X}^\mu$ has finite codimension in 
$\overline{\Gr_{2n,X}^\lambda}$ and hence the IC-stalk makes sense.
This can be made precise using the observation in
\cite{CN1} that 
the image $\frak L\on{Sp}_{2n}\backslash\Gr_{2n,X}^\lambda$
of the 
$\frak L\on{Sp}_{2n}$-orbits $\Gr_{2n,X}^\lambda$
in the quotient $\Omega\on{Sp}(n)\backslash\Gr_{2n}$ is 
finite dimensional with even real dimension and 
the collection $\{\frak L\on{Sp}_{2n}\backslash\Gr_{2n,X}^\lambda\}_{\lambda\in\Lambda_n^+}$
forms a nice stratification of $\Omega\on{Sp}(n)\backslash\Gr_{2n}$.
This allow us to define the IC-complex
$\IC(\overline{\Gr_{2n,X}^\lambda})$ of $\overline{\Gr_{2n,X}^\lambda}$ (and hence the IC-stalks) as the IC-complex  for the 
orbit closure $\Omega\on{Sp}(n)\backslash\overline{\Gr_{2n,X}^\lambda}$
of $\Omega\on{Sp}(n)\backslash\Gr_{2n,X}^\lambda$ inside $\Omega\on{Sp}(n)\backslash\Gr_{2n}$.

\end{remark}

\begin{remark}
We first prove Theorem \ref{IC stalks, intro}
in the real case using the nice  geometry of quaternionic 
Grassmannian $\Gr_{n,\bbH}$ and 
deduce the symmetric case via the real-symmetric correspondence.
At the moment, we do not know a direct argument in the symmetric case.
\end{remark}

\subsection{Outline of the proof}
We briefly explain the proof of Theorem \ref{quaternionic intro}.
Similar to the proof of the derived Satake for complex reductive groups \cite{BF}, the desired equivalence follows from the following two statements:
(1) the  
de-equivariantized Extension algebra
$\on{Ext}^*_{D^b_{}(\frak L^+G_{n,\bbH}\backslash\Gr_{n,\bbH})}(\IC_0,\IC_0\star\mO(G_n))$ is isomorphic to the dg symmetric algebra 
$\on{Sym}(\frak{gl}_n[-4])$, see Proposition \ref{computation of Ext}, and 
(2)
the
dg algebra $\on{RHom}_{D^b_{}(\frak L^+G_{n,\bbH}\backslash\Gr_{n,\bbH})}(\IC_0,\IC_0\star\mO(G_n))$ is formal, see Proposition \ref{formality}.

We deduce (1) from a fully-faithfulness property of the equivariant cohomology functor
In \cite{BF}, this was proved using a general result of Ginzburg \cite{G2}
whose proof uses Hodge theory  and hence can not apply directly to the real 
analytic setting.
We observe that 
in the situation of quaternionic affine Grassmannian 
the stalks of the IC-complexes satisfy a parity vanishing property and,
as observed in \cite{AR},
one can use parity considerations in place of Hodge theory.
To prove the the parity vanishing result of the IC-stalks 
we show that 
fibers of certain convolution Grassmannians (which are basically 
quaternionic Springer fibers)
admit pavings by quaternionic affine spaces.

The proof of (2) in \cite{BF} also
relies on Hodge theory (or theory of weights) and hence must be modified in the real setting.  We observe that 
the nearby cycles functor~\eqref{nearby intro}
induces a surjective homomorphism form the dg algebra 
$\on{RHom}_{D^b_{}(\frak L^+G_{2n}\backslash\Gr_{2n})}(\IC_0,\IC_0\star\mO(G_{2n}))$
associated to the complex affine Grassimannian $\Gr_{2n}$ to  the dg algebra 
$\on{RHom}_{D^b_{}(\frak L^+G_{n,\bbH}\backslash\Gr_{n,\bbH})}(\IC_0,\IC_0\star\mO(G_n))$.
Since the former 
dg algebra  is  formal, thanks to \cite{BF},
the desired claim follows from a 
general criterion of formality, see Lemma \ref{criterion}.

\begin{remark}
We expect that the proof strategy outlined above are applicable for 
general real groups:  the parity vanishing, fully-faithfulness, and formality results should hold in general.

\end{remark}

\subsection{Organization}
We briefly summarize the main goals of each sections.
In Section \ref{sheaves}, we recall some notations and results on
constructible sheaves on a semi-analytic stack.
In Section \ref{Spectral side}, we study the spectral side of the 
quaternionic Satake equivalence including 
results on quaternionic groups, symplectic groups,
regular centralizers group schemes, and Whittaker reduction.
In Section \ref{constructible side}, we study the constructible side of the 
equivalence including the study of 
nearby cycles functors, parity vanishing results,
fully-faithfulness of the equivariant cohomology functor,
 and the computation of 
 the IC-stalks and
the de-equivariantized extension algebra.
Finally, in Section \ref{Main results}, 
we prove  the formality result and deduce the 
derived Satake equivalence for quaternionic groups 
including a version involving nilpotent singular supports (Theorem \ref{main})
and also the spectral description of the nearby cycles functor (Theorem \ref{spectral nearby cycle}).

\subsection{Acknowledgements}

We would like to thank David Ben-Zvi, 
Jonathan Wang and Geordie Williamson  for useful discussions.

T.-H. Chen would also like to 
thank the Institute for Advanced Study in Princeton for support, hospitality, and a nice research environment.

T.-H. Chen
is supported by NSF grant DMS-2001257 and the S. S. Chern Foundation.
D. Nadler is supported by NSF grant DMS-2101466. M. Macerato is supported by an NSF Graduate Research Fellowship.

\section{Constructible sheaves on a semi-analytic stack}\label{sheaves}
We will be working with $\bC$-linear dg-categories (see, e.g., \cite[Section 0.6]{DG} for a concise summary of the theory of dg-categories). Unless specified otherwise, all dg-categories will be assumed cocomplete, i.e., containing all small colimits,
and all functors between dg-categories will be assumed continuous, i.e., preserving all small colimits.

We collect some facts about constructible sheaves on a semi-analytic stack.
Recall that a subset $Y$ of a 
real analytic manifold $M$ 
is called semi-analytic if
any point $y\in Y$ has a open neighbourhood $U$ 
such that the intersection $Y\cap U$ is a finite union of sets of the form
\[\{y\in U|f_1(y)=\cdot\cdot\cdot=f_r(y)=0, g_1(y)>0,..., g_l(y)>0\},\]
where the $f_i$ and $g_j$ are real analytic functions on 
$U$.
A map $f:Y\to Y'$ between two semi-analytic sets is called semi-analytic if
it is continuous and its graph is a semi-analytic set.

For any semi-analytic set $S$, 
we define $D(Y)=\on{Ind}(D^b(Y))$ to be the ind-completion
of the 
 bounded dg-category $D^b(Y)$ of $\bC$-constructible sheaves on 
$Y$. For any semi-analytic stack $\calY$ 
we define $D(\calY):=\underset{I}{\on{lim}}\ D(Y)$
where the index category is that of semi-analytic sets equipped with a semi-analytic map
to $\calY$, and the transition functors are given by $!$-pullback.
Since we are in the constructible context, $!$-pullback admits a left adjoint, given by 
$!$-pushforward, and it follows that 
$D(\calY)=\underset{S}{\on{colim}}D(S)$. In particular,
$D(\calY)$ is compactly generated. 
We let $D(\calY)^c$ be the full subcategory of compact objects
and $D^b(\calY)\subset D(\calY)$ be the full subcategory
of objects that pull back to an object of $D^b(Y)$ for any $Y$ mapping to $\calY$.
Note that we have natural inclusion $D(\calY)^c\subset D^b(\calY)$
but the inclusion is in general
not an equality. For example, the constant sheaf $\bC_{\calY}\in D^b(\calY)$  
for the classifying stack $\calY=B(\GL_1(\bC))$ is not compact.

Let $f:\calY\to\calY'$ be a map between semi-analytic stacks.
We have the usual six functor formalism
$f_*$, $f^!$, $f_*$, $f_!$, $\otimes$, $\underline{\on{Hom}}$.
%For any $\mF_1,\mF_2\in D(\calY)$ we denote by
%$\mF_1\otimes^!\mF_2:=\Delta^!_\calY(\mF_1\boxtimes\mF_2)$,
%where $\Delta_\calY:\calY\to\calY\times\calY$ be the diagonal map.

For a semi-analytic stack $\calY$, with projection map $p:\calY\to\on{pt}$, and $\mF\in D(Y)$,
we will write 
$H^*(Y,\mF):=p_*(\mF)$ for the cohomology of $\mF$.
If $\calY$ is isomorphic to the quotient stack $\calY\is G\backslash Y$, where 
$G$ is a Lie group acting real analytically on a semi-analytic set $Y$, 
we will write 
$H^*_G(Y,\mF):=(p_{BG})_*(\mF)$ for the 
the equivariant cohomology of $\mF$ 
where $p_{BG}:\calY=G\backslash Y\to BG$ is the projection map.
When it is clear from the content we will abbreviate 
$H^*(Y,\mF)$ and $H^*_G(Y,\mF)$ by $H^*(\mF)$
and $H^*_G(\mF)$.

For an ind semi-analytic stack $\calY=\underset{I}{\on{colim}}\ \calY_i$ we define 
$D(\calY)=\underset{I}{\on{lim}}\ D(\calY_i)$ where the limit is taking with respect to 
the $!$-pull-back along the closed embedding $\iota_{i,i'}:\calY_i\to\calY_{i'}, i,i'\in I$.

\section{Spectral side}\label{Spectral side}
\subsection{Quaternion group}
For any positive integer $n$,
we denote by $G_n=\GL_n(\bC)$ the complex Lie group of $n\times n$ invertible matrices 
and $\fg_n=\frak{gl}_n(\bC)$ its Lie algebra of $n\times n$ matrices.
We write 
$B_n$, $N_n$, and $T_n$ for the subgroups of $G_n $ consisting of upper triangular matrices,
upper triangular unipotent matrices,
 and 
diagonal matrices and 
$\fb_n$, $\frak n_n$, and $\ft_n$ for their Lie algebras.
We denote by $\rW_n$ the Weyl group of $G_n$ acting on $\ft_n$
 by the permutation action.
 We let $\frak c_n=\ft_n//\rW_n$.
 We will identify $\frak c_n$ with the space of 
 degree $n$ monomials in such a way that, under the above identification, the 
 Chevalley map
$\chi_n:\fg_n\to \fg_n//G_n\is\fc_n$
becomes the map sending a matrix to its 
characteristic polynomial.
We will identify $\fg_n\is\fg_n^*$ using the trace paring 
$\fg_n\times\fg_n\to\bC$,
$(A,B)\mapsto\tr(AB)$.

 Let $\mathbb H= \{a+ib+jc+kd\}$ denote the quaternions. 
Consider the quaternionic vector space $\bbH^n$ where
$\mathbb H$ acts via right multiplication.
Let 
$G_{n,\bbH}$ be the Lie group of 
$\bbH$-linear invertible endomorphisms of $\bbH^n$,
which can be identified with the space $\GL_n(\bbH)$ of $n\times n$-invertible 
quaternionic matrices, and let 
$\frak g_{n,\bbH}$ be the Lie algebra of $\bbH$-linear endomorphisms of 
$\bbH^n$ which can be identified with the space $\frak{gl}_n(\bbH)$ of 
$n\times n$ 
quaternionic matrices.

Using the identification 
$\bC^{2n}\is\bbH^n$ sending $(z,w)\to q=z+jw$ , $z,w\in\bC^n$, one can realize 
$G_{n,\bbH}$ as a real form of $G_n$.
Namely, the endomorphism of $\bbH^n$ sending 
$q\to qj$ corresponds to the endomorphism 
of $\bC^{2n}$ sending 
\beq\label{j}
(z,w)\to (-\bar w,\bar z)
\eeq and we can identify 
$\fg_{n,\bbH}$ and $G_{n,\bbH}$ as the subsets of 
$\fg_n$ and $G_n$ consisting of 
$\bC$-linear endomorphisms of $\bC^{2n}$ that commute with the 
map~\eqref{j}. Equivalently, consider the $2n\times 2n$-matrix
\[S_n=\begin{pmatrix} 0 & -I_n  \\ 
I_n &0  \end{pmatrix}\]
where $I_n $ is the $n\times n$ identity matrix.
Then the endomorphism $\eta$ of $\fg_{2n}$ (resp. $G_{2n}$) sending 
$X\in\fg_{2n}$ (resp. $X\in G_{2n}$) to 
\[\eta(X)=S_n\overline XS_n^{-1}\]
defines a real form of $\fg_{2n}$ (resp. $G_{2n}$), that is, an
anti-holomorphic conjugation on $\fg_{2n}$ (resp. $G_{2n}$)
and $\fg_{n,\bbH}$ and $G_{n,\bbH}$ are the $\eta$-fixed points 
in $\fg_{2n}$ and $G_{2n}$. Concretely, $\fg_{n,\bbH}$ (resp. $G_{n,\bbH}$)
consists of $2n\times 2n$-matrices (resp. invertible matrices) of the form
\[\begin{pmatrix} A & B  \\ 
-\overline B &\overline A  \end{pmatrix}\]
where $A,B\in\fg_n$.

We denote by 
$\ft_{n,\bbH}\subset\fg_{n,\bbH}$ (resp.
$T_{n,\bbH}\subset G_{n,\bbH}$) the Cartan subalgebra (resp. Cartan subgroup) 
consisting of matrices (resp. invertible matrices)
\[\begin{pmatrix} A & 0  \\ 
0 &\overline A  \end{pmatrix}\]
where $A\in\ft_n$.

We denote by 
$P_{n,\bbH}=M_{n,\bbH} A_{n,\bbH} N_{n,\bbH}$  the standard minimal parabolic subgroup of 
$G_{n,\bbH}$ consisting of invertible upper triangular quaternionic matrices
and $\fp_{n,\bbH}=\frak m_{n,\bbH}\oplus\fa_{n,\bbH}\oplus\frak n_{n,\bbH}$ its Lie algebra.

\subsection{Symplectic group}
According to the Cartan classification of real forms, the conjugation $\eta$
corresponds to a holomorphic involution $\theta$ on 
$G_{2n}$ (resp. $\bC$-linear involution of $\fg_{2n}$) characterized by the property that 
$\eta\circ\theta=\theta\circ\eta$ is a compact real form, that is,  the fixed points 
subgroup (resp. subalgebra)
of $\eta_c:=\eta\circ\theta$:
\[G_c=(G_{2n})^{\eta_c}\ \ \ \ \ \  (resp.\ \ \  \fg_c=(\fg_{2n})^{\eta_c})\]
is compact. In our case, we will take
$\theta$ to be
\[\theta(X)=S_n(X^t)^{-1}S_n^{-1}\ \ \ \ \ (resp.\ \ \ \  \theta(X)=-S_n(X^t)S_n^{-1}),\]
where $X\in G_{2n}$ (resp. $X\in\fg_{2n}$), and we have 
\[\eta_c(X)=(\overline X^t)^{-1}\ \ \ \ \ \ (resp.\ \ \ \  \theta(X)=-\overline X^t)\]
and the corresponding compact subgroup $G_c=(G_{2n})^{\eta_c}$
is the group 
of $2n\times 2n$-unitary matrices.

The $\theta$-fixed point subgroup $K=(G_{2n})^\theta=\on{Sp}_{2n}$ 
is the symplectic group  of rank $n$
and the intersection  
\[K_c:=
\on{Sp}_{2n}\cap G_c=\on{Sp}(n)\] is the compact symplectic group. 
The Lie algebras $\frak k_{}$ and $\frak k_c$ consist of matrices
\[\begin{pmatrix}
A&B\\
 C&-A^t
\end{pmatrix},\]
where $A,B, C\in\frak g_n$
satisfying $B=B^t$ and $C=C^t$
for $\frak k_{}$, and the additional condition
$A=-\bar A^t$ and $C=-\overline B$
for $\frak k_c$.

Recall the Cartan decompostion of the Lie algebra
 $\fg_{2n}=\frak k\oplus\frak p$
 where $\frak p$ is the $(-1)$-eigenspace of $\theta$.
The Cartan decompostion induces a decomposition 
$\ft_{2n}=\ft\oplus\frak s$
where 
$\ft=\ft_{2n}\cap\frak k$ is a Cartan subalgebra of $\frak k$ consisting of diagonal matrices of the form
\[\on{diag}(h_1,...,h_n,-h_1,...,-h_n)\]
and $\frak s=\frak t_{2n}\cap\fp\subset\fp$ is a maximal abelian subspace of $\fp$ 
consisting of diagonal matrices of the form
\[\on{diag}(s_1,...,s_n,s_1,...,s_n).\] 
We denote by 
$\rW$ the Weyl group of $K$ and $\rW_{\fa}=N_K(\fa)/Z_K(\fa)$ the little Weyl group.
We have $\rW\is\rW_n\ltimes\{\pm1\}^n$ and $\rW_\fa\is\rW_n$.
We let 
$\fc=\frak h//\rW$. Then the natural inclusion 
$\ft\to\ft_{2n}$ gives rise to an embedding 
\beq\label{char poly}
\frak c=\ft//\rW\lra\ft_{2n}//\rW_{2n}=\frak c_{2n}
\eeq
whose image consists of even monomials of degree $2n$.

Finally, we denote by 
$X_{}=G_{2n}/K_{}$
and $X_c=G_c/K_c$ the symmetric space and compact symmetric space 
associated to $K_{}$ and $K_c$.

\subsection{Notation related to root structure}
Let $\Lambda_{n}=\Hom(\bC^\times, T_n)$ be the coweight lattice of $T_n$
and let $\Lambda_n^+$ be the set of dominant coweights with respect to $B_n$.  
Let $2\rho_n$ be the sum of the positive roots of $G_n$
and let $\langle\lambda,2\rho_n\rangle\in\bZ$ be the natural paring for a coweight $\lambda\in\Lambda_n$.

Let $S\subset T$ be the maximal split torus corresponds to the maxmial abelian subspace $\frak s\subset\fp$
and 
let $\Lambda_S=\Hom(\bC^\times,S)$ be the set of real coweights
and $\Lambda_S^+=\Lambda_S\cap\Lambda_{2n}^+$ be the set of dominant real coweights.
There is natural identification 
$S\is T_n$ sending $\on{diag}(s_1,...s_n,s_1,...,s_n)$ to $\on{diag}(s_1,...,s_n)$
and hence natural identification $\Lambda_S\is\Lambda_n$
and $\Lambda_S^+\is\Lambda^+_n$.

\subsection{Regular centralizers}
\subsubsection{}
Consider the following embedding
\beq\label{tau embedding}
\tau:\fg_n\to \fg_{2n}\ \ \ \ \tau(C)=\begin{pmatrix}0&I_n\\
C&0\end{pmatrix}.
\eeq
Note that the map $\tau$ is
$G_n$-equivariant where $G_n$ acts on $\fg_{2n}$
via diagonal embedding
$\delta:G_n\to G_{2n}$.
Thus it indues 
an embedding  on the invariant quotients (denoted again by $\tau$)
\beq\label{c_X}
\tau:\fc_n=\fg_n//G_n\to \fg_{2n}//G_{2n}\is\fc_{2n}\ \ \ 
\tau(c_1,...,c_n)=(0,c_1,0,c_2,...,0,c_{n})
\eeq
whose image consists of even monomials of degree $2n$.
Note that the image of $\tau$ is equal to the image of the map
$\fc=\ft//\rW\to\fc_{2n}=\ft_{2n}//\rW_{2n}$ in~\eqref{char poly}, and hence there is 
natural identification 
\beq\label{c=c_n}
\fc\is\fc_n
\eeq
such that 
$\tau:\fc_n\is\fc\to\fc_{2n}$

Recall the   the group scheme of centralizers $I_{n}$ (resp. $I_{2n}$)
 over $\fg_{n}$ (resp. $\fg_{2n}$)
and  the group scheme of regular centralizers $J_n$ (resp. $J_{2n}$)
 over $\fc_n$ (resp. $\fc_{2n}$), see \cite[Section 3]{Ng}.

\begin{lemma}
There is a natural closed embedding of 
groups schemes 
$J_n\to J_{2n}$ fits into the following commutative diagram
\beq\label{key diagram}
\xymatrix{J_n\ar[r]\ar[d]&J_{2n}\ar[d]\\
\fc_n\ar[r]^{}&\fc_{2n}}
\eeq
where the bottom arrow is the map in~\eqref{c_X}.
\end{lemma}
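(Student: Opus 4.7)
The plan is to exploit the explicit Weil-restriction description of the regular centralizer group scheme for $\GL_N$. Namely, for each $N$ we identify $J_N$ with $\on{Res}_{C_N/\fc_N}\bG_m$, where
\[C_N = \{(c,t) \in \fc_N \times \bA^1 \mid p_c(t) = 0\}, \qquad p_c(t) = t^N + c_1 t^{N-1} + \cdots + c_N,\]
is the universal zero-locus of the characteristic polynomial. This reflects the classical fact that for any regular $x \in \fg_N$ with characteristic polynomial $p_c$ one has $Z_{G_N}(x) \cong (\bC[t]/p_c(t))^\times$ canonically, independently of the choice of regular lift; one verifies this globally by taking $x$ to be the companion matrix, which provides a Kostant section of $\chi_N$.

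Given this setup, I construct the embedding as follows. By the explicit formula~\eqref{c_X}, $\tau$ satisfies $p_{\tau(c)}(t) = p_c(t^2)$, so the pullback of $C_{2n}$ along $\tau$ is
\[C_{2n} \times_{\fc_{2n}} \fc_n = \{(c,t) \in \fc_n \times \bA^1 \mid p_c(t^2) = 0\},\]
and there is a natural finite flat morphism of degree two
\[\pi: C_{2n} \times_{\fc_{2n}} \fc_n \longrightarrow C_n, \qquad (c,t) \longmapsto (c, t^2),\]
sending a root $t$ of $p_c(t^2)$ to the root $s = t^2$ of $p_c(s)$. Applying functoriality of Weil restriction to pullback of $\bG_m$ along $\pi$ produces a homomorphism of group schemes over $\fc_n$,
\[J_n = \on{Res}_{C_n/\fc_n}\bG_m \longrightarrow \on{Res}_{(C_{2n} \times_{\fc_{2n}} \fc_n)/\fc_n}\bG_m = J_{2n} \times_{\fc_{2n}} \fc_n,\]
and composing with the projection yields $J_n \to J_{2n}$ lying over $\tau$; the square~\eqref{key diagram} commutes by construction.

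To verify the closed embedding property, I would unpack Weil restriction at the level of $R$-valued points for $R$ an $\fc_n$-algebra: the homomorphism becomes $(R[s]/p_c(s))^\times \to (R[t]/p_c(t^2))^\times$, $u(s) \mapsto u(t^2)$. Since $R[t]/p_c(t^2)$ is free of rank two over $R[s]/p_c(s)$ with basis $\{1, t\}$, an element $a + bt$ lies in the image precisely when $b = 0$, which is a closed condition. This presents $J_n$ as a closed subgroup scheme of $J_{2n} \times_{\fc_{2n}} \fc_n$, and since $\tau:\fc_n \hookrightarrow \fc_{2n}$ is itself closed, the composite into $J_{2n}$ remains a closed embedding. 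The main subtlety is confirming the Weil-restriction identification of $J_N$ holds globally on $\fc_N$, not merely on the regular semisimple locus; once that identification is in hand via the companion-matrix Kostant section, the construction of the map and the closed-embedding check are immediate.
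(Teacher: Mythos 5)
Your proof is correct, and it takes a genuinely different route from the paper's.

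The paper constructs the map by first observing that $\tau$ takes $\fg_n^{\mathrm{reg}}$ into $\fg_{2n}^{\mathrm{reg}}$ (proved via the abelian-centralizer criterion in one direction and the companion matrix in the other), then mapping between the centralizer group schemes $I_n^{\mathrm{reg}}\to I_{2n}^{\mathrm{reg}}|_{\fg_n^{\mathrm{reg}}}$ via the diagonal embedding $\delta$, and finally descending to the GIT quotients $\fc_n$, $\fc_{2n}$. This is the general formalism for regular centralizers of any reductive group specialized to the present situation; the closed-embedding property is then visible from the fact that the map is a base change of the closed embedding $\delta:G_n\hookrightarrow G_{2n}$ (most cleanly via the Kostant-section description appearing in the paper right after the lemma). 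You instead invoke the $\GL_N$-specific identification $J_N\simeq\on{Res}_{C_N/\fc_N}\bG_m$ via the universal spectral cover, observe the identity $p_{\tau(c)}(t)=p_c(t^2)$, and obtain the homomorphism by contravariant functoriality of Weil restriction along the degree-two cover $(c,t)\mapsto(c,t^2)$. The two constructions produce the same map: taking $x=\kappa_n(c)$ the companion matrix, one has $\delta(x)=\tau(x)^2$, so under the identifications $\bC[x]\simeq\bC[s]/p_c(s)$ (with $x\leftrightarrow s$) and $\bC[\tau(x)]\simeq\bC[t]/p_c(t^2)$ (with $\tau(x)\leftrightarrow t$), the diagonal $\delta:u(x)\mapsto u(\delta(x))=u(\tau(x)^2)$ becomes exactly your $u(s)\mapsto u(t^2)$. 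The paper's approach is the one that would generalize beyond $\GL_N$ and fits naturally with the rest of its Kostant-section machinery; yours sidesteps the regularity-preservation argument entirely (the identity of characteristic polynomials does all the work) and makes the closed-embedding check completely transparent at the level of the rank-two module $R[t]/p_c(t^2)$ over $R[s]/p_c(s)$. Your flagged subtlety — that $J_N\simeq\on{Res}_{C_N/\fc_N}\bG_m$ as a group scheme over all of $\fc_N$, not merely fiberwise over the regular semisimple locus — is genuine, but as you note it is handled by the companion-matrix Kostant section, which is the same input the paper uses for its alternative description.
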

\begin{proof}
We first claim  that 
$\tau(\fg_n^{reg})=\fg_{2n}^{reg}\cap\tau(\fg_n)$.
Let $x=\tau(C)=\begin{pmatrix}0&I_n\\
C&0\end{pmatrix}$.
If $x$
 is in $\fg_{2n}^{reg}$, then 
the centralizer $Z_{G_{2n}}(x)$ of $x$ in $G_{2n}$ is abelian.
It follows that  
$Z_{G_n}(C)$ is also abelian 
and the characterization of regular elements 
for $\fg_n$
 implies that $C\in\fg_{n}^{\on{reg}}$. 
 On the other hand, if $C\in\fg_n^{reg}$ then without loss of generality we can assume 
$C$ is a companion matrix
and easy computation shows that $x$ is in $\fg_{2n}^{reg}$ (see \eqref{matrix}). 
The claim follows.

Let $I_n^{reg}=I_n|_{\fg_n^{reg}}$ and 
$I_{2n}^{reg}=I_{2n}|_{\fg_{2n}^{reg}}$. 
Then the claim implies that 
we have a commutative diagram
\beq\label{reg}
\xymatrix{
I_n^{reg}\ar[r]\ar[d]& I_{2n}^{reg}\ar[d]\\
\fg_n^{reg}\ar[r]^{\tau}&\fg_{2n}^{reg}}
\eeq
Since 
$J_{2n}\is I_{2n}^{reg}//G_{2n}$ is the descent of 
$I_{2n}^{reg}$
along the map
$\fg_{2n}^{reg}\to\fc_{2n}$, 
the restriction $J_{2n}|_{\fc_n}$ is the descent of 
$I_{2n}^{reg}|_{\fg_n^{reg}}$ along the map
$\fg_{n}^{reg}\to\fc_n$:
\[J_{2n}|_{\fc_n}\is(I_{2n}^{reg}//G_{2n})|_{\fc_n}\is I_{2n}^{reg}|_{\fg_n^{reg}}// G_n.\]
Since the maps in~\eqref{reg} are compatible with the natural 
$G_n$-action and the desired map is the map 
on the GIT quotients
\[J_n\is I_n^{reg}//G_n\lra 
I_{2n}^{reg}|_{\fg_n^{reg}}// G_n\is J_{2n}|_{\fc_n}.\]

\end{proof}

\subsubsection{Kostant sections}\label{Kostant section}
We give an alternative construction of the map 
$J_n\to J_{2n}$ in~\eqref{key diagram} using Kostant sections.

Consider the following two ordered bases of 
$\bC^{2n}$:
the standard basis $\{e_1=(1,0,..,0),...,e_{2n}=(0,...,0,1)\}$
and the basis 
 $\{w_1=e_1, w_2=e_{3},...,w_n=e_{2n-1}, w_{n+1}=e_2,w_{n+2}=e_4,...,w_{2n}=e_{2n}\}$.
 Let $P\in G_{2n}$ be the matrix associated to the 
 linear map $w_i\to v_i$ in the basis $w_1,...,w_{2n}$.

For any positive integer $s$,
consider the Kostant section $\kappa_s:\fc_s\to\fg_s$
for $G_s$
given by 
\[
 \kappa_s(c)=\begin{pmatrix} 0 & 1 & \\ 
 \vdots& 0 &\ddots &  \\ 
\vdots &  & \ddots & 1 \\
-c_{s}& -c_{s-1}&\hdots  &-c_1  \end{pmatrix}\ \ \ \ c=(c_1,...,c_s)\in\fc_s
\]
%t induces a section $\kappa_X:\fc_X\to V_X$ given 
%by 
%\[\kappa_X(c)=\begin{pmatrix}0&I_n\\
%\kappa_n(c)&0\end{pmatrix}\ \ \ \ c=(c_1,...,c_n)\in\fc_X\]
A direct computation shows that 
\[
\begin{pmatrix} 0 & 1 & \\ 
 \vdots& 0 &\ddots &  \\ 
\vdots &  & \ddots & 1 \\
-c_{2n}& -c_{2n-1}& \hdots &-c_1  \end{pmatrix}=
P\begin{pmatrix}0&I_n\\
C&D\end{pmatrix}P^{-1}
\]
where
\[C=\begin{pmatrix} 0 & 1 & \\ 
 \vdots& 0 &\ddots &  \\ 
 \vdots &  & \ddots & 1 \\
-c_{2n}& -c_{2n-2}& \hdots &-c_2  \end{pmatrix}\ \ \ \ 
D=\begin{pmatrix} 0 & 0 &\hdots&0 \\ 
 \vdots& \vdots &\hdots &0  \\ 
0 & 0 & \hdots & 0 \\
-c_{2n-1}& -c_{2n-3}& \hdots &-c_1\end{pmatrix}\]
It follows that for any 
$\fc=(c_1,...,c_n)\in\fc_n$ with $\tau(c)=(0,c_1,0,c_2,...,0,c_{n})
\in\fc_{2n}$
we have
\beq\label{matrix}
\begin{pmatrix} 0 & 1 &\hdots&\hdots& \hdots&\hdots&0\\ 
 \vdots& 0 &\ddots &  \\ 
\vdots &  & \ddots & \ddots \\
\vdots&  &\ddots &\ddots & &\\
\vdots&  &\ddots &\ddots & \ddots&&1\\
-c_{n}& 0& -c_{n-1} &0&\hdots&0&-c_1  \end{pmatrix}=P\begin{pmatrix}0&I_n\\
\kappa_n(c)&0\end{pmatrix}P^{-1}.
\eeq
Thus
there is a commutative diagram
\beq\label{diagram ks}
\xymatrix{\fg_n^{reg}\ar[r]^{\tau}&\fg_{2n}^{reg}\\
\fc_n\ar[u]^{\kappa_n}\ar[r]^{\tau}&\fc_{2n}\ar[u]_{\on{Ad}_{P}^{-1}\circ\kappa_{2n}}}
\eeq
In particular, we have 
\[\tau\circ\kappa_n:\fc_n\to \fg_{2n}^{reg}.\] 
 The pull-back of the group scheme $I_{2n}^{reg}$ along the map above $\tau\circ\kappa_n$
is isomorphic to
\[(\tau\circ\kappa_n)^*(I_{2n}^{reg})\is ((\kappa_{2n})^*\on{Ad}_{P^{-1}}^*(I_{2n}^{reg}))|_{\fc_n}\is
(\kappa_{2n})^*(I_{2n}^{reg})|_{\fc_n}\is J_{2n}|_{\fc_n}\]
and the desired map is given by pull-back of~\eqref{reg} along the 
map $\tau\circ\kappa_n$:
\beq\label{alternative description}
J_n\is\kappa_n^*(I_n^{reg})\lra\kappa_n^{*}(I^{reg}_{2n}|_{\fg_n^{reg}})\is
(\tau\circ\kappa_n)^*(I_{2n}^{reg})\is J_{2n}|_{\fc_n}
\eeq

\subsubsection{}
The identification $\fc\is\fc_n$ in~\eqref{c=c_n} gives rise to a map
$\ft\to\fc\is\fc_n$ and 
we shall give a description 
of the pull-back 
\beq\label{pull-back}
J_n\times_{\fc_n}\ft\to J_{2n}\times_{\fc_{2n}}\ft
\eeq
of~\eqref{key diagram} 
along  $\ft\to\fc_n$.

Consider the map 
\beq\label{e^T_2n}
e^{T_{2n}}:\ft_{2n}\to\fg_{2n},\ \ \ \ \ e^{T_{2n}}(t)=
\begin{pmatrix} t_1 & 1 & \\ 
 0& t_2 &\ddots &  \\ 
 \vdots &  & \ddots & 1 \\
0&  \hdots&0 &t_{2n} \end{pmatrix}\ \ \ t=\on{diag}(t_1,...,t_{2n})
\eeq
Note that the image of $e^{T_{2n}}$ consist of regular elements.
We have the following commutative diagram
\[\xymatrix{\ft_{2n}\ar[r]^{e^{T_{2n}}}\ar[d]&\fg_{2n}\ar[d]\\
\fc_{2n}\ar[r]&\fc_{2n}}\]
where the vertical arrows are the natural adjoint quotient maps.
If follows that there is a canonical isomorphism
\[J_{2n}\times_{\fc_{2n}}\ft_{2n}\is  (e^T)^*I_{2n}=(G_{2n}\times\ft_{2n})^{e^{T_{2n}}}\]
where $(G_{2n}\times\ft_{2n})^{e^{T_{2n}}}$ the centralizer of $e^{T_{2n}}$ in $G_{2n}\times\ft_{2n}$.

Consider the restriction $e^T=e^{T_{2n}}|_{\ft}:\ft\to\fg_{2n}$.
Concretely, we have 
\beq\label{e^T}
e^{T}:\ft_{}\to\fg_{2n},\ \ \ \ \ e^{T_{}}(t)=
\begin{pmatrix} t_1 & 1 &0&\hdots&\hdots&0 \\ 
0&  \ddots&\ddots & && \vdots\\ 
\vdots&  \ddots& t_n & \ddots& &\vdots\\ 
\vdots& & \ddots& -t_1&\ddots&0\\ 
\vdots &  &  & \ddots & \ddots &1\\
0& \hdots&\hdots  &\hdots  &0 &-t_n
\end{pmatrix}\ \ \ t=\on{diag}(t_1,...,t_n,-t_1,...,-t_{2n})
\eeq
It is clear that 
\[J_{2n}\times_{\fc_{2n}}\ft\is  (e^T)^*I_{2n}=(G_{2n}\times\ft_{})^{e^{T_{}}}.\]

Consider the map
\beq\label{e^T_X}
e^T_X:\ft\to \fg_n,\ \ \ \ \ \ e^T_X(t)=\begin{pmatrix} t_1^2 & 1 & \\ 
 \vdots& t_2^2 &\ddots &  \\ 
 \vdots &  & \ddots & 1 \\
0& 0& \hdots &t_n^2 \end{pmatrix}
\eeq
The image of $e_X^T$ consists of regular elements and we have 
 the following commutative diagram
\[\xymatrix{\ft\ar[r]^{e^T_X}\ar[d]&\fg_n\ar[d]\\
\fc_n\ar[r]^{id}&\fc_{n}}\]
It follows that we have a canonical isomorphism 
\[J_n\times_{\fc_n}\ft\is (e^T_X)^*I_n=(G_n\times\ft)^{e^T_X}\]
of groups schemes over $\ft$.
For any $t\in\ft$, 
we have 
\beq\label{}
\tau\circ e^T_X(t)=
\begin{pmatrix}0&I_n\\
C&0\end{pmatrix}
\ \ \ \ C=\begin{pmatrix} t_1^2 & 1 & \\ 
 \vdots& t_2^2 &\ddots &  \\ 
 \vdots &  & \ddots & 1 \\
0& 0& \hdots &t_n^2 \end{pmatrix}
\eeq
Note that the
elements $\tau\circ e^T_X(t)$ and $e^T(t)$ are regular and have the same characteristic polynomial
and hence lie in the same $G_{2n}$-orbit.
Pick an element $g_t\in G_{2n}$ such that 
\[e^T(t)=g_t(\tau\circ e^T_X(t)) g_t^{-1}\]
Then the conjugation map
$\on{Ad}_{g_t}:G_{2n}\to G_{2n}, g\to g_t g g_t^{-1}$
restricts to a map between  the centralizers
\[(G_n)^{e^T_X(t)}\stackrel{\delta}\to (G_{2n})^{e^T(t)}\stackrel{\on{Ad}_{g_t}}\to (G_{2n})^{e^T(t)}\]
Since  centralizers of a regular element is a commutative group, the map 
above is independent of the choice of the element $g_t$ and 
hene is canonical.
Then as $t$ varies over $\ft$, we obtain a map  between the corresponding centralizers group schemes.
\beq\label{base change to t}
J_n\times_{\fc_n}\ft\is (G_n\times\ft)^{e^T_X}\stackrel{}\to  (G_{2n}\times\ft)^{e^T}\is
J_{2n}\times_{\fc_X}\ft
\eeq
which 
is the map in~\eqref{pull-back}.

Alternatively, 
the assignment 
$t\to g_t$ gives rise to an element 
\[\Phi\in G_{2n}\otimes R_T\ \ \ \ \Phi(t)=g_t\]
and 
if we regard  the maps  $e^T$ and $\tau\circ e^T_X$ as elements 
in 
$\fg_{2n}\otimes R_T$
we have 
\beq\label{key property}
e^T=\Phi (\tau\circ e^T_X)\Phi^{-1}\in\fg_{2n}\otimes R_T.
\eeq
(we will give a canonical construction of 
the element $\Phi$, see Remark \ref{construction of Phi}).
Then the
composition
\[\on{Ad}_\Phi\circ\delta: G_{n}\times\ft\to G_{2n}\times\ft\ \ \ \ (g,t)\to \Phi(t)(\delta(g))\Phi^{-1}(t)\]
restricts to the map~\eqref{base change to t}
between the corresponding centralizers group schemes.

\subsection{Dual group}\label{Nadler dual group}
In \cite{Na}, the author associated to each 
real from 
$G_\bbR$ of a complex reductive group $G$,
equivalently a
symmetric space $X$ of $G$, 
a complex reductive  group 
$\check G_X$ together with
a homomorphism $\delta: \check G_X\to \check G$.
In the case $G=G_{2n}$ and $G_\bbR=G_{n,\bbH}$, equivalently $X=G_{2n}/\on{Sp}_{2n}$,
we have $\check G=G_{2n}$
and the homomorphism is the diagonal embedding 
\beq
\delta:G_n\to G_{2n}\ \ \ \ 
\delta(A)=\begin{pmatrix}
A&0\\
0&A
\end{pmatrix}
\eeq
Let $P=LN$ be the complexification of the minimal parabolic
$P_{n,\bbH}$. The Levi subgroup $L$ consisting of matrices of the form
\[L=\{\begin{pmatrix}
A&B\\
C&D
\end{pmatrix}\in G_{2n}\ |\ A,B,C,D\text{  are diagonal matrices}\}
\]
Consider the principal $\SL_2$ of $L$ given by 
\[\psi:\SL_2\to L\ \ \ \ \ \psi\begin{pmatrix}
a&b\\
c&d
\end{pmatrix}=\begin{pmatrix}
A&B\\
C&D
\end{pmatrix}\]
where $A=aI_n$, $B=bI_n$, etc.
The restriction of $\psi$ to the 
torus $\bG_m\subset\SL_2$ is the 
 co-character 
\[2\rho_{L}:\bG_m\to L \ \ \
\ \ \ \ 2\rho_{L}(h)=\on{diag}(h,...,h,h^{-1},...,h^{-1})\]
 corresponding to the 
sum of the positive roots of the Levi factor $L$.
A direct computation shows that the image $\psi(\SL_2)\subset G_{2n}$ centralizes the subgroup $\delta(G_n)\subset G_{2n}$
and hence we obtain a 
homomorphism
\beq\label{SL_2}
\psi_X:\check G_X\times\SL_2\to G_{2n}\ \ \ \ \ \psi_X(g,y)\to \delta(g)\psi(y)
\eeq

\subsection{The partial Whittaker reduction}
Consider the identification $T^*G_{2n}\is G_{2n}\times\fg_{2n}^*$
by considering $\fg_{2n}^*$ as left invariant differential forms on $G_{2n}$.
The group $G_{2n}\times G_{2n}$ acts 
on $G_{2n}$ via the left and right multiplication and the 
 induced action on $T^*G_{2n}\is G_{2n}\times\fg_{2n}^*$
 is given by 
 $(g,h)(x,v)=(gxh^{-1},\on{Ad}_hv)$.
 The moment map 
 $(\mu_l,\mu_r):T^*G_{2n}\to \fg_{2n}^*\times\fg_{2n}^*$ with respect to
 the $G_{2n}\times G_{2n}$-action 
 is given by $(\mu_l,\mu_r)(x,v)=(\on{Ad}_xv,-v)$.
 
 Consider the Shalika subgroup 
 $G_n\ltimes U$ and the generic morphism 
 $\psi$ in~\eqref{Shalika}.
 Let $\fg_n\times\fu$ be the Lie algebra of $G_n\ltimes U$.
 Then one can view $\psi$ as 
an element $\psi=(0,-\tr)$ in  
$\fg_n^*\times\fu^*$
\[\psi(\begin{pmatrix}
A&0\\
0&A
\end{pmatrix},\begin{pmatrix}
0&0\\
C&0
\end{pmatrix})=-\tr(C).\]
The moment map 
for the right $G_n\ltimes U$-action on $T^*G_{2n}$ is given by
\[\mu:T^*G_{2n}\stackrel{\mu_r}\to\fg_{2n}^*\to\fg_n^*\times\fu^*\]
where  $\mu_r$ is the right moment map above and the second map the natural restriction map.
The partial Whittaker reduction $\check M$ of 
$T^*G_{2n}$ with respect to the right $G_n\ltimes U$-action 
is given by 
\[\check M=T^*(G_{2n}/G_n\ltimes U,\psi):=\mu^{-1}(\psi)/G_n\ltimes U.\]

\begin{lemma}\label{partial reduction}
There is an isomorphism 
$\check M\is G_{2n}\times^{G_n}\fg_n$
fiting into the following commutative diagram
\[\xymatrix{\check M\ar[r]^{\simeq\ \ \ \ }\ar[d]^{}&G_{2n}\times^{G_n}\fg_n\ar[d]^{}\\
\fg_{2n}^*\ar[r]^{\simeq\ \ \ }&\fg_{2n}}\]
where  the left vertical arrow is the left moment map $\mu_l$,
the bottom arrow is induced by the
trace pairing 
$(A,B)\to\on{tr}(AB)$,
 and the
right vertical map is given by
$$(x,C)\mapsto \on{Ad}_x\begin{pmatrix}
0&I_n\\
C&0
\end{pmatrix}$$
\end{lemma}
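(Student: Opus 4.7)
The plan is to unravel the partial Whittaker reduction by an explicit computation with block $2 \times 2$ matrices, using the trace pairing to identify $\fg_{2n}^* \cong \fg_{2n}$ throughout.

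First, I would identify the level set $\mu^{-1}(\psi)$. Trivialize $T^*G_{2n} \cong G_{2n} \times \fg_{2n}^* \cong G_{2n} \times \fg_{2n}$ by left-invariant forms and trace. Then the right moment map is $\mu_r(x,v) = -v$, so $\mu = -v|_{\fg_n \oplus \fu}$. Writing $v = \begin{pmatrix} v_{11} & v_{12} \\ v_{21} & v_{22}\end{pmatrix}$, the condition $v|_{\fg_n} = 0$ forces $v_{11} + v_{22} = 0$ (pairing against $\delta(A)$ for all $A \in \fg_n$), while the condition $-v|_{\fu} = \psi$ evaluated on $\begin{pmatrix}0 & 0 \\ C & 0\end{pmatrix}$ forces $\tr(v_{12}C) = \tr(C)$ for all $C$, i.e. $v_{12} = I_n$. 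Hence
\[
\mu^{-1}(\psi) = \left\{\left(x, \begin{pmatrix} v_{11} & I_n \\ v_{21} & -v_{11}\end{pmatrix}\right) \; : \; x \in G_{2n},\; v_{11}, v_{21} \in \fg_n\right\}.
\]

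Next I would reduce by $U$ using the Kostant-style slice $v = \begin{pmatrix} 0 & I_n \\ C & 0\end{pmatrix}$. A direct computation shows that conjugating an arbitrary $v \in \mu^{-1}(\psi)$ by $u = \begin{pmatrix} I_n & 0 \\ v_{11} & I_n\end{pmatrix} \in U$ kills the upper-left block while transforming the lower-left block to $C = v_{11}^2 + v_{21}$; conversely, the only $u \in U$ preserving the slice is the identity. Thus the slice is a cross-section for the free $U$-action, giving $\mu^{-1}(\psi)/U \cong G_{2n} \times \fg_n$ via $(x, v) \mapsto (xu^{-1}, C)$.

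Then I would track the residual $G_n$-action. If $A \in G_n$ acts from the right as $(x,v) \mapsto (x\delta(A)^{-1}, \on{Ad}_{\delta(A)} v)$, the new $v$ has $v'_{11} = Av_{11}A^{-1}$ and $v'_{21} = Av_{21}A^{-1}$, and the reducing element changes to $u' = \delta(A) u \delta(A)^{-1}$. Pushing this through gives $A \cdot (y, C) = (y\delta(A)^{-1}, \on{Ad}_A C)$. Therefore
\[
\check M = (G_{2n} \times \fg_n)/G_n \risom G_{2n} \times^{G_n} \fg_n.
\]

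Finally, for commutativity of the diagram: on the slice, $\mu_l(x, v) = \on{Ad}_x v = \on{Ad}_y \begin{pmatrix} 0 & I_n \\ C & 0\end{pmatrix}$, which is exactly the map $(y, C) \mapsto \on{Ad}_y \tau(C)$ appearing on the right of the square; well-definedness on the $G_n$-quotient follows from $\on{Ad}_{\delta(A)^{-1}} \tau(\on{Ad}_A C) = \tau(C)$, a one-line block computation. The main thing to watch is simply the bookkeeping for how the $U$-reduction twists the $G_{2n}$-coordinate; apart from this, each step is a mechanical matrix calculation.
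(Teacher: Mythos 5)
Your proof is correct and follows essentially the same route as the paper: identify $\fg_{2n}^*\cong\fg_{2n}$ via trace, compute the block form of $\mu^{-1}(\psi)$, and kill the residual $U$-action by the explicit conjugation $\begin{pmatrix}I_n & 0 \\ X & I_n\end{pmatrix}\!\cdot v\cdot\!\begin{pmatrix}I_n & 0 \\ -X & I_n\end{pmatrix}$ with $X = v_{11}$, producing the slice $\begin{pmatrix}0 & I_n \\ C & 0\end{pmatrix}$. You additionally spell out the twisting of the $G_{2n}$-coordinate under the $U$-reduction and the compatibility with the residual $G_n$-action, details the paper leaves implicit, but the underlying computation is the same.
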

\begin{proof}
We will identify $\fg_{2n}^*$ with $\fg_{2n}$
via the trace pairing.
 The pre-image of $\psi=(0,-\on{tr})\in\fg_n^*\times\fu^*$
in $\fg_{2n}^*\is\fg_{2n}$
is given by
\[\fg_{2n,\psi}^*:=\{\begin{pmatrix}
A&-I_n\\
C&-A
\end{pmatrix}|\  A,C\in\fg_n\}\]
and it follows that 
\[\check M\is\mu^{-1}(\psi)/G_n\ltimes U\is \mu_r^{-1}(\fg_{2n,\psi}^*)/G_n\ltimes U\is
G_{2n}\times^{G_n\ltimes U} (-\fg_{2n,\psi}^*).\]
(recall that $\mu_r(x,v)=-v$). On the other hand, a direct computation shows that the action of 
$U$ on $-\fg_{2n,\psi}^*$ is free and 
any $U$-orbit on $-\fg_{2n,\psi}^*$ contains 
a unique element of the form $\begin{pmatrix}
0&I_n\\
C&0
\end{pmatrix}$, $C\in\fg_n$.\footnote{Indeed, it follows from  
$\begin{pmatrix}
I_n&0\\
X&I_n
\end{pmatrix}\begin{pmatrix}
A&I_n\\
C&-A
\end{pmatrix}\begin{pmatrix}
I_n&0\\
-X&I_n
\end{pmatrix}=\begin{pmatrix}
A-X&I_n\\
C+XA+AX-X^2&X-A
\end{pmatrix}$} Thus there is an isomorphism 
\[\check M\is G_{2n}\times^{G_n\ltimes U} (-\fg_{2n,\psi}^*)\is
G_{2n}\times^{G_n}\fg_n.\]
such that the left moment map is given by
$\mu_l(x,C)=\on{Ad}_x\begin{pmatrix}
0&I_n\\
C&0
\end{pmatrix}$. The lemma follows.
\end{proof}

\section{Constructible side}\label{constructible side}

\subsection{Twistor fibration}\label{twistor}
Consider the complex projective space 
$\mathbb P^{2n-1}$ and the quaternionic projective space $\mathbb{HP}^{n-1}$.
Recall the identification 
$\bC^{2n}\is\mathbb H^n$
sending 
\[(z,w)=(z_1,...,z_n,w_1,...,w_n)\ra z+jw=(q_1=z_1+jw_1,...,q_n=z_n+jw_n)\] 
If to each complex line in $\bC^{2n}\is\mathbb H^n$ we associate the quaternion line it generates,
we get a map
\beq\label{twistor}
f:\mathbb P^{2n-1}\to\mathbb{HP}^{n-1}\ \ \ [z,w]\to [q_1,...,q_n]
\eeq
between 
the corresponding complex and quaternionic projective spaces, to be called 
the \emph{twistor fibration} for $\mathbb{HP}^{n-1}$.
The fiber of $f$ over
a quaternion line (a copy of $\bbH\is\bC^2$) consists of all complex line in it which is a copy of $\mathbb P^1\is S^2$.
Thus the twistor fibration $f$ is a fiber bundle with fiber $\mathbb P^1$.
In the case $n=2$, we have $\mathbb{HP}^{n-1}=\mathbb{HP}^{1}\is S^4$
and the map~\eqref{twistor} is the well known twistor fibration
\[f:\mathbb P^3\to S^4\]
for $S^4$.

Consider the standard
action of the
complex torus $T_{2n}$ (resp. $T_n$)
on $\mathbb P^{2n-1}$ (resp. $\mathbb{HP}^{n-1}$):
\[x\cdot[z_1,...,z_{2n}]=[x_1z_1,...,x_{2n}z_{2n}]\ \ \ \ x=(x_1,...,x_{2n})\in T_{2n}\]
\[(\on{resp}.\ \ \ \  x\cdot[q_1,...,q_{n}]=[x_1q_1,...,x_{n}q_{n}]\ \ \ \ x=(x_1,...,x_n)\in T_n).\]
Then
the twistor map $f:\mathbb P^{2n-1}\to\mathbb {HP}^{n-1}$
is $T_n$-equivariant   where 
$T_n$ acts on $\mathbb P^{2n-1}$
through the embedding 
\[T_n\stackrel{\sim}\lra T_{n,\bbH}\subset T_{2n}\ \ \ \ (x_1,...,x_n)\to (x_1,...,x_n,\bar x_1,...,\bar x_n).\]
(Recall that $T_{n,\bbH}$ is the Cartan subgroup of 
$G_{n,\bbH}$)
Indeed,
for any $x=(x_1,...,x_n)\in T_n$, 
 we have 
\[f(x\cdot[z,w])=f([x_1z_1,...,x_{n}z_{n},\bar x_1w_1,...,\bar x_nw_n])=
[x_1z_1+j\bar x_1w_1,...,x_nz_n+j\bar x_nw_n]=\]
\[=[x_1z_1+x_1jw_1,...,x_nz_n+x_njw_n]=x\cdot[q_1,...,q_n].\]

\subsection{Equivariant cohomology of quaternionic projective spaces}
Consider the inverse action of 
$T_{2n}$  on 
$\mathbb P^{2n-1}$.
\footnote{The reason to consider the inverse of the standard 
action will become clear later, see the proof of Lemma \ref{computation}.}
Recall the following well-known description of the $T_{2n}$-equivariant cohomology
of $\mathbb{P}^{2n-1}$:
\beq
H^*_{T_{2n}}(\mathbb P^{2n-1})\is\bC[t_1,...,t_{2n}][\xi]/\prod_{i=1}^{2n}(\xi-t_i)
\eeq
where 
\[\xi=c_1^{T_{2n}}(\mO(1))\in H^*_{T_{2n}}(\mathbb P^{2n-1})\] is the first equivariant Chern class of the line bundle
$\mO(1)$ over $\mathbb P^{2n-1}$
and $H^*_{T_{2n}}(\on{pt})\is\mO(\ft_{2n})\is\bC[t_1,...,t_{2n}]$.

The imbedding $T_n\is T_{n,\bbH}\subset T_{2n}$
gives rise to a map $H^*_{T_{2n}}(\on{pt})\to H^*_{T_{n}}(\on{pt})$
and a direct computation show that, under the isomorphism 
$\bC[t_1,...,t_{2n}]\is H^*_{T_{2n}}(\on{pt})$
and $\bC[t_1,...,t_{n}]\is H^*_{T_{n}}(\on{pt})$, the map is given by
\[\bC[t_1,...,t_{2n}]\to\bC[t_1,...,t_{2n}]/(t_1+t_{n+1},t_2+t_{n+2},...,t_n+t_{2n})\is\bC[t_1,...,t_n]\]
It follows that 
\beq\label{description of equ coh}
H^*_{T_{n}}(\mathbb P^{2n-1})\is H^*_{T_{2n}}(\mathbb P^{2n-1})\otimes_{H^*_{T_{2n}}(\on{pt})}H^*_{T_{n}}(\on{pt})\is
\bC[t_1,...,t_n][\xi]/\prod_{i=1}^{n}(\xi^2-t^2_i).
\eeq

Similarly, we consider the inverse $T_n$-action on $\mathbb{HP}^{n-1}$.
Let $\mO_{\mathbb H}(-1)$ be the  tautological $\mathbb H$-line bundle 
$\mO_{\mathbb H}(-1)$  over $\mathbb{HP}^{n-1}$.
It is canonical $T_n$-equivariant and we denote by
 \[\eta=-e^{T_n}(\mO_{\mathbb H}(-1))\in H^4_{T_n}(\mathbb{HP}^{n-1})\]
the negative of the equivariant Euler class of $\mO_{\mathbb H}(-1)$.

\begin{lemma}\label{description of equ coh H}
There is an isomorphism
\[H_{T_n}^*(\mathbb {HP}^{n-1})\is\bC[t_1,...t_n][\eta]/\prod_{i=1}^n(\eta-t_i^2)\]
making the following diagram commutes
\[\xymatrix{H_{T_n}^*(\mathbb {HP}^{n-1})\ar[r]^{f^*}\ar[d]^{\simeq}&H_{T_n}^*(\mathbb {P}^{2n-1})\ar[d]^{\simeq}\\
\bC[t_1,...t_n][\eta]/\prod_{i=1}^n(\eta-t_i^2)\ar[r]&\bC[t_1,...t_n][\xi]/\prod_{i=1}^n(\xi^2-t_i^2)}\]
where the bottom arrow is the natural $\bC[t_1,...t_n]$-linear embedding sending 
$\eta$ to $\xi^2$, that is, we have 
$f^*(\eta)=\xi^2$.
\end{lemma}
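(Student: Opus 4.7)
The plan is to deduce the presentation by comparing $H^*_{T_n}(\mathbb{HP}^{n-1})$ with the already-known ring~\eqref{description of equ coh} across the twistor $\bP^1$-bundle $f$. The argument has three steps: verify the Chern-class identity $f^*\eta = \xi^2$, apply Leray-Hirsch to relate the two equivariant cohomology rings, and conclude by a rank count.

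First I would prove $f^*\eta = \xi^2$. Over a point $[z,w]\in\bP^{2n-1}$ the fiber of $f^*\mO_\bbH(-1)$ is the $\bbH$-line $\bbH\cdot(z+jw)\subset\bbH^n$, which as a $\bC$-subspace of $\bC^{2n}$ splits canonically as $\bC\cdot(z,w)\oplus\bC\cdot((z,w)\!\cdot\!j)$. The first summand is tautologically $\mO(-1)$, and by~\eqref{j} right multiplication by $j$ is $\bC$-antilinear, so the second summand is identified with $\overline{\mO(-1)}$. This yields a $T_n$-equivariant isomorphism of smooth complex rank $2$ bundles
\[
f^*\mO_\bbH(-1) \;\is\; \mO(-1) \oplus \overline{\mO(-1)}.
\]
Since the equivariant Euler class of an $\bbH$-line bundle viewed as an oriented real rank $4$ bundle equals the top Chern class of any compatible complex rank $2$ structure, we compute $f^* e^{T_n}(\mO_\bbH(-1)) = c_1^{T_n}(\mO(-1)) \cdot c_1^{T_n}(\overline{\mO(-1)}) = (-\xi)(\xi) = -\xi^2$, hence $f^*\eta = \xi^2$.

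Next I would invoke Leray-Hirsch. Since $\xi$ restricts to a generator of $H^2$ of each $\bP^1$-fiber of $f$, the pair $\{1,\xi\}$ exhibits $H^*_{T_n}(\bP^{2n-1})$ as a free rank $2$ module over $H^*_{T_n}(\mathbb{HP}^{n-1})$; in particular $f^*$ is injective. Combined with $f^*\eta = \xi^2$, the image of $f^*$ lies in the subring of $H^*_{T_n}(\bP^{2n-1})$ generated over $\bC[t_1,\dots,t_n]$ by $\xi^2$, which via~\eqref{description of equ coh} is isomorphic to $\bC[t_1,\dots,t_n][\eta]/\prod_i(\eta - t_i^2)$ under $\eta \leftrightarrow \xi^2$ and is free of rank $n$ over $\bC[t_1,\dots,t_n]$. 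Finally, because $\mathbb{HP}^{n-1}$ has cohomology concentrated in even degrees, the $T_n$-action is equivariantly formal and $H^*_{T_n}(\mathbb{HP}^{n-1})$ is itself free of rank $n$ over $\bC[t_1,\dots,t_n]$. Matching ranks forces $f^*$ to identify $H^*_{T_n}(\mathbb{HP}^{n-1})$ with the stated ring, making the desired square commute.

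The main obstacle I anticipate is the first step, namely verifying the splitting $f^*\mO_\bbH(-1)\is\mO(-1)\oplus\overline{\mO(-1)}$ holds $T_n$-equivariantly as smooth (rather than holomorphic) complex vector bundles, since the twistor fibration is only real-analytic and one cannot appeal to standard holomorphic arguments. Once that equivariant Chern-class identity is granted, the rest is Leray-Hirsch and a rank count.
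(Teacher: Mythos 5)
Your proof follows the route sketched in the Remark immediately after the lemma in the paper --- proving $f^*\eta = \xi^2$ via the $T_n$-equivariant bundle splitting $f^*\mO_{\bbH}(-1)\cong\mO(-1)\oplus\overline{\mO(-1)}$ --- rather than the paper's main proof, which uses equivariant localization at the $T_n$-fixed points of both $\mathbb{HP}^{n-1}$ and $\bP^{2n-1}$. The derivation of $f^*\eta=\xi^2$ from the splitting is correct, and is perhaps cleaner than the localization computation because it makes the identity manifest at the level of bundles.

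There is, however, a gap in the final step. The assertion that ``the image of $f^*$ lies in the subring of $H^*_{T_n}(\bP^{2n-1})$ generated over $\bC[t_1,\dots,t_n]$ by $\xi^2$'' does not follow from $f^*\eta=\xi^2$ alone: it is equivalent to $H^*_{T_n}(\mathbb{HP}^{n-1})$ being generated as a $\bC[t]$-algebra by $\eta$, which is precisely part of what the lemma asserts, so as written the argument is circular. The gap is easy to close with the ingredients you already have. From $f^*\eta=\xi^2$ and the Leray--Hirsch injectivity of $f^*$, you get $\prod_i(\eta-t_i^2)=0$ in $H^*_{T_n}(\mathbb{HP}^{n-1})$, hence a graded ring map $\phi:\bC[t][\eta]/\prod_i(\eta-t_i^2)\to H^*_{T_n}(\mathbb{HP}^{n-1})$. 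Composing with $f^*$ and noting that a polynomial $p(\xi^2)$ with $\deg_\eta p<n$ cannot vanish in $\bC[t][\xi]/\prod_i(\xi^2-t_i^2)$ shows $\phi$ is injective; since equivariant formality makes the target a graded free $\bC[t]$-module with the same Poincar\'e series as the source, $\phi$ must be an isomorphism. (Alternatively, the $T_n$-equivariant fiberwise involution $[v]\mapsto[vj]$ of $f$ covers the identity on $\mathbb{HP}^{n-1}$ and sends $\xi\mapsto-\xi$, so the image of $f^*$ lands in its fixed subring $\bC[t][\xi^2]/\prod$, which is what you wanted to claim directly.)
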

\begin{proof}
The $T_n$-fixed points on $\mathbb{HP}^{n-1}$ are 
$p_0=[1,0,...,0]_\bbH$, $p_2=[0,1,0,...,0]_\bbH$,...,$p_n=[0,0,...,0,1]_\bbH$.
Write $s_i:\{p_i\}\to \mathbb{HP}^{n-1}$ for the inclusion map.
Then the equivariant localization says that we have an injective map of rings
\[Loc=\bigoplus s^*_i:H_{T_n}^*(\mathbb{HP}^{n-1})\lra R_{T_n}^{\oplus n}\]
The fiber of 
$\mO_{\mathbb H}(-1)|_{p_i}$ over $p_i$ 
is  the $\mathbb H$-line spanned by the i-th coordinate vector of 
$\mathbb H^n$ and hence the action 
 of $T_n$ factors though the i-th projection 
 $T_n\to \bG_m$, $(x_1,...,x_n)\to x_i$.
 It follows that, in terms of the coordinate 
 $\bC^2\is\mO_{\mathbb H}(-1)|_{p_i}, (z_i,w_i)\to z_i+jw_i$ (and hence a chosen orientation) the (inverse) action is given by
 $x_i (z_i,w_i)=(x_i^{-1}z_i,\bar x_i^{-1}w_i)$) and hence 
\[s_i^*(\eta)=
s_i^*(-e^{T_n}(\mO_{\mathbb H}(-1)))=
-e^T(\mO_{\mathbb H}(-1)|_{p_i})=t_i^2\]
Thus we have 
\[Loc(\eta)=Loc(-e^T(\mO_{\mathbb H}(-1)))=(t_1^2,...,t_n^2)\in R_T^{\oplus n}\]
 and it follows that  $Loc(\prod_{i=1}^n(\eta-t_i^2))=0$ and, as $Loc$ is injective,
 it implies $\prod_{i=1}^n(\eta-t_i^2)=0$.

To see $f^*(\eta)=\xi^2$, we observe that the 
preimage 
$f^{-1}(p_i)$ is isomorphic to the projection line 
$\mathbb P^1_i=[z_i,w_i]\subset\mathbb{P}^{2n-1}$.
The $T_n$-action  preserves $\mathbb P^1_i$
and is given by
$(x_1,...,x_n)[z_i,w_i]=[x_i^{-1}z_i,\bar x_i^{-1}w_i]$. 
The localization map
$Loc:H_{T_n}^*(\mathbb P^{2n-1})\to \bigoplus H_{T_n}^*(\mathbb P^{1}_i)=\bC[t_i][\xi_i]/(\xi_i^2-t_i^2)$ 
is injective and 
we have 
$Loc(\xi^2)=(\xi^2_1,...,\xi^2_n)$. On the other hand, 
we have 
\[Loc(f^*\eta)=f^*(Loc(\eta))=f^*((t_1^2,...,t^2_n))=
(t_1^2,...,t^2_n)=
(\xi^2_1,...,\xi^2_n)\in\bigoplus H_{T_n}^*(\mathbb P^{1}_i)\]
as $\xi^2_i=t_i^2$ in $H_{T_n}^*(\mathbb P^{1}_i)$.
We conclude that $Loc(\xi^2)=Loc(f^*\eta)$
and hence $\xi^2=f^*\eta$.

\end{proof}

\begin{remark}
Here is an alternative argument. One can show that 
 there is an isomorphism of 
 $T_n$-equivariant complex vector bundles 
 \[f^*\mO_{\mathbb H}(-1)\is \mO(-1)\oplus\overline{\mO(-1)}\]
 over $\mathbb P^{2n-1}$.
Here $\overline{\mO(-1)}$ is the complex conjugate of $\mO(-1)$ (note that a choice of a hermitian metric on $\mO(-1)$ induces an isomorphism 
 $\overline{\mO(-1)}\is\mO(-1)^\vee\is\mO(1)$).
 Since $e^T(\overline{\mO(-1)})=-e^T(\mO(-1))=-\xi$, it follows that 
\[f^*(\eta)=-f^*(e^T(\mO_\mathbb H(-1))=
-e^T(\mO(-1)\oplus\overline{\mO(-1)})= e^T(\mO(-1))^2=\xi^2.\]
Now the lemma follows from the fact that  
$f^*:H_{T_n}^*(\mathbb {HP}^{n-1})\to H_{T_n}^*(\mathbb {P}^{2n-1})$ is injective 
and
$f^*(\prod_{i=1}^n(\eta-t_i^2))=\prod_{i=1}^n(f^*\eta-t_i^2)=\prod_{i=1}^n(\xi^2-t_i^2)=0$
in $H_{T_n}^*(\mathbb {P}^{2n-1})$.

\end{remark}

%%%%%%%%%%
\quash{

\begin{lemma}
There is an isomorphism
\[H_{T_c}^*(\mathbb {HP}^{n-1})\is\bC[t_1,...t_n][\eta]/\prod_{i=1}^n(\eta-t_i^2)\]
making the following diagram commutes
\[\xymatrix{H_{T_c}^*(\mathbb {HP}^{n-1})\ar[r]^{f^*}\ar[d]^{\simeq}&H_{T_c}^*(\mathbb {P}^{2n-1})\ar[d]^{\simeq}\\
\bC[t_1,...t_n][\eta]/\prod_{i=1}^n(\eta-t_i^2)\ar[r]&\bC[t_1,...t_n][\xi]/\prod_{i=1}^n(\xi^2-t_i^2)}\]
where the bottom arrow is the natural $\bC[t_1,...t_n]$-linear embedding sending 
$\eta$ to $\xi^2$, that is, we have 
$f^*(\eta)=\xi^2$.
\end{lemma}
\begin{proof}
Since $e^T(\overline{\mO(-1)})=-e^T(\mO(-1))=-\xi$,
the lemma above implies
\[f^*(e^T(\mO_\mathbb H(-1))= e^T(f^*\mO_\mathbb H(-1))=
e^T(\mO(-1))\oplus\overline{\mO(-1)})= -(e^T(\mO(1))^2=-\xi^2.\]
Since $e^T(\mO_\mathbb H(1))=-e^T(\mO_\mathbb H(-1))$ it follows that 
\[f^*(\eta)=f^*(e^T(\mO_\mathbb H(-1))=\xi^2.\]
It is known that the 
$H_{T_c}^*(\mathbb {HP}^{n-1})$ is a free $R_T$-module with basis
$1,\eta,\eta^2,...,\eta^{n-1}$.
It suffices to show that 
$\prod_{i=1}^n(\eta-t_i^2)=0$, but it follows from the fact that  
$f^*:H_{T_c}^*(\mathbb {HP}^{n-1})\to H_{T_c}^*(\mathbb {P}^{2n-1})$ is injective 
and
$f^*(\prod_{i=1}^n(\eta-t_i^2))=\prod_{i=1}^n(f^*\eta-t_i^2)=\prod_{i=1}^n(\xi^2-t_i^2)=0$
in $H_{T_c}^*(\mathbb {P}^{2n-1})$.

\end{proof}
 \begin{lemma}
 There is an isomorphism of 
 $T_c$-equivariant complex vector bundles 
 \[f^*\mO_{\mathbb H}(-1)\is \mO(-1)\oplus\overline{\mO(-1)}\]
 over $\mathbb P^{2n-1}$.
Here $\overline{\mO(-1)}$ is the complex conjugate of $\mO(-1)$ (note that a choice of a hermitian metric on $\mO(-1)$ induces an isomorphism 
 $\overline{\mO(-1)}\is\mO(-1)^\vee\is\mO(1)$).
\end{lemma}
\begin{proof}
We define a map from $\mO(-1)\oplus\overline{\mO(-1)}$
to $f^*\mO_{\mathbb H}(-1)$ by 
\[\psi([z,w]_{\bC},v_1,v_2)=([z+jw]_{\mathbb H},\phi(v_1)+\phi(v_2)j)\]
To see that it is well-defined we observe that, since $v_i$ lies in the line spanned by
the vector $(z,w)$, we have 
$v_i=c_i\cdot (z,w)$ for some constants $c_i\in\bC$. It follows that 
\[\phi(v_1)+\phi(v_2)j=(c_1z+jc_1w)+(\bar c_2z+j\bar c_2w)j=(z+jw)(c_1+jc_2)\]
and hence lies in the $\mathbb H$-line spanned by $z+jw$.
Since for any $c\in\bC$ we have 
\[\phi(c\cdot v_1)+\phi(c\cdot v_2)j=\phi(v_1)c+\phi(v_2)\bar cj=\phi(v_1)c+\phi(v_2)j c=(\phi(v_1)+\phi(v_2)j) c
\] 
we see that $\psi$ is $\bC$-linear.\footnote{Recall that 
$\phi:\bC^{2n}\is\mathbb H^n$
is $\bC$-linear with respect to the right $\mathbb H$-module structure 
on $\mathbb H^n$} 
Note also that $\psi$ is injective as 
$\phi(v_1)+\phi(v_2)j=(z+jw)(c_1+jc_2)=0$ implies 
$c_1=c_2=0$.
All together, we see that $\psi$ is an isomorphism. 
Finally, since $\phi$ and $f$ are  $T_c$-equivariant, we have 
$\psi(x[z,w]_\bC,xv_1,xv_2)=(x[z+jw]_\mathbb H,\phi(xv_1)+\phi(xv_2)j)=
(x[z+jw]_\mathbb H,x(\phi(v_1)+\phi(v_2)j))$, that is, $\phi$ is $T_c$-equivariant.
The lemma follows.

\end{proof}
}
%%%%%%%%%%%%

Consider the push-forward functor 
$f_*:D_{T_n}^b(\mathbb{P}^{2n-1})\to D_{T_n}^b(\mathbb{HP}^{n-1})$.

\begin{lemma}\label{splitting}
$f_*(\bC_{\mathbb{P}^{2n-1}})\is\bC_{\mathbb{HP}^{n-1}}\oplus\bC_{\mathbb{HP}^{n-1}}[-2]$
\end{lemma}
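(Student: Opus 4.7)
The plan is to use the Leray--Hirsch principle in the constructible-sheaf setting: exhibit explicit maps realizing the two summands and then verify the decomposition stalkwise.

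First, I would observe that the twistor fibration $f: \mathbb{P}^{2n-1} \to \mathbb{HP}^{n-1}$ is a smooth proper fiber bundle with fiber $\mathbb{P}^1$. By proper base change, the stalk of $f_*\bC_{\mathbb{P}^{2n-1}}$ at any $p \in \mathbb{HP}^{n-1}$ is the cohomology $H^*(f^{-1}(p); \bC) = H^*(\mathbb{P}^1; \bC) \cong \bC \oplus \bC[-2]$. Since $\mathbb{HP}^{n-1}$ is simply connected and the fibers have cohomology concentrated in even degrees, the cohomology sheaves are the constant sheaves $\bC_{\mathbb{HP}^{n-1}}$ in degrees $0$ and $2$. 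The substance of the lemma is therefore the formality statement: that this complex splits as a direct sum of its cohomology sheaves.

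To produce the splitting, I would construct two morphisms in $D^b(\mathbb{HP}^{n-1})$. The first is the adjunction unit $u: \bC_{\mathbb{HP}^{n-1}} \to f_*\bC_{\mathbb{P}^{2n-1}}$. The second comes from the tautological class $\xi = c_1(\mathcal{O}(1)) \in H^2(\mathbb{P}^{2n-1}; \bC) = \Hom(\bC_{\mathbb{P}^{2n-1}}[-2], \bC_{\mathbb{P}^{2n-1}})$: via the $(f^*, f_*)$ adjunction, $\xi$ corresponds to a morphism
\[
v: \bC_{\mathbb{HP}^{n-1}}[-2] \lra f_*\bC_{\mathbb{P}^{2n-1}}.
\]
Summing these gives a map $u + v: \bC_{\mathbb{HP}^{n-1}} \oplus \bC_{\mathbb{HP}^{n-1}}[-2] \to f_*\bC_{\mathbb{P}^{2n-1}}$.

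Finally, I would check that $u+v$ is an isomorphism by passing to stalks. At a point $p$, the map becomes $\bC \oplus \bC[-2] \to H^*(f^{-1}(p); \bC)$, where the first component sends $1$ to $1 \in H^0$ and the second component is the restriction of $\xi$ to the fiber $f^{-1}(p) \cong \mathbb{P}^1_p \subset \mathbb{P}^{2n-1}$. Since $\mathcal{O}_{\mathbb{P}^{2n-1}}(1)|_{\mathbb{P}^1_p}$ is the hyperplane bundle on $\mathbb{P}^1_p$, its first Chern class generates $H^2(\mathbb{P}^1; \bC)$. Thus $u+v$ is an isomorphism on stalks and hence in $D^b(\mathbb{HP}^{n-1})$.

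The only potentially delicate point is the construction of $v$ as a genuine morphism of complexes rather than merely on cohomology, but this is automatic from the derived adjunction $\Hom(\bC_{\mathbb{HP}^{n-1}}[-2], f_*\bC_{\mathbb{P}^{2n-1}}) = \Hom(\bC_{\mathbb{P}^{2n-1}}[-2], \bC_{\mathbb{P}^{2n-1}}) = H^2(\mathbb{P}^{2n-1}; \bC)$, so no extra formality argument is needed.
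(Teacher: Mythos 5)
Your proof is correct, but it takes a genuinely different route from the paper's. The paper writes down the distinguished triangle $\bC_{\mathbb{HP}^{n-1}} \to f_*\bC_{\mathbb{P}^{2n-1}} \to \bC_{\mathbb{HP}^{n-1}}[-2] \to$ coming from the $\mathbb{P}^1$-fibration and then kills the boundary map abstractly by computing that $\Hom(\bC[-2],\bC[1]) \cong \on{Ext}^3(\bC,\bC) \cong H^3_{T_n}(\mathbb{HP}^{n-1}) = 0$. You instead run a Leray--Hirsch argument: build $u$ from the adjunction unit and $v$ from the class $\xi = c_1(\mathcal{O}(1))$, then verify on stalks using proper base change and the fact that $\mathcal{O}_{\mathbb{P}^{2n-1}}(1)$ restricts to $\mathcal{O}_{\mathbb{P}^1}(1)$ on each fiber $\mathbb{P}^1_p$ (a linear projective line). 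Both arguments are sound. Your approach has the virtue of producing the splitting explicitly in terms of $\xi$, which the paper in effect needs again in Section~4.3 when it identifies the bases $\{[\mathbb{HP}^{i-1}],[\mathbb{HP}^{i-1}][2]\}$ of $H^*_{T_n}(\mathbb{P}^{2n-1})$; the paper's argument is shorter because it only asserts existence. One point to flag: the functor $f_*$ in the paper is $T_n$-equivariant (hence the appearance of $H^3_{T_n}$ in their proof), and the equivariant version of the splitting is what gets used downstream. Your argument works non-equivariantly as written, but it adapts with essentially no change: replace $\xi$ by the equivariant Chern class $c_1^{T_n}(\mathcal{O}(1)) \in H^2_{T_n}(\mathbb{P}^{2n-1})$, use the equivariant adjunction $\Hom_{D^b_{T_n}}(\bC[-2], f_*\bC) \cong H^2_{T_n}(\mathbb{P}^{2n-1})$, and note that an equivariant map is an isomorphism iff it is so after forgetting equivariance, which reduces to exactly the stalkwise check you perform.
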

\begin{proof}
Since $f$ is a $\mathbb P^1$-fibration we have a distinguished triangle
\[\bC_{\mathbb{HP}^{n-1}}\to f_*(\bC_{\mathbb{P}^{2n-1}})\to\bC_{\mathbb{HP}^{n-1}}[-2]\to
\bC_{\mathbb{HP}^{n-1}}[1]\]
and we need to show that it splits. But 
this follows from  
\[\Hom(\bC_{\mathbb{HP}^{n-1}}[-2],\bC_{\mathbb{HP}^{n-1}}[1])\is
\on{Ext}^3(\bC_{\mathbb{HP}^{n-1}},\bC_{\mathbb{HP}^{n-1}})\is H^3_{T_n}(\mathbb{HP}^{n-1})=0\]
\end{proof}

\subsection{Two bases}\label{two bases}
Consider the subvarieties $\mathbb P^{i-1}=\{[z_1,...,z_i,0,...,0]\}\subset\mathbb P^{2n-1}$, $i=1,...,2n$. If we write 
$[\mathbb P^{i-1}]\in H_{2i-2}^{T_{2n}}(\mathbb P^{2n-1})\is H^{4n-2i}_{T_{2n}}(\mathbb P^{2n-1})$
for the corresponding  fundamental class in the equivariant Borel-Moore homology, the the collection 
$\{[\mathbb P^{i-1}]\}_{i=1,...,2n}$ forms a basis of the free $R_{T_{2n}}$-module
$H^{*}_{T_{2n}}(\mathbb P^{2n-1})$. 
Moreover, one can check the image of the fundamental class $[\mathbb P^{i-1}]$ under the 
the identification 
\eqref{description of equ coh}
is given by
\[\Upsilon:H^{*}_{T_{2n}}(\mathbb P^{2n-1})\is\bC[t_1,...,t_{2n}][\xi]/\prod_{i=1}^{2n}(\xi-t_i)\]
\[\Upsilon([\mathbb P^{2n}])=1\ \ \ \ 
\Upsilon([\mathbb P^{i-1}])=\prod_{s=i+1}^{2n}(\xi-t_s)\ \ \ i=1,...,2n-1\]
Consider the subvarieties $\mathbb{HP}^{i-1}=\{[q_1,...,q_i,0,...,0]\}\subset\mathbb {HP}^{n-1}$, $i=1,...,n$. If we write 
$[\mathbb{HP}^{i-1}]\in H_{4i-4,T_n}(\mathbb{HP}^{n-1})\is H^{4n-4i}_{T_n}(\mathbb P^{2n-1})$
for the corresponding  fundamental class in the equivariant Borel-Moore homology, the the collection 
$\{[\mathbb{HP}^{i-1}]\}_{i=1,...,n}$ forms a basis of the free $R_{T_n}$-module
$H^{*}_{T_n}(\mathbb{HP}^{n-1})$. Moreover,
one can check that the image of 
the fundamental class $[\mathbb{HP}^{i-1}]$ under the identification
in~\eqref{description of equ coh H} is given by
\[\Upsilon_\bbH:H^{*}_{T_n}(\mathbb{HP}^{n-1})=\bC[t_1,...,t_n][\eta]/\prod_{i=1}^n(\eta-t_i^2)\]
\[\Upsilon_\bbH([\mathbb{HP}^{n-1}])=1\ \ \ \ 
\Upsilon_\bbH([\mathbb{HP}^{i-1}])=\prod_{s=i+1}^{n}(\eta-t^2_s)\ \ \ i=1,...,n-1\]

The isomorphism $f_*\bC_{\mathbb P^{2n-1}}\is\bC_{\mathbb{HP}^{n-1}}\oplus
\bC_{\mathbb{HP}^{n-1}}[-2]$, 
gives rise to a decomposition 
\[\Upsilon':H^*_{T_n}(\mathbb{HP}^{n-1})\oplus
H^{*-2}_{T_n}(\mathbb{HP}^{n-1})\is H^*_{T_n}(\mathbb P^{2n-1})\is \bC[t_1,...,t_n][\xi]/\prod_{i=1}^n(\xi^2-t_i^2)\]
and 
one can check that the image of the 
basis $\{[\mathbb{HP}^{i-1}]\}\cup\{[\mathbb{HP}^{i-1}][2]\}$ of $H^*_{T_n}(\mathbb{HP}^{n-1})\oplus H^{*-2}_{T_n}(\mathbb{HP}^{n-1})$
under the map above are 
\beq
\Upsilon'([\mathbb{HP}^{n-1}])=1,\ \ 
\Upsilon'([\mathbb{HP}^{i-1}])=\prod_{s=i+1}^{n}(\xi^2-t^2_s)\ \ \ i=1,...,n-1
\eeq
\[\Upsilon'([\mathbb{HP}^{n-1}[2]])=\xi,\ \ 
\Upsilon'([\mathbb{HP}^{i-1}[2]])=\xi\prod_{s=i+1}^{n}(\xi^2-t^2_s)\ \ \ i=1,...,n-1
.\]
\begin{lemma}\label{matrix presentation}
(1) In terms of the ordered basis $\{[\mathbb P^{0}],[\mathbb P^{1}],...,[\mathbb P^{2n}]\}$,
the 
the  cup product action $c_1^{T_{2n}}(\mO(1))\cup(-)\in\End_{R_{T_n}} (H^*_{T_n}(\mathbb P^{2n-1}))$
is given by
the element $e^{T_{2n}}$ in~\eqref{e^T_2n}:
\[e^{T_{2n}}=\begin{pmatrix} t_1 & 1 & \\ 
 0& t_2 &\ddots &  \\ 
 \vdots &  & \ddots & 1 \\
0&  \hdots&0 &t_{2n} \end{pmatrix}
\]
(2)
In terms of the ordered basis
 $\{[\mathbb{HP}^{0}][2],...,[\mathbb{HP}^{n-1}][2],
 [\mathbb{HP}^{0}],..,[\mathbb{HP}^{n-1}]\}$,
the cup product action $c_1^{T_{n}}(\mO(1))\cup(-)\in\End_{R_{T_n}} (H^*_{T_n}(\mathbb P^{2n-1}))$
is given by the
element $\tau\circ e^T_X$ in~\eqref{e^T_X}:
\[
\tau\circ e^T_X=\begin{pmatrix}0&I_n\\
C&0\end{pmatrix}
\ \ \ \ C=\begin{pmatrix} t_1^2 & 1 & \\ 
 \vdots& t_2^2 &\ddots &  \\ 
 \vdots &  & \ddots & 1 \\
0& 0& \hdots &t_n^2 \end{pmatrix}
\]

\end{lemma}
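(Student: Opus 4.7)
My plan is to prove both parts by direct polynomial calculation in the presentations of the equivariant cohomology rings established above, using the explicit formulas for $\Upsilon$, $\Upsilon_{\bbH}$, and $\Upsilon'$ from Section \ref{two bases} to read off the matrix of multiplication by the relevant first Chern class on the chosen bases.

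For part (1), write $b_i := \Upsilon([\mathbb P^{i-1}])$, so that $b_{2n}=1$ and $b_i=\prod_{s=i+1}^{2n}(\xi-t_s)$ for $i<2n$. Multiplication by $\xi$ corresponds to multiplication by $c_1^{T_{2n}}(\mO(1))$ under $\Upsilon$. The key observation is the telescoping identity $(\xi-t_i)\,b_i = b_{i-1}$ for $i\geq 2$, which gives
\[\xi\,b_i = b_{i-1} + t_i\,b_i\qquad (i\geq 2),\]
while for $i=1$ the relation $\prod_{s=1}^{2n}(\xi-t_s)=0$ in the quotient yields $(\xi-t_1)b_1=0$, hence $\xi\,b_1=t_1\,b_1$. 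Reading off the matrix in the ordered basis $(b_1,\dots,b_{2n})$ gives exactly the bidiagonal shape of $e^{T_{2n}}$ in~\eqref{e^T_2n}, with $t_i$ on the diagonal and $1$'s on the superdiagonal.

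For part (2), the argument is structurally identical. Set $c_i:=\Upsilon'([\mathbb{HP}^{i-1}])$ and $d_i:=\Upsilon'([\mathbb{HP}^{i-1}[2]])$, so that by the formulas in Section \ref{two bases} we have $d_i=\xi\,c_i$ for all $i$. Thus $\xi\,c_i = d_i$ contributes the upper-right block $I_n$ of $\tau\circ e^T_X$. For the lower-left block, compute $\xi\,d_i = \xi^2\,c_i$ and use the analogous telescoping $(\xi^2-t_i^2)\,c_i = c_{i-1}$ for $i\geq 2$, together with the quotient relation $\prod_{s=1}^n(\xi^2-t_s^2)=0$ which gives $\xi^2\,c_1 = t_1^2\,c_1$. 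This yields
\[\xi\,d_1 = t_1^2\,c_1,\qquad \xi\,d_i = c_{i-1}+t_i^2\,c_i\quad (i\geq 2),\]
which is precisely the matrix $C=e^T_X$ in the lower-left block of $\tau\circ e^T_X$, with $t_i^2$'s on the diagonal and $1$'s on the superdiagonal and zeros elsewhere. The block-antidiagonal shape of $\tau\circ e^T_X$ from~\eqref{tau embedding} and~\eqref{e^T_X} is therefore recovered exactly.

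There is no conceptual obstacle here: both calculations reduce to the two identities $(\xi-t_i)b_i=b_{i-1}$ and $(\xi^2-t_i^2)c_i=c_{i-1}$, combined with the defining relation in the respective quotient ring for the boundary column. The only care needed is bookkeeping of indices and verifying the block structure matches the matrix $\tau\circ e^T_X$ rather than some conjugate of it; in particular, the ordering of the basis vectors (shifted classes first, then unshifted) is what produces the off-diagonal block form in the stated answer rather than a single bidiagonal matrix on a common basis.
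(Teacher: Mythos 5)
Your proof is correct and is exactly the "straightforward computation" that the paper's one-line proof defers to: identify the cup product with multiplication by $\xi$ in the polynomial presentation, note the telescoping identities $(\xi-t_i)b_i=b_{i-1}$ and $(\xi^2-t_i^2)c_i=c_{i-1}$ from the explicit formulas for $\Upsilon$ and $\Upsilon'$, and use the defining relation of the quotient ring for the boundary column; the only bookkeeping subtlety is the ordering $(d_1,\dots,d_n,c_1,\dots,c_n)$ of the basis in part (2), which you handle correctly to produce the block-anti-diagonal shape. No gaps.
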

\begin{proof}
The cup product action is given by mutiplication by $\xi$ and the claim 
is a straightforward computation.
\end{proof}

\subsection{Complex and quaternionic affine Grassmannians}
\label{affine grassmannian}
We denote by
$\Gr_{2n}=\frak LG_{2n}/\frak L^+G_{2n}$
the complex affine grassmannian for $G_{2n}$
where $\frak LG_{2n}=G_{2n}(\bC((t)))$
and $\frak L^+G_{2n}=G_{2n}(\bC[[t]])$
are the Laurent loop group and Taylor arc group for $G_{2n}$ respectively.
We denote by 
$D^b(\frak L^+G_{2n}\backslash\Gr_{2n})$
the dg category of $\frak L^+G_{2n}$-equivariant constructible complexes
on $\Gr_{2n}$
and 
$\on{Perv}(\Gr_{2n})$ the abelain category of 
$\frak L^+G_{2n}$-equivariant perverse sheaves on $\Gr_{2n}$.

We denote by
$\Gr_{n,\bbH}=\frak LG_{n,\bbH}/\frak L^+G_{n,\bbH}$ 
the real affine Grassmannian 
for the quaternionic group $G_{n,\bbH}$ where $\frak LG_{n,\bbH}=G_{n,\bbH}(\bbR((t)))$ and $\frak L^+G_{n,\bbH}=G_{n,\bbH}(\bbR[[t]])$ are the real Laurent loop group and real Taylor arc group for 
$G_{n,\bbH}$. 
The $\frak L^+G_{n,\bbH}$-orbits on $\Gr_{n,\bbH}$ are of the form 
$\Gr^\lambda_{n,\bbH}=\frak L^+G_{n,\bbH}\cdot t^\lambda$ where $(\lambda:\bG_m\to S)\in\Lambda_S^+$ is a dominant real coweight. 
By \cite[Proposition 3.6.1]{Na}, each orbit $\Gr^\lambda_{n,\bbH}$ is a real vector bundle over 
the quaternionic flag manifold $G_{n,\bbH}/P^\lambda_{n,\bbH}$ of real dimension $2\langle\lambda,\rho_{2n}\rangle$. 
We denote by 
$D^b(\frak L^+G_{n,\bbH}\backslash\Gr_{n,\bbH})$
the dg category of $\frak L^+G_{n,\bbH}$-equivariant constructible complexes
on $\Gr_{n,\bbH}$.
Since  
$2\langle\lambda,\rho_{2n}\rangle=4\langle\lambda,2\rho_n\rangle\in 4\bbZ$ for all $\lambda\in\Lambda_S^+$
(in the second paring we regard $\lambda$ as element in $\Lambda_n$), all the orbits $\Gr^\lambda_{n,\bbH}$
have real even dimension, and
hence middle perversity makes sense
and we denote by 
$\on{Perv}_{}(\Gr_{n,\bbH})$ the category $\frak L^+G_{n,\bbH}$-equivariant perverse sheaves on $\Gr_{n,\bbH}$.
Note also that, as $P^\lambda_{n,\bbH}$ is connected, all the 
$G_{n,\bbH}$-equivariant local system on $\on{Perv}_{}(\Gr_{n,\bbH})$ are trivial and hence the irreducible objects in $\on{Perv}_{}(\Gr_{n,\bbH})$ are 
intersection cohomology complexes 
$\IC_\lambda=\IC(\overline{\Gr^\lambda_{n,\bbH}})$, $\lambda\in\Lambda_S^+$ for the closure $\overline{\Gr^\lambda_{n,\bbH}}\subset\Gr_{n,\bbH}$.

Like in the case of complex reductive groups,
there is a natural monoidal structure on 
$D^b(\frak L^+G_{n,\bbH}\backslash\Gr_{n,\bbH})$
given by the convolution product:
consider the convolution diagram
\[\Gr_{n,\bbH}\times\Gr_{n,\bbH}\stackrel{p}\la
\frak LG_{n,\bbH}\stackrel{q}\ra
\Gr_{n,\bbH}\tilde\times\Gr_{n,\bbH}:=\frak LG_{n,\bbH}\times^{\frak L^+G_{n,\bbH}}\Gr_{n,\bbH}\stackrel{m}\to\Gr_{n,\bbH}\]
where $p$ and $q$ are the natural quotient maps and
 $m(x,y\mod\frak L^+G_{n,\bbH})= xy\mod\frak L^+G_{n,\bbH}$.
 For any $\mF_1,\mF_2\in D^b(\frak L^+G_{n,\bbH}\backslash\Gr_{n,\bbH})$,
 the convolution 
 is defined as 
 \[\mF_1\star\mF_2=m_!(\mF_1\tilde\boxtimes\mF_2)\]
 where $\mF_1\tilde\boxtimes\mF_2\in D^b(\frak L^+G_{n,\bbH}\backslash\Gr_{n,\bbH}\tilde\times\Gr_{n,\bbH})$ is the unique complex
 such that $q^*(\mF_1\tilde\boxtimes\mF_2)\is p^*(\mF_1\boxtimes\mF_2)$.

\subsection{Real nearby cycles functor}\label{Real nearby cycles}
We shall recall the construction of the real nearby cycles functor 
in \cite{Na}.
Consider 
the Beilinson-Drinfeld Grassmannian 
$\Gr^{(2)}_{2n}\to\bC$ over the complex line $\bC$ classifying a $G_{2n}$-bundle 
$\mE\to\bC$, a point $x\in\bC$, and a section 
$\nu:\bC\setminus\{\pm x\}\to\mE|_{\bC\setminus\{\pm x\}}$.
It is well-known that there are canonical isomorphisms 
\[\Gr^{(2)}_{2n}|_{\{0\}}\is\Gr_{2n}\]
\[\Gr^{(2)}_{2n}|_{\bC\setminus\{0\}}\is\Gr_{2n}\times\Gr_{2n}\times\bC\setminus\{0\}\]
It is shown in \cite{Na} that 
the real form $G_{n,\bbH}$ of $G_{2n}$ together with real form 
$i\bbR$ 
of $\bC$ (corresponding to the complex conjugation $x\to -\bar x$
on $\bC$) defines a real form
$\Gr_{n,\bbH}^{(2)}\to i\bbR$ of $\Gr_{2n}^{(2)}$ such that 
there are canonical isomorphisms 
\[\Gr^{(2)}_{n,\bbH}|_{\{0\}}\is\Gr_{n,\bbH}\]
\[\Gr^{(2)}_{n,\bbH}|_{i\bbR\setminus\{0\}}\is\Gr_{2n}\times (i\bbR\setminus\{0\})\]
Consider the following diagram 
\[\xymatrix{\Gr_{2n}\times i\bbR\ar[r]^{\simeq}&\Gr^{(2)}_{n,\bbH}|_{i\bbR_{>0}}\ar[r]^j\ar[d]&\Gr^{(2)}_{n,\bbH}|_{i\bbR_{\geq 0}}\ar[d]&\Gr^{(2)}_{n,\bbH}|_{\{0\}}\ar[d]\ar[l]_i&\Gr_{n,\bbH}\ar[l]_{\ \ \simeq}\\
&i\bbR_{>0}\ar[r]&i\bbR_{\geq 0}&\{0\}\ar[l]&}\]
Note that the maps in the above diagram are all $K_c$-equivariant and we 
define the functor
\[\label{real nearby cycle}
\mathrm R': D^b_{}(K_c\backslash\Gr_{2n})\to D^b_{}(K_c\backslash\Gr_{n,\bbH})
\]
by the formula
\beq\label{formula for real nearby cycle}
\mathrm R'(\mF)=i^*j_*(\mF\boxtimes\bC_{i\bbR_{\geq 0}}).
\eeq
By \cite[Proposition 4.5.1]{Na}, the functor $\mathrm R'$ takes
$\frak L^+G_{2n}$-constructible complexes to 
$\frak L^+G_{n,\bbH}$-constructible complexes.
Introduce the subcategory $D^b_{\{\frak L^+G_{n,\bbH}\}}(K_c\backslash\Gr_{n,\bbH})$ (resp. $D^b_{\{\frak L^+G_{2n}\}}(G_c\backslash\Gr_{2n})$ and $D^b_{\{\frak L^+G_{2n}\}}(K_c\backslash\Gr_{2n})$)
of $D^b(K_c\backslash\Gr_{n,\bbH})$
(resp. $D^b(G_c\backslash\Gr_{2n})$ and $D^b(K_c\backslash\Gr_{2n})$)
consisting $\frak L^+G_{n,\bbH}$-constructible complexes (resp. 
$\frak L^+G_{2n}$-constructible complexes).
We have natural equivalence $D^b_{\{\frak L^+G_{n,\bbH}\}}(K_c\backslash\Gr_{n,\bbH})\is D^b(\frak L^+G_{n,\bbH}\backslash\Gr_{n,\bbH})$,
$D^b_{\{\frak L^+G_{2n}\}}(G_c\backslash\Gr_{2n})\is D^b(\frak L^+G_{2n}\backslash\Gr_{2n})$
and the nearby cycles functor $\mathrm R'$ above induces a functor
\beq\label{R'}
\mathrm R':D^b_{\{\frak L^+G_{2n}\}}(K_c\backslash\Gr_{2n})\to
D^b_{\{\frak L^+G_{n,\bbH}\}}(K_c\backslash\Gr_{n,\bbH})\is D^b(\frak L^+G_{n,\bbH}\backslash\Gr_{n,\bbH})
\eeq
Finally, the real nearby cycles functor is defined as 
\beq\label{real nearby cycle}
\mathrm R: D^b_{}(\frak L^+G_{2n}\backslash\Gr_{2n})\is D^b_{\{\frak L^+G_{2n}\}}(G_c\backslash\Gr_{2n})\to 
D^b_{\{\frak L^+G_{2n}\}}(K_c\backslash\Gr_{2n})\stackrel{\mathrm R'}\to
D^b(\frak L^+G_{n,\bbH}\backslash\Gr_{n,\bbH})
\eeq
where the 
middle arrow is the natural forgetful functor.

The following properties of $\on{Perv}_{}(\Gr_{n,\bbH})$
and $\mathrm R$ can be deduced from \cite{Na}:

\begin{prop}\label{abelian Satake}
\begin{enumerate}
\item There is a tensor equivalence 
$\on{Rep}(G_n)\is\on{Perv}_{}(\Gr_{n,\bbH})$
sending irreducible representation $V_\lambda$ of $G_n$ with highest weight 
$\lambda\in\Lambda_S^+$ to $\IC_\lambda$.
\item The real nearby cycle functor $\mathrm R$ preserves semi-simplicity, that is,
we have  
\[\mathrm R(\mF)\is\bigoplus_{n\in\bZ} {^pH^n}\mathrm R(\mF)[-n]\]
for any semisimple complexe $\mF$ in $D_{}^b(\frak L^+G_{2n}\backslash\Gr_{2n})$.
\item
Consider the monoidal subcategory 
\[\on{Perv}(\Gr_{n,\bbH})_\bbZ:=\bigoplus_{n\in\bZ}\on{Perv}(\Gr_{n,\bbH})[n]\subset D_{\frak L^+G_{n,\bbH}}^b(\Gr_{n,\bbH}).\]
The real nearby cycle functor restricts to a monoidal functor
\[^p\mathrm R:\on{Perv}(\Gr_{2n})\to\on{Perv}(\Gr_{n,\bbH})_\bbZ\]
such that there is a commutative diagram
\[\xymatrix{\on{Perv}(\Gr_{2n})\ar[r]^{^p\mathrm R\ \ \ }\ar[d]^{\simeq}&\on{Perv}(\Gr_{n,\bbH})_\bbZ\ar[d]^{\simeq}\\
\on{Rep}(G_{2n})\ar[r]&\on{Rep}(G_n\times\bG_m)}\]
where the vertical tensor equivalences come from the
complex and quaternionic Satake isomorphisms (part (1)) and the bottom arrow 
is the restriction map to the subgroup $G_n\times\bG_m\subset G_{2n}$.
\end{enumerate}
\end{prop}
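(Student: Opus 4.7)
The plan is to deduce all three parts from results in \cite{Na}, using the real Beilinson-Drinfeld Grassmannian $\Gr^{(2)}_{n,\bbH}\to i\bbR$ recalled in Section \ref{Real nearby cycles} as the main bridge. Its generic fibers recover $\Gr_{2n}$ and its special fiber is $\Gr_{n,\bbH}$, so fusion on the complex side is transported to convolution on the quaternionic side through the nearby cycles functor $\mathrm R$.

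For part (1), I would invoke Nadler's real geometric Satake equivalence directly. The tensor structure on $\on{Perv}(\Gr_{n,\bbH})$ is constructed via fusion on $\Gr^{(2)}_{n,\bbH}$, and Tannakian reconstruction produces a complex reductive group; \cite{Na} identifies this as $\check G_X=G_n$ in the quaternionic case, with $\IC_\lambda$ matching the irreducible representation $V_\lambda$ of highest weight $\lambda\in\Lambda_S^+$.

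For part (2), I would first show that $\mathrm R(\IC_\lambda)$ is a direct sum of shifts of IC sheaves on $\Gr_{n,\bbH}$ for each $\lambda$. By \cite{Na} each perverse cohomology sheaf ${}^pH^n\mathrm R(\IC_\lambda)$ is semisimple with trivial equivariant local system summands; the splitting into shifts then follows from the parity vanishing for IC stalks on $\Gr_{n,\bbH}$ (established in Section \ref{constructible side}), which kills every $\on{Ext}^1$ between shifts of IC sheaves of differing parity. Semisimplicity preservation of $\mathrm R$ on an arbitrary semisimple $\mF$ follows by additivity, and the decomposition $\mathrm R(\mF)\is\bigoplus_n{}^pH^n\mathrm R(\mF)[-n]$ is then automatic.

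Part (3) has two assertions: monoidality of ${}^p\mathrm R$ and identification of the bottom arrow. Monoidality is a standard fusion argument: compatibility of nearby cycles with external product translates fusion on $\Gr^{(2)}_{n,\bbH}$ in a neighborhood of $0\in i\bbR$ into convolution on $\Gr_{n,\bbH}$, yielding a tensor functor $\on{Perv}(\Gr_{2n})\to\on{Perv}(\Gr_{n,\bbH})_\bbZ$; the target is Tannakian-equivalent to $\on{Rep}(G_n\times\bG_m)$ with the $\bG_m$-factor recording the cohomological degree. The main obstacle is matching this tensor functor with restriction along the Arthur parameter $\psi_X$ from~\eqref{SL_2}. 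I would verify this by computing the $\bG_m$-action on the fiber functor: geometrically the degrees appearing in $\mathrm R(\IC_\lambda)$ are prescribed by the real codimensions of the $\frak L^+G_{n,\bbH}$-orbit stratification of $\Gr_{n,\bbH}$, while on the spectral side restriction along $\psi_X|_{\bG_m}=2\rho_L$ gives the $\bG_m$-weights of $V_{2n}^\lambda$ as its $L$-weights. Both gradings are controlled by the cocharacter $2\rho_L$, and this coincidence identifies the two tensor functors.
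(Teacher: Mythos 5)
For parts (1) and (3) your proposal is essentially the paper's: both are deferred to \cite{Na}, and your Tannakian/fusion sketch for (3) is a reconstruction of Nadler's argument rather than a new one. The genuine divergence is in part (2).

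The paper proves part (2) by a short self-contained argument: since $\mathrm R$ is monoidal and a convolution of semisimples in its essential image is again semisimple (both cited from \cite{Na}), it suffices to check semisimplicity on the tensor generators $V_{\omega_1}$ and $\det_{2n}$; the determinant case is immediate, and for $\IC_{\omega_1}$ one computes $\on{Ext}^1(\IC_{\omega_1}[-1],\IC_{\omega_1}[1])\cong H^3_{G_{n,\bbH}}(\mathbb{HP}^{n-1})=0$ directly. Your proposal instead routes through the parity-sheaf machinery (Lemma \ref{affinepaving}, Proposition \ref{IC parity}, Corollary \ref{parity vanishing}), which appears \emph{after} this proposition in the paper. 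That forward reference is not actually circular — the proof of Proposition \ref{IC parity} uses only part (1) of \ref{abelian Satake}, not part (2) — but it should be flagged, and it makes your proof of (2) heavier than the paper's, which was presumably designed to stay elementary at this point. On the other hand, your approach gives the splitting for all $\lambda$ at once without invoking the ``convolution of semisimples is semisimple'' input from \cite{Na}, which is a nontrivial citation; this is a real trade-off.

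There is, however, a concrete error in your parity bookkeeping. You claim parity vanishing ``kills every $\on{Ext}^1$ between shifts of IC sheaves of differing parity.'' If $a-b$ is odd, then $\on{Ext}^1(\IC_\mu[a],\IC_\nu[b])=\on{Ext}^{1+a-b}(\IC_\mu,\IC_\nu)$ lies in \emph{even} degree, so parity vanishing says nothing about it. What parity vanishing actually gives is $\on{Ext}^{\mathrm{odd}}(\IC_\mu,\IC_\nu)=0$, which kills $\on{Ext}^1$ between shifts of the \emph{same} parity. The argument is rescued by a fact you did not state: all perverse cohomology degrees of $\mathrm R(\IC_\lambda)$ have the same parity, since they are controlled by $\langle-,2\rho_L\rangle$ and $2\rho_L$ pairs evenly with every root of $G_{2n}$. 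With that observation, the boundary maps in the perverse Postnikov tower live in $\on{Ext}^{n_k-n_i+1}$ with $n_k-n_i$ even, hence in odd degree, and parity vanishing applies. You should add that observation and correct the parity direction; also verify the precise form of the ``semisimple with trivial local system summands'' input you attribute to \cite{Na}, since the paper's own proof is careful to cite \cite{Na} only for monoidality and for closure of semisimples under convolution, not for semisimplicity of each ${}^pH^n\mathrm R(\IC_\lambda)$ outright.
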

\begin{proof}
Expect part (2), all the other claims are in \cite{Na}.
To prove part (2), it suffices to show that 
$\mathrm R(\IC_\lambda)$ is semisimple for all dominant $\lambda$.
It is shown \cite{Na} that  $\mathrm R$ is monoidal and 
given two semisimple objects $\mF_1,\mF_1$ in the essential image 
of $\mathrm R$, the convolution $\mF_1\star\mF_2$ is again semisimple.
Since $\on{Rep}(G_{2n})$ is tensor generated by
the standard representation
$V_{\omega_1}=\bC^{2n}$ and the determinant character 
$\on{det}_{2n}$, it suffices to show that 
$\mathrm R(\IC_{\omega_1})$
and $\mathrm R(\IC_{\det_{2n}})$ are semisimple.
We have $\mathrm R(\IC_{\det})\is\IC_{\det^{\otimes 2}}$
where $\det_n^{\otimes 2}$ is the square of the determinant character of 
$G_n$ (double check) and hence is simple.
On the other hand, 
$\mathrm R(\IC_{\omega_1})$ admits a filtration with associated graded
given by $\IC_{\omega_1}[1]\oplus\IC_{\omega_1}[-1]$.
where $\IC_{\omega_1}$
is the $\IC$-complex of $\Gr_{n,\bbH}^{\omega_1}\is\mathbb{HP}^{n-1}$.
 Since $\on{Ext}^1(\IC_{\omega_1}[-1],\IC_{\omega_1}[1])=
 \on{Ext}^3(\bC_{\mathbb{HP}^{n-1}},\bC_{\mathbb{HP}^{n-1}})\is
 H^3_{G_{n,\bbH}}(\mathbb{HP}^{n-1})
 \subset H^3_{T_n}(\mathbb{HP}^{n-1})=0$, it follows that the filtration splits and 
 hence $\mathrm R(\IC_{\omega_1})\is\IC_{\omega_1}[1]\oplus\IC_{\omega_1}[-1]$
 is semisimple.

\end{proof}
In the course of the proof, together with Lemma \ref{splitting}, we show that 
\begin{corollary}
There is an isomorphism 
$\mathrm R(\IC_{\omega_1})\is f_*(\IC_{\mathbb P^{2n-1}})\is 
\IC_{\mathbb{HP}^{n-1}}[1]\oplus\IC_{\mathbb{HP}^{n-1}}[-1]
$
\end{corollary}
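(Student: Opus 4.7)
The plan is to exhibit both $\mathrm R(\IC_{\omega_1})$ and $f_*\IC_{\mathbb P^{2n-1}}$ as isomorphic to the common object $\IC_{\mathbb{HP}^{n-1}}[1]\oplus\IC_{\mathbb{HP}^{n-1}}[-1]$, then conclude by transitivity. The left-hand identification is already implicit in the preceding proof of Proposition~\ref{abelian Satake}(2): one constructs a two-step filtration on $\mathrm R(\IC_{\omega_1})$ whose associated graded is $\IC_{\omega_1}[1]\oplus\IC_{\omega_1}[-1]$, where $\IC_{\omega_1}$ denotes the IC-sheaf on $\Gr^{\omega_1}_{n,\bbH}\is\mathbb{HP}^{n-1}$, and this filtration splits because $\on{Ext}^1(\IC_{\omega_1}[-1],\IC_{\omega_1}[1])$ sits inside $H^3_{T_n}(\mathbb{HP}^{n-1})=0$. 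No new argument is needed on this side; the output is exactly the direct sum above.

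For the right-hand identification, the plan is simply to upgrade Lemma~\ref{splitting} from constant sheaves to IC-sheaves by tracking shifts. Since $\mathbb P^{2n-1}$ has real dimension $4n-2$ and $\mathbb{HP}^{n-1}$ has real dimension $4n-4$, middle-perversity normalization gives $\IC_{\mathbb P^{2n-1}}=\bC_{\mathbb P^{2n-1}}[2n-1]$ and $\IC_{\mathbb{HP}^{n-1}}=\bC_{\mathbb{HP}^{n-1}}[2n-2]$. Applying the shift $[2n-1]$ to Lemma~\ref{splitting} yields
\[
f_*\IC_{\mathbb P^{2n-1}}\is\bC_{\mathbb{HP}^{n-1}}[2n-1]\oplus\bC_{\mathbb{HP}^{n-1}}[2n-3]=\IC_{\mathbb{HP}^{n-1}}[1]\oplus\IC_{\mathbb{HP}^{n-1}}[-1],
\]
completing the match with the left-hand side.

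A remark on canonicity: the strategy above yields the isomorphism $\mathrm R(\IC_{\omega_1})\is f_*\IC_{\mathbb P^{2n-1}}$ only abstractly, through the shared intermediate direct sum. The geometric reason one expects this identification is that the twistor fibration $f$ should arise as the specialization map along the $\omega_1$-substratum of the real Beilinson-Drinfeld family $\Gr^{(2)}_{n,\bbH}\to i\bbR$: over $i\bbR_{>0}$ this substratum looks like $\mathbb P^{2n-1}\times i\bbR_{>0}$, the limit at $0$ lies in $\Gr_{n,\bbH}$ as $\mathbb{HP}^{n-1}$, and the passage from generic to special fiber should realize $f$. A canonical isomorphism would follow from constructing a proper map $\mathbb P^{2n-1}\times i\bbR_{\geq 0}\to\Gr^{(2)}_{n,\bbH}|_{i\bbR_{\geq 0}}$ that is the identity over $i\bbR_{>0}$ and equals $f$ at $0$, then invoking proper base change; the only nontrivial point would be verifying that the degeneration genuinely realizes the twistor map rather than some other surjection onto $\mathbb{HP}^{n-1}$. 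For the corollary as stated, however, the abstract matching via $\IC_{\mathbb{HP}^{n-1}}[1]\oplus\IC_{\mathbb{HP}^{n-1}}[-1]$ suffices.
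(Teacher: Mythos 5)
Your argument is correct and is essentially the same as the paper's: the left identification is read off from the proof of Proposition~\ref{abelian Satake}(2) (the filtration on $\mathrm R(\IC_{\omega_1})$ with split associated graded $\IC_{\omega_1}[1]\oplus\IC_{\omega_1}[-1]$), and the right one is Lemma~\ref{splitting} after accounting for the IC normalization shifts $[2n-1]$ and $[2n-2]$, which you compute correctly. The closing remark about canonicity via a specialization map realizing the twistor fibration is a nice observation but not part of what the corollary asserts.
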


\begin{remark}
In Theorem \ref{spectral nearby cycle}, we will give a spectral description of nearby cycle functor $\mathrm R$
on the whole derived categories (not just its restriction $^p\mathrm R$ to the subcategory of perverse sheaves).
\end{remark}

Recall the nearby cycles functor $\mathrm R':
D_{\{\frak L^+G_{2n}\}}^b(K_c\backslash\Gr_{2n})\to D^b_{\{\frak L^+G_{n,\bbH}\}}(
K_c\backslash\Gr_{n,\bbH})$ in~\eqref{R'}.
It extends to the ind-completion (denoted again by $\mathrm R'$)
\[\mathrm R':
\Ind\ D_{\{\frak L^+G_{2n}\}}^b(K_c\backslash\Gr_{2n})\to \Ind\ D^b_{\{\frak L^+G_{n,\bbH}\}}(
K_c\backslash\Gr_{n,\bbH}).\]
\begin{lemma}\label{left adjoint}
The functor $\mathrm R'$ admits left adjoint
\[^L\mathrm R':  \Ind\ D^b_{\{\frak L^+G_{n,\bbH}\}}(
K_c\backslash\Gr_{n,\bbH})
\to\Ind\ D_{\{\frak L^+G_{2n}\}}^b(K_c\backslash\Gr_{2n}) \]
Moreover, we have $^L\mathrm R'(\bC_{\Gr_{n,\bbH}})\is\bC_{\Gr_{2n}}$.
\end{lemma}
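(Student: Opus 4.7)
The plan is to produce ${}^L\mathrm R'$ by taking left adjoints of each factor in the defining formula for $\mathrm R'$, and then to compute its value on the constant sheaf via Yoneda.

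Writing $\mathrm R'=i^*\circ j_*\circ\pi^*$ (with $\pi\colon\Gr_{2n}\times i\bbR_{>0}\to\Gr_{2n}$ the projection, $j$ the open embedding of the generic part of $Y:=\Gr^{(2)}_{n,\bbH}|_{i\bbR_{\geq 0}}$, and $i$ the closed embedding of the special fiber $\Gr_{n,\bbH}$), I would construct left adjoints to each factor after passing to ind-completions. Since $\pi$ is smooth of relative real dimension one, $\pi^!\simeq\pi^*[1]$, so $\pi^*$ admits left adjoint $\pi_![1]$; and $j^*$ is the left adjoint of $j_*$ by the standard open-closed formalism. The delicate case is the left adjoint of $i^*$ for the closed embedding of the special fiber, which is not automatically supplied by the six-functor formalism. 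I would construct it using the manifold-with-boundary structure coming from the map $Y\to i\bbR_{\geq 0}$: locally near the special fiber, $Y$ is semi-analytically modeled on $\Gr_{n,\bbH}\times i\bbR_{\geq 0}$, and an explicit left adjoint $i_{\sharp}$ can be defined via pushforward-then-shift along this boundary inclusion. Composing yields ${}^L\mathrm R'=\pi_![1]\circ j^*\circ i_{\sharp}$.

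For the value on $\bC_{\Gr_{n,\bbH}}$, I would apply Yoneda. For any test object $\mG$,
\[
\Hom\bigl({}^L\mathrm R'(\bC_{\Gr_{n,\bbH}}),\mG\bigr)\simeq\Hom\bigl(\bC_{\Gr_{n,\bbH}},\mathrm R'(\mG)\bigr)=\Gamma\bigl(\Gr_{n,\bbH},\mathrm R'(\mG)\bigr).
\]
The claim then reduces to the identity $\Gamma(\Gr_{n,\bbH},\mathrm R'\mG)\simeq\Gamma(\Gr_{2n},\mG)$, which is the real-analytic analogue of the classical fact that nearby cycles preserves total-space cohomology. I would derive this by applying proper base change to the family $p\colon Y\to i\bbR_{\geq 0}$ together with contractibility of $i\bbR_{\geq 0}$, after exhausting $\Gr_{2n}$ by its finite-type $\frak L^+G_{2n}$-orbit closures on which proper base change directly applies. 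Yoneda then gives ${}^L\mathrm R'(\bC_{\Gr_{n,\bbH}})\simeq\bC_{\Gr_{2n}}$.

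The main obstacle is producing the left adjoint of $i^*$ in a way that respects the $K_c$-equivariance and the ind-structure on both sides; the boundary-geometry description must be checked to be compatible with these. A secondary difficulty is the global-sections/nearby-cycles compatibility: in the ind-setting one has to verify that proper base change behaves well along the exhaustion of $\Gr_{2n}$ by finite-type substacks, which is standard but requires care given that $\mathrm R'$ is only defined after passing to the real Beilinson--Drinfeld model.
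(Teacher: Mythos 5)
Your approach is genuinely different from the paper's. The paper does not factor $\mathrm R'$ and build a left adjoint piece by piece; instead, citing the Thom stratification of $f:\Gr_{n,\bbH}^{(2)}\to i\bbR_{\geq 0}$ from \cite[Proposition 4.5.1]{Na}, the construction of \cite[Section 6]{GM}, and the equivariant extension of Mather's control data theory in \cite[Theorem 1.1]{PW}, it realizes $\mathrm R'$ \emph{globally} as $\psi_*$ for a single $K_c$-equivariant specialization map $\psi:\Gr_{2n}\to\Gr_{n,\bbH}$. The left adjoint is then simply $\psi^*$, and $\psi^*\bC_{\Gr_{n,\bbH}}\simeq\bC_{\Gr_{2n}}$ is immediate, with no Yoneda computation required. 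The payoff of the paper's route is that all the geometric work (collar structure, equivariance, ind-compatibility) is packaged into the existence of $\psi$.

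The gap in your argument is exactly where you flag it: the left adjoint of $i^*$ for the closed inclusion of the special fiber. In the standard recollement formalism $i^*$ only has a right adjoint ($i_*$); it has no left adjoint for free. Your proposal to produce one from a local semi-analytic model $Y\approx\Gr_{n,\bbH}\times i\bbR_{\geq 0}$ near the boundary is, in effect, asserting the existence of an equivariant collar retraction of $Y$ onto its special fiber — and the patching of such local models into a global $K_c$-equivariant retraction compatible with the $\mathfrak L^+$-orbit stratifications is precisely the content of the control data results cited above. So you have not avoided that input; you have implicitly assumed it without supplying it, and without it the step ``an explicit left adjoint $i_\sharp$ can be defined'' does not go through. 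Once one does have the specialization map, your factor-by-factor decomposition is unnecessary. Your Yoneda computation of ${}^L\mathrm R'(\bC_{\Gr_{n,\bbH}})$ via $\Gamma(\Gr_{n,\bbH},\mathrm R'\mG)\simeq\Gamma(\Gr_{2n},\mG)$ is a valid alternative to the paper's one-line observation, but the identity you invoke is itself a consequence of proper base change for the ind-proper family $f$ together with the same specialization structure, so it does not reduce the geometric input either. A cleaner route to bare existence (if one only wanted that) would be to note $\mathrm R'$ is continuous on ind-completions and invoke the adjoint functor theorem; but you would still need the specialization map or the global-sections identity to identify the value on the constant sheaf.
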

\begin{proof}
By \cite[Proposition 4.5.1]{Na}, the ind-proper family  
$f:\Gr_{n,\bbH}^{(2)}\to i\bbR_{\geq0}$ is a Thom stratified map
with respect to a Whitney stratification $\calT$ on $\Gr_{n,\bbH}^{(2)}$
and the stratification $i\bbR_{>0}\cup\{0\}$ on $i\bbR_{\geq0}$ 
such that $\calT$ restricts to the 
 $\frak L^+G_{2n}$-orbits stratification on the 
generic fiber $\Gr_{2n}$ and to the 
$\frak L^+G_{n,\bbH}$-obits stratification
on the special fiber 
$\Gr_{n,\bbH}$. 
The construction in \cite[Section 6]{GM} 
together with the results in \cite[Theorem 1.1]{PW}
(extending 
Mather's theory of control data to the 
the equivariant setting)
implies that 
 the nearby cycles functor $\mathrm R'$  is isomorphic to the functor given by $*$-push-forward 
along a $K_c$-equivariant specialization map 
$\psi:\Gr_{2n}\to\Gr_{n,\bbH}$, and hence 
admits left adjoint
given by $*$-pull-back $\psi^*$.
It is clear that  $\psi^*$ sends constant sheaf to constant sheaf.
The lemma follows.

\end{proof}

%%%%%%
\quash{
\begin{remark}
Here is an alternative argument.
Since $\mathrm R(\mF)=i^*j_*(\mF'\boxtimes\bC_{i\bbR_{>0}})$
and both the forgetful functor $\mF\to\mF'$ and 
the functor $j_*$ admit left adjoint 
it suffices to show that the functor 
$i^*:\on{Ind}D^b(\Gr_{n,\bbH}^{(2)}|_{i\bbR_{\geq0}})\to\on{Ind}D^b(\Gr_{n,\bbH})$
admits a left adjoint
on object of the form $j_*(\mF'\boxtimes\bC_{i\bbR_{>0}})$.
By \cite[Proposition 4.5.1]{Na}, the family  
$\Gr_{n,\bbH}^{(2)}\to i\bbR_{\geq0}$ is a Thom stratified map
with respect to a Whitney stratification $\calT$ on $\Gr_{n,\bbH}^{(2)}$
and the stratification $i\bbR_{>0}\cup\{0\}$ on $i\bbR_{\geq0}$ 
such that the induced stratification $\calT|_{0}$ on the special fiber 
$\Gr_{n,\bbH}$ is the $\frak L^+G_{n,\bbH}$-obits stratification.
Now the 
 existence of left adjoint of  follows from \cite[A.1.11 Lemma]{NY}.
\end{remark}
}
%%%%%%%

\subsection{Equivariant homology and cohomology
of affine Grassmannians}
\subsubsection{}
We review the description of the 
equivariant homology $H_*^{T_{2n}}(\Gr_{2n})$ 
and $H_*^{T_c}(\Gr_{n,\bbH})$
of $\Gr_{2n}$ and $\Gr_{n,\bbH}$ in \cite{O,YZ}.
Recall that for an ind-proper 
semi-analytic set $Y=\underset{I}{\on{colim}}\ Y_i$
acting real analytically by a Lie group $G$, the 
$G$-equivariant homology $H_*^G(Y)$ of $Y$ is defined as 
$H_*^G(Y):=\underset{I}{\on{colim}}\ H^*_G(Y_i,\omega_{i})$, where 
$\omega_{i}\in D(G\backslash Y_i)$ is the dualizing sheaf of $G\backslash Y_i$
and the colimit is induced by the natural adjunction map 
$(\iota_{i,i'})_*\omega_{i}\is(\iota_{i,i'})_!\omega_{i}\to\omega_{i'}$,
and the $G$-equivariant cohomology $H^*_G(Y)$ of $Y$ is defined as 
$H^*_G(Y):=\underset{I}{\on{lim}}\ H^*_G(Y_i,\bC)$, where 
the limit is induced by the natural restriction map 
$H^*_G(Y_{i'},\bC)\to H^*_G(Y_i,\bC)$.

Let $\mL$ be the determinant line bundle on $\Gr_{2n}$
and let $c_1^{T_{2n}}(\mL)\in H^2_{T_{2n}}(\Gr_{2n})$ be its equivariant 
first Chern class. 
It is shown in \cite[Lemma 2.2]{YZ}
that there is an isomorphism of functors
\beq\label{MV filtration}
H^*_{T_{2n}}(\Gr_{2n},-)\is H^*(\Gr_{2n},-)\otimes R_{T_{2n}}:
\on{Perv}(\Gr_{2n})\to R_{T_{2n}}\on{-mod}.
\eeq
induced by the canonical splitting of the MV-filtration
associated to the semi-infinite orbits 
$S^\lambda_{2n}$, the $\frak L N_{2n}$-orbits through $\lambda\in\Lambda_{2n}$.
Moreover, the isomorphism respect the 
natural monoidal structures on $H^*_{T_{2n}}(\Gr_{2n},-)$  coming from fusion and  the one on $H^*(\Gr_{2n},-)\otimes R_{T_{2n}}$ induced from
$H^*(\Gr_{2n},-)$.
The cup product  action  $\wedge c_1^{T_{2n}}(\mL)$
on $H^*_{T_{2n}}(\Gr_{2n},\mF)$, $\mF\in\on{Perv}(\Gr_{2n})$
gives rise to a tensor endomorphism of $H^*_{T_{2n}}(\Gr_{2n},-)$ and hence 
by the Tannakian formalism gives rise 
to an element 
$c^{T_{2n}}\in\fg_{2n}\otimes R_{T_{2n}}$.
One can regard the element $c^{T_{2n}}$ as
a map
\beq\label{c^T_2n}
c^{T_{2n}}:\ft_{2n}\to\fg_{2n}.
\eeq
The equivariant homology $H_*^{T_{2n}}(\Gr_{2n})$ 
is a commutative and cocommutative Hopf algebra over $R_{T_{2n}}$
and 
there is isomorphism of group schemes 
\beq
\on{Spec}(H_*^{T_{2n}}(\Gr_{2n}))\is
(G_{2n}\times\ft_{2n})^{c^{T_{2n}}}
\eeq
where $(G_{2n}\times\ft_{2n})^{c^{T_{2n}}}
$ is the centralizer of $c^{T_{2n}}$ in $G_{2n}\times\ft_{2n}$.

We have a similar result for 
 quaternionic grassmannians.
Let $\mL_\bbH$ be the quaternionic determinant line bundle on $\Gr_{n,\bbH}$
and
let $p^{T}(\mL_\bbH)\in H^4_{T_{c}}(\Gr_{n,\bbH})$ be its equivariant 
Pontryagin class. 
It is shown in \cite{O}
that there is an isomorphism of functors
\beq\label{real MV filtration}
H^*_{T_c}(\Gr_{n,\bbH},-)\is H^*(\Gr_{n,\bbH},-)\otimes_\bC R_{T_{}}:
\on{Perv}(\Gr_{n,\bbH})\to R_{T_{}}\on{-mod}.
\eeq
induced by the canonical splitting of the real MV filtration
associated to the real semi-infinite orbits
$S_{n,\bbH}^\lambda$, the $\frak L N_{n,\bbH}$-orbits through 
$\lambda\in\Lambda_S$.
Moreover,
the isomorphism above respects the
natural monoidal structures on $H^*_{T_{c}}(\Gr_{n,\bbH},-)$  coming from fusion and the one on $H^*(\Gr_{n,\bbH},-)\otimes R_{T_{}}$ induced from
$H^*(\Gr_{n,\bbH},-)$.
The cup product action of $p^{T}(\mL_\bbH)$
on $H^*_{T_{c}}(\Gr_{n,\bbH},\mF)$, $\mF\in\on{Perv}(\Gr_{n,\bbH})\is\on{Rep}(G_n)$
gives rise to a tensor endomorphism of $H^*_{T_{}}(\Gr_{n,\bbH},-)$ and hence 
an element $p^T_X\in\fg_n\otimes R_T$.
Let 
\beq
p^{T}_X:\ft_{}\to\fg_n.
\eeq
be the corresponding map.
The main result in \cite{O} says that 
there an isomorphism of group schemes 
\beq\label{description of equ coh Gr_H}
\on{Spec}(H_*^{T_{c}}(\Gr_{n,\bbH}))\is
(G_n\times\ft_{})^{p^{T_{}}_X}
\eeq
where $(G_n\times\ft_{})^{p^{T_{}}_X}
$ is the centralizer of $p^{T_{}}_X$ in $G_n\times\ft_{}$.

Recall the maps $e^{T_{2n}}$ and $e^T_X$
introduced in~\eqref{e^T} and~\eqref{e^T_X} respectively.

\begin{lemma}\label{computation}
We have 
$c^{T_{2n}}=e^{T_{2n}}$ and  $p^T_X=-e^T_X$.
Thus there are isomorphisms of group scheme \[\on{Spec}(H_*^{T_{2n}}(\Gr_{2n}))\is 
(G_{2n}\times\ft_{2n})^{e^{T_{2n}}}\is J_{2n}\times_{\fc_{2n}}\ft_{2n}
\]
\[ 
\on{Spec}(H_*^{T_{}}(\Gr_{n,\bbH}))\is 
(G_{n}\times\ft_{})^{e_X^{T_{}}}\is J_{n}\times_{\fc_n}\ft,\]
over $\ft_{2n}$
and $\ft$ respectively, and
isomorphisms of 
group schemes 
\[
H_*^{G_{c}}(\Gr_{2n})\is H_*^{T_{2n}}(\Gr_{2n})^{\rW_{2n}}\is
(J_{2n}\times_{\fc_{2n}}\ft_{2n})^{\rW_{2n}}\is J_{2n}
\]
\[ 
H_*^{K_{c}}(\Gr_{n,\bbH})\is H_*^{T_{c}}(\Gr_{n,\bbH})^{\rW_{}}\is
(J_{n}\times_{\fc_{n}}\ft_{})^{\rW_{}}\is J_{n}.\]
over $\fc_{2n}=\ft_{2n}//\rW_{2n}$
and $\ft//\rW\is\fc_n$ respectively.

\end{lemma}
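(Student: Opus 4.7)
My plan is to prove the two equalities $c^{T_{2n}} = e^{T_{2n}}$ and $p^T_X = -e^T_X$ first; the subsequent chains of isomorphisms of group schemes will then follow formally by combining the identifications $\Spec H_*^{T_{2n}}(\Gr_{2n}) \simeq (G_{2n} \times \ft_{2n})^{c^{T_{2n}}}$ of \cite{YZ} and $\Spec H_*^{T_c}(\Gr_{n,\bbH}) \simeq (G_n \times \ft)^{p^T_X}$ of \cite{O} with the descriptions $(G_{2n} \times \ft_{2n})^{e^{T_{2n}}} \simeq J_{2n} \times_{\fc_{2n}} \ft_{2n}$ and $(G_n \times \ft)^{e^T_X} \simeq J_n \times_{\fc_n} \ft$ already established in Section~\ref{Kostant section}, together with the trivial remark that the centralizers of $x$ and $-x$ in $G \times \ft$ coincide. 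The $G_c$- and $K_c$-equivariant statements will then follow by standard Weyl-invariance/descent along the Chevalley map $\ft \to \ft//\rW \simeq \fc$, which identifies $(J \times_{\fc} \ft)/\rW$ with $J$ itself.

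The core of the argument is the two cup-product computations. Both $c^{T_{2n}}$ and $p^T_X$ arise by applying the Tannakian formalism to the cup-product tensor endomorphism of the monoidal fiber functors~\eqref{MV filtration} and~\eqref{real MV filtration}, so each is determined by its action on any one faithful representation of $\GL_{2n}$ or $\GL_n$ respectively. I will use the standard representation $V_{\omega_1}$: under geometric Satake, $\IC_{\omega_1}$ is supported on the minimal orbit closure $\Gr_{2n}^{\omega_1} \simeq \mathbb P^{2n-1}$ (respectively $\Gr_{n,\bbH}^{\omega_1} \simeq \mathbb{HP}^{n-1}$), and the determinant line bundles $\mL$, $\mL_\bbH$ restrict there to $\mO(1)$ and $\mO_\bbH(1)$. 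Thus $c^{T_{2n}}$ is the matrix of the $\xi$-action on $H^*_{T_{2n}}(\mathbb P^{2n-1})$, while $p^T_X$ is (up to sign) the matrix of the $\eta$-action on $H^*_{T_c}(\mathbb{HP}^{n-1})$.

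The complex case is then immediate from Lemma~\ref{matrix presentation}(1), which produces exactly $e^{T_{2n}}$ in the basis $\{[\mathbb P^{i-1}]\}$. For the quaternionic case I will compute cup product by $\eta$ directly in the basis $\{[\mathbb{HP}^{i-1}]\}_{i=1,\ldots,n}$ of Section~\ref{two bases}. Using $\Upsilon_\bbH([\mathbb{HP}^{i-1}]) = \prod_{s=i+1}^n(\eta - t_s^2)$ together with the relation $\prod_{s=1}^n(\eta - t_s^2) = 0$, the one-line identity
\[
\eta \cdot \prod_{s=i+1}^n(\eta - t_s^2) \;=\; \prod_{s=i}^n(\eta - t_s^2) \;+\; t_i^2 \prod_{s=i+1}^n(\eta - t_s^2)
\]
(with its specialization at $i=1$ using the vanishing relation) shows that the matrix of $\eta$-cup-product is precisely $e^T_X$ of~\eqref{e^T_X}. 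Accounting for the sign convention $p^T(\mL_\bbH)|_{\mathbb{HP}^{n-1}} = -\eta$ built into Lemma~\ref{description of equ coh H} then yields $p^T_X = -e^T_X$. The only mildly subtle point will be pinning down this sign, which comes from the identification of Pontryagin classes with Euler classes for complex rank-$2$ bundles (already encoded in the proof of Lemma~\ref{description of equ coh H}); everything else is either a mechanical matrix computation or a formal Tannakian reduction.
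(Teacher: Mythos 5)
Your proposal is correct and essentially mirrors the paper's own argument: reduce to the standard representation via the Tannakian formalism, identify the minuscule orbit with $\mathbb{HP}^{n-1}$, compute the cup-product matrix in the fundamental-class basis, and assemble the group-scheme identifications from \cite{YZ}, \cite{O} and Section~\ref{Kostant section}. The one place you depart slightly is in the quaternionic computation: the paper routes through Lemma~\ref{matrix presentation}(2), which gives the $\xi$-matrix on $H^*_{T_n}(\mathbb P^{2n-1})$ in the basis adapted to the splitting $f_*\underline\bbC\simeq\underline\bbC\oplus\underline\bbC[-2]$, and one then reads off $e^T_X$ as the diagonal block of $(\tau\circ e^T_X)^2$ using $f^*\eta=\xi^2$; your direct calculation of the $\eta$-matrix via the identity $\eta\cdot\prod_{s>i}(\eta-t_s^2)=\prod_{s\geq i}(\eta-t_s^2)+t_i^2\prod_{s>i}(\eta-t_s^2)$, with the vanishing relation absorbing the $i=1$ term, is a cleaner way to arrive at $e^T_X$ and is perfectly valid. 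One minor inaccuracy: the sign relation $e^T(\mO_\bbH(1))=e^T(\mO_\bbH(-1))=-\eta$ is not contained in Lemma~\ref{description of equ coh H}, which only sets $\eta=-e^{T_n}(\mO_\bbH(-1))$; it comes from the separate observation that the underlying complex rank-$2$ bundles of $\mO_\bbH(\pm 1)$ are complex conjugate, so their Euler classes differ by $(-1)^2=1$. You should spell this out rather than attribute it to that lemma. The remainder --- insensitivity of the centralizer to the sign of $e^T_X$, and Weyl descent from $\ft$ (resp.\ $\ft_{2n}$) to $\fc_n$ (resp.\ $\fc_{2n}$) to recover $J_n$ and $J_{2n}$ --- is correct and matches the paper.
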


\begin{proof}
It 
follows form the computations in \cite[Section 5]{YZ} and \cite{O}.
We give an alternative (and more direct) proof 
using the computation in Section \ref{two bases}.

Let $V_{\omega_1}$ be the standard representation of  $G_{2n}$ (resp. $G_n$).
We have $H^*_{}(\Gr_{2n},\IC_{\omega_1})\is V_{\omega_1}$
(resp. $H^*_{}(\Gr_{n,\bbH},\IC_{\omega_1})\is V_{\omega_1}$)
and 
it suffices to show that the 
element
\[c^{T_{2n}}=c_1^{T_{2n}}(\mL)\cup(-)\in\on{End}(H^*_{T_{2n}}(\Gr_{2n},\IC_{\omega_1}))\is
 \on{End}(H^*_{}(\Gr_{2n},\IC_{\omega_1})\otimes R_{T_{2n}})\is
 \on{End}(V_{\omega_1}\otimes R_{T_{2n}})\is\]
 \[\is
 \fg_{2n}\otimes R_T\] 
\[(\on{resp.}\ \ 
p^T_X=p^{T}_1(\mL_\bbH)\cup(-)\in\on{End}(H^*_{T_c}(\Gr_{n,\bbH},\IC_{\omega_1}))\is
 \on{End}(H^*_{}(\Gr_{n,\bbH},\IC_{\omega_1})\otimes R_T)\is
  \on{End}(V_{\omega_1}\otimes R_T)\is
\]
  \[\is
 \fg_{n}\otimes R_T)
\]
is given by 
$e^T_X$ (resp. $p^T$).
We have the following observations:

(1) there is a 
$T_{2n}$ (resp. $T_c$)-equivariant isomorphism 
$\Gr^{\omega_1}_{2n}\is\mathbb{P}^{2n-1}$ (resp. 
$\Gr^{\omega_1}_{n,\bbH}\is\mathbb{HP}^{n-1}$)
where $T_{2n}$ (resp. $T_c$) acts on $\mathbb{P}^{2n-1}$ (resp. $\mathbb{HP}^{n-1}$) via the inverse of the natural action\footnote{This is because 
the isomorphism $\Gr^{\omega_1}_{2n}\is\mathbb P^{2n-1}$
is given by the composition the 
canonical $T_{2n}$-equivariant isomorphism 
$\Gr^{\omega_1}_{2n}\is\Gr(2n-1,V_{\omega_1})$, where 
$\Gr(2n-1,V_{\omega_1})$ is the Grassmannian variety
of $(2n-1)$-dim subspaces of $V_{\omega_1}$,
 with
the duality $\Gr(2n-1,V_{\omega_1})\is\Gr(1,V_{\omega_1}^*)\is\mathbb{P}^{2n-1}$.},

(2) the restriction 
$\mL|_{\Gr_{2n}^{\omega_1}}$ (resp. $\mL_{\bbH}|_{\Gr_{n,\bbH}^{\omega_1}}$)
is isomorphic to $\mO(1)$ (resp. $\mO_\bbH(1)$ the $\bbH$-dual of the $\mO_\bbH(-1)$), 

(3) the composed isomorphism
\[R_{T_{2n}}\otimes V_{\omega_1}\is R_{T_{2n}}\otimes H^*_{}(\Gr_{2n},\IC_{\omega_1})\is
H^*_{T_{2n}}(\Gr_{2n},\IC_{\omega_1})\is H^*_{T_{2n}}(\mathbb P^{2n-1},\IC_{\mathbb P^{2n-1}})\]
\[(\on{resp.}\ \  R_{T_{}}\otimes V_{\omega_1}\is R_{T_{}}\otimes H^*_{}(\Gr_{n,\bbH},\IC_{\omega_1})\is
H^*_{T_{}}(\Gr_{n,\bbH},\IC_{\omega_1})\is H^*_{T_{}}(\mathbb{HP}^{n-1},\IC_{\mathbb{HP}^{n-1}}))\]
sends the vectors 
$1\otimes e_i, i=1,...,2n$ to the fundamental class 
\[[\mathbb{P}^{i-1}]\in 
H^{2n+1-2i}_{T_{2n}}(\mathbb P^{2n-1},\IC_{\mathbb P^{2n-1}})=H^{4n-2i}_{T_{2n}}(\mathbb P^{2n-1})\]
(resp. $1\otimes e_i, i=1,...,n$ to the fundamental class 
\[[\mathbb{HP}^{i-1}]\in 
H^{2n+2-4i}_{T_{}}(\mathbb{HP}^{n-1},\IC_{\mathbb{HP}^{n-1}})=H^{4n-4i}_{T_{}}(\mathbb{HP}^{n-1}))\]
From the above observations, we see that 
$c^{T_{2n}}\in\fg_{2n}\otimes R_{T_{2n}}$ (resp. $p^T_X\in\fg_n\otimes R_{T}$) is the matrix presentation of the 
cup product action $c_1^{T_{2n}}(\mO(1))\cup(-)$
(resp. $e^T(\mO_\bbH(1))\cup(-)$)\footnote{Note that the underlying complex rank $2$ bundles  of $\mO_\bbH(-1)$
and $\mO_\bbH(1)$ are complex conjugate to each other and hence 
$e^T(\mO_\bbH(1))\is(-1)^2e^T(\mO_\bbH(-1))=e^T(\mO_\bbH(-1))$.}
in the basis $\{[\mathbb P^{i-1}]\}_{i=1,...,2n}$
(resp. $\{[\mathbb{HP}^{i-1}]\}_{i=1,...,n}$)
and the desired claim follows from Lemma \ref{matrix presentation}.

%%%%%%
\quash{

Consider the action of 
$\bG_m$ on $\mathbb{HP}^{n-1}$ given by 
the co-character 
$-2\rho_n:\bG_m\to T_{n}$.
The $\bG_m$-fixed points are exactly
$o_1=[1,0,...,0],...,o_{n}=[0,...,0,1]$  and 
we denote by 
\[\calS^\bbH_i=\{z\in\mathbb{P}^{2n-1}|\lim_{x\to 0} x\cdot z=o_i\}\ \ \ 
\calT_i=\{z\in\mathbb{P}^{2n-1}|\lim_{x\to\infty} x\cdot z=o_i\}\]
the corresponding descending and ascending manifolds.
Explicitly, we have 
$\calS_1=[1,z_2,...,z_{2n}]$, $\calS_2=[0,1,z_3,...,z_{2n}]$,..., $\calS_{2n}=[0,...,1]$
and $\calT_1=[1,0,...,0]$, $\calT_2=[z_1,1,0,...,0]$,..., $\calT_{2n}=[z_1,...,z_{2n-1},1]$
and the collection $\calS=\{\calS_i\}$ (resp. $\calT=\{\calT_i\}$)
forms a cell decomposition of $\mathbb P^{2n-1}$.

We have $\overline\calT_i\is\mathbb P^{i-1}\subset\mathbb P^{2n-1}$
and, if we write 
$[\overline\calT_i]=[\mathbb P^{i-1}]\in H^{2n+1-2i}_{T_n}(\mathbb P^{2n-1})$
for the corresponding equivariant fundamental class, the the collection 
$\{[\overline\calT_i]\}_{i=1,...,2n}$ for a basis of the free $R_{T_n}$-module
$H^{*}_{T_n}(\mathbb P^{2n-1})$. 
One can check that under the identification 
$H^{*}_{T_n}(\mathbb P^{2n-1})\is\bC[t_1,...,t_n][\xi]/\prod_{i=1}^n(\xi^2-t_i^2)$
in~\eqref{description of equ coh}, the fundamental class $[\overline\calT_i]$
is given by
\[[\overline\calT_{2n}]=1,\ \ \ \ 
[\overline\calT_i]=\prod_{s=i+1}^{2n}(\xi-t_s)\ \ \ i=1,...,2n-1\]
where in the second formula we identify $t_s=-t_{s-n}$ for $s>n$.

Consider the $\bG_m$-action on $\mathbb P^{2n-1}$
given via the co-character 
$-2\rho_{2n}:\bG_m\to T_{2n}$
where $2\rho$ is the sum of all positive co-roots.
We have
$-2\rho(x)=(x^{-2n+1},x^{-2n+3},...,x^{2n-1})$ (double check the formula).
The $\bG_m$-fixed points are exactly
$o_1=[1,0,...,0],...,o_{2n}=[0,...,0,1]$  and 
we denote by 
\[\calS_i=\{z\in\mathbb{P}^{2n-1}|\lim_{x\to 0} x\cdot z=o_i\}\ \ \ 
\calT_i=\{z\in\mathbb{P}^{2n-1}|\lim_{x\to\infty} x\cdot z=o_i\}\]
the corresponding descending and ascending manifolds.
Explicitly, we have 
$\calS_1=[1,z_2,...,z_{2n}]$, $\calS_2=[0,1,z_3,...,z_{2n}]$,..., $\calS_{2n}=[0,...,1]$
and $\calT_1=[1,0,...,0]$, $\calT_2=[z_1,1,0,...,0]$,..., $\calT_{2n}=[z_1,...,z_{2n-1},1]$
and the collection $\calS=\{\calS_i\}$ (resp. $\calT=\{\calT_i\}$)
forms a cell decomposition of $\mathbb P^{2n-1}$.
}
%%%%%
\end{proof}

%%%%%%%
\quash{
\begin{lemma}
The maps 
$c^{T_{2n}}$ and $c^T_X$ are equal to the maps 
$e^{T_{2n}}$ and $e^T_X$
introduced in~\eqref{e^T} and~\eqref{e^T_X} respectively.
Thus we have 
\beq
\on{Spec}(H_*^{T_{2n}}(\Gr_{2n}))\is (G_{2n}\times\ft_{2n})^{c^{T_{2n}}}=
(G_{2n}\times\ft_{2n})^{e^{T_{2n}}}
\eeq
\beq
\on{Spec}(H_*^{T_{2n}}(\Gr_{n,\bbH}))\is (G_{n}\times\ft_{})^{c^{T}_X}=
(G_{n}\times\ft_{2n})^{e_X^{T_{}}}
\eeq
\end{lemma}
\begin{proof}
It 
follows form the computations in \cite[Section 5]{YZ}
and \cite[Section ??]{O}.
We give an alternative proof of $e^T_X=c^T_X$. 
Consider the standard representation $V=\bC^{2n}$ of $G_{2n}$
with basis $e_1=(1,0...,0),...,e_{2n}=(0,...,0,1)$.
We have $\IC_V\is\IC_{\mathbb P^{2n-1}}$
and if we write $R_{T_{2n}}=\bC[t_1,...,t_{2n}]$, where $t_i$ are the generators in degree $2$, then
there is a canonical isomorphism
%\[H^*_{}(\Gr_{2n},\IC_V)\is H^*(\IC_{\mathbb P^{2n-1}})\is \bC[\xi]/(\xi^{2n})\]
\[H^*_{T_{2n}}(\Gr_{2n},\IC_V)\is H^*_{T_{2n}}(\IC_{\mathbb P^{2n-1}})\is
 \bC[t_1,...,t_{2n}][\xi^{}]/\prod_{i=1}^{2n} (\xi^{}-t_i).\]
The isomorphism 
$H^*_{}(\Gr_{2n},\IC_V)\otimes R_{T_{2n}}\is H^*_{T_{2n}}(\Gr_{2n},\IC_V)$
in~\eqref{MV filtration}  induces an isomorphism
 \[\phi:V\otimes R_{T_{2n}}\is H^*_{}(\Gr_{2n},\IC_V)\otimes R_{T_{2n}}\is H^*_{T_{2n}}(\Gr_{2n},\IC_V)\is  \bC[t_1,...,t_{2n}][\xi^{}]/\prod_{i=1}^{2n} (\xi^{}-t_i)\]
and a direct computation shows that
\[
\phi(e_{2n}\otimes 1)=1\ \ \ \ \phi(e_{j}\otimes 1)=(\xi^{}-t_{j+1})(\xi^{}-t_{j+2})\cdot\cdot\cdot(\xi-t_{2n})\ \ \ j=1,...,2n-1\]
Thus respect to the above trivialization, 
%, with respect to the basis $\{1, \xi^{T_{2n}}-t_1,...,\prod_{i=1}^{2n-1}(\xi^{T_{2n}}-t_i)\}$ of $H^*_{T_{2n}}(\Gr_{2n},\IC_V)$, the
the cup-product $\wedge c_1^{T_{2n}}(\mL)=\wedge \xi^{}$ on $H^*_{T_{2n}}(\Gr_{2n},\IC_V)\is 
V\otimes R_{T_{2n}}$
is represented by the matrix
\[\begin{pmatrix} t_1 & 1 & \\ 
 \vdots& t_2 &\ddots &  \\ 
 \vdots &  & \ddots & 1 \\
0& 0& \hdots &t_{2n}
\end{pmatrix}\in\fg_{2n}\otimes R_{T_{2n}}\]
and hence the corresponding map
$c^{T_{2n}}:\ft_{2n}\to\fg_{2n}$
is exactly the map $e^{T_{2n}}$ in~\eqref{e^T_2n}. 
The lemma follows.
\end{proof}
}
%%%%%%%%%%

%%%%%%%%%
\quash{
\begin{remark}
In the quaternioinc case,  
 the real MV-filtration is actually $M_c$-equivariant and 
we have an isomorphism
\beq\label{M-equ cohomology}
H^*_{M_c}(\Gr_{n,\bbH},-)\is H^*(\Gr_{n,\bbH},-)\otimes R_{M_{}}:
\on{Perv}(\Gr_{n,\bbH})\to R_{M_{}}\on{-mod}.
\eeq
The determinant line bundle $\mL_\bbH$ is also $M_c$-equivariant 
and the cup product of the $p^{M}(\mL_\bbH)\in H^4_{M_c}(\Gr_{n,\bbH})$
defines a tensor endomorphism of~\eqref{M-equ cohomology}. 
Thus we obtain a
map
\beq
c^{M}_X:\ft/\rW_M\to\check\fg_X
\eeq 
such that  
\beq
c^{T}_X:\ft\to\ft/\rW_M\stackrel{c_X^M}\to\check\fg_X
\eeq
\beq
\on{Spec}(H_*^{M_{c}}(\Gr_{n,\bbH}))\is (\check G_{X}\times\ft_{}/\rW_M)^{c^{M_{}}}
\eeq
Note that the "square`` map
$\ft\to\check\ft_X$ sending 
\[\on{diag}(t_1,...,t_n,-t_1,...,-t_n)\in\ft\to\on{diag}(t_1^2,...,t_n^2,t^2_1,...,t^2_n)\in\check\ft_X\]
induces an isomorphism
$\ft/\rW_M\is\check\ft_X$
and we have 
\beq
c^{M}_X:\ft/\rW_M\is\check\ft_X\stackrel{c^{T_n}}\lra\check\fg_X
\eeq
where in the second arrow we identify
$\ft_n\is\check\ft_X$ and $\fg_n\is\check\fg_X$.

\end{remark}
}
%%%%%%%%%%

\subsubsection{}\label{equ coh and hom}
Recall that for any Lie group $G$
and any 
 ind-proper $G$-variety $Y$ we have a paring 
 \[H^*_{G}(Y)\times H_*^{G}(Y)\to H^*_{G}(\on{pt})\is R_{G}\]
induced by the action of cohomology on homology and then the push-forward map in the 
Borel-Moore homology 
$H_*^{G}(Y)\to H^*_{G}(\on{pt})$. 
On the other hand, for any commutative affine group scheme $H$ over $S$ there is a canonical paring
\[U(\on{Lie} H)\times\mO(H)\to\mO(S)\ \ \ \ (\xi,f)\to \xi(f)|_e\]
between the relative universal enveloping algebra $U(\Lie H)$
and the ring of functions on $H$. Here $e:S\to H$ is the unity map.

According to \cite[Remark 2.13]{BFM},
there are 
isomorphisms
\beq\label{equ cohomology}
H^*_{G_c}(\Gr_{2n})\is U(\Lie J_{2n})\ \ \ \ 
H^*_{K_c}(\Gr_{n,\bbH})\is U(\Lie J_{n})
\eeq
such that
the  paring above 
between cohomology and homology
of $Y=\Gr_{2n}$ (resp. $\Gr_{n,\bbH}$)
 becomes the paring between 
universal enveloping algebra and ring of functions
for the group scheme $H=J_{2n}$ (resp. $J_n$).

%%%%%%%%%%%%
\quash{
\subsection{Parity vanishing of stalks}
We prove the following parity vanishing result.
For any real dominant weight $\lambda\in X_A^+$ let 
$\IC_\lambda$ be the IC-complex for the orbit closure 
$\overline\Gr_{n,\bbH}^{\lambda}$
\begin{thm}
We have $\mathscr H^{i-4\langle\lambda,\rho_n\rangle}(\IC_\lambda)=0$ for 
$i\nmid 4$.
\end{thm}

We will deduce the theorem from the following.
Consider the dominant weight $m\omega_1=(m,0,...,0)\in X_A^+$
with $m\geq0$.
Consider the convolution map 
$\pi:\Gr^{\omega_1}_{n,\bbH}\tilde\times\cdots\tilde\times\Gr^{\omega_1}_{n,\bbH}\to
\Gr^{\leq m\omega_1}_{n,\bbH}$.
Recall that $\Gr^{\omega_1}_{n,\bbH}\tilde\times\cdots\tilde\times\Gr^{\omega_1}_{n,\bbH}$
classify
 chains of $\bbH[[t]]$-lattices
$(\Lambda_m\subset\Lambda_{m-1}\subset\cdots\subset\Lambda_1\subset\Lambda_0=\bbH[[t]]^n)$ 
such that $\Lambda_{i-1}/\Lambda_i$ is a rank one free (right) $\bbH$-module 
and $\Gr^{\leq m\omega_1}_{n,\bbH}$ classify 
$\bbH[[t]]$-lattices $(\Lambda\subset\Lambda_0)$ such that 
$\Lambda_0/\Lambda$ is a rank $m$ right $\bbH$-module.
The map $m$ is given by 
$(\Lambda_m\subset\Lambda_{m-1}\subset\cdots\subset\Lambda_1\subset\Lambda_0)\to
(\Lambda_m\subset\Lambda_0)$.

\begin{lemma}
The fibers of $\pi$ admit pavings by 
quaternionic affine spaces.
\end{lemma}
\begin{proof}
Let $Y=\pi^{-1}(\Lambda\subset\Lambda_0)$ be a 
fiber and let $\calB$ be the flag manifold $\calB$
of $G_{m,\bbH}$.
Choose an identification 
$\Lambda_0/\Lambda\is\bbH^m$, then the assignment sending 
$(\Lambda_m=\Lambda\subset\Lambda_{m-1}\subset\cdots\subset\Lambda_0)
$ to 
$(\Lambda_{m-1}/\Lambda\subset\Lambda_{m-2}/\Lambda\subset\cdots\subset
\Lambda_0/\Lambda\is\bbH^m)$ 
define a map $\phi:Y\to\mathcal B$.
Moreover, the multiplication by 
the parameter $t$ on lattices indues a 
$\bbH$-linear nilpotent endomorphism of 
$u\in\on{End}(\Lambda_0/\Lambda)\is\on{End}(\bbH^m)$
and the map $\phi$ induces an isomorphism 
$\phi:Y\is\calB_u$ where $\calB_u$ the quaternionic Springer fiber 
over $u$ classifying (complete) $\bbH$-flags 
$(0=F_m\subset F_{m-1}\subset F_{m-2}\subset\cdots\subset F_0=\bbH^m)$
such that $uF_{i-1}\subset F_i$.
Now we conclude by noticing that the same argument for affine pavings for Springer fibers for 
$\GL_m$ as in \cite{Sp} works in same way to the quatenionic setting.

\end{proof}
}
%%%%%%%%%%

\subsection{Fully faithfulness} A key ingredient in the proof of the (complex) derived Satake theorem is the fully faithfulness of the equivariant cohomology functor $H_{\frak L^+G_{2n}}^*(\Gr_{2n}, -)$ into the category of modules over the global cohomology $H_{\frak L^+G_{2n}}^*(\Gr_{2n}, \bbC)$. In \cite{BF}, this was established using general results of Ginzburg \cite{G2}. Ginzburg's arguments appeal to Hodge theory, and therefore must be modified in the real setting. 
As in \cite{AR}, we can use parity considerations in place of Hodge theory. More precisely, we will make use of the theory of parity sheaves \cite{JMW}. 
Our first step, therefore, 
is to establish that the complexes $\IC_\lambda$ (for $\lambda \in\Lambda_S^+$) are even.

\begin{remark}
In fact, our situation is simpler than the modular setting
considered in \cite{JMW} due to the fact that the 
tensor category 
$\on{Perv}(\Gr_{n,\bbH})$ of spherical perverse sheaves 
on $\Gr_{n,\bbH}$ is semisimple (see Proposition \ref{abelian Satake}).
\end{remark}

\subsubsection{} Recall that if a coweight $\mu \in \Lambda_S^+$ is miniscule, the orbit $\Gr_{n,\bbH}^{\mu}$ is closed. Such an orbit is necessarily smooth. 

\begin{lemma}\label{affinepaving}
Let $\mu_1,\dots,\mu_k \in \Lambda_S^+$ denote miniscule coweights. Consider the convolution morphism 
$$
m: \Gr_{n,\bbH}^{\mu_{\bullet}} := \Gr_{n,\bbH}^{\mu_1} \operatorname{\widetilde{\times}} \dots \operatorname{\widetilde{\times}} \Gr_{n,\bbH}^{\mu_k} \rightarrow \Gr_{n,\bbH}. 
$$
Then, the non-empty fibers of $m$ are paved by quaternionic affine spaces. 
\end{lemma}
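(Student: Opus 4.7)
The plan is to identify the non-empty fibers of $m$ with quaternionic analogues of Spaltenstein varieties and then pave them by adapting the classical inductive argument.

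\textbf{Step 1 (Reduction to a Spaltenstein-type variety).} Write each minuscule coweight as $\mu_i = \omega_{d_i}$ with $1 \leq d_i \leq n-1$, and set $N := \sum_i d_i$. Fix $\Lambda \in \Gr_{n,\bbH}$ in the image of $m$. Since the $\mu_i$ are minuscule, any chain in $m^{-1}(\Lambda)$,
\[
\Lambda = \Lambda_k \subset \Lambda_{k-1} \subset \cdots \subset \Lambda_0 = \bbH[[t]]^n,
\]
automatically satisfies $t\Lambda_{i-1} \subset \Lambda_i$ and $\dim_\bbH(\Lambda_{i-1}/\Lambda_i) = d_i$. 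Setting $V := \Lambda_0/\Lambda \cong \bbH^N$ and letting $u \in \End_\bbH(V)$ denote the nilpotent endomorphism induced by multiplication by $t$, the assignment $(\Lambda_\bullet) \mapsto (V_i := \Lambda_i/\Lambda)$ identifies $m^{-1}(\Lambda)$ with the quaternionic Spaltenstein variety
\[
\calF^{\mu_\bullet}_u := \bigl\{\, 0 = V_k \subset V_{k-1} \subset \cdots \subset V_0 = V \,\bigr\}
\]
of $\bbH$-flags satisfying $\dim_\bbH(V_{i-1}/V_i) = d_i$ and $u V_{i-1} \subset V_i$ for each $i$. In the special case $d_1 = \cdots = d_k = 1$, this reduces to a quaternionic Springer fiber, exactly as in the argument for the convolution of copies of $\Gr^{\omega_1}_{n,\bbH}$.

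\textbf{Step 2 (Paving by induction on $k$).} I would then show that $\calF^{\mu_\bullet}_u$ is paved by quaternionic affine spaces by induction on $k$. Project onto the first step,
\[
p : \calF^{\mu_\bullet}_u \longrightarrow Y := \{\, V_1 \subset V : \dim_\bbH(V/V_1) = d_1 \text{ and } u(V) \subset V_1 \,\}, \qquad (V_\bullet) \mapsto V_1.
\]
The fiber of $p$ over $V_1$ is canonically isomorphic to $\calF^{(\mu_2,\dots,\mu_k)}_{u|_{V_1}}$, which is paved by quaternionic affines by the inductive hypothesis. The base $Y$ is cut out of the quaternionic Grassmannian of codimension-$d_1$ right $\bbH$-subspaces of $V$ by the $\bbH$-linear condition $u(V) \subset V_1$; its intersection with each standard quaternionic Schubert cell is a quaternionic affine space, as one checks via direct Gauss elimination over $\bbH$ (valid since $\bbH$ is a skew field). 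Finally, $p$ is locally trivial over each cell of $Y$, with trivializations built from the same normal forms, so composing the base and fiber pavings yields the required paving of $\calF^{\mu_\bullet}_u$.

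\textbf{Main obstacle.} The principal subtlety is ensuring that the inductive construction produces genuine \emph{quaternionic} affine cells rather than merely complex or real ones. This is guaranteed because every step---the Schubert decomposition of quaternionic Grassmannians, the linear condition $u(V) \subset V_1$, and the Gauss-elimination normal forms for the fibers of $p$---respects the right $\bbH$-module structure on $V$. Since $\bbH$ is a division ring, the purely linear-algebraic manipulations underlying Spaltenstein's original argument \cite{Sp} transfer essentially verbatim from $\bbC$ to $\bbH$, yielding the required paving.
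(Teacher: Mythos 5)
Your proof is correct, but it takes a genuinely different route from the paper's. You identify the fiber $m^{-1}(\Lambda)$ with a quaternionic Spaltenstein variety via the lattice model $V_i = \Lambda_i/\Lambda$ and then adapt Spaltenstein's inductive linear-algebraic paving: project to the first flag step $V_1$, stratify the base Grassmannian $Y$ by Schubert cells chosen with respect to a Jordan basis of $u$, and pave each piece by induction. (Amusingly, a quashed passage in the source shows the authors originally contemplated exactly this Springer/Spaltenstein-fiber route for the special case of all $\mu_i = \omega_1$.) The published proof instead follows Haines's equidimensionality argument: it factors $m$ through $\Gr_{n,\bbH}\operatorname{\widetilde{\times}}\Gr^{\mu_k}_{n,\bbH}$, identifies the relevant base with the closed minuscule orbit $\Gr^{-w_0(\mu_k)}_{n,\bbH}$ — a quaternionic partial flag variety — stratifies it by $P^{\mu}_{n,\bbH}$-orbits (Bruhat cells, which are quaternionic affine spaces), and obtains local triviality over each cell for free from the transitive action of the conjugated unipotent radical. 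So the two proofs peel the chain from opposite ends (you remove the largest proper step $V_1$; the paper removes the last convolution factor), and the paper's version trades your Jordan-normal-form analysis for the group-theoretic Bruhat decomposition, which makes the local triviality step immediate rather than a lemma. Two small points worth flagging in your write-up: your reduction "$\mu_i = \omega_{d_i}$ with $1 \le d_i \le n-1$" silently discards central (determinant-type) minuscule coweights, which is harmless since such factors contribute single points but should be said; and the assertion that $p$ is locally trivial over each cell of $Y$, with constant conjugacy type of $u|_{V_1}$, is precisely the substantive content of Spaltenstein's lemma — it requires the Schubert cells to be taken adapted to a Jordan basis of $u$, and deserves to be stated as such rather than compressed into "built from the same normal forms" (though the transfer to the division ring $\bbH$ is indeed routine once this is made explicit).
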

\begin{proof}
In the complex setting, this result is due to \cite{H}. We proceed by induction on $k$. When $k=1$, there is nothing to prove. In general, we factor $m$ as follows. 
\[\xymatrix{\Gr_{n,\bbH}^{\mu_\bullet} \ar[r]^{q \qquad } \ar[rd]_{m}& \Gr_{n,\bbH}\operatorname{\widetilde{\times}} \Gr_{n,\bbH}^{\mu_k} \ar[d]^{p}\\
&\Gr_{n,\bbH} }\]
Here, $q$ is induced by multiplying the first $k-1$ factors of $\Gr^{\mu_\bullet}_{n,\bbH}$. Since $m$ is $\mathfrak{L}^+{G}_{n,\bbH}$-equivariant, it suffices to show that each fiber $m^{-1}(t^\lambda)$ (for $\lambda \in \Lambda_S^+$) is paved by quaternionic affine spaces. By the above diagram, we have 
$$
m^{-1}(t^\lambda) = q^{-1}(p^{-1}(t^\lambda)).
$$
Let $\mu_\bullet' = (\mu_1,\dots,\mu_{k-1})$. Observe that we have a commutative diagram
\[\xymatrix{m^{-1} (t^\lambda) \ar@{^{(}->}[r] \ar[d]_{q} & \Gr_{n,\bbH}^{\mu_\bullet} \ar[r]^{\qquad} \ar[d]_{q}& \Gr_{n,\bbH}^{\mu'_{\bullet}} \ar[d]^{a}\\
p^{-1}(t^\lambda) \ar@{^{(}->}[r] & \Gr_{n,\bbH} \operatorname{\widetilde{\times}} \Gr_{n,\bbH}^{\mu_k} \ar[r]^{\quad \pi}& \Gr_{n,\bbH}. }\]
Here, $\pi$ is the projection to the first factor and $a$ is the convolution map. Observe that both horizontal compositions are closed embeddings. Hence, we obtain a Cartesian diagram
\[\xymatrix{m^{-1} (t^\lambda) \ar@{^{(}->}[r] \ar[d]_{\pi \circ q} & \Gr_{n,\bbH}^{\mu'_{\bullet}} \ar[d]^{a}\\
\pi(p^{-1}(t^\lambda)) \ar@{^{(}->}[r] & \Gr_{n,\bbH}. }\]
By induction, the fibers of $a$ are paved by quaternionic affine spaces. Hence, the same is true of $\pi \circ q$. It therefore suffices to show that $\pi(p^{-1}(t^\lambda))$ is paved by quaternionic affine spaces over which $a$ is a trivial fibration. Since $p$ is $G_{n,\bbH}(\bbR((t)))$-equivariant, we have
$$
p^{-1}(t^\lambda) = t^\lambda p^{-1}(t^0) = t^\lambda(\Gr^{-\mu_k}_{n,\bbH} \operatorname{\widetilde{\times}} \Gr_{n,\bbH}^{\mu_k}). 
$$
Here, $t^\lambda$ acts on the first factor. Hence,
$$
\pi(p^{-1}(t^\lambda)) = t^\lambda \Gr_{n,\bbH}^{-\mu_k} = t^\lambda \Gr_{n,\bbH}^{-w_0(\mu_k)},
$$
where $w_0$ is the longest element of the Weyl group. Multiplication by $t^\lambda$ is an isomorphism commuting with $a$, hence it suffices to show that $\Gr_{n,\bbH}^{-w_0(\mu_k)}$ is paved by quaternionic affine spaces over which $a$ is a trivial fibration. Let $\mu = -w_0 (\mu_k)$. The coweight $\mu$ is once again minuscule. Recall that $\Gr_{n,\bbH}^{\mu}$ is a vector bundle over a partial flag variety of $G_{n,\bbH}$. On the other hand, $\Gr_{n,\bbH}^{\mu}$ is closed, so it is a partial flag variety of $G_{n,\bbH}$. We claim that the orbits of $P^{\mu}_{n,\bbH}$ on $\Gr^\mu_{n,\bbH}$ are the desired affine spaces. 

Each such orbit has the form $P^{\mu}_{n,\bbH} t^{w(\mu)}$ for $w \in W_n$ an element of the Weyl group. Let $^{w}P^{\mu}_{n,\bbH} = w(P^{\mu}_{n,\bbH})$. Then, by the real Bruhat decomposition, there exists a unipotent subgroup $^{w}N^{\mu}_{n,\bbH}$ of $^{w}P^{\mu}_{n,\bbH}$ which acts freely and transitively on the orbit $P^{\mu}_{n,\bbH} t^{w(\mu)}$. Hence, $P^{\mu}_{n,\bbH} t^{w(\mu)} = {^w}N^{\mu}_{n,\bbH} t^{w(\mu)}$. By the $\mathfrak{L}G_{n,\bbH}$ equivariance of $a$, we have a commutative diagram
\[\xymatrix{ ^{w}N^\mu_{n,\bbH} \times a^{-1}(t^{w(\mu)}) \ar[r]  \ar[d]_{\pi_1} & a^{-1}( {^w}N_{n,\bbH}^{\mu} t^{w(\mu)}) \ar[d]_{a}\\
^{w}N^\mu_{n,\bbH} \ar[r]^{\sim} & {^w}N_{n,\bbH}^{\mu} t^{w(\mu)}. }\]
As the diagram is Cartesian, and the bottom arrow is an isomorphism, the top arrow is an isomorphism are well. By induction, $a^{-1}(t^{w(\mu)})$ is paved by quaternionic affine spaces. Therefore, it suffices to see that the unipotent subgroup $^{w}N^\mu_{n,\bbH}$ is a quaternionic affine space, which is clear. 
\end{proof}

We now recall the terminology of \cite{JMW} that we will use. For $\lambda \in \Lambda_S^+$, let 
$$
i_\lambda: \Gr^\lambda_{n,\bbH} \hookrightarrow \Gr_{n,\bbH}
$$
denote the inclusion. 
\begin{definition}\label{even}
Let $\mathcal{F} \in D^b_{\mathfrak{L}^+G_{n,\bbH}}(\Gr_{n,\bbH})$. We say that $\mathcal{F}$ is \textit{$*$-even} (resp. \textit{$!$-even}) if for all $\lambda \in \Lambda_S^+$, the $\mathfrak{L}^+G_{n,\bbH}$-equivariant sheaf $i_\lambda^*\mathcal{F}$ (resp. $i_\lambda^!\mathcal{F}$) is a direct sum of constant sheaves appearing in even degrees. If $\mathcal{F}$ is both $*$-even and $!$-even, we simply say that it is \textit{even}.

We say that $\mathcal{F}$ is \textit{$*$-odd} (resp. \textit{$!$-odd}) if $\mathcal{F}[1]$ is $*$-even (resp. $!$-even). If $\mathcal{F}$ is both $*$-odd and $!$-odd, we simply say that it is \textit{odd}.
\end{definition}

\begin{prop}\label{IC parity}
For $\lambda \in \Lambda_S^+$, the complex $\IC_\lambda$ is even. 
\end{prop}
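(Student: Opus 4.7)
The plan is to reduce the parity vanishing for all $\IC_\lambda$ to the parity vanishing for convolutions of IC-sheaves associated to minuscule coweights, via the semisimplicity of $\on{Perv}(\Gr_{n,\bbH})$ and the Satake equivalence of Proposition \ref{abelian Satake}, and then to deduce the latter from the affine paving of convolution fibers established in Lemma \ref{affinepaving}.

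First I would handle the minuscule case directly. If $\mu \in \Lambda_S^+$ is minuscule, then $\Gr_{n,\bbH}^\mu$ is closed and smooth (a quaternionic partial flag manifold $G_{n,\bbH}/P^\mu_{n,\bbH}$), and by the discussion in Section \ref{affine grassmannian} its real dimension $2\langle \mu, \rho_{2n}\rangle = 4\langle\mu, 2\rho_n\rangle$ is divisible by $4$. Hence $\IC_\mu$ is the shifted constant sheaf on this closed smooth orbit placed in a single even cohomological degree, so it is trivially even in the sense of Definition \ref{even}.

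Next I would show that any convolution of minuscule IC-sheaves is even. Given minuscule coweights $\mu_1,\ldots,\mu_k$, the twisted product $\Gr_{n,\bbH}^{\mu_\bullet}$ is smooth of real dimension divisible by $4$, and the external convolution $\IC_{\mu_1}\tilde\boxtimes\cdots\tilde\boxtimes\IC_{\mu_k}$ is the shifted constant sheaf placed in a single even cohomological degree. By proper base change applied to the proper convolution morphism $m:\Gr_{n,\bbH}^{\mu_\bullet}\to\Gr_{n,\bbH}$, the $*$-stalk of $\IC_{\mu_1}\star\cdots\star\IC_{\mu_k} = m_*(\IC_{\mu_1}\tilde\boxtimes\cdots\tilde\boxtimes\IC_{\mu_k})$ at a point $t^\lambda$ agrees, up to an even shift, with the ordinary cohomology $H^*(m^{-1}(t^\lambda),\bbC)$. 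By Lemma \ref{affinepaving}, $m^{-1}(t^\lambda)$ admits a paving by quaternionic affine spaces, hence its cohomology is concentrated in degrees divisible by $4$; in particular the $*$-stalks lie in even degrees. The $!$-stalks are then also in even degrees by Verdier self-duality of the convolution (self-duality of each $\IC_{\mu_i}$ together with properness of $m$) combined with the even-dimensionality of all orbits $\Gr_{n,\bbH}^\lambda$. Thus the convolution is even.

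Finally, I would invoke the Satake equivalence $\on{Perv}(\Gr_{n,\bbH})\simeq\on{Rep}(G_n)$ of Proposition \ref{abelian Satake} with $G_n = \GL_n$: every irreducible representation $V_\lambda$ occurs as a direct summand of some tensor product of minuscule representations, since the exterior powers of the standard representation and its dual already generate $\on{Rep}(\GL_n)$. Under Satake, $\IC_\lambda$ is therefore a direct summand of some convolution $\IC_{\mu_1}\star\cdots\star\IC_{\mu_k}$ with all $\mu_i$ minuscule. Since taking direct summands commutes with $i_\lambda^*$ and $i_\lambda^!$, evenness is inherited by summands, and we conclude that $\IC_\lambda$ is even. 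The main subtlety throughout is matching parities: we need the various cohomological shifts to line up modulo $2$, which is ensured by the fact that every $\frak L^+G_{n,\bbH}$-orbit on $\Gr_{n,\bbH}$ has real dimension divisible by $4$ and the fibers appearing in Lemma \ref{affinepaving} decompose into quaternionic (rather than merely complex or real) cells.
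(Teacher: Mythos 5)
Your proof is correct and follows essentially the same route as the paper: reduce via the Satake equivalence to realizing $\IC_\lambda$ as a direct summand of a convolution of minuscule IC-sheaves, then control the stalks using the quaternionic affine paving of Lemma \ref{affinepaving}, with self-duality giving the $!$-direction. The only cosmetic differences are your more general choice of minuscule generators (the paper uses specifically $V_{\omega_1}$ and $\det$) and your use of ordinary cohomology of the convolution fiber where the paper phrases the stalk computation via $T_c$-equivariant cohomology and equivariant formality; these are interchangeable.
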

\begin{proof}
Since $\IC_\lambda$ is self-dual, it suffices to show that it is $*$-even. Recall from \ref{abelian Satake}(1) that we have an equivalence $\mathrm{Perv}(\Gr_{n,\bbH}) \simeq \mathrm{Rep}(G_n)$ taking $\IC_\lambda$ to the highest weight module $V_\lambda$. Let $\omega_1$ (resp. $\epsilon$) denote the highest weight of the standard representation of $G_n$ (resp. the determinant character). Then, $V_\lambda$ is a direct summand of a tensor product $V_{\epsilon}^{\otimes j} \otimes V_{\omega_1}^{\otimes k}$, for some $j,k \geq 0$. Hence, $\IC_\lambda$ is a direct summand of the convolution $\IC_{\epsilon}^{\star j} \star \IC_{\omega_1}^{\star k}$. It therefore suffices to show that $\IC_{\epsilon}^{\star j} \star \IC_{\omega_1}^{\star k}$ is $*$-even. We now apply \ref{affinepaving} with $\mu_1,\dots,\mu_j = \epsilon$ and $\mu_{j+1} =\dots = \mu_{j+k} = \omega_1$. Let 
$$
m: \Gr^{\mu_\bullet}_{n,\bbH} \rightarrow \Gr_{n,\bbH}
$$
denote the convolution map. We have 
$$
\IC_{\epsilon}^{\star j} \star \IC_{\omega_1}^{\star k} \simeq m_!(\IC_\epsilon^{\widetilde{\boxtimes} j} \operatorname{\widetilde{\boxtimes}} \IC_\epsilon^{\widetilde{\boxtimes} k}). 
$$
Now let $\nu \in\Lambda_S$ and let $i_\nu: \Gr^\nu_{n,\bbH} \hookrightarrow \Gr_{n,\bbH}$ denote the inclusion. Firstly, we have
$$
H^*_{\mathfrak{L}^+G_{n,\bbH}}(i_\nu^*(\IC_{\epsilon}^{\star j} \star \IC_{\omega_1}^{\star k})) \simeq H^*_{T_c}(i_\nu^*(\IC_{\epsilon}^{\star j} \star \IC_{\omega_1}^{\star k}))^W. 
$$
Next, by proper base change,
$$
H^*_{T_c}(i_\nu^*(\IC_{\epsilon}^{\star j} \star \IC_{\omega_1}^{\star k})) \simeq H^*_{T_c}(m^{-1}(\Gr^\nu_{n,\bbH}), \IC_\epsilon^{\widetilde{\boxtimes} j} \operatorname{\widetilde{\boxtimes}} \IC_{\omega_1}^{\widetilde{\boxtimes} k}).
$$
Since $\epsilon$ and $\omega_1$ are minuscule, the orbits $\Gr^{\omega_1}_{n,\bbH}$ and $\Gr^{\epsilon}_{n,\bbH}$ are smooth. Therefore, $\IC_{\omega_1} \simeq \underline{\bbC}[2(n-1)]$ and $\IC_{\epsilon} \simeq \underline{\bbC}$. Hence,
$$
H^*_{T_c}(m^{-1}(\Gr^\nu_{n,\bbH}), \IC_\epsilon^{\widetilde{\boxtimes} j} \operatorname{\widetilde{\boxtimes}} \IC_{\omega_1}^{\widetilde{\boxtimes} k}) \simeq H^*_{T_c}(m^{-1}(\Gr^\nu_{n,\bbH}), \bbC)[2k(n-1)]. 
$$
By \ref{affinepaving}, the ordinary cohomology $H^*(m^{-1}(\Gr^\nu_{n,\bbH}), \bbC)$ is concentrated in even degrees (in fact, in degrees divisible by $4$). Hence, $m^{-1}(\Gr^\nu_{n,\bbH})$ is equivariantly formal with respect to the action of $T_c$. Therefore, $H^*_{T_c}(m^{-1}(\Gr^\nu_{n,\bbH}), \bbC)$ is concentrated in even degrees.

Now we may express $i_\nu^*(\IC_{\epsilon}^{\star j} \star \IC_{\omega_1}^{\star k})$ as a direct sum of constant sheaves. We have
$$
i_\nu^*(\IC_{\epsilon}^{\star j} \star \IC_{\omega_1}^{\star k}) \simeq \underline{V}
$$
for a complex $V \in D^b(\mathrm{Vect}_{\bbC})$. Hence,
$$
H^*_{T_c}(i_\nu^*(\IC_{\epsilon}^{\star j}) \simeq H^*_{T_c}(\Gr^\lambda_{n,\bbH}, \bbC) \otimes V. 
$$ 
We have shown that $H^*_{T_c}(i_\nu^*(\IC_{\epsilon}^{\star j})$ is concentrated in even degrees. Since $H^0_{T_c}(\Gr^\lambda_{n,\bbH}, \bbC) \neq 0$, we conclude that $V$ is concentrated in even degrees. The result follows. 
\end{proof}

As a corollary of the proof we obtain the following parity vanishing 
results.

\begin{corollary}\label{parity vanishing}
We have $\mathscr H^{i-\langle\lambda,\rho_{2n}\rangle}(\IC_\lambda)=0$ for 
$i\nmid 4$.
\end{corollary}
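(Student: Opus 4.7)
The plan is to sharpen the parity argument in the proof of Proposition \ref{IC parity} from ``concentrated in even degrees'' to ``concentrated in degrees divisible by $4$'', using the full strength of Lemma \ref{affinepaving} -- that the relevant convolution fibers are paved by \emph{quaternionic} affine cells rather than merely complex ones. All of the structural input is already in place; only the mod-$4$ bookkeeping is new.

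Fix $\lambda\in\Lambda_S^+$ and, as in the proof of Proposition \ref{IC parity}, choose $j,k\geq 0$ so that $V_\lambda$ appears as a summand of $V_\epsilon^{\otimes j}\otimes V_{\omega_1}^{\otimes k}$. Real geometric Satake (Proposition \ref{abelian Satake}(1)) together with the semisimplicity of $\on{Perv}(\Gr_{n,\bbH})$ then realizes $\IC_\lambda$ as a direct summand of
\[
\IC_\epsilon^{\star j}\star \IC_{\omega_1}^{\star k}\simeq m_!\,\underline{\bC}_{\Gr_{n,\bbH}^{\mu_\bullet}}[2k(n-1)],
\]
where $m:\Gr_{n,\bbH}^{\mu_\bullet}\to\Gr_{n,\bbH}$ is the convolution map of Lemma \ref{affinepaving} with $\mu_1=\cdots=\mu_j=\epsilon$ and $\mu_{j+1}=\cdots=\mu_{j+k}=\omega_1$. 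By proper base change the stalk at any $x$ is $H^*(m^{-1}(x);\bC)[2k(n-1)]$, and by Lemma \ref{affinepaving} the fiber $m^{-1}(x)$ is paved by quaternionic affine cells, so $H^*(m^{-1}(x);\bC)$ is concentrated in degrees divisible by $4$. Hence the stalks of $\IC_\lambda$ at every point lie in cohomological degrees congruent to $-2k(n-1)\pmod 4$.

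To pin down this congruence class I would evaluate at a point of the open orbit $\Gr_{n,\bbH}^\lambda$, where $\mathscr H^*_x(\IC_\lambda)$ is nonzero only in degree $-\langle\lambda,\rho_{2n}\rangle$ by the middle-perversity normalization (using $\dim_{\bbR}\Gr_{n,\bbH}^\lambda=2\langle\lambda,\rho_{2n}\rangle$). This forces $-\langle\lambda,\rho_{2n}\rangle\equiv -2k(n-1)\pmod 4$, so every stalk of $\IC_\lambda$ lies in degrees $\equiv -\langle\lambda,\rho_{2n}\rangle\pmod 4$, i.e.\ $\mathscr H^{i-\langle\lambda,\rho_{2n}\rangle}(\IC_\lambda)=0$ for $4\nmid i$. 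No step here should be difficult; the only place where genuinely quaternionic input is essential (as opposed to the complex input that would only recover Proposition \ref{IC parity}) is the affine paving of Lemma \ref{affinepaving}, and that has already been established.
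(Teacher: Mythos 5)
Your argument is correct and follows the same skeleton as the paper's: realize $\IC_\lambda$ as a summand of $\IC_\epsilon^{\star j}\star\IC_{\omega_1}^{\star k}\simeq m_!\underline{\bC}[2k(n-1)]$ and invoke the quaternionic affine paving from Lemma \ref{affinepaving} to get that every stalk lives in cohomological degrees $\equiv -2k(n-1)\pmod 4$. Where you differ is the last step of converting that congruence to $\equiv -\langle\lambda,\rho_{2n}\rangle\pmod 4$: you evaluate the stalk at a point of the open orbit $\Gr^\lambda_{n,\bbH}$ (where the middle-perversity normalization forces nonvanishing exactly in degree $-\langle\lambda,\rho_{2n}\rangle$, since $\dim_{\bbR}\Gr^\lambda_{n,\bbH}=2\langle\lambda,\rho_{2n}\rangle$), which pins down the congruence class by comparison; the paper instead computes directly that $2k(n-1)-\langle\lambda,\rho_{2n}\rangle=\langle k\omega_1-\lambda,\rho_{2n}\rangle=4\langle k\omega_1-\lambda,\rho_n\rangle\in 4\bbZ$, using that $k\omega_1-\lambda$ is a non-negative integral combination of positive coroots. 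Your route is a touch more conceptual and sidesteps the coweight arithmetic; the paper's is more explicit and makes no appeal to nonvanishing of the open-stratum stalk. Either is perfectly fine.
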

\begin{proof}
We have shown that 
any direct summand 
$\IC_\lambda$ of $\IC_{\epsilon}^{\star j} \star \IC_{\omega_1}^{\star k}$ satisfies
$\mathscr H^{i-2k(n-1)}(\IC_\lambda)=0$ for 
$i\nmid 4$. Since $k\omega_1-\lambda$ is a non-negative integral sum of 
positive coroots,  we have  $\langle k\omega_1-\lambda,\rho_n\rangle\in\bZ$
and hence $2k(n-1)-\langle\lambda,\rho_{2n}\rangle=
\langle k\omega_1-\lambda,\rho_{2n}\rangle=4\langle k\omega_1-\lambda,\rho_n\rangle$ is divisible by four. The desired claim follows. 

\end{proof}

\subsubsection{} Our goal is now to apply the parity vanshing above to deduce the following faithfulness result. 
\begin{prop}\label{faithfulness}
For any $\lambda, \mu\in \Lambda_S^+$, the natural map
\[\on{Ext}^\bullet_{D^b_{\frak L^+G_{n,\bbH}}(\Gr_{n,\bbH})}(\IC_\lambda,\IC_\mu)\to
\Hom^\bullet_{H^*_{\frak L^+G_{n,\bbH}}(\Gr_{n,\bbH})}(H^*_{\frak L^+G_{n,\bbH}}(\IC_\lambda),H^*_{\frak L^+G_{n,\bbH}}(\IC_\mu))\]
is an isomorphism of graded modules.
\end{prop}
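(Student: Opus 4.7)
The plan is to deduce the statement directly from Proposition \ref{IC parity} by invoking the general formalism of parity sheaves \cite{JMW}, adapted to our real analytic situation as in \cite{AR}. The key point is that both $\IC_\lambda$ and $\IC_\mu$ are even (parity) complexes, so the natural map in question fits into the standard parity-sheaf framework for fully faithfulness of the equivariant cohomology functor. I will proceed by induction on the number of orbits appearing in the combined support of $\IC_\lambda$ and $\IC_\mu$, using the recollement associated to the orbit stratification.

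The first ingredient I will need is equivariant formality: for any even complex $\mathcal{F}$, the graded $R$-module $H^*_{\frak L^+G_{n,\bbH}}(\mathcal{F})$ (where $R=H^*_{\frak L^+G_{n,\bbH}}(\on{pt})$) is free and concentrated in even degrees. This follows from the spectral sequence associated to the filtration of $\mathcal{F}$ by the orbit stratification: by parity (Definition \ref{even}) the $E_2$ page is concentrated in even total degree, and the equivariant cohomology of each orbit $\Gr^\nu_{n,\bbH}$ is itself free and concentrated in even degrees over $R$. The latter uses that $\Gr^\nu_{n,\bbH}$ is a real vector bundle of real rank $4\langle\nu,\rho_n\rangle$ over the quaternionic partial flag manifold $G_{n,\bbH}/P^\nu_{n,\bbH}$, which admits an affine paving (in the sense of Lemma \ref{affinepaving}) and hence is equivariantly formal. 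Consequently the functor $\Hom^\bullet_R(H^*_{\frak L^+G_{n,\bbH}}(\IC_\lambda), H^*_{\frak L^+G_{n,\bbH}}(-))$ is exact on the full subcategory of even complexes.

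For the inductive step, let $Z=\Gr^{\nu_0}_{n,\bbH}$ be a closed orbit in the union of supports, with open complement $j:U\hookrightarrow$ (union of supports) and closed embedding $i:Z\hookrightarrow$ (same). The distinguished triangle $i_!i^!\IC_\mu\to\IC_\mu\to j_*j^*\IC_\mu\to[1]$ produces, after applying $\on{Ext}^\bullet_{D^b}(\IC_\lambda,-)$ and $\Hom^\bullet_R(H^*_{\frak L^+G_{n,\bbH}}(\IC_\lambda),H^*_{\frak L^+G_{n,\bbH}}(-))$ respectively, two long exact sequences related by the natural comparison map. By the inductive hypothesis the comparison map is an isomorphism for the pieces $i_!i^!\IC_\mu$ and $j_*j^*\IC_\mu$ (each supported on strictly fewer orbits, after one more devissage on the $\IC_\lambda$ side if necessary), so the five lemma yields the isomorphism for $\IC_\mu$. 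The base case, where both complexes are supported on a single orbit $\Gr^\nu_{n,\bbH}$, reduces by adjunction to $\on{Ext}^\bullet(\underline{\bbC}[2k],\underline{\bbC}[2k'])=H^{*-2k+2k'}_{\frak L^+G_{n,\bbH}}(\Gr^\nu_{n,\bbH})$ on both sides, where the identification is tautological.

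The main subtlety will be ensuring that the odd-degree Ext groups between the parity pieces vanish so that the long exact sequences break into short exact sequences (equivalently, that the filtration controlling $H^*_{\frak L^+G_{n,\bbH}}(\IC_\mu)$ actually splits as graded $R$-modules). This is precisely where the evenness of both $\IC_\lambda$ and $\IC_\mu$ established in Proposition \ref{IC parity} is used: the comparison map is a morphism of exact $\delta$-functors on each factor, and the vanishing of $\on{Ext}^{2k+1}$ between any two even complexes (provable by exactly the same induction, using parity of $i_\nu^*i_{\nu'}^!\underline{\bbC}$ which follows from the fact that each orbit has real dimension divisible by $4$) forces all spectral sequences in sight to degenerate. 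This is entirely parallel to the JMW argument and is not substantially complicated by the real analytic nature of $\Gr_{n,\bbH}$.
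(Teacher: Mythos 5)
There is a genuine gap in your proposal, and it concerns the ring over which $\Hom$ is taken. Throughout your argument you use $\Hom^\bullet_R$ where $R=H^*_{\frak L^+G_{n,\bbH}}(\on{pt})$, but the proposition asserts fully faithfulness into $H$-modules where $H=H^*_{\frak L^+G_{n,\bbH}}(\Gr_{n,\bbH})$, a much larger ring (the de-equivariantization of which is essentially $\mO(J_n)$). With $R$-linear Hom in place of $H$-linear Hom the statement is simply \emph{false}: for instance, $\Ext^\bullet(\IC_0,\IC_\mu)$ computes the equivariant costalk of $\IC_\mu$ at the base point, while $\Hom^\bullet_R(H^*(\IC_0),H^*(\IC_\mu))\simeq H^*(\IC_\mu)$ is the full equivariant cohomology — these disagree once $\mu\neq 0$. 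So your claim that equivariant formality (freeness over $R$) makes the target functor exact does not address the actual target.

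Once you replace $R$ by $H$, the heart of the argument is precisely what your proposal skips. The equivariant cohomology modules $H^*_{\frak L^+G_{n,\bbH}}(\mathcal{F})$ are free over $R$ but are \emph{not} projective over $H$, so applying $\Hom^\bullet_H$ to the short exact sequence of $H$-modules coming from the recollement triangle does not automatically give exactness in the middle. In the paper's Lemma~\ref{key induction}, this is exactly where the technical content lives: one must show that any $H$-linear map $H^*_G(\mathcal{F})\to H^*_G(i_\lambda^!\mathcal{G})$ factors through $H^*_G(i_\lambda^*\mathcal{F})$, which is done using the compactly supported class $\mathfrak{c}_\lambda\in H^{\langle 2\rho_{2n},\lambda\rangle}_{G,c}(\Gr^\lambda_{n,\bbH})$ acting through $H$, together with Poincar\'e duality on the open stratum to identify $\mathfrak{c}_\lambda$-multiplication on $H^*_G(j_{\lambda!}j^!_\lambda\mathcal{F})$ with the co-restriction to $H^*_G(j_{\lambda*}j^*_\lambda\mathcal{F})$, and finally the surjectivity hypothesis (your hypothesis (3)) to conclude. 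Parity vanishing controls the Ext side (this is Lemma~\ref{parity splitting} and Corollary~\ref{parity examples}, in line with what you outline), but it does not control the $\Hom_H$ side; the five lemma can only be invoked once both rows are known to be exact. Your proposal correctly identifies the skeleton of the argument — induction on support, recollement, parity splitting — but the $\mathfrak{c}_\lambda$/Poincar\'e-duality argument is the actual engine, and it is absent.
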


We will deduce \ref{faithfulness} as a consequence of the following more general result. 

\begin{prop}\label{faithfulness and parity}
Let $\mathcal{F}, \mathcal{G} \in D^b_{\mathfrak{L}^+G_{n, \bbH}}(\Gr_{n,\bbH})$. Assume that $\mathcal{F}$ and $\mathcal{G}$ are even. Then, the natural map 
$$
\mathrm{Ext}^\bullet_{D^b_{\mathfrak{L}^{+}G_{n,\bbH}}(\Gr_{n,\bbH})}(\mathcal{F}, \mathcal{G}) \rightarrow \mathrm{Hom}^\bullet_{H^*_{\mathfrak{L}^+G_{n,\bbH}}(\Gr_{n,\bbH})}(H^*_{\mathfrak{L}^+G_{n,\bbH}}(\mathcal{F}), H^*_{\mathfrak{L}^+G_{n,\bbH}}(\mathcal{G}))
$$
is an isomorphism of graded modules. 
\end{prop}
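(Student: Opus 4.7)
The plan is to prove \ref{faithfulness and parity} by devissage, in the spirit of the parity sheaf arguments of \cite{JMW,AR} adapted to our real semi-analytic setting. I would induct on the number of $\frak L^+G_{n,\bbH}$-orbits meeting $\on{supp}(\mathcal{F})\cup\on{supp}(\mathcal{G})$. For the inductive step, given $\mathcal{G}$ with support meeting more than one orbit, pick $\lambda\in\Lambda_S^+$ such that $\Gr_{n,\bbH}^\lambda$ is open in $\on{supp}(\mathcal{G})$ and use the triangle
\[
j_!j^*\mathcal{G}\to\mathcal{G}\to i_*i^*\mathcal{G}\to
\]
with $j=i_\lambda$ and $i$ the inclusion of the closed complement.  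Evenness of $\mathcal{G}$ forces $j^*\mathcal{G}$ to be concentrated on a single orbit and $i^*\mathcal{G}$ to have strictly smaller support, so both are accessible to the induction hypothesis. A dual devissage via $i_*i^!\mathcal{F}\to\mathcal{F}\to j_*j^*\mathcal{F}$ reduces the $\mathcal{F}$ variable in the same way.

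To propagate the proposed isomorphism across each triangle, two ingredients are needed. First, on the LHS the long exact $\on{Ext}^\bullet$-sequence should split into short exact sequences; this follows from the parity vanishing of $\on{Ext}^\bullet$ between even objects, which I would establish in parallel with the main statement (the single-orbit base case providing the starting point). Second, on the RHS the functor $H^*_{\frak L^+G_{n,\bbH}}$ must take the triangle to a short exact sequence of $A$-modules (where $A:=H^*_{\frak L^+G_{n,\bbH}}(\Gr_{n,\bbH})\is U(\on{Lie}\,J_n)$ by \ref{equ cohomology}); this again follows from parity, since $H^*_{\frak L^+G_{n,\bbH}}$ of an even complex is concentrated in even degrees while the connecting map has odd degree and is therefore forced to vanish. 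Finally, $H^*(\mathcal{F})$ and $H^*(\mathcal{G})$ should be free $A$-modules, so that $\on{Hom}^\bullet_A(H^*(\mathcal{F}),-)$ is exact on short exact sequences. Freeness is expected to follow from the real MV-filtration \ref{real MV filtration} on perverse constituents, extended to general even complexes using parity vanishing to split the perverse filtration. Given these ingredients, the five-lemma closes the inductive step.

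The base case reduces to the situation $\mathcal{F}\simeq (i_\mu)_!\bigoplus_k\underline{\bbC}_{\Gr^\mu_{n,\bbH}}[2a_k]$ and $\mathcal{G}\simeq (i_\lambda)_*\bigoplus_k\underline{\bbC}_{\Gr^\lambda_{n,\bbH}}[2b_k]$ for single orbits. By adjunction the LHS becomes $\on{Ext}^\bullet(\underline{\bbC},(i_\mu)^!(i_\lambda)_*\underline{\bbC})$ up to shifts, which vanishes unless $\mu\leq\lambda$ and (in the equal case) reduces to the equivariant cohomology of $\Gr^\lambda_{n,\bbH}$; the RHS reduces to a $\on{Hom}^\bullet_A$-computation between the equivariant Borel--Moore homology and equivariant cohomology of the same orbit, and matches the LHS via the equivariant formality of $\Gr^\lambda_{n,\bbH}$. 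This formality follows from the structure of $\Gr^\lambda_{n,\bbH}$ as a real vector bundle over the quaternionic partial flag manifold $G_{n,\bbH}/P^\lambda_{n,\bbH}$, which carries an affine paving by (the $k=1$ specialization of) Lemma \ref{affinepaving}, together with the parity-vanishing of its cohomology.

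The hardest part will be the freeness of $H^*_{\frak L^+G_{n,\bbH}}(\mathcal{F})$ over $A$ for an arbitrary even complex $\mathcal{F}$: the MV-filtration of \ref{real MV filtration} is formulated only for perverse sheaves, so one must extend it to even complexes by controlling extensions between perverse graded pieces via parity vanishing. Once this freeness is available, the identification $A\is U(\on{Lie}\,J_n)$ via \ref{equ cohomology} makes the explicit matching of both sides in the base case essentially formal.
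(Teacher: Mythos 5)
Your overall skeleton — devissage on supports, parity vanishing of the Ext boundary maps via Lemma \ref{parity splitting}, and equivariant formality in the base case — matches the paper closely. The paper's actual proof also reduces to an inductive statement (Lemma \ref{key induction}) proved by exactly this kind of devissage. But where you part ways is the crucial point, and that is where the gap sits.

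The heart of the paper's argument is \emph{not} freeness of $H^*_{\mathfrak{L}^+G_{n,\bbH}}(\mathcal{F})$ over $A := H^*_{\mathfrak{L}^+G_{n,\bbH}}(\Gr_{n,\bbH})$. In Lemma \ref{key induction} the two short exact sequences of cohomology (one for $\mathcal{F}$ using $j_!j^!$, one for $\mathcal{G}$ using $i_*i^!$) are compared via a Hom sequence, and the delicate step is exactness in the middle: one must show an arbitrary $A$-linear map $\alpha\colon H^*(\mathcal{F})\to H^*(i_\lambda^!\mathcal{G})$ factors through $H^*(i_\lambda^*\mathcal{F})$, i.e.\ kills the image of $H^*(j_{\lambda!}j_\lambda^!\mathcal{F})$. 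Projectivity of $H^*(\mathcal{F})$ over $A$ does \emph{not} give this, because the sequence you need to analyze has the first variable changing as well; left-exactness of $\on{Hom}$ out of a fixed projective is not the relevant issue. The paper's mechanism is geometric: a compactly supported class $\mathfrak{c}_\lambda\in H$ maps to zero on the closed complement, hence acts by zero on $H^*(i_\lambda^!\mathcal{G})$; by Poincar\'e duality on the orbit and a localization-theorem surjectivity (hypothesis (3) of Lemma \ref{key induction}, verified via \eqref{surj}), the image of $H^*(j_{\lambda!}j_\lambda^!\mathcal{F})$ is contained in $\mathfrak{c}_\lambda\cdot H^*(\mathcal{F})$, so $\alpha$ kills it by $A$-linearity. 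Your proposal has nothing corresponding to this factorization step, and freeness does not replace it.

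You also correctly flag freeness over $A$ as the hard point and offer only a sketch (extend the MV filtration from perverse sheaves to even complexes). This is a genuine additional gap: the MV filtration gives freeness over $R_T$ (or $R_G$), not over the much larger ring $A \cong U(\on{Lie}\,J_n)$; the two claims are not the same and the latter is not established by the tools you cite. Finally, the dual devissage $i_*i^!\mathcal{F}\to\mathcal{F}\to j_*j^*\mathcal{F}$ does not preserve evenness (the $*$-stalks of $i_*i^!\mathcal{F}$ involve $*$-restrictions of $i^!\mathcal{F}$ inside the closed subset, not stalks of $\mathcal{F}$ itself). The paper avoids this by working with the asymmetric hypotheses (``$\mathcal{F}$ is $*$-even'', ``$\mathcal{G}$ is $!$-even'') and a single triangle $j_!j^!\mathcal{F}\to\mathcal{F}\to i_*i^*\mathcal{F}$, which does preserve $*$-evenness, together with the propagated hypotheses (3),(4). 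To close your argument you would need either to establish freeness over $A$ for all even complexes and rule out the evenness-preservation issue, or to reintroduce the factorization argument — at which point you are essentially reconstructing the paper's proof.
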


\begin{proof}[Proof of \ref{faithfulness}]
By \ref{IC parity}, we know that $\IC_\lambda$ and $\IC_\mu$ are even complexes. The claim now follows from \ref{faithfulness and parity}. 
\end{proof}

\subsubsection{} In the proof of \ref{faithfulness and parity}, we will make use of the following terminology. Consider a triangulated functor
$$
\Omega: D^b_{\mathfrak{L}^+G_{n, \bbH}}(\Gr_{n,\bbH}) \rightarrow D^b(\mathrm{Vect}_\bbC). 
$$
We say that $\Omega$ is $*$-\textit{parity preserving} (resp. $!$-\textit{parity preserving}) if it takes $*$-even (resp. $!$-even)  complexes of sheaves to even complexes of vector spaces. If $\Omega$ is both $*$-parity preserving and $!$-parity preserving, we will simply say that it is parity preserving. We use the same terminology for functors 
$$
\Omega: D^b_{\mathfrak{L}^+G_{n, \bbH}}(\Gr_{n,\bbH})^{\mathrm{op}} \rightarrow D^b(\mathrm{Vect}_\bbC).
$$
 To check that functors are parity preserving, we will use the following criterion. 
\begin{lemma}\label{parity criterion}
Let
$$
\Omega: D^b_{\mathfrak{L}^+G_{n, \bbH}}(\Gr_{n,\bbH}) \rightarrow D^b(\mathrm{Vect}_\bbC). 
$$
be a triangulated functor. Then, $\Omega$ is $*$-even if and only if, for each $\lambda \in \Lambda_S^+$, the complex $\Omega(j_{\lambda !} \underline{\bbC})$ is even. Here, $j_{\lambda}: \Gr^\lambda_{n,\bbH} \hookrightarrow \Gr_{n,\bbH}$ is the natural inclusion. 
\end{lemma}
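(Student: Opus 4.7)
The plan is to reduce the question to the single generators $j_{\lambda!}\underline{\bbC}$ via dévissage along the orbit stratification of $\Gr_{n,\bbH}$. The ``only if'' direction is immediate: the sheaf $j_{\lambda!}\underline{\bbC}$ is itself $*$-even, because $i_\mu^* j_{\lambda!}\underline{\bbC}$ vanishes for $\mu\neq\lambda$ and equals $\underline{\bbC}$ concentrated in degree zero when $\mu=\lambda$. Hence if $\Omega$ is $*$-parity preserving, $\Omega(j_{\lambda!}\underline{\bbC})$ is automatically even.

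For the ``if'' direction, I would induct on the number of $\mathfrak{L}^+G_{n,\bbH}$-orbits in the support of a given $*$-even complex $\mathcal{F}$; this support is finite since $\mathcal{F}$ is bounded and constructible. The base case is when the support is a single orbit $\Gr^\lambda_{n,\bbH}$, so $\mathcal{F}\simeq j_{\lambda!}(j_\lambda^*\mathcal{F})$ with $j_\lambda^*\mathcal{F}$ a direct sum of even shifts of the equivariant constant sheaf by the $*$-even hypothesis. The desired evenness of $\Omega(\mathcal{F})$ follows directly from the hypothesis on $\Omega(j_{\lambda!}\underline{\bbC})$ together with triangulatedness of $\Omega$.

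For the inductive step, I pick an orbit $\Gr^\lambda_{n,\bbH}$ that is open in $\mathrm{supp}(\mathcal{F})$, set $Z=\mathrm{supp}(\mathcal{F})\setminus\Gr^\lambda_{n,\bbH}$, and apply $\Omega$ to the standard distinguished triangle
\[
j_{\lambda!}j_\lambda^*\mathcal{F} \lra \mathcal{F} \lra i_{Z*}i_Z^*\mathcal{F} \lra.
\]
The third term is $*$-even with strictly smaller support (its restriction to any orbit in $Z$ agrees with that of $\mathcal{F}$, and vanishes on the removed orbit), so inductively $\Omega(i_{Z*}i_Z^*\mathcal{F})$ is even. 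The first term is a direct sum of even shifts of $j_{\lambda!}\underline{\bbC}$, hence $\Omega$ sends it to an even complex by hypothesis. Since every object of $D^b(\mathrm{Vect}_\bbC)$ splits as the direct sum of its cohomology placed in appropriate degrees, the long exact sequence in cohomology of the image triangle forces $\Omega(\mathcal{F})$ to be concentrated in even degrees, as desired.

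I do not anticipate a serious obstacle: the argument is purely formal, relying only on the orbit stratification, the excision triangle, and the observation that evenness is stable under extensions in $D^b(\mathrm{Vect}_\bbC)$. The one point that deserves a moment's care is that $i_Z^*\mathcal{F}$ inherits $*$-evenness from $\mathcal{F}$, but this is immediate from $i_\mu^*i_{Z*}i_Z^*\mathcal{F}\simeq i_\mu^*\mathcal{F}$ for orbits $\mu$ lying in $Z$. The $!$-parity statement is handled by the dual argument, using the triangle $i_{Z*}i_Z^!\mathcal{F}\to\mathcal{F}\to j_{\lambda*}j_\lambda^*\mathcal{F}$ and the $!$-even generators $j_{\lambda*}\underline{\bbC}$.
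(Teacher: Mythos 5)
Your proof is correct and follows essentially the same route as the paper's: dévissage along the orbit stratification, the open–closed excision triangle $j_!j^*\mathcal{F}\to\mathcal{F}\to i_*i^*\mathcal{F}$ (recall $j^!=j^*$ for an open immersion, so your triangle agrees with the paper's), and the observation that evenness in $D^b(\mathrm{Vect}_\bbC)$ is closed under extensions. The only cosmetic difference is that you spell out the base case and explicitly allow $j_\lambda^*\mathcal{F}$ to be a direct sum of even shifts rather than a single one, which is a slight tightening of the paper's wording; the final sentence about the $!$-parity variant goes beyond what the lemma asserts but is harmless.
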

\begin{proof}
We assume that the latter condition holds, and prove that $\Omega$ is $*$-parity preserving. Let $\mathcal{F} \in D^b_{\mathfrak{L}^+G_{n, \bbH}}(\Gr_{n,\bbH})$ be $*$-even. We must show that $\Omega(\mathcal{F})$ is even, which we do by induction on the support of $\mathcal{F}$ (which is a finite union of $\mathfrak{L}^+ G_{n,\bbH}$ orbits). Let  
$$
j: \Gr^\lambda_{n,\bbH} \hookrightarrow \Gr_{n,\bbH}
$$
denote the inclusion of a $\mathfrak{L}^+ G_{n,\bbH}$ orbit open in the support of $\mathcal{F}$. Let 
$$
i: \overline{\Gr^\lambda_{n,\bbH}} \setminus \Gr^\lambda_{n,\bbH} \hookrightarrow \Gr_{n,\bbH}
$$
denote the complementary closed embedding. We have a triangle
$$
j_{ !}j_{}^! \mathcal{F} \rightarrow \mathcal{F} \rightarrow i_{ *}i^*_{} \mathcal{F} \rightarrow . 
$$
Applying $\Omega$ yields
$$
\Omega(j_!j^! \mathcal{F}) \rightarrow \Omega(\mathcal{F}) \rightarrow \Omega(i_*i^* \mathcal{F}) \rightarrow .
$$
By induction, we may assume that $\Omega(i_*i^* \mathcal{F})$ is even. On the other hand, $\mathcal{F}$ is $\mathfrak{L}^+G_{n,\bbH}$ equivariant and $*$-even. Hence, $j^!\mathcal{F} \simeq \underline{\bbC}[m]$, for $m \in \bbZ$. We have that $m$ is even. Hence, $\Omega(j_!j^! \mathcal{F})$ is $*$-even. Therefore, $\Omega(\mathcal{F})$ is $*$-even, and $\Omega$ is $*$-parity preserving.  

For the converse, observe that each $j_{\lambda!}\underline{\bbC}$ is $*$-even, as its only non-trivial stalk is isomorphic to $H^*_{\mathfrak{L}^+ G_{n,\bbH}}(\mathrm{pt})$. Hence, $\Omega(j_{\lambda!}\underline{\bbC})$ is $*$-even. 
\end{proof}

\begin{corollary}\label{parity examples}
(i) The functor $\Gamma_{\mathfrak{L}^+ G_{n,\bbH}}(\Gr_{n,\bbH}, -)$ is $*$-parity preserving. 

(ii) Let $\mathcal{G} \in D^b_{\mathfrak{L}^+G_{n,\bbH}}(\Gr_{n,\bbH})$ be $!$-even. Then, the functor $\mathrm{Hom}_{D^b_{\mathfrak{L}^{+}G_{n,\bbH}}(\Gr_{n,\bbH})}(-, \mathcal{G})$ is $*$-parity preserving. 
\end{corollary}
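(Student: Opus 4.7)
The plan is to apply Lemma \ref{parity criterion} in both cases. The stated criterion is for covariant functors, but its proof adapts verbatim to the contravariant setting of (ii) via the same triangle $j_!j^!\mathcal{F} \to \mathcal{F} \to i_*i^*\mathcal{F}$ and the adjunction $j_{\lambda!}\dashv j_\lambda^!$. In each case the problem thereby reduces to checking that $\Omega(j_{\lambda!}\underline{\bbC})$ lies in even degrees for every $\lambda\in\Lambda_S^+$. For (i), adjunction identifies
\[
\Gamma_{\mathfrak{L}^+G_{n,\bbH}}(j_{\lambda!}\underline{\bbC}) \simeq H^*_{\mathfrak{L}^+G_{n,\bbH},c}(\Gr^\lambda_{n,\bbH},\underline{\bbC}),
\]
and for (ii), using that $\mathcal{G}$ is $!$-even,
\[
\mathrm{Hom}(j_{\lambda!}\underline{\bbC},\mathcal{G}) \simeq H^*_{\mathfrak{L}^+G_{n,\bbH}}(\Gr^\lambda_{n,\bbH}, j_\lambda^!\mathcal{G})
\]
is a direct sum of even-degree shifts of $H^*_{\mathfrak{L}^+G_{n,\bbH}}(\Gr^\lambda_{n,\bbH},\underline{\bbC})$. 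Both parts thus reduce to the parity of (possibly compactly supported) equivariant cohomology of a single orbit with constant coefficients.

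Since $\mathfrak{L}^+G_{n,\bbH}$ acts transitively on $\Gr^\lambda_{n,\bbH}$, one has $H^*_{\mathfrak{L}^+G_{n,\bbH}}(\Gr^\lambda_{n,\bbH},\underline{\bbC})\simeq H^*(B\,\mathrm{Stab}(t^\lambda))$, where the stabilizer retracts up to homotopy onto the compact form of the Levi factor of $P^\lambda_{n,\bbH}$, which is a product $\prod_i\mathrm{Sp}(n_i)$ of compact symplectic groups. The classifying space cohomology
\[
H^*(B\,\mathrm{Stab}(t^\lambda)) \simeq \bigotimes_i H^*(B\mathrm{Sp}(n_i)) \simeq \bigotimes_i \bbC[p_1,\dots,p_{n_i}]
\]
is a polynomial ring on Pontryagin classes of degrees $4,8,\dots$, hence concentrated in degrees divisible by $4$. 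For the compactly supported version used in (i), one applies the equivariant Thom isomorphism to the real vector bundle structure $\Gr^\lambda_{n,\bbH}\to G_{n,\bbH}/P^\lambda_{n,\bbH}$ from \cite[Proposition 3.6.1]{Na}: the shift is by the real rank of the bundle, and since both the total real dimension $2\langle\lambda,\rho_{2n}\rangle$ and the real dimension of the quaternionic flag base lie in $4\bbZ$, so does the rank.

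The main technical point will be to verify the homotopy retraction of $\mathrm{Stab}(t^\lambda)$ and the equivariant Thom isomorphism rigorously in the ind-proalgebraic/semi-analytic setting developed in Section \ref{sheaves}; these follow by reducing to finite-dimensional quotients of $\mathfrak{L}^+G_{n,\bbH}$ in the usual way, analogously to the equivariant formality argument employed in the proof of Proposition \ref{IC parity}. Apart from this bookkeeping, the proof is a direct application of the parity criterion combined with the classical parity statement for classifying spaces of compact symplectic groups.
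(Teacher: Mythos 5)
Your proof is correct and follows the same basic strategy as the paper: apply the reduction of Lemma~\ref{parity criterion} (together with its contravariant analogue, which you rightly observe goes through via the same triangle and the adjunction $j_{\lambda!}\dashv j_\lambda^!$) to reduce both parts to evenness statements for the equivariant (compactly supported) cohomology of a single orbit. Where the paper is elliptical you fill in the missing step: the evenness of $H^*_{\frak L^+G_{n,\bbH}}(\Gr^\lambda_{n,\bbH})$, which you identify with $H^*\bigl(B\textstyle\prod_i\mathrm{Sp}(n_i)\bigr)$, a polynomial ring concentrated in degrees divisible by $4$; for (i) the paper points to the proof of Lemma~\ref{parity criterion}, where only the $*$-evenness of the sheaf $j_{\lambda!}\underline{\bbC}$ is noted, and for (ii) it says the conclusion follows from $\calG$ being $!$-even without making the orbit-cohomology input explicit. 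Your extra Thom step for the compactly supported version is sound: the rank of the bundle $\Gr^\lambda_{n,\bbH}\to G_{n,\bbH}/P^\lambda_{n,\bbH}$ does lie in $4\bbZ$ since both total space and base do. The only thing you leave unaddressed is equivariant orientability of this bundle, which is what the Thom isomorphism actually requires; this holds because the bundle is $\frak L^+G_{n,\bbH}$-equivariant with the group retracting onto the simply connected compact form $\mathrm{Sp}(n)$, so no orientation obstruction can survive. With that small remark your argument is complete and is, up to these spelled-out details, the paper's own.
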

\begin{proof}
(i) We checked in the proof of \ref{parity criterion} that each $H^*_{\mathfrak{L}^+ G_{n,\bbH}}(j_!\underline{\bbC})$ is even. By the conclusion of that lemma, we conclude that $\Gamma_{\mathfrak{L}^+ G_{n,\bbH}}(\Gr_{n,\bbH}, -)$ is $*$-parity preserving. 

(ii) By \ref{parity criterion}, we must check that 
$$
\mathrm{Ext}^i_{D^b_{\mathfrak{L}^{+}G_{n,\bbH}}(\Gr_{n,\bbH})}(j_{\lambda !} \underline{\bbC}, \mathcal{G}) \simeq 0
$$
for each $\lambda \in X_{A}^+$ and $i$ odd. By adjunction, 
$$
\mathrm{Ext}^i_{D^b_{\mathfrak{L}^{+}G_{n,\bbH}}(\Gr_{n,\bbH})}(j_{\lambda !} \underline{\bbC}, \mathcal{G}) \simeq \mathrm{Ext}^i_{D^b_{\mathfrak{L}^{+}G_{n,\bbH}}(\Gr_{n,\bbH})}(\underline{\bbC}, j_{\lambda}^!\mathcal{G}) \simeq H_{\mathfrak{L}^{+}G_{n,\bbH}}^i(j_{\lambda}^! \mathcal{G}). 
$$
The claim follows from the assumption that $\mathcal{G}$ is $!$-even. 
\end{proof}

\begin{lemma}\label{parity splitting}
Let $\mathcal{F}\in D^b_{\mathfrak{L}^+G_{n, \bbH}}(\Gr_{n,\bbH})$ be $*$-even. Suppose that $X \subseteq \Gr_{n,\bbH}$ is a closed finite union of $\mathfrak{L}^+{G_{n,\bbH}}$ orbits, such that $X$ contains the support of $\mathcal{F}$. Let $Z \subseteq X$ denote a $\mathfrak{L}^+{G_{n,\bbH}}$-stable closed subset, and $U = X \setminus Z$ its open complement. Let $j: U \hookrightarrow X$ and $i: Z \hookrightarrow X$ denote the natural inclusions. We have a triangle
$$
j_!j^! \mathcal{F} \rightarrow \mathcal{F} \rightarrow i_*i^* \mathcal{F} \rightarrow .
$$
Now let $\Omega: D^b_{\mathfrak{L}^+G_{n, \bbH}}(\Gr_{n,\bbH}) \rightarrow D^b(\mathrm{Vect}_\bbC)$ denote a $*$-parity preserving functor. Then, the triangle  
$$
\Omega(j_!j^! \mathcal{F}) \rightarrow \Omega(\mathcal{F}) \rightarrow \Omega(i_*i^* \mathcal{F}) \rightarrow
$$
is split. 
\end{lemma}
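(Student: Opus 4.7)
The plan is to reduce the splitting to the vanishing of the connecting morphism $\Omega(i_*i^*\mathcal{F}) \to \Omega(j_!j^!\mathcal{F})[1]$, and then to obtain that vanishing by parity considerations: a distinguished triangle in $D^b(\mathrm{Vect}_\bbC)$ whose boundary is zero automatically splits (one gets a section of the second map, for instance by taking chain-level lifts, so the triangle is isomorphic to the direct-sum triangle on $\Omega(j_!j^!\mathcal{F}) \oplus \Omega(i_*i^*\mathcal{F})$).

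First I would check that both $j_!j^!\mathcal{F}$ and $i_*i^*\mathcal{F}$ are themselves $*$-even. For the closed embedding $i$, the sheaf $i_*i^*\mathcal{F}$ is supported on $Z$, and since $Z$ is $\mathfrak{L}^+G_{n,\bbH}$-stable each orbit is either contained in $Z$ or disjoint from it; on orbits contained in $Z$ the $*$-stalks agree with those of $\mathcal{F}$, which are even by assumption, and elsewhere they vanish. For the open embedding $j$ one has $j^! = j^*$, so the same stalk-by-stalk argument applies on $U$, with vanishing stalks along $Z$.

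Applying the hypothesis that $\Omega$ is $*$-parity preserving then gives that both $\Omega(j_!j^!\mathcal{F})$ and $\Omega(i_*i^*\mathcal{F})$ are concentrated in even degrees, that is, are (non-canonically) direct sums of even shifts of $\bbC$. Hence $\Omega(j_!j^!\mathcal{F})[1]$ is concentrated in odd degrees, and any morphism from an even complex of vector spaces to an odd one vanishes degree-by-degree. This forces the connecting morphism in the triangle to be zero and produces the desired splitting. I do not anticipate a real obstacle here; the only nontrivial point is tracking that the equivariance of $Z$ makes the $*$-parity of $i_*i^*\mathcal{F}$ and $j_!j^!\mathcal{F}$ reduce immediately to the $*$-parity of $\mathcal{F}$ itself, bypassing any subtlety at the boundary of $Z$.
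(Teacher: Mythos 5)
Your proof is correct and follows essentially the same route as the paper's: show the boundary map $\delta\colon \Omega(i_*i^*\mathcal F)\to\Omega(j_!j^!\mathcal F)[1]$ vanishes because the source is even while the target is odd, after noting that $i_*i^*$ and $j_!j^!$ preserve $*$-evenness and applying the $*$-parity hypothesis on $\Omega$. The only difference is that you spell out the stalkwise check that $i_*i^*$ and $j_!j^!$ preserve $*$-evenness, which the paper states without elaboration.
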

\begin{proof}
We must show that the boundary map $\delta: \Omega(i_*i^* \mathcal{F}) \rightarrow \Omega(j_!j^! \mathcal{F})[1]$ is zero. Observe that the functors $j_!j^!$ and $i_*i^*$ take $*$-even sheaves to $*$-even sheaves. Since $\Omega$ is $*$-parity preserving, the complexes $\Omega(i_*i^* \mathcal{F})$ and $\Omega(j_!j^! \mathcal{F})$ are even. Hence, $\Omega(j_!j^! \mathcal{F})[1]$ is odd. Therefore, $\delta$ induces the zero map in cohomology, so is zero. 
\end{proof}

\begin{lemma}\label{key induction}
Let $\mathcal{F}, \mathcal{G} \in D^b_{\mathfrak{L}^+G_{n, \bbH}}(\Gr_{n,\bbH})$. We make the following assumptions:
\begin{enumerate}
\item $\mathcal{F}$ is $*$-even. 
\item $\mathcal{G}$ is $!$-even.
\item For any $\mu \in \Lambda_S^+$, the map 
$$
H^*_{\mathfrak{L}^+G_{n,\bbH}}(\mathcal{F}) \rightarrow H^*_{\mathfrak{L}^+G_{n,\bbH}}(j_\mu^*\mathcal{F}) 
$$
is surjective. 
\item For any $\mu \in \Lambda_S^+$, the map 
$$
H^*_{\mathfrak{L}^+G_{n,\bbH}}(j_{\mu !}j_{\mu}^!\mathcal{G}) \rightarrow H^*_{\mathfrak{L}^+G_{n,\bbH}}(\mathcal{G}) 
$$
is injective. 
\end{enumerate}
Then, the natural map 
$$
\mathrm{Ext}^*_{D^b_{\mathfrak{L}^{+}G_{n,\bbH}}(\Gr_{n,\bbH})}(\mathcal{F}, \mathcal{G}) \rightarrow \mathrm{Hom}^*_{H^*_{\mathfrak{L}^+G_{n,\bbH}}(\Gr_{n,\bbH})}(H^*_{\mathfrak{L}^+G_{n,\bbH}}(\mathcal{F}), H^*_{\mathfrak{L}^+G_{n,\bbH}}(\mathcal{G}))
$$
is an isomorphism of graded modules. 
\end{lemma}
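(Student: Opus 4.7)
The plan is to prove the lemma by induction on the number of $\mathfrak{L}^+G_{n,\bbH}$-orbits in the support of $\mathcal{F}$. In the inductive step I pick an orbit $\Gr^\lambda_{n,\bbH}$ open in $\mathrm{supp}(\mathcal{F})$, with open inclusion $j = j_\lambda$ and complementary closed inclusion $i$, and consider the triangle
\[
j_!j^!\mathcal{F} \to \mathcal{F} \to i_*i^*\mathcal{F} \to .
\]
The functor $\mathrm{Hom}(-,\mathcal{G})$ is $*$-parity preserving by Corollary \ref{parity examples}(ii) (applicable since $\mathcal{G}$ is $!$-even), and $H^*_{\mathfrak{L}^+G_{n,\bbH}}$ is $*$-parity preserving by Corollary \ref{parity examples}(i). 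Both $j_!j^!\mathcal{F}$ and $i_*i^*\mathcal{F}$ are $*$-even, so Lemma \ref{parity splitting} produces two split short exact sequences: one of graded Ext groups, and one of cohomology groups. The latter is automatically a splitting of $H^*_{\mathfrak{L}^+G_{n,\bbH}}(\Gr_{n,\bbH})$-modules, since the vanishing connecting map is module-linear. Applying $\mathrm{Hom}^*(-, H^*_{\mathfrak{L}^+G_{n,\bbH}}(\mathcal{G}))$ to the split cohomology sequence yields a third short exact sequence, and these three sequences fit into a commutative ladder with the natural comparison map. By the five lemma it then suffices to prove the statement for $i_*i^*\mathcal{F}$ and $j_!j^!\mathcal{F}$ individually.

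Before invoking induction I would verify that conditions (1) and (3) descend to both $i_*i^*\mathcal{F}$ and $j_!j^!\mathcal{F}$. Evenness is preserved by $j_!j^!$ and $i_*i^*$. Surjectivity in (3) propagates by a direct chase: for $\Gr^\mu_{n,\bbH}$ outside the relevant support the restriction is zero, while for $\mu = \lambda$ the splitting $H^*(\mathcal{F}) \cong H^*(j_!j^!\mathcal{F}) \oplus H^*(i_*i^*\mathcal{F})$, together with $j_\lambda^*(i_*i^*\mathcal{F}) = 0$, identifies the restriction map on each summand with a complementary factor of the original surjection from (3). The inductive hypothesis then handles $i_*i^*\mathcal{F}$ (strictly smaller support), leaving the single-orbit case.

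For the single-orbit case, condition (1) allows me to reduce to $\mathcal{F} = j_{\lambda!}\underline{\bbC}$ after extracting even shifts as direct summands. By $(j_{\lambda!}, j_\lambda^!)$-adjunction, the Ext side equals $H^*_{\mathfrak{L}^+G_{n,\bbH}}(\Gr^\lambda_{n,\bbH}, j_\lambda^!\mathcal{G})$. For the Hom side I would use conditions (3) and (4) in tandem: condition (4) gives a filtration of $H^*_{\mathfrak{L}^+G_{n,\bbH}}(\mathcal{G})$ by costalk contributions whose associated graded is a direct sum of costalk modules $H^*_{\mathfrak{L}^+G_{n,\bbH}}(j_\nu^!\mathcal{G})$, while condition (3) for $\mathcal{F} = j_{\lambda!}\underline{\bbC}$ presents $H^*(j_{\lambda!}\underline{\bbC})$ as a cyclic $H^*_{\mathfrak{L}^+G_{n,\bbH}}(\Gr_{n,\bbH})$-module whose annihilator is the ideal cutting out the $\lambda$-stratum. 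These two structural inputs are designed to match up so that $\mathrm{Hom}^*(H^*(j_{\lambda!}\underline{\bbC}), H^*(\mathcal{G}))$ picks out exactly $H^*_{\mathfrak{L}^+G_{n,\bbH}}(\Gr^\lambda_{n,\bbH}, j_\lambda^!\mathcal{G})$; a naturality check then gives compatibility with the comparison map.

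The main obstacle is this single-orbit base case. The inductive machinery --- parity splitting followed by the five lemma --- is relatively formal and only reduces everything to a single stratum. But on a single stratum, identifying $\mathrm{Hom}^*(H^*(\mathcal{F}), H^*(\mathcal{G}))$ with a single costalk cohomology requires simultaneously using both the cyclic structure of $H^*(\mathcal{F})$ coming from (3) and the cofree structure of $H^*(\mathcal{G})$ coming from (4); neither alone suffices, and the interplay is exactly what makes assumptions (3) and (4) the correct hypotheses for the lemma.
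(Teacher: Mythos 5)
Your inductive-ladder framework starts in a reasonable direction, but the bottom row you use is the wrong one, and this pushes the entire difficulty into a single-orbit ``base case'' that you neither prove nor can prove by the route you sketch.

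Concretely, you apply $\mathrm{Hom}^*(-,H^*(\mathcal{G}))$ to the split cohomology sequence of $\mathcal{F}$, producing a ladder whose end terms are $\mathrm{Hom}(H^*(i_*i^*\mathcal{F}),H^*(\mathcal{G}))$ and $\mathrm{Hom}(H^*(j_!j^!\mathcal{F}),H^*(\mathcal{G}))$. The five lemma then asks you to verify the lemma's conclusion for the pairs $(i_*i^*\mathcal{F},\mathcal{G})$ and $(j_!j^!\mathcal{F},\mathcal{G})$. The first is fine by induction. The second is not: hypothesis (3) \emph{fails} for $j_!j^!\mathcal{F}$. Indeed, for $\mu=\lambda$ the map in (3) becomes the forget-supports map $H^*_c(\Gr^\lambda_{n,\bbH},j_\lambda^!\mathcal{F})\to H^*(\Gr^\lambda_{n,\bbH},j_\lambda^!\mathcal{F})$, which is a degree-shifting injection (cup with the Thom class of the orbit) and is never surjective when $\Gr^\lambda_{n,\bbH}$ is positive-dimensional. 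So you cannot invoke the lemma's hypotheses for this pair, and your base-case sketch explicitly cites ``condition (3) for $\mathcal{F}=j_{\lambda!}\underline{\bbC}$,'' which is false. The accompanying claim that $H^*(j_{\lambda!}\underline{\bbC})$ is cyclic over $H^*_{\frak L^+G_{n,\bbH}}(\Gr_{n,\bbH})$ is also not true for general $\lambda$: it is a free $R_T$-module of rank equal to the number of $T_c$-fixed points in $\Gr^\lambda_{n,\bbH}$. You acknowledge the base case is ``the main obstacle,'' and indeed it is: the proposal does not prove it.

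The paper's proof is structured so that no such isolated base case arises. The bottom sequence there is not $\mathrm{Hom}(-,H^*(\mathcal{G}))$ of the $\mathcal{F}$-sequence; it is
\[
0\to \mathrm{Hom}^*(H^*(i_\lambda^*\mathcal{F}),H^*(i_\lambda^!\mathcal{G}))\to \mathrm{Hom}^*(H^*(\mathcal{F}),H^*(\mathcal{G}))\to \mathrm{Hom}^*(H^*(j_\lambda^*\mathcal{F}),H^*(j_\lambda^*\mathcal{G})),
\]
with \emph{both} arguments localized to $Y$ (resp. to $\Gr^\lambda_{n,\bbH}$). The induction is on $\mathrm{supp}(\mathcal{F})\cup\mathrm{supp}(\mathcal{G})$ and applies the inductive hypothesis to the pair $(i_\lambda^*\mathcal{F},i_\lambda^!\mathcal{G})$ simultaneously; the right-hand comparison map $h$ is then an isomorphism by an easy direct computation because \emph{both} $j_\lambda^*\mathcal{F}$ and $j_\lambda^*\mathcal{G}$ are equivariant local systems on a single smooth orbit. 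The genuinely nontrivial step is establishing exactness of the bottom row at the left and at the middle, and this is precisely where hypotheses (3) and (4) are consumed: one constructs the equivariant class $\frak c_\lambda\in H^{\langle 2\rho_{2n},\lambda\rangle}_{G,c}(\Gr^\lambda_{n,\bbH})$, observes that it kills $H^*(i_\lambda^!\mathcal{G})$, and uses Poincar\'e duality on $\Gr^\lambda_{n,\bbH}$ together with (3) to show that any $H$-linear map $H^*(\mathcal{F})\to H^*(i_\lambda^!\mathcal{G})$ factors through $H^*(i_\lambda^*\mathcal{F})$ (and dually for (4)). That argument is entirely absent from your proposal, and it is the actual content of the lemma. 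To repair the proposal you would need to replace the bottom row with the paper's, supply the $\frak c_\lambda$/Poincar\'e-duality middle-exactness argument, and do induction on the joint support so that the inductive step localizes $\mathcal{G}$ along with $\mathcal{F}$.
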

\begin{proof}
Let $Z$ denote the union of the supports of $\mathcal{F}$ and $\mathcal{G}$. We proceed by induction on $Z$. Certainly there exists an orbit $\Gr^\lambda_{n,\bbH}$ open in $Z$. Let $Y = Z \setminus \Gr^\lambda_{n,\bbH}$, and let $i_\lambda: Y \hookrightarrow Z$ denote the inclusion. We claim that the pair $i_\lambda^*\mathcal{F}$, $i_{\lambda}^!\mathcal{G}$ satisfies the hypotheses (1)-(4). 

That $i_{\lambda}^*\mathcal{F}$ is $*$-even and that $i_{\lambda}^! \mathcal{G}$ is $!$-even is evident. We verify (3) for $i_{\lambda}^*{\mathcal{F}}$. If $\Gr^\mu_{n,\bbH}$ does not lie in the support of $i_{\lambda}^*\mathcal{F}$, then there is nothing to prove. So, we may assume that $\Gr^\mu_{n,\bbH} \subseteq Y$. Therefore, we have the composition of maps
$$
H^*_{\mathfrak{L}^+G_{n,\bbH}}(\mathcal{F}) \rightarrow H^*_{\mathfrak{L}^+G_{n,\bbH}}(i_{\lambda}^*\mathcal{F} ) \rightarrow H^*_{\mathfrak{L}^+G_{n,\bbH}}(j_\mu^*i_{\lambda}^*\mathcal{F}) \simeq H^*_{\mathfrak{L}^+G_{n,\bbH}}(j_\mu^*\mathcal{F}). 
$$
The composite is surjective by assumption. Hence, the map 
$$
H^*_{\mathfrak{L}^+G_{n,\bbH}}(i_{\lambda}^*\mathcal{F} ) \rightarrow H^*_{\mathfrak{L}^+G_{n,\bbH}}(j_\mu^*i_{\lambda}^*\mathcal{F})
$$
is surjective, as needed. The proof that $i_{\lambda}^!\mathcal{G}$ satisfies (4) is similar. 

Now we proceed with the induction. To avoid overly cumbersone notation, we will surpress the subscripts on $\mathrm{Ext}^*_{D^b_{\mathfrak{L}^{+}G_{n,\bbH}}(\Gr_{n,\bbH})}$ and $\mathrm{Hom}^*_{H^*_{\mathfrak{L}^+G_{n,\bbH}}(\Gr_{n,\bbH})}$. Similarly, we will make use of the isomorphism $H^*_{\mathfrak{L}^+G_{n,\bbH}} \simeq H^*_G$ to simplify notation. Lastly, we let $H = H^*_{\mathfrak{L}^+G_{n,\bbH}}(\Gr_{n,\bbH})$. 

 Consider the triangle 
\begin{equation}\label{basic triangle}
j_{\lambda !} j^{!}_{\lambda} \mathcal{F} \rightarrow \mathcal{F} \rightarrow i_{\lambda *} i_{\lambda}^*\mathcal{F} \rightarrow .
\end{equation}
By \ref{parity splitting}, \ref{parity examples}, and adjunction, we have an exact sequence 
\begin{equation}\label{ext sequence}
0 \rightarrow \mathrm{Ext}^*(i_{\lambda}^* \mathcal{F}, i_{\lambda}^!\mathcal{G}) \rightarrow \mathrm{Ext}^*(\mathcal{F}, \mathcal{G}) \rightarrow \mathrm{Ext}^*(j_{\lambda}^! \mathcal{F}, j_{\lambda}^!\mathcal{G}) \rightarrow 0.
\end{equation}
We can also apply the functor $H^*$ to \ref{basic triangle} to obtain the exact sequence 
$$
0 \rightarrow H_G^*(j_{\lambda !}j_{\lambda}^! \mathcal{F}) \rightarrow H_G^*(\mathcal{F}) \rightarrow H_G^*(i_{\lambda}^*\mathcal{F}) \rightarrow 0. 
$$
Similarly, we have the exact sequence
\begin{equation}
0 \rightarrow H_G^*(i_{\lambda}^! \mathcal{G}) \rightarrow H_G^*(\mathcal{G}) \rightarrow H_G^*(j_{\lambda}^*\mathcal{G}) \rightarrow 0. 
\end{equation}
These two exact sequences induce a sequence
$$
0 \rightarrow \mathrm{Hom}^*(H^*_G(i_{\lambda}^*\mathcal{F}), H^*_G(i_{\lambda}^!\mathcal{G})) \rightarrow \mathrm{Hom}^*(H^*_G(\mathcal{F}), H^*_G(\mathcal{G})) \rightarrow \mathrm{Hom}^*(H^*_G(\mathcal{F}), H^*_G(j_{\lambda}^*\mathcal{G})).
$$
The second map is clearly an injection. We claim that the sequence is also exact in the middle. It suffices to show that any $H$-linear map 
$$
\alpha: H_G^*(\mathcal{F}) \rightarrow H_G^*(i_{\lambda}^! \mathcal{G})
$$
factors through $H_G^*(i_{\lambda}^*\mathcal{F})$. Consider the compactly supported cohomology $H^*_{G,c}(\Gr^{\lambda}_{n,\bbH})$. Let $\mathfrak{c}_\lambda \in H_{G,c}^{\langle 2\rho_{2n}, \lambda \rangle}(\Gr^{\lambda}_{n,\bbH})$ denote a lift of a generator to $G$-equivariant cohomology; it maps to an element $\mathfrak{c}_{\lambda} \in H$. Since $\mathfrak{c}_{\lambda}$ maps to $0 \in H^*_G(Y)$, it acts trivially on $H^*(i_{\lambda}^!\mathcal{G})$. Since $\alpha$ is $H$-linear, it suffices to show that $H^*(j_{\lambda !}j_\lambda^! \mathcal{F})$ lies in the image of 
$$
\mathfrak{c}_{\lambda}: H_G^*(\mathcal{F}) \rightarrow H^*_G(\mathcal{F}[-\langle 2 \rho_{2n}, \lambda \rangle]). 
$$
To do so, we note that by Poincar\'e duality for the smooth manifold $\Gr_{n,\bbH}^\lambda$, cupping with $\mathfrak{c}_{\lambda}$ induces an isomorphism 
$$
\mathfrak{c}_{\lambda}: H_G^*(j_{\lambda !}j^!_{\lambda}\mathcal{F}) \rightarrow H_G^*(j_{\lambda *} j_{\lambda}^*\mathcal{F}[-\langle 2 \rho_{2n}, \lambda \rangle]). 
$$
Thus we obtain a commutative diagram 
\[\xymatrix{H_G^*(\mathcal{F}) \ar[r]^{\mathfrak{c}_\lambda \qquad}& H_G^*(\mathcal{F}[-\langle 2 \rho_{2n}, \lambda \rangle]) \ar[d]\\
H_G^*(j_{\lambda !}j^!_{\lambda}\mathcal{F}) \ar[u] \ar[r]^{\mathfrak{c}_\lambda \qquad} & H_G^*(j_{\lambda *} j_{\lambda}^*\mathcal{F}[-\langle 2 \rho_{2n}, \lambda \rangle]).}\]
As the bottom arrow is an isomorphism, it suffices to show that the right vertical map is surjective. But this is assumed in (3). 

Next, we observe that any $H$-linear map $\beta: H_G^*(\mathcal{F}) \rightarrow H_G^*(j^*_{\lambda}\mathcal{G})$ factors through $H_G^*(j^*_{\lambda} \mathcal{F})$. The proof is similar to that of the previous step, using (4) in place of (3), and is therefore omitted. 

Hence, we have an exact sequence 
$$
0 \rightarrow \mathrm{Hom}^*(H_G^*(i_{\lambda}^*\mathcal{F}), H_G^*(i_{\lambda}^!\mathcal{G})) \rightarrow \mathrm{Hom}^*(H_G^*(\mathcal{F}), H_G^*(\mathcal{G})) \rightarrow \mathrm{Hom}^*(H_G^*(j_{\lambda}^*\mathcal{F}), H_G^*(j_{\lambda}^*\mathcal{G})).
$$
To conclude, we observe that this exact sequence fits into the following commutative diagram with \ref{ext sequence}: 
\[\xymatrix{\mathrm{Ext}^*(i_\lambda^* \mathcal{F}, i_\lambda^!\mathcal{G}) \ar[r] \ar[d]_{f} & \mathrm{Ext}^*(\mathcal{F}, \mathcal{G}) \ar[r]^{\qquad} \ar[d]_{g}& \mathrm{Ext}^*(j_\lambda^*\mathcal{F}, j_\lambda^*\mathcal{G})  \ar[d]^{h}\\
\mathrm{Hom}_{H}^*(H^*_{G}(i_\lambda^* \mathcal{F}), H^*_G(i_\lambda^!\mathcal{G})) \ar[r] & \mathrm{Hom}_{H}^*(H^*_{G}(\mathcal{F}), H^*_{G}(\mathcal{G})) \ar[r] & \mathrm{Hom}_{H}^*(H^*_{G}(j_\lambda^*\mathcal{F}), H^*_{G}(j_\lambda^*\mathcal{G})). }\]
The map $f$ is an isomorphism by induction, and $h$ is easily seen to be an isomorphism. Hence, $g$ is an isomorphism, as claimed. 
\end{proof}
\begin{proof}[Proof of \ref{faithfulness and parity}]
It suffices to verify that $\mathcal{F}$ and $\mathcal{G}$ satisfy the hypotheses of \ref{key induction}. The properties (1) and (2) are assumed. We will show that (3) holds; the proof of (4) is similar. We must show that for each $\lambda \in \Lambda_S^+$, the map 
$$
H^*_G(\mathcal{F}) \rightarrow H^*_{G}(j_\lambda^*\mathcal{F})
$$
is surjective. It identifies with 
$$
H^*_{T_c}(\mathcal{F})^{W_n} \rightarrow H^*_{T_c}(j_\lambda^*\mathcal{F})^{W_n}. 
$$
Since the coefficient field has characteristic zero, the functor of $W_n$-invariants is exact, and it suffices to show that the restriction map $H^*_{T_c}(\mathcal{F}) \rightarrow H^*_{T_c}(j_\lambda^*\mathcal{F})$ is surjective. We let 
$$
k_{\lambda}: (\Gr^\lambda_{n,\bbH})^{T_c} \hookrightarrow \Gr_{n,\bbH}
$$
denote the inclusion of the $T_c$-fixed locus in $\Gr^\lambda_{n,\bbH}$. Now, consider the composition
\beq\label{surj}
H^*_{T_c}(\mathcal{F}) \rightarrow H^*_{T_c}(j_\lambda^*\mathcal{F}) \rightarrow H^*_{T_c}(k_\lambda^*\mathcal{F}). 
\eeq
Observe that $j_\lambda^*\mathcal{F}$ is a constant sheaf, and that $\Gr^\lambda_{n,\bbH}$ is an equivariantly formal $T_c$-manifold. Hence, the second map above is injective by the localization theorem. The surjectivity of the first map is then reduced to that of the composition. Now, $k_\lambda$ is a \textit{closed} inclusion, so $k_{\lambda *}k_{\lambda}^{*} \mathcal{F}$ is $*$-even. Thus, \ref{parity splitting} shows that the restriction map $H^*_{T_c}(\mathcal{F}) \rightarrow H^*_{T_c}(k_\lambda^*\mathcal{F})$ is indeed surjective.
\end{proof}

\subsection{Ext algebras}
The tensor equivalence 
$\on{Rep}(G_n)\is\on{Perv}(\Gr_{n,\bbH})$ gives rise to 
a monoidal action of $\on{Rep}(G_n)$ on
$D^b_{}(\frak L^+G_{n,\bbH}\backslash\Gr_{n,\bbH})$.
We compute the de-equivariantized Extension algebra 
\[\on{Ext}^*_{D^b_{}(\frak L^+G_{n,\bbH}\backslash\Gr_{n,\bbH})}(\IC_0,\IC_0\star\mO(G_n)).\]

\begin{prop}\label{computation of Ext}
There is a  $G_n$-equivariant isomorphism of graded algebras
\[\on{Ext}^*_{D^b_{\frak L^+G_{n,\bbH}}(\Gr_{n,\bbH})}(\IC_0,\IC_0\star\mO(G_n))\is\mO(\fg_{n}[4])\is\on{Sym}(\fg_n[-4])\]
\end{prop}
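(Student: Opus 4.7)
The plan is to mirror the approach of Bezrukavnikov--Finkelberg \cite{BF} in the complex case, with cohomological degrees uniformly doubled to reflect the quaternionic geometry.

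I will first reduce to computing $\on{Ext}$-groups between individual IC sheaves. By the real Satake equivalence (Proposition~\ref{abelian Satake}(1)), the regular representation decomposes as $\mO(G_n) \cong \bigoplus_\lambda V_\lambda \otimes V_\lambda^*$, whence $\IC_0 \star \mO(G_n) \cong \bigoplus_\lambda \IC_\lambda \otimes V_\lambda^*$ and consequently
$$\on{Ext}^*(\IC_0, \IC_0 \star \mO(G_n)) \cong \bigoplus_\lambda \on{Ext}^*(\IC_0, \IC_\lambda) \otimes V_\lambda^*$$
as $G_n$-modules. It thus suffices to construct a $G_n$-equivariant isomorphism of graded algebras between the right-hand side and $\on{Sym}(\fg_n[-4])$.

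Next I will apply the faithfulness result (Proposition~\ref{faithfulness}) to translate each $\on{Ext}^*(\IC_0, \IC_\lambda)$ into a $\Hom$-space of graded modules over $H := H^*_{\frak L^+G_{n,\bbH}}(\Gr_{n,\bbH})$. By~\eqref{equ cohomology} and the commutativity of $J_n$ over $\fc_n$, $H$ is a symmetric algebra on the rank-$n$ module $\Lie J_n$ over $H^*_{K_c}(\on{pt})$, with the generators $\Lie J_n$ placed in cohomological degree $4$. The module $H^*_{\frak L^+G_{n,\bbH}}(\IC_0) \cong H^*_{K_c}(\on{pt})$ is the augmentation quotient of $H$, on which $\Lie J_n$ acts trivially. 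Combining the real MV decomposition~\eqref{real MV filtration} with passage to $W$-invariants identifies $H^*_{\frak L^+G_{n,\bbH}}(\IC_\lambda) \cong V_\lambda \otimes H^*_{K_c}(\on{pt})$, with $\Lie J_n$ acting through the embedding $\Lie J_n \hookrightarrow \fg_n \times \fc_n$ followed by the $\fg_n$-action on $V_\lambda$; the explicit matrix form of this action is pinned down by the element $e^T_X$ of Lemma~\ref{computation}.

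I will then compute the derived $\Hom$ by resolving the augmentation module via the Koszul complex for the regular sequence $\Lie J_n$ over $H^*_{K_c}(\on{pt})$. Applying $\Hom_H(-, V_\lambda \otimes H^*_{K_c}(\on{pt}))$ produces a Chevalley--Eilenberg-style complex whose total cohomology computes $V_\lambda$-valued polynomial functions on the appropriate fibers of $\fg_n \to \fc_n$. Summing over $\lambda$ with the twist $V_\lambda^*$ and invoking Kostant's section theorem---which realizes $\fg_n^{\mathrm{reg}}$ as a $J_n$-torsor over $\fc_n$ trivialized by the Kostant section $\kappa_n$ of Section~\ref{Kostant section}---assembles these pieces into the full $G_n$-equivariant coordinate ring $\mO(\fg_n)$, graded so that $\fg_n$ sits in cohomological degree $-4$, i.e., $\on{Sym}(\fg_n[-4])$.

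The main technical obstacle will be verifying that the two algebra structures agree: on the $\on{Ext}$-side the product is induced by Yoneda composition combined with the algebra structure on $\mO(G_n)$, while on the spectral side it is ordinary multiplication of functions on $\fg_n$. Matching these requires tracing the Kostant trivialization through the real Satake equivalence and exploiting full $G_n$-equivariance, together with the explicit matrix-level identifications from Section~\ref{equ coh and hom}.
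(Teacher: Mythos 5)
Your proposal correctly identifies the key inputs -- the faithfulness result, the identification $H^*_{K_c}(\Gr_{n,\bbH}) \cong U(\Lie J_n) \cong \Sym_{\mO(\fc_n)}(\Lie J_n)$, the real MV decomposition giving $H^*_{\frak L^+G_{n,\bbH}}(\IC_\lambda) \cong V_\lambda \otimes \mO(\fc_n)$, and the Kostant section realizing $\fg_n^{\mathrm{reg}}$ as a $J_n$-torsor over $\fc_n$. However, there is a genuine gap at the central computational step, and it is exactly the place where the faithfulness proposition is designed to do the heavy lifting.

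Proposition~\ref{faithfulness} asserts that the topological $\on{Ext}^\bullet_{D^b}(\IC_0,\IC_\lambda)$ coincides with the \emph{underived} graded $\Hom^\bullet_H(H^*(\IC_0),H^*(\IC_\lambda))$, i.e.\ with the space of degree-homogeneous $H$-module maps. Since $H^*(\IC_0)\cong\mO(\fc_n)$ is the augmentation quotient of $H$, this is simply the submodule of $\Lie J_n$-invariants (equivalently $J_n$-invariants) inside the target. Your plan instead resolves $\mO(\fc_n)$ by a Koszul complex and takes the total cohomology of the resulting Chevalley--Eilenberg complex; that computes the \emph{derived} $\on{Ext}^\bullet_H$ in the abelian category of $H$-modules, which is strictly larger. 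Already in the toy example $H=\bC[x]=U(\Lie\bG_m)$ acting on $\mO(\bG_m)=\bC[z,z^{-1}]$ by $x\mapsto z\partial_z$, the graded $\Hom$ from $\bC$ is one-dimensional (the constants), while $R\Hom$ has a second copy of $\bC$ in homological degree one from the cokernel of $z\partial_z$. The same phenomenon occurs fiberwise here because $J_n$ is generically a torus, so the CE complex produces spurious higher cohomology that is not present in $\Sym(\fg_n[-4])$. The faithfulness result is precisely what lets one bypass any resolution: you should compute the plain $\Hom$, which the paper does (passing first to $T_c$-equivariant cohomology, identifying the invariants via the $(G_n\times\ft)^{e^T_X}$-torsor $\nu\colon G_n\times\ft \to \fg_n^{\mathrm{reg}}\times_{\fc_n}\ft$, $(g,t)\mapsto(\Ad_{g^{-1}}e^T_X(t),t)$, and then taking $\rW$-invariants to land in $\mO(\fg_n)$). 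Finally, you flag the grading issue but do not resolve it; the paper settles it by verifying that $\nu$ is $\bG_m$-equivariant for the weight-$(4\rho_n, -2)$ action on $G_n\times\ft$ and the weight-$(-4,-2)$ action on $\fg_n^{\mathrm{reg}}\times_{\fc_n}\ft$, which is where the $\fg_n[-4]$ (rather than $\fg_n[-2]$) shows up.
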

\begin{proof}
By Proposition \ref{faithfulness}, 
taking equivariant cohomology induces  a $G_n$-equivariant isomorphism of 
graded algebras
\[\on{Ext}^*_{D^b_{}(\frak L^+G_{n,\bbH}\backslash\Gr_{n,\bbH})}(\IC_0,\IC_0\star\mO(G_n))
\is(\Hom^*_{H^*_{T_c}(\Gr_{n,\bbH})}(H^*_{T_c}(\on{pt}),H^*_{T_c}(\Gr_{n,\bbH},\IC_0\star\mO(G_n))^\rW\is\]
\[\is
(\Hom^*_{H^*_{T_c}(\Gr_{n,\bbH})}(\mO(\ft),\mO(G_n\times\ft)))^\rW\is 
(\mO(G_n\times\ft)^{\on{Spec}(H_*^{T_c}(\Gr_{n,\bbH}))})^\rW\]
where $\mO(G_n\times\ft)^{\on{Spec}(H_*^{T_c}(\Gr_{n,\bbH}))}\subset\mO(G_n\times\ft)$ is the subspace consisting of 
functions 
that are invariant 
(relative over $\ft$) with respect to the left  action of the group scheme 
$\on{Spec}(H_*^{T_c}(\Gr_{n,\bbH}))\is(G_n\times\ft)^{e^T_X}$ on $G_n\times\ft$.
Since 
$\mO(\fg_n^{\on{reg}}\times_{\fc_n}\ft)=\mO(\fg_n^{\on{}}\times_{\fc_n}\ft)$ and the 
 map
\beq\label{torsor}
\nu:G_n\times\ft\to \fg_n^{\on{reg}}\times_{\fc_n}\ft,\ \  (g,t)\to (\on{Ad}_{g^{-1}}e^T_X(t),t)
\eeq
realizes $G_n\times\ft$  as a $(G_n\times\ft)^{e^T_X}$-torsor over $\fg_n^{\on{reg}}\times_{\fc_n}\ft$, we obtain an isomorphism of algebra
\[\on{Ext}^*_{D^b_{}(\frak L^+G_{n,\bbH}\backslash\Gr_{n,\bbH})}(\IC_0,\IC_0\star\mO(G_n))\is(\mO(G_n\times\ft)^{\on{Spec}(H_*^{T_c}(\Gr_{n,\bbH}))})^\rW\is\mO(\fg_n^{\on{reg}}\times_{\fc_n}\ft)^\rW\is\]
\[\is
\mO(\fg_n^{\on{}}\times_{\fc_n}\ft)^\rW\is\mO(\fg_n).\]
It remain to check that the isomorphism above is compatible with the desired gradings.
 By \cite[Theorem 8.5.1]{Na}, 
 for any $\lambda\in\Lambda_S$
 and $\mF\in\on{Perv}(\Gr_{n,\bbH})$,
the compactly supported cohomology $H^*_c(S_{n,\bbH}^\lambda,\mF)$ 
along the real semi-infinite orbit $S^\lambda_{n,\bbH}$
is non-zero only in degree
$\langle\lambda,\rho_{2n}\rangle$.
Note that 
$\langle\lambda,\rho_{2n}\rangle=4\langle\lambda,\rho_{n}\rangle$
where in the second paring we regard $\lambda$ as an element in $\Lambda_n$.
Thus the grading on $H^*(\Gr_{n,\bbH},\IC_\lambda)$ corresponds, under the 
geometric Satake equivalence, to the grading on $V_\lambda$
given by cocharacter $4\rho_n$
and it follows that 
the grading on $H^*_{T_c}(\Gr_{n,\bbH},\IC_0\star\mO(G_n))\is\mO(G_n\times\ft)$
is induced by the $\bG_m$ action on $G_n\times\ft$ 
given by $x(g,t)=(4\rho_n(x)g,x^{-2}t)$ (note that the generators of $\mO(\ft)$ are in degree $2$).
We claim that the map $\nu$ in~\eqref{torsor} is $\bG_m$-equivaraint 
with respect to the above action on $G_n\times\ft$ 
and the action on $\fg_n^{\on{reg}}\times_{\fc_n}\ft$ given by
$x(v,t)=(x^{-4}v,x^{-2}t)$.
Indeed,
we have 
\[\on{Ad}_{4\rho_n(x^{-1})}e_X^T(x^{-2}t)=\on{Ad}_{4\rho_n(x^{-1})}\begin{pmatrix} x^{-4}t_1^2 & 1 & \\ 
 \vdots& x^{-4}t_2^2 &\ddots &  \\ 
 \vdots &  & \ddots & 1 \\
0& 0& \hdots &x^{-4}t_n^2 \end{pmatrix}=\begin{pmatrix} x^{-4}t_1^2 & x^{-4} & \\ 
 \vdots& x^{-4}t_2^2 &\ddots &  \\ 
 \vdots &  & \ddots & x^{-4} \\
0& 0& \hdots &x^{-4}t_n^2 \end{pmatrix}=\]
\[=
x^{-4}e_X^T(t)\]
and hence 
\[\nu(x(g,t))=
\nu(4\rho_n(x)g,x^{-2}t)=(\on{Ad}_{g^{-1}}\on{Ad}_{4\rho_n(x)^{-1}}e_X^T(x^{-2}t),x^{-2}t)=\]
\[=(x^{-4}\on{Ad}_{g^{-1}}e_X^T(t),x^{-2}t)=x(\on{Ad}_{g^{-1}}e_X^T(t),t)=x\nu(g,t).\]
Thus the pull-back along the map $\nu$ induces an isomorphism 
of graded algebras
\[(\mO(G_n\times\ft)^{\on{Spec}(H_*^{T_c}(\Gr_{n,\bbH}))})^\rW\is\mO(\fg_n^{\on{reg}}[4]\times_{\fc_n}\ft[2])^\rW\is\mO(\fg_n[4]).\]
This finishes the proof of the theorem.

\end{proof}

\subsection{IC-stalks, $q$-analog of weight multiplicity, and Kostka-Foulkes polynomials}
In this section we shall prove Theorem \ref{IC stalks, intro} (2).
We will follow Ginzburg's approach \cite{G1} (see also \cite[Section 5]{Z}) using
techniques of equivariant cohomology.

Let $V\in\on{Rep}(G_n)$.
Consider the Brylinski-Kostant filtration $F_iV:=\on{ker} e_n^{i+1}, i\geq 0$ on $V$ associated to the regular nilpotent 
element $e_n$. For any $\mu\in\Lambda_n$, we denote by 
$V(\mu)$ the $\mu$-weight space of $V$ (since $G_n$ is self dual we can 
 view $\Lambda_n$
as the weight lattice of $G_n$). The filtration $F_iV$ induces a filtration on the weight space:
\[F_iV(\mu)=F_iV\cap V(\mu).\]
Let
\[P_\mu(V,q)=\sum_i\dim(F_iV(\mu)/F_{i-1}V(\mu))q^i\]
be the $q$-analogue of weight multiplicity polynomial.

From now now 
we will identify $\Lambda_n$ with the set 
$\Lambda_S$
of 
real co-weights 
and denote by $s_\mu:\{\mu\}\to \Lambda_n\is\Lambda_S\subset\Gr_{n,\bbH}$
the inclusion map

\begin{thm}\label{IC-stalks}
Let $\mF\in\on{Perv}(\Gr_{n,\bbH})$
and let $V=H^*(\Gr_{n,\bbH},\mF)$ be the 
corresponding representation of $G_n$.
 We have 
\[P_{\mu}(V,q)=\sum_i\dim H^{-4i-4\langle\mu,\rho_n\rangle}(s_\mu^*\mF)q^i=
\sum_i\dim H^{4i+4\langle\mu,\rho_n\rangle}(s_\mu^!\mF)q^i\]
\end{thm}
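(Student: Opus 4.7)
The plan is to adapt Ginzburg's strategy~\cite{G1} from the complex setting (see also~\cite{Z}) to the quaternionic geometry, using the tools built in this section. Since both sides of the claimed identity are additive in $\mF$, and $\on{Perv}(\Gr_{n,\bbH})$ is semisimple by Proposition~\ref{abelian Satake}(1), I would first reduce to the case $\mF=\IC_\lambda$ with $V=V_\lambda$ simple.

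The geometric inputs are already in place. By the real Mirkovi\'c--Vilonen theorem \cite[Theorem 8.5.1]{Na} combined with~\eqref{real MV filtration}, the $T_c$-equivariant cohomology $H^*_{T_c}(\Gr_{n,\bbH},\mF)$ is canonically a free $R_T$-module isomorphic to $V\otimes_\bC R_T$ whose weight decomposition matches the decomposition along semi-infinite orbits, with $V(\mu)$ appearing in cohomological degree $4\langle\mu,\rho_n\rangle$. By Lemma~\ref{computation}, cup-product with the equivariant Pontryagin class $p^T(\mL_\bbH)\in H^4_{T_c}(\Gr_{n,\bbH})$ acts on this module as the tautological element $-e^T_X\in\fg_n\otimes R_T$; specializing to $t=0$ gives precisely the action of minus the principal nilpotent $e_n=e^T_X(0)$ on $V$, so powers of $e_n$ correspond to iterated cup-product in equivariant cohomology.

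The central step will be to match the cohomological grading on the stalk $s_\mu^*\mF$ with the Brylinski-Kostant filtration on $V(\mu)$. I would consider the $\bG_m$-action on $\Gr_{n,\bbH}$ via the cocharacter $2\rho_n:\bG_m\to T$, whose fixed points are the $\{t^\mu\}_{\mu\in\Lambda_S}$ and whose attracting cells are the semi-infinite orbits $S^\mu_{n,\bbH}$. Applying Braden's hyperbolic localization in this setup produces an identification, up to the shift by $4\langle\mu,\rho_n\rangle$, relating $s_\mu^*\mF$ to the cohomology of $\mF$ along $S^\mu_{n,\bbH}$. Combining this with the $T_c$-equivariant MV picture of the previous paragraph and invoking the $\bG_m$-equivariant degeneration, one identifies the filtration on $V(\mu)$ induced by cohomological degree on the stalk with the filtration by kernels of powers of $e_n$. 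The statement for $s_\mu^!$ then follows from the statement for $s_\mu^*$ by Verdier duality, using the self-duality of $\IC_\lambda$ up to the shift by the real dimension $8\langle\lambda,\rho_n\rangle$ of $\Gr^\lambda_{n,\bbH}$.

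The main obstacle will be justifying Braden's hyperbolic localization and the equivariant degree-filtration comparison of~\cite{G1} in the real/semi-analytic setting, as the standard proofs rely on algebraic constructibility or Hodge theory. The needed geometric inputs---Whitney stratifiability of the $\bG_m$-action compatible with the $\frak L^+ G_{n,\bbH}$-orbit stratification (from~\cite[Proposition 4.5.1]{Na}), equivariant formality of $T_c$-stalks (as in the proof of Proposition~\ref{IC parity}), and the parity vanishing of Corollary~\ref{parity vanishing}---are all in place, so I expect the adaptation to proceed by a control-theoretic argument in the spirit of \cite{GM, PW}, with parity/semisimplicity playing the role that Hodge theory plays in~\cite{G1}.
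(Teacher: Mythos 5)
Your proposal takes a genuinely different route from the paper, and in a couple of places the route as described would not go through cleanly.

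The paper's proof (following Zhu's reworking of Ginzburg) does \emph{not} specialize the $R_T$-module to $t=0$ and does \emph{not} invoke Braden's hyperbolic localization. Instead it specializes to a single, carefully chosen \emph{generic} point $t\in\ft$ with $e_X^T(t)=e_n+2\rho_n$. This element is regular semisimple (conjugate to $2\rho_n$ by a unique $u\in N_n$), so its centralizer is a maximal torus, and the equivariant localization theorem for the $T_c$-action (whose fixed points are exactly the $\{t^\mu\}$) gives the isomorphism $\bigoplus_\mu H_t(s_\mu^!\mF)\cong H_t(\Gr_{n,\bbH},\mF)$ and identifies the $\mu$-summands with weight spaces of $(G_n)^{e_X^T(t)}$. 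The interaction of this torus with the cohomological-degree filtration, compared against the $e_n$-kernel (Brylinski--Kostant) filtration, is then settled by a purely representation-theoretic lemma (\cite[Lemma 5.5]{Z}); no sheaf-theoretic hyperbolic localization is needed. The freeness and splitting required for the localization step --- namely that $H^*_{T_c}(\Gr_{n,\bbH},\mF)\to H^*_{T_c}(s_\mu^*\mF)$ is a surjection of free $R_T$-modules, hence $H^*_{T_c}(s_\mu^!\mF)\hookrightarrow H^*_{T_c}(\Gr_{n,\bbH},\mF)$ is a split injection --- come from the parity argument already proved in~\eqref{surj} and Proposition~\ref{faithfulness and parity}, precisely the tool that replaces Hodge theory. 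Finally, the $s_\mu^*$ version follows from the $s_\mu^!$ version by Verdier self-duality of $\IC_\lambda$, as you note.

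Two concrete issues with the route you sketch. First, specializing at $t=0$ only sees the principal nilpotent $e_n$ acting on $V$; it forgets the weight grading by $\mu$ entirely, because $e_n$ has no eigenspaces. You would need a separate device to reintroduce the weight decomposition --- this is exactly what the choice $e_X^T(t)=e_n+2\rho_n$ at a \emph{regular semisimple} $t$ accomplishes in one stroke, and it is the heart of the argument, not an incidental choice. Second, your plan hinges on Braden's hyperbolic localization for the $\bG_m$-action through $2\rho_n$, and you correctly flag that justifying this in the semi-analytic setting is nontrivial: the known proofs use algebraic (or at least complex-analytic) structure. The paper's approach entirely avoids this difficulty by substituting the classical equivariant localization theorem for a compact torus action on an equivariantly formal space, which is available once parity (Proposition~\ref{IC parity}) gives the required freeness. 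If you wanted to salvage the hyperbolic-localization route you would need to construct and verify the attracting/repelling adjunctions and contraction statement in the real-analytic $\Gr_{n,\bbH}$; that is possible in principle but is more work than the task requires.
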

The theorem above implies Theorem \ref{IC stalks, intro} (2)
in the case of quaternionic affine Grassmannian. Indeed,
if $\mu,\lambda\in\Lambda_n^+$ and $V=V_\lambda$ is the irreducible representation of 
highest weight $\lambda$, then it is known that 
$P_\mu(V_\lambda,q)=K_{\lambda,\mu}(q)$ is the 
Kostka-Foulkes polynomial associated to $\lambda,\mu$ (see, e.g., \cite{B}).
Thus for any $x\in\Gr_{n,\bbH}^\mu$, we have 
\[K_{\lambda,\mu}(q)=\sum_i\dim H^{-4i-4\langle\mu,\rho_n\rangle}(s_\mu^*\mF)q^i=\sum_i\dim\mathscr H^{-4i-4\langle\mu,\rho_n\rangle}_x(\IC_\lambda)q^i\]
and it follows that 
\[q^{\langle\lambda-\mu,\rho_n\rangle}K_{\lambda,\mu}(q^{-1})=
\sum_i\dim\mathscr H^{-4i-4\langle\mu,\rho_n\rangle}_x(\IC_\lambda)q^{-i-
\langle\mu,\rho_n\rangle+\langle\lambda,\rho_n\rangle}=
\sum_i\dim\mathscr H^{4i-4\langle\lambda,\rho_n\rangle}_x(\IC_\lambda)q^{i}.\]

The case of $\frak LK$ orbits on $\Gr_{2n}$ follows from the 
 \cite[Proposition 6.10 (3)]{CN2}
saying that 
there is a stratified $K_c$-equivariant homeomorphism 
between 
$\Omega K_c\backslash\Gr_{2n}$ and $\Gr_{n,\bbH}$
(where $\Omega K_c$ is the based loop group of $K_c$)
with stratifications given by images of 
$\frak L K$-orbits on $\Gr_{2n}$ in
the quotient $\Omega K_c\backslash\Gr_{2n}$ and the 
$\frak L^+G_{n,\bbH}$-orbits on $\Gr_{n,\bbH}$.

\subsubsection{Proof of Theorem \ref{IC-stalks}}
We follow closely the presentation in \cite[Section 5]{Z}.
For any $t\in\ft$ we denote by 
$\kappa(t)$ the residue field of $t$.
The specialized cohomology 
\[H_t(\Gr_{n,\bbH},\mF):=H^*_{T_c}(\Gr_{n,\bbH},\mF)\otimes_{R_T}\kappa(t)\]
carries a canonical filtration
\[H^{\leq i}_t(\Gr_{n,\bbH},\mF):=\on{Im}(\sum_{j\leq i} H^j_{T_c}(\Gr_{n,\bbH},\mF)\to H_t(\Gr_{n,\bbH},\mF))\]
Let us identify 
$H_t(\Gr_{n,\bbH},\mF)\is (H^*(\Gr_{n,\bbH},\mF)\otimes R_T)\otimes_{R_T}\kappa(t)\is V$ via the 
canonical splitting in~\eqref{real MV filtration}.
As explained in the proof of Proposition \ref{computation of Ext},
the cohomological grading  on
$H^*(\Gr_{n,\bbH},\mF)$ corresponds to the grading on the representation $V$
given by 
the eigenvalues of $4\rho_n$. 
It follows that 
the filtration $H^{\leq i}_t(\Gr_{n,\bbH},\mF)$ 
corresponds to  the increasing filtration on $V$ given by
the eigenvalues of $4\rho_n$ (see, e.g., \cite[Theorem 5.2.1]{G1}).
 
Fix a generic element $t=(t_1,...,t_n)\in\ft$ away from the root hyperplanes. The localization theorem 
implies that there is an isomorphism
\beq\label{local}
\bigoplus_{\mu\in\Lambda_n} H_t(s_\mu^!\mF)\is H_t(\Gr_{n,\bbH},\mF)
\eeq
Recall the description of  
the equivariant homology 
$\on{Spec}(H_*^{T_c}(\Gr_{n,\bbH}))\is (G_n\times\ft)^{e_X^T}$
in Lemma \ref{computation}.
The fiber of the group scheme $(G_n\times\ft)^{e_X^T}$ over $t$ is the 
centralizer subgroup $ (G_n)^{e_X^T(t)}\subset G_n$ of the element 
$e_X^T(t)\in\fg_n$ in~\eqref{e^T_X} .
Note that $e_X^T(t)$ is  
conjugate to the diagonal matrix $\on{diag}(t_1^2,...,t_n^2)\in\ft_n$ and 
hence is regular semi-simple (we have $t_i^2\neq t_j^2$ for $i\neq j$
as the Weyl group 
$\rW=\rW_n\ltimes\{\pm1\}^n$ acts freely on $t$).
Thus $ (G_n)^{e_X^T(t)}$ is a maximal torus and 
there is a canonical isomorphism $ (G_n)^{e_X^T(t)}\is T_n$ 
given by $x\to\on{Ad}_ux$, where $x\in (G_n)^{e_X^T(t)}$ and
$u\in G_n$ is any element satisfies 
$\on{Ad}_ue_X^T(t)=\on{diag}(t_1^2,...,t_n^2)$.\footnote{it is easy to see that the 
isomorphism is independent of the choice of $u$.}
It is shown in \cite{O}, that the decomposition in~\eqref{local} corresponds to the 
weight decomposition under $(G_n)^{e_X^T(t)}$:

\begin{lemma}
The decomposition in~\eqref{local} corresponds, under the canonical isomorphism 
$H_t(\Gr_{n,\bbH},\mF)\is V$, the weight decomposition 
$V=\oplus_{\mu\in\Lambda_n} V(\mu_t)$ with respect to the 
action the maximal tours $(G_n)^{e_X^T(t)}$.
Here $V(\mu_t)$ is the weight space associated to the character 
$\mu_t:(G_n)^{e_X^T(t)}\is T_n\stackrel{\mu}\to\bC^\times$.
\end{lemma}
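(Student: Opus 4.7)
The statement asserts a compatibility between two torus actions on $V \cong H_t(\Gr_{n,\bbH},\mF)$: one obtained by specializing the convolution action of $H_*^{T_c}(\Gr_{n,\bbH})$ at $t \in \ft$, the other inherited from the $G_n$-action on the representation $V$ via the identification $(G_n)^{e^T_X(t)} \is T_n \hookrightarrow G_n$. The plan is to verify that both actions induce the same eigenspace decomposition, namely the one provided by equivariant localization at $T_c$-fixed points.

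First I would analyze the geometric side. The convolution product endows $H^*_{T_c}(\Gr_{n,\bbH},\mF)$ with the structure of a module over the Hopf algebra $H_*^{T_c}(\Gr_{n,\bbH}) \is \mO((G_n\times\ft)^{e^T_X})$, so the group scheme $(G_n\times\ft)^{e^T_X}$ acts on the relative spectrum. For each $T_c$-fixed point $\mu \in \Lambda_n \subset \Gr_{n,\bbH}$, the induced action on $H^*_{T_c}(s_\mu^!\mF)$ is ``concentrated at $\mu$'': it factors through the restriction map $H_*^{T_c}(\Gr_{n,\bbH}) \to H_*^{T_c}(\{\mu\}) \is R_T$, and the corresponding character of the torus factor is precisely $\mu$. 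Specializing at the chosen generic $t$, the fiber $(G_n)^{e^T_X(t)}$ is a maximal torus (since $e^T_X(t)$ is regular semisimple), and the localization decomposition $H_t(\Gr_{n,\bbH},\mF) \is \bigoplus_\mu H_t(s_\mu^!\mF)$ becomes its weight decomposition, with the $\mu$-summand carrying weight $\mu_t$.

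Second I would match this with the Tannakian side. By the argument in Proposition \ref{computation of Ext} and Lemma \ref{computation}, the canonical splitting coming from the real MV filtration identifies $H_t(\Gr_{n,\bbH},\mF)$ with $V$ in a manner compatible with all tensor structures. Consequently, the action of $(G_n)^{e^T_X(t)}$ on $V$ through the group scheme $(G_n\times\ft)^{e^T_X}$ translates, via conjugation by any $u \in G_n$ with $\on{Ad}_u\, e^T_X(t) = \on{diag}(t_1^2,\ldots,t_n^2)$, into the standard action of $T_n \subset G_n$ on the representation $V$. Composing the two identifications yields the weight decomposition $V = \bigoplus_\mu V(\mu_t)$, as claimed.

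The main obstacle is the second step: one must verify that the Tannakian recovery of $G_n$ from $\on{Perv}(\Gr_{n,\bbH})$, together with the real MV filtration, genuinely produces $e^T_X(t)$ as the specialization of the cup product action of $p^T(\mL_\bbH)$ on the fiber functor, and that the conjugation by $u$ is well-defined independently of the choice (up to $\rW$). Both facts are the content of \cite{O}, which we invoke directly; the remainder of the argument is a diagram chase once these compatibilities are in place.
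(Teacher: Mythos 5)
The paper gives no independent proof of this lemma; it is attributed entirely to~\cite{O} (O'Brien, in preparation), with the sentence immediately preceding the statement reading ``It is shown in~\cite{O}, that the decomposition in~\eqref{local} corresponds to the weight decomposition under $(G_n)^{e_X^T(t)}$.'' Your proposal is a reasonable conceptual reconstruction of what such a proof would involve --- equivariant localization at $T_c$-fixed points supplying the direct sum decomposition, the Hopf-algebra coaction of $H_*^{T_c}(\Gr_{n,\bbH})$ identifying each summand with a character of the group scheme fiber, and the real MV splitting together with the Tannakian identification matching this with the $T_n$-weight decomposition of $V$ after conjugating $e^T_X(t)$ to diagonal form. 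But as you note in your final paragraph, the two load-bearing facts --- that the cup product with $p^T(\mL_\bbH)$ realizes $e^T_X$ on the fiber functor, and that the resulting identification is compatible with the localization decomposition --- are precisely what must be imported from~\cite{O}. Since the paper itself defers to the same reference for the entirety of this lemma, your approach is not a genuinely different route; both arguments rest on the same external input, and your sketch is best understood as an expansion of what that citation is understood to supply.
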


Choose $t\in\ft$ such that 
$e_X^T(t)=e_n+2\rho_n$. Let $u$ be the unique element in $N_n$ such that
$\on{Ad}_u(e_n+2\rho_n)=2\rho_n$.
\begin{lemma}
We haves 
\[H_t^{\leq4i+2m}(\Gr_{n,\bbH},\mF)\cap\bigoplus_{\mu\in\Lambda_n,2\langle\mu,\rho_n\rangle=m}H_t(s^!_\mu\mF)=F_iV\cap \bigoplus_{\mu\in\Lambda_n,2\langle\mu,\rho_n\rangle=m} V(\mu_t)\]
\end{lemma}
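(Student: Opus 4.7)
The plan is to reduce both sides of the claimed identity to the same explicit condition on a common ``preimage'' $w$ under the isomorphism $u^{-1}\colon V(\mu)\is V(\mu_t)$.

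First I would set up the parametrization. Since $uHu^{-1}=T_n$ for $H=(G_n)^{e_n+2\rho_n}$, the tilted weight space is $V(\mu_t)=u^{-1}V(\mu)$, and every element of the common ambient space $\bigoplus_{\mu:\langle\mu,2\rho_n\rangle=m}V(\mu_t)$ can be written uniquely as $v=u^{-1}w$ with $w\in V_m:=\bigoplus_{\mu:\langle\mu,2\rho_n\rangle=m}V(\mu)$ pure of $2\rho_n$-weight $m$.

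The key technical input I would establish is the explicit formula
\[
u^{-1}w=\sum_{k\ge 0}\frac{(-1)^k}{2^k\,k!}\,e_n^k w,\qquad w\in V_m,
\]
which I would verify by an eigenvalue argument. The identity $\on{Ad}_u(e_n+2\rho_n)=2\rho_n$ gives $(e_n+2\rho_n)u^{-1}=u^{-1}\cdot 2\rho_n$, so $u^{-1}w$ is an eigenvector of $e_n+2\rho_n$ with eigenvalue $m$; a direct manipulation of the series on the right (using $2\rho_n\cdot e_n^k w=(m+2k)e_n^k w$, since each simple root of $\GL_n$ pairs with $2\rho_n$ to give $2$) shows it satisfies the same eigenvector equation. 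Both sides lie in $V(\mu_t)$ and have image $w$ under the projection $V\twoheadrightarrow V(\mu)$, which is an isomorphism when restricted to $V(\mu_t)$, so they coincide.

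From the formula, write $\phi_k:=\frac{(-1)^k}{2^k\,k!}\,e_n^k w$ and note that $\phi_k$ lies in the $4\rho_n$-eigenspace $V_{2m+4k}$, since each simple root pairs with $4\rho_n$ to give $4$. For the left-hand side of the lemma: $v\in V_{\le 4i+2m}$ iff $\phi_k=0$ for all $k>i$, iff $e_n^{i+1}w=0$, i.e.\ $w\in F_iV$. For the right-hand side, matching graded pieces in the eigenvalue equation yields $e_n\phi_k=-2(k+1)\phi_{k+1}$, and iterating shows that $e_n^{i+1}v$ is an explicit linear combination of the vectors $e_n^j w$, $j\ge i+1$, with nonzero coefficients. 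Since these vectors lie in distinct $4\rho_n$-eigenspaces $V_{2m+4j}$, they are linearly independent whenever nonzero, so $e_n^{i+1}v=0$ iff $e_n^{i+1}w=0$, again iff $w\in F_iV$. Both sides of the lemma therefore equal $u^{-1}(F_iV\cap V_m)$.

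The only delicate point is the factor $2^k$ in the denominator of the key formula, together with the resulting $4\rho_n$-grading jumps of size $4$; these reflect the ``doubling'' characteristic of the quaternionic setting, where each simple root of $\GL_n$ pairs with $2\rho_n$ to give $2$ and with $4\rho_n$ to give $4$. Once this bookkeeping is correctly in place, both sides of the identity manifestly compute the same subspace and the lemma follows.
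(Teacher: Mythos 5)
Your argument is correct, and it is genuinely more self-contained than the paper's. The paper reduces the claim to the two-filtration comparison $F^1_{2i+m}V\cap V^2(m)=F^2_iV\cap V^2(m)$ and cites it as \cite[Lemma 5.5]{Z}, whereas you prove exactly this identity from scratch by observing that the unique $u\in N_n$ with $\on{Ad}_u(e_n+2\rho_n)=2\rho_n$ is $\exp(\tfrac12 e_n)$, hence $u^{-1}w=\sum_{k\ge 0}\frac{(-1)^k}{2^k k!}e_n^k w$, and then reading off both filtrations from the explicit $4\rho_n$-graded components $\phi_k\in V^1(m+2k)$. The structural reduction — to comparing the cohomological-degree filtration ($4\rho_n$-grading) and the Brylinski--Kostant filtration ($\ker e_n^{i+1}$) on the tilted eigenspace — is the same in both proofs; what you add is a direct, computation-free-of-citations proof of the key $\mathfrak{sl}_2$-type identity. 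One small imprecision in the write-up: in the eigenvalue verification that the two expressions for $u^{-1}w$ agree, you conclude from both being $(e_n+2\rho_n)$-eigenvectors of eigenvalue $m$ projecting to $w$; strictly speaking, both sides should be compared as elements of the full eigenspace $V^2(m)=\ker(e_n+2\rho_n-m)$, using that the projection $V^2(m)\to V^1(m)$ onto the lowest $2\rho_n$-graded component is an isomorphism (by the recursion $(j-m)v_j=-e_n v_{j-2}$), rather than speaking of a single $V(\mu_t)$ and $V(\mu)$ — but this is a matter of phrasing; the underlying argument is sound. Your closing remark about the $2^k$ in the denominator and the degree-$4$ jumps correctly pinpoints where the quaternionic doubling enters relative to the classical (complex) case, where one would instead have $u=\exp(e_n)$ and cohomological jumps of size $2$.
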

\begin{proof}
Let $V=\bigoplus V^1(i)$ and $V=\bigoplus V^2(i)$ be two gradings 
on $V$ given by the cocharacter $2\rho_n$ and $\on{Ad}_{u^{-1}}2\rho_n$ respectively.
Let $F^1_iV$ and $F^2_iV$ be the two filtrations on $V$ given by
$F^1_iV=\oplus_{j\leq i} V^1(j)$ and $F^2_iV=\on{ker}(e_n^{i+1})$.
We have \[F_iV\cap \bigoplus_{\mu\in\Lambda_n,2\langle\mu,\rho_n\rangle=m} V(\mu_t)=F^2_iV\cap V^2(m)\]
and \[H_t^{\leq4i+2m}(\Gr_{n,\bbH},\mF)\cap\bigoplus_{\mu\in\Lambda_n,2\langle\mu,\rho_n\rangle=m}H_t(s^!_\mu\mF)=F^1_{2i+m}(V)\cap V^2(m)\]
and the desired claim follows from \cite[Lemma 5.5]{Z}.
\end{proof}

Note that we have shown in~\eqref{surj} that the natural map 
$H^*_{T_c}(\Gr_{n,\bbH},\mF)\to H^*_{T_c}(s_\mu^*\mF)$
is a surjective map of free $R_T$-modules and it implies the dual map 
$ H^*_{T_c}(s_\mu^!\mF)\to H^*_{T_c}(\Gr_{n,\bbH},\mF)$ is a splitting injective map of free $R_T$-modules.
Thus we have 
\[H^{\leq i}_{t}(\Gr_{n,\bbH},\mF)\cap H_t(s_\mu^!\mF)=H^{\leq i}_t(s_\mu^!\mF)\]
On the other hand,
the element $u\in N_n$ above  maps 
$V(\mu_t)$ to $V(\mu)$ and preserves the filtration $F_iV$, and hence 
$\on{dim}(F_iV(\mu)/F_{i-1}V(\mu))=\dim(F_iV(\mu_t)/F_{i-1}V(\mu_t))$.
Now the lemma above implies 
\[P_\mu(V,q)=\sum_i\dim(F_iV(\mu)/F_{i-1}V(\mu))q^i=
\sum_i\dim(F_iV(\mu_t)/F_{i-1}V(\mu_t))q^i=\]
\[=\sum_i
\on{dim}(H^{\leq4i+4\langle\mu,\rho_n\rangle}_{t}(\Gr_{n,\bbH},\mF)\cap H_t(s_\mu^!\mF)/H^{\leq4(i-1)+4\langle\mu,\rho_n\rangle}_{t}(\Gr_{n,\bbH},\mF)\cap H_t(s_\mu^!\mF))q^i=\]
\[=
\sum_i
\on{dim}(H_t^{\leq4i+4\langle\mu,\rho_n\rangle}(s_\mu^!\mF)/H_t^{\leq4(i-1)+4\langle\mu,\rho_n\rangle}(s_\mu^!\mF))q^i.\]
To conclude the proof, we observe that under the canonical isomorphism 
$H_t(s_\mu^!\mF)\is H^*(s_\mu^!\mF)$ the canonical filtration on the left hand side corresponds to the 
cohomological degree filtration on the right hand side and hence we obtain
\[P_\mu(V,q)=
\sum_i
\on{dim}(H_t^{\leq4i+4\langle\mu,\rho_n\rangle}(s_\mu^!\mF)/H_t^{\leq4(i-1)+4\langle\mu,\rho_n\rangle}(s_\mu^!\mF))q^i=\sum_i\on{dim } H^{4i+4\langle\mu,\rho_n\rangle}(s_\mu^!\mF)q^i.\]

\section{Main results}\label{Main results}

\subsection{Formality}
The goal of this section is to show that the dg-algebra 
\[\on{RHom}_{D^b_{}(\frak L^+G_{n,\bbH}\backslash\Gr_{n,\bbH})}(\IC_0,\IC_0\star\mO(G_n))\] is formal.

The proof is based on the following 
key proposition.
The existence of left adjoint of nearby cycles functor in Lemma \ref{left adjoint}
gives rise to a map between $K_c$-equivariant cohomology
\beq
H^*_{K_c}(\Gr_{n,\bbH})\is
\on{Ext}^*(\bC_{\Gr_{n,\bbH}},\bC_{\Gr_{n,\bbH}})\stackrel{^L\mathrm R'}\lra\on{Ext}^*(\bC_{\Gr_{2n}},\bC_{\Gr_{2n}})\is H^*_{K_c}(\Gr_{2n}).
\eeq
By taking the graded dual (see Section \ref{equ coh and hom}), we get a map between 
equivariant homology 
\beq\label{map between hom}
H_*^{K_c}(\Gr_{2n})\to H_*^{K_c}(\Gr_{n,\bbH}).
\eeq

\begin{prop}\label{key prop}
We have a commutative digram
\[\xymatrix{\on{Spec}(H_*^{K_c}(\Gr_{n,\bbH}))\ar[r]^{}\ar[d]^\simeq&
\on{Spec}(H_*^{K_c}(\Gr_{2n}))\ar[d]^\simeq\\
J_n\ar[r]^{}&J_{2n}|_{\frak c_n}}\]
where the bottom arrow $J_n\to J_{2n}|_{\fc_n}$ is the morphism introduced in
~\eqref{key diagram}.  
\end{prop}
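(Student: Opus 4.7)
My approach is to work Tannakianly and match the natural map on equivariant homology with the explicit intertwiner $\Phi$ of~\eqref{base change to t}. After passing to $\rW$-invariants, using Lemma~\ref{computation} and the identification $\ft/\rW \is \fc_n \hookrightarrow \fc_{2n}$ via $\tau$, the claim reduces to the commutativity of
\[
\xymatrix{
\on{Spec}(H_*^{T_c}(\Gr_{n,\bbH})) \ar[r] \ar[d]^{\simeq} & \on{Spec}(H_*^{T_c}(\Gr_{2n})) \ar[d]^{\simeq} \\
(G_n \times \ft)^{e^T_X} \ar[r] & (G_{2n} \times \ft)^{e^T}
}
\]
in which the right-hand Spec is base-changed from $\ft_{2n}$ to $\ft$ along $T_c \subset T_{2n,c}$ and the bottom arrow is $\on{Ad}_\Phi \circ \delta$ from~\eqref{base change to t}.

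By the MV splittings~\eqref{MV filtration} and~\eqref{real MV filtration}, together with the real/complex Satake equivalences, the bottom two group schemes are the Tannakian duals of the $R_T$-linear fiber functors $V \mapsto V \otimes R_T$ on $\on{Rep}(G_n)$ and $\on{Rep}(G_{2n})$; their identifications with the centralizer presentations come from Lemma~\ref{computation}, via the computation of the cup-product actions of $p^T(\mL_\bbH)$ and $c^{T_{2n}}_1(\mL)$ as $e^T_X$ and $e^T$. The top map, dual to the one induced by $^L\mathrm R'$, sends a tensor automorphism $\alpha$ of the fiber functor on $\on{Perv}(\Gr_{n,\bbH})$ to the composition $\alpha \circ {}^p\mathrm R$, viewed as a tensor automorphism of the fiber functor on $\on{Perv}(\Gr_{2n})$ via the canonical isomorphism of fiber functors coming from $^p\mathrm R$. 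By Proposition~\ref{abelian Satake}(3), the functor $^p\mathrm R$ corresponds, under Satake, to restriction of representations along $\psi_X: \check G_X \times \bG_m \to G_{2n}$; on Tannakian duals this is the diagonal embedding $\delta$ composed with an intertwiner absorbing the cohomological shift encoded by $2\rho_L$.

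The main technical point, as I see it, will be matching this Tannakian intertwiner with the explicit element $\Phi$ of~\eqref{key property}. To overcome it, I would give a canonical construction of $\Phi$ (as foreshadowed in Remark~\ref{construction of Phi}) directly as the intertwiner between the two natural trivializations of $V \otimes R_T$: one arising from the MV splitting on $\Gr_{2n}$ applied to $\IC_V$, the other from applying the MV splitting on $\Gr_{n,\bbH}$ to each cohomological degree of $\mathrm R(\IC_V) = {}^p\mathrm R(\IC_V)$. The compatibility of this $\Phi$ with the Chern-class actions $e^T$ and $\tau \circ e^T_X$, computed on the standard representation of $G_{2n}$ in Section~\ref{two bases} and then extended monoidally, yields the relation $e^T = \Phi(\tau\circ e^T_X)\Phi^{-1}$. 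Once this canonical construction of $\Phi$ is in place, the commutativity of the diagram becomes a formal consequence of Tannakian duality.
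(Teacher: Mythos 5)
Your proposal is correct and takes essentially the same route as the paper's proof: reduce to $T_c$-equivariant homology via Lemma~\ref{computation} and $\rW$-invariance, construct $\Phi$ as the change-of-frame between the complex MV trivialization of $H^*_{T_c}(\Gr_{2n},\IC_{\omega_1})$ and the quaternionic MV trivialization of $H^*_{T_c}(\Gr_{n,\bbH},\mathrm R(\IC_{\omega_1}))$ (exactly the canonical construction you foresee in Remark~\ref{construction of Phi}), and verify $e^T = \Phi(\tau\circ e^T_X)\Phi^{-1}$ from the basis computations of Section~\ref{two bases} and Lemma~\ref{matrix presentation}. The only difference is one of framing --- you present it Tannakianly where the paper works with the explicit bases $\{[\mathbb{P}^{i-1}]\}$ and $\{[\mathbb{HP}^{i-1}],[\mathbb{HP}^{i-1}][2]\}$ --- and your closing step of ``extending monoidally'' is unnecessary, since once $e^T = \Phi(\tau\circ e^T_X)\Phi^{-1}$ holds in $\fg_{2n}\otimes R_T$, the conjugation $\on{Ad}_\Phi\circ\delta$ automatically carries the entire centralizer group scheme $(G_n\times\ft)^{e^T_X}$ into $(G_{2n}\times\ft)^{e^T}$.
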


\begin{proof}
We shall  verify the statement for 
$T_c$-equivariant homology, that is, we have a commutative diagram
\[\xymatrix{\on{Spec}(H_*^{T_c}(\Gr_{n,\bbH}))\ar[r]^{}\ar[d]^\simeq&
\on{Spec}(H_*^{T_c}(\Gr_{2n}))\ar[d]^\simeq\\
(G_{n}\times\ft)^{e^T_X}\is J_n
\times_{\fc_n}\ft\ar[r]^{}&(G_{2n}\times\ft)^{e^T}\is J_{2n}\times_{\fc_{2n}}\ft}\]
where the bottom arrow is the map~\eqref{base change to t}.
All the maps above are compatible with the natural $\rW$-actions and taking $\rW$-invariants we get
the desired claim.

Let $V_{\omega_1}$ be the standard representation of $G_{2n}$.
Recall the isomorphisms 
\beq\label{complex}
H^*_{T_c}(\Gr_{2n},\IC_{\omega_1})\is
H^*_{}(\Gr_{2n},\IC_{\omega_1})\otimes R_T\is V_{\omega_1}\otimes R_T
\eeq
\beq\label{real}
H^*_{T_c}(\Gr_{n,\bbH},\mathrm R(\IC_{\omega_1}))\is H^*_{}(\Gr_{n,\bbH},\mathrm R(\IC_{\omega_1}))\otimes R_T\is
V_{\omega_1}\otimes R_T
\eeq
induce by the complex and real MV filtrations.
 Together with the canonical isomorphism 
\beq\label{can iso}
H^*_{T_c}(\Gr_{n,\bbH},\mathrm R(\IC_{\omega_1}))\is H^*_{T_c}(\Gr_{2n},\IC_{\omega_1})
\eeq
we get an automorphism 
\beq\label{change of basis}
V_{\omega_1}\otimes R_T\is H^*_{T_c}(\Gr_{n,\bbH},\mathrm R(\IC_{\omega_1}))\is H^*_{T_c}(\Gr_{2n},\IC_{\omega_1})\is
V_{\omega_1}\otimes R_T
\eeq
and hence an element 
\beq\label{Phi'}
\Phi'\in \GL(V_{\omega_1})\otimes R_T\is G_{2n}\otimes R_T
\eeq
Note that 
the isomorphisms~\eqref{complex} and~\eqref{real}
map the standard basis  $\{e_1\otimes 1,...,e_{2n}\otimes 1\}$ of $V_{\omega_1}\otimes R_{T}$
to the basis 
\[\{b_1,...,b_{2n}\}=\{[\mathbb P^{0}],...,[\mathbb P^{2n-1}]\}\]
of  $H^*_{T_c}(\Gr_{2n},\IC_{\omega_1})\is 
H^{*}_{T_c}(\mathbb P^{2n-1})$
(up to a constant degree shift)
and the basis 
\[\{c_1,...,c_{2n}\}=\{[\mathbb{HP}^0][2],[\mathbb{HP}^0],[\mathbb{HP}^1][2],[\mathbb{HP}^1], ...,[\mathbb{HP}^{n-1}][2],[\mathbb{HP}^{n-1}]\}\]
of 
$H^*_{T}(\Gr_{n,\bbH},\mathrm R(\IC_{\omega_1}))\is 
H^{*}_{T_c}(\mathbb{HP}^{n-1})\oplus H^{*-2}_{T_c}(\mathbb{HP}^{n-1})$
(up to a constant degree shift) respectively, and the  element $\Phi'$
is the matrix for the linear map sending
$c_i\to b_i$ in the basis $c_1,...,c_{2n}$
(which is not the identity element).

By Lemma \ref{computation}, 
there is a commutative diagram
\beq\label{key com diag}
\xymatrix{(G_n\times\ft)^{e^T_X}\ar[r]^{\simeq\ \ \ }&\on{Spec}H^*_{T_c}(\Gr_{n,\bbH})\ar[d]\ar[r]&
\on{GL}(H^*_{T_c}(\Gr_{n,\bbH},\mathrm R(\IC_{\omega_1})))\ar[r]^{\ \ \ \ \ \ \ \ \ \simeq}_{\ \ \ \ \ \ \ \ \eqref{real}}\ar[d]^{\simeq}_{\eqref{can iso}}&G_{2n}\times\ft\ar[d]^{\simeq}_{\on{Ad}_{\Phi'}}\\
(G_{2n}\times\ft)^{e^T}
\ar[r]^{\simeq\ \ \ }&\on{Spec}H^*_{T_c}(\Gr_{2n})\ar[r]^{}&\on{GL}(H^*_{T_c}(\Gr_{2n},\IC_{\omega_1}))\ar[r]^{\ \ \ \ \ \ \ \ \simeq}_{\ \ \ \ \ \ \ \eqref{complex}}&G_{2n}\times\ft}
\eeq
where the upper and lower middle arrows are given by the 
 co-action of  $H_*^{T_c}(\Gr_{n,\bbH})$ and $H_*^{T_c}(\Gr_{2n})$
on $H^*_{T_c}(\Gr_{n,\bbH},\mathrm R(\IC_{\omega_1}))$
and $H^*_{T_c}(\Gr_{2n},\IC_{\omega_1})$,
and right vertical isomorphism is given by the conjugation action
\[\on{Ad}_{\Phi'}:G_{2n}\times\ft\to G_{2n}\times\ft \ \ \ \ (g,t)\to (\on{Ad}_{\Phi'(t)}g,t)\]

Note that in the above diagram the lower composed map
$(G_{2n}\times\ft)^{e^T}\to G_{2n}\times\ft$
is the natural embedding 
and the upper composed map
$(G_n\times\ft)^{e^T_X}\to G_{2n}\times\ft$
is the restriction of the map 
\[\on{Ad}_P\circ\delta:G_n\times\ft\to G_{2n}\times\ft\to G_{2n}\times\ft\ \ \ \ (g,t)\to (P\delta(g)P^{-1},t)\]
to $(G_n\times\ft)^{e^T_X}$ where 
$P\in G_{2n}$ is the permutation matrix which sends the the ordered basis 
$\{e_1,e_3,...,e_{2n-1},e_2,e_4,...,e_{2n}\}$ to the ordered basis
$\{e_1,...,e_{2n}\}$ (see Section \ref{Kostant section}).

Thus in view of the description of the map
$(G_{n}\times\ft)^{e^T_X}\to (G_{2n}\times\ft)^{e^T}$ in~\eqref{base change to t}
we need to show that the 
element
\[\Phi:=\Phi'\circ P\in G_{2n}\otimes R_T\]
satisfies
\beq\label{change of bases}
e^T=\Phi (\tau\circ e^T_X) \Phi^{-1}\in\fg_{2n}\otimes R_T
\eeq
To this end, 
we observe that, by Lemma \ref{matrix presentation},
the elements 
$\tau\circ e^T_X$
and $e^T$ in $\fg_{2n}\otimes R_T$ 
 are the matrices of 
the cup product map
$c_1^T(\mL)\cup(-):H^*_{T_c}(\Gr_{2n}, \IC_{\omega_1})\to H^*_{T_c}(\Gr_{2n}, \IC_{\omega_1})$
 in the bases 
$\{d_1,...,d_{2n}\}=
\{[\mathbb{HP}^0][2],...,[\mathbb{HP}^{n-1}][2],[\mathbb{HP}^0],...,[\mathbb{HP}^{n-1}]\}$  and $\{b_1,...,b_{2n}\}$
respectively.
On the other hand, the element 
$\Phi=\Phi'\circ P$ is the matrix for the linear map sending
$d_i\stackrel{}\to c_i\stackrel{}\to b_i$ in the basis $d_1,...,d_{2n}$, and hence ~\eqref{change of bases} holds. 
This completes the proof of the proposition.

\end{proof}

\begin{remark}\label{construction of Phi}
The proof  gives a canonical construction of the element $\Phi$ in~\eqref{key property}.
\end{remark}

\begin{proposition}\label{formality}
The dg-algebra $\on{RHom}_{D^b_{}(\frak L^+G_{n,\bbH}\backslash\Gr_{n,\bbH})}(\IC_0,\IC_0\star\mO(G_n))$ is formal.
\end{proposition}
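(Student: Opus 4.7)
The plan is to apply the general formality criterion referred to in the outline as Lemma~\ref{criterion}: roughly, if $\varphi\colon A\to B$ is a morphism of dg algebras with $A$ formal and $H^*(\varphi)$ surjective, then $B$ is formal. We take
\[
A:=\on{RHom}_{D^b(\frak L^+G_{2n}\backslash\Gr_{2n})}(\IC_0,\IC_0\star\mO(G_{2n})),\qquad
B:=\on{RHom}_{D^b(\frak L^+G_{n,\bbH}\backslash\Gr_{n,\bbH})}(\IC_0,\IC_0\star\mO(G_n)).
\]
Formality of $A$ is the corresponding statement for the \emph{complex} affine Grassmannian $\Gr_{2n}$, which is due to Bezrukavnikov--Finkelberg \cite{BF}. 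Thus the entire task reduces to producing $\varphi$ and showing it is surjective on cohomology.

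To construct $\varphi$, I will use the real nearby cycles functor $\mathrm R$ from~\eqref{real nearby cycle}. By Proposition~\ref{abelian Satake}(3), $\mathrm R$ is monoidal and sends $\IC_0$ to $\IC_0$, so applying $\mathrm R$ to morphisms and using the monoidality isomorphism $\mathrm R(\IC_0\star -)\simeq \IC_0\star \mathrm R(-)$ gives a dg-algebra map
\[
A\lra \on{RHom}_{D^b(\frak L^+G_{n,\bbH}\backslash\Gr_{n,\bbH})}(\IC_0,\IC_0\star \mathrm R(\mO(G_{2n}))).
\]
The ind-object $\mathrm R(\mO(G_{2n}))$ corresponds, via the equivalence $\on{Perv}(\Gr_{n,\bbH})_\bbZ\simeq \on{Rep}(G_n\times\bG_m)$ of Proposition~\ref{abelian Satake}(3), to the restriction of the regular $G_{2n}$-representation along the Arthur homomorphism $\psi_X\colon G_n\times\SL_2\to G_{2n}$ of~\eqref{SL_2}. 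Projecting to the $\bG_m\subset\SL_2$-invariant part (or equivalently, composing with the $G_n\times\bG_m$-equivariant projection $\mO(G_{2n})|_{G_n\times\bG_m}\to\mO(G_n)$) produces the desired map $\varphi\colon A\to B$.

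To identify $H^*(\varphi)$, I will combine Proposition~\ref{computation of Ext} ($H^*(B)\simeq\mO(\fg_n[4])$) with the analogous statement for $A$ ($H^*(A)\simeq\mO(\fg_{2n}[2])$), and trace through Proposition~\ref{key prop}. That proposition identifies the map $H_*^{K_c}(\Gr_{2n})\to H_*^{K_c}(\Gr_{n,\bbH})$ induced by the left adjoint $^L\mathrm R'$ with the canonical map $J_n\to J_{2n}|_{\fc_n}$ of regular centralizer group schemes. Dualizing and passing to the $G_n$-equivariant enhancement (i.e.\ applying the torsor $\nu$ of~\eqref{torsor} on the quaternionic side and its complex analogue on the other side), together with the $2\rho_L$-shearing inherent in the comparison of cohomological gradings, identifies $H^*(\varphi)$ with the pullback of functions along the composition
\[
\fg_n[4]\xrightarrow{\ \tau\ }\widetilde{\fg}_{2n}[2]\lra \fg_{2n}[2]
\]
appearing in Theorem~\ref{spectral nearby cycles, intro}. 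Surjectivity follows because $\tau\circ\kappa_n$ hits a Kostant-type section (see diagram~\eqref{diagram ks}), so pulling back $G_{2n}$-invariants along $\tau$ surjects onto $\mO(\fc_n)$, and combining with the diagonal $G_n$-action recovers all of $\mO(\fg_n[4])=\on{Sym}(\fg_n[-4])$.

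The main obstacle will be Step 2: identifying $H^*(\varphi)$ precisely. Proposition~\ref{key prop} is phrased at the level of equivariant (co)homology of the whole Grassmannian, whereas $\varphi$ involves the de-equivariantized Ext algebra with the $\mO(G_n)$-twist. Passing between these requires a careful dualization, a compatibility between the $G_n$-torsor $\nu$ and its $G_{2n}$-counterpart, and tracking how the sheared forgetful functor associated to $2\rho_L$ matches the degree doubling predicted by Proposition~\ref{computation of Ext}. Once this diagram chase is pinned down, the surjectivity statement reduces to the concrete transversality of the embedding $\tau$ with Kostant sections, which is already encoded in~\eqref{matrix} and~\eqref{diagram ks}.
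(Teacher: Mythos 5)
Your high-level plan—use the nearby cycles functor to produce a dg-algebra morphism from the complex side (formal by \cite{BF}) to the quaternionic side and apply a formality-transfer lemma—is exactly the paper's strategy, and you correctly identify the key inputs (Propositions~\ref{computation of Ext}, \ref{key prop}, and Lemma~\ref{criterion}). However, there are two genuine gaps in how you propose to execute it.

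First, the map $\varphi\colon A\to B$ that you describe is not actually a dg-algebra morphism. Projecting onto the zero $\bG_m$-weight part of $\mathrm R(\mO(G_{2n}))$ does not respect multiplication (weights add), and the alternative you suggest, restricting functions along $G_n\hookrightarrow G_{2n}$, is an algebra map but is \emph{not} $\bG_m$-equivariant, since right translation by $2\rho_L(t)$ does not preserve $G_n$. Concretely, in the quaternionic category $\mathrm R(\IC_{\mO(G_{2n})})$ decomposes as $\bigoplus_j\IC_{\mO_j}[j]$ with the cohomological shift locked to the $\bG_m$-weight $j$, and there is no nonzero morphism $\IC[j]\to\IC$ for $j>0$, so there is no natural algebra map from $\mathrm R(\IC_{\mO(G_{2n})})$ to $\IC_{\mO(G_n)}$. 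The paper instead works with the larger target $B=\on{RHom}(\IC_0,\IC_0\star\mO(G_n\times\bG_m))$, builds a $\bG_m$-weight-graded dg-algebra map $\phi\colon A\to B$ using the $G_n\times\bG_m$-equivariant restriction $\mO(G_{2n})\to\mO(G_n\times\bG_m)$, and then passes to the zero-weight dg-subalgebras $\phi_0\colon A_0\to B_0$. Note this also means the formal source in Lemma~\ref{criterion} must be $A_0$, not all of $A$ (formality of $A_0$ follows because the $\bG_m$-decomposition is compatible with the formality quasi-isomorphism of $A$).

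Second, your surjectivity argument is incomplete. Knowing that the map hits $\mO(\fc_n)$ via Kostant sections gives surjectivity onto $G_n$-invariant functions, and $G_n$-equivariance alone does not upgrade this to surjectivity onto all of $\mO(\fg_n[4])$. The paper produces an explicit splitting: the $\bG_m$-invariant map
\[
\fg_{2n}\lra\fg_n,\qquad
\begin{pmatrix}A&B\\ C&D\end{pmatrix}\longmapsto BC,
\]
whose pullback lands in $A_0$ and whose composite with $(C,t)\mapsto\begin{pmatrix}0&t^{-2}I_n\\ t^2C&0\end{pmatrix}$ is the projection $(C,t)\mapsto C$, giving a section of $H^*(\phi_0)$. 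Some such explicit section (or an equivalent argument beyond invariants) is required; the transversality of $\tau$ with the Kostant section is not sufficient by itself.
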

\begin{proof}
Consider the following dg-algebras 
\[A=\on{RHom}_{D^b_{}(\frak L^+G_{2n}\backslash\Gr_{2n})}(\IC_0,\IC_0\star\mO(G_{2n})),
\ B=\on{RHom}_{{D^b_{}(\frak L^+G_{n,\bbH}\backslash\Gr_{n,\bbH})}}(\IC_0,\IC_0\star\mO(G_n\times\bG_m)).\] 
Here 
we regard
$\mO(G_n\times\bG_m)\is\bigoplus_{j\in\bZ}\IC_{\mO(G_n)}[j]$ via the 
monoidal functor
$\on{Rep}(G_n\times\bG_m)\is
\bigoplus_{j\in\bbZ}\on{Perv}(\Gr_{n,\bbH})[j]\subset
D^b_{}(\frak L^+G_{n,\bbH}\backslash\Gr_{n,\bbH})$. 
Proposition \ref{abelian Satake} (3) implies that the 
nearby cycle functor gives rise to a map of dg-algebras
\[\phi:A=\on{RHom}_{D^b_{}(\frak L^+G_{2n}\backslash\Gr_{2n})}(\IC_0,\IC_0\star\mO(G_{2n}))\stackrel{\mathrm R}\to
\on{RHom}_{D^b_{}(\frak L^+G_{n,\bbH}\backslash\Gr_{n,\bbH})}(\IC_0,\IC_0\star(\on{Res}^{G_{2n}}_{G_n\times\bG_m}\mO(G_{2n})))\]
\[\to B=\on{RHom}_{{D^b_{}(\frak L^+G_{n,\bbH}\backslash\Gr_{n,\bbH})}}(\IC_0,\IC_0\star\mO(G_n\times\bG_m))\]
where the last arrow is induced by the quotient map 
$\on{Res}^{G_{2n}}_{G_n\times\bG_m}\mO(G_{2n})\to\mO(G_n\times\bG_m)$
(in the category of $\on{Rep}(G_n\times\bG_m)$).
The right regular representations of $G_{2n}$ on $G_n\times\bG_m$
induce natural  $G_{2n}$
and $G_n\times\bG_m$-action on $A$ and $B$
and their restriction to 
the subgroup $\bG_m\subset G_n\times\bG_m\subset G_{2n}$
gives rise to a $\bG_m$-weight decomposition $A=\oplus_{j\in\bZ} A_j$ and $B=\oplus_{j\in\bZ} B_j$.
Note that the zero weight spaces $A_0$ and $B_0$ are dg-subalgebras of $A$ and $B$
and $B_0=\on{RHom}_{D^b_{}(\frak L^+G_{n,\bbH}\backslash\Gr_{n,\bbH})}(\IC_0,\IC_0\star\mO(G_n))$.

According to \cite{BF}, the dg-algebra $A$ is formal moreover we have  $A\is H^*(A)\is\mO(\fg_{2n}[2])$. Note the map $\phi:A\to B$ above respects the $\bG_m$-action and hence 
restricts to a map $\phi_0:A_0\to B_0$ fitting into the following diagram  
\[\xymatrix{A_0\ar[r]^{\phi_0}\ar[d]&B_0\ar[d]\\
A\ar[r]^{\phi_{\on{}}}&B}\]
We claim that the map 
$H^*(\phi_0):H^*(A_0)\to H^*(B_0)$ between cohomology is surjective.
Since $A_0$ is formal with generators in even degree 
and 
$H^*(B_0)\is H^*(\on{RHom}_{D^b_{}(\frak L^+G_{n,\bbH}\backslash\Gr_{n,\bbH})}(\IC_0,\IC_0\star\mO(G_n)))\is\mO(\fg_n[4])$
which is a polynomial ring with generators in even degree (see Lemma \ref{computation of Ext}),  Lemma \ref{criterion}
 below implies that $B_0$ is formal. The proposition follows.

Proof of the claim.
 To show the surjectivity of $H^*(\phi_0):H^*(A_0)\to H^*(B_0)$, we can ignore the 
 grading and view $H^*(\phi_0)$ as maps between plain algebras.
We have a commutative diagram 
\[\xymatrix{ H^*(A)\ar[r]^{\simeq\ \ \ \ \ \ \ \ \ \ \ \ \ \ \ \ \ \ \ \ \ \ \ \ \ \ \ \ }\ar[d]^{H^*(\phi)}& \on{Hom}^*_{H^*_{G_{2n}}(\Gr_{2n})}(H^*_{G_{2n}}(\IC_0),H^*_{G_{2n}}(\IC_0\star\mO(G_{2n}))\ar[d]\\
H^*(B)
\ar[r]^{\simeq\ \ \ \ \ \ \ \ \ \ \ \ \ \ \ \ \ \ \ \ \ \ \ \ \ \ \ \ \ }&\on{Hom}^*_{H^*_{K_c}(\Gr_{n,\bbH})}(H^*_{K_c}(\IC_0),H^*_{K_c}(\IC_0\star\mO(G_n\times\bG_m)))}\]
where the horizontal isomorphisms
are given by the functor of equivariant cohomology,
see Proposition \ref{faithfulness}.
Note that $\IC_0\star\mO(G_n\times\bG_m)\is\bigoplus_{j\in\bZ}\IC_{\mO(G_n)}[j]$
is a direct sum of shifts of IC-complexes
and hence 
Proposition \ref{faithfulness} is applicable. On the other hand, using Proposition \ref{key prop}, we can identify the right vertical arrow as
\[\xymatrix{ \on{Hom}^*_{H^*_{G_{2n}}(\Gr_{2n})}(H^*_{G_{2n}}(\IC_0),H^*_{G_{2n}}(\IC_0\star\mO(G_{2n}))\ar[d]\ar[r]&\mO(G_{2n}\times\fc_{2n})^{J_{2n}}\ar[d]\\
\on{Hom}^*_{H^*_{K_c}(\Gr_{n,\bbH})}(H^*_{K_c}(\IC_0),H^*_{K_c}(\IC_0\star\mO(G_n\times\bG_m)))\ar[r]&\mO(G_n\times\bG_m\times\fc_n)^{J_n}}\]
where the right vertical map above is induced by the embeddings 
$\tau:\fc_n\to\fc_{2n}$ in~\eqref{c_X} and
$\delta\times2\rho_L:G_n\times\bG_m\to G_{2n}$ in~\eqref{SL_2}.
The groups schemes $J_{2n}$ and $J_n$ act on 
$\mO(G_{2n}\times\fc_{2n})$ and $\mO(G_{n}\times\bG_m\times \fc_n)$
via the identification $J_{2n}\is (G_{2n}\times\fc_{2n})^{\on{Ad}_P^{-1}\circ\kappa_{2n}}$
and $J_n\is (G_{n}\times\fc_n)^{\tau\circ\kappa_n}$ 
where $\on{Ad}_{P}^{-1}\circ\kappa_{2n}:\fc_{2n}\stackrel{\kappa_{2n}}\to\fg_{2n}^{\on{reg}}\stackrel{\on{Ad}_{P}^{-1}}\to\fg_{2n}^{\on{reg}}$ and
$\tau\circ\kappa_n:\fc_n\stackrel{\kappa_n}\to\fg^{\on{reg}}_{n}\stackrel{\tau}\to\fg^{\on{reg}}_{2n}$ are the maps in~\eqref{diagram ks}.
Thus we have a commutative diagram
\[\xymatrix{\mO(G_{2n}\times\fc_n)^{J_{2n}}\ar[d]\ar[r]^{\simeq\ \ \ \ \ }&\mO(\fg_{2n}^{\on{reg}})\is\mO(\fg_{2n})\ar[d]\\
\mO(G_n\times\bG_m\times\fc_n)^{J_n}\ar[r]^{\simeq\ \ \ \ \ }&\mO(\fg_n^{\on{reg}}\times\bG_m)\is\mO(\fg_n\times\bG_m)}\]
where the right vertical arrow is
 given by pull-back of functions along the map
\beq\label{action}
\fg_n\times\bG_m\to\fg_{2n},\ \ \ \ (C,t)\to \on{Ad}_{2\rho_L(t)^{-1}}\tau(C)=
\begin{pmatrix}0&t^{-2}I_n\\
t^{2}C&0\end{pmatrix}.
\eeq
All together we can identify $H^*(\phi):H^*(A)\to H^*(B)$
with the  map $\mO(\fg_{2n})\to \mO(\fg_n\times\bG_m)$ 
(as map between non-graded algebras)
and we need to show that the induced map
\beq\label{zero wt space}
\mO(\fg_{2n})_0\to\mO(\fg_n\times\bG_m)_0=\mO(\fg_n)
\eeq
between the zero $\bG_m$-weight spaces is surjective.
For this we observe that the map 
\beq\label{section}
\fg_{2n}\to\fg_n\ \ \ \begin{pmatrix}A&B\\
C&D\end{pmatrix}\to BC
\eeq 
is $\bG_m$-equivariant 
($\bG_m$ acts trivially on $\fg_n$)
and the composition $\fg_n\times\bG_m\stackrel{~\eqref{action}}\to\fg_{2n}\stackrel{~\eqref{section}}\to\fg_n$
is the projection map $(C,t)\to C$. Thus the pull-back 
map
$\mO(\fg_n)\to\mO(\fg_{2n})_0$ 
along~\eqref{section}
defines a section of~\eqref{zero wt space}.
We are done.

\end{proof}

\begin{lemma}\label{criterion}
Let $\phi:A_1\to A_2$ be a map of dg-algebras.
Assume that (1) $H^*(A_1)$ commutative with 
with generators in even degree and 
$H^*(A_2)$
is a commutative polynomial ring 
with generators in even degree and (2) 
the map $H^*(\phi):H^*(A_1)\to H^*(A_2)$ is surjective.
Then $A_1$ is formal implies $A_2$ is formal.
\end{lemma}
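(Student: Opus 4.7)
The plan is to exploit formality of $A_1$ to produce \emph{strictly commuting} cocycle lifts of the polynomial generators of $H^*(A_2)$ inside $A_2$, thereby defining an explicit quasi-isomorphism from $H^*(A_2)$ to $A_2$.

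First I would fix polynomial generators $x_1,\dots,x_n$ of $H^*(A_2)$, all in even degrees, and use the surjectivity hypothesis to choose lifts $\tilde x_i\in H^*(A_1)$ with $H^*(\phi)(\tilde x_i)=x_i$. Since $A_1$ is formal, there is a zigzag of dg-algebra quasi-isomorphisms connecting $A_1$ to $H^*(A_1)$; after passing to a cofibrant replacement of $H^*(A_1)$ (or, equivalently, invoking the Sullivan/Kadeishvili machinery in characteristic zero) one may assume there is a genuine dg-algebra quasi-isomorphism $f:H^*(A_1)\to A_1$, where the source is equipped with zero differential. Set $u_i=f(\tilde x_i)\in A_1$.

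Next I would observe that the elements $u_i$ are cocycles, and since $H^*(A_1)$ is graded-commutative and the $\tilde x_i$ live in even degree, we have $\tilde x_i\tilde x_j=\tilde x_j\tilde x_i$ on the nose in $H^*(A_1)$; applying the algebra map $f$ gives $u_iu_j=u_ju_i$ strictly in $A_1$. Applying the dg-algebra map $\phi$, the elements $v_i:=\phi(u_i)\in A_2$ are then \emph{strictly commuting cocycles} in even degree with $[v_i]=H^*(\phi)(\tilde x_i)=x_i$.

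Finally, the strict commutativity of the $v_i$ allows me to define a dg-algebra homomorphism
\[
\Psi_2:H^*(A_2)=\bbC[x_1,\dots,x_n]\lra A_2,\qquad x_i\longmapsto v_i,
\]
where $H^*(A_2)$ carries the zero differential. By construction $H^*(\Psi_2)$ is the identity on the generators $x_i$ of the polynomial ring $H^*(A_2)$, hence is an isomorphism; thus $\Psi_2$ is a quasi-isomorphism and $A_2$ is formal.

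The main obstacle is the transition from the abstract zigzag definition of formality of $A_1$ to a concrete dg-algebra map $f:H^*(A_1)\to A_1$ that can be composed with $\phi$; once that is in place, the even-degree hypothesis on generators converts graded-commutativity in cohomology into strict commutativity of the chosen lifts, which is exactly what is needed to build $\Psi_2$ as an honest dg-algebra map out of the polynomial ring.
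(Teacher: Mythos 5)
Your proof is correct and follows essentially the same route as the paper's: use surjectivity to lift the polynomial generators of $H^*(A_2)$, use formality of $A_1$ to push the resulting map $\bC[z_1,\dots,z_l]\to H^*(A_1)$ into a dg-algebra map into $A_1$, then compose with $\phi$ and check it is a quasi-isomorphism. You are slightly more explicit than the paper in noting that the lifted cocycles strictly commute (the point that makes the map out of the polynomial ring well-defined), but this is the same argument, presented with a bit more care about the role of formality.
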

\begin{proof}
Let  
$x_1,...,x_l$ be the set of generators of $H^*(A_2)$ in even degree such
that 
$\bC[x_1,...,x_l]\is H^*(A_2)$. Since $H^*(\phi):H^*(A_1)\to H^*(A_2)$
is surjective, one can find homogeneous elements $ y_1,...,y_l\in H^*(A_1)$
such that $H^*(\phi)(y_i)=x_i$ for $i=1,...,l$. Assume $H^*(A_1)\is A_1$
is formal, then we have map of dg-algebras
$k[z_1,...,z_l]\to H^*(A_1)\is A_1$ sending 
$z_i$ to $y_i$. Then the composition 
$\gamma:\bC[z_1,...,z_l]\to H^*(A_1)\is A_1\stackrel{\phi}\to A_2$
defines a dg-algebra morphism such that 
$H^*(\gamma):\bC[z_1,...,z_l]\is\bC[x_1,...,x_l]\is H^*(A_2)$ is the isomorphism sending
$z_i$ to $x_i$. The lemma follows.

\end{proof}

\subsection{Derived geometric Satake equivalence for the quaternionic groups}
\quash{
Denote by $D_{\frak L^+G_{n,\bbH}}(\Gr_{n,\bbH})$  the dg-category 
of constructible complexes on the real analytic stack $\frak L^+G_{n,\bbH}\backslash\Gr_{n,\bbH}$.
It is known that $D_{\frak L^+G_{n,\bbH}}(\Gr_{n,\bbH})$ is compactly generated, i.e., 
we have 
$D_{\frak L^+G_{n,\bbH}}(\Gr_{n,\bbH})\is\on{Ind}(D_{\frak L^+G_{n,\bbH}}(\Gr_{n,\bbH})^{c})$
where the ride hand side is the ind-completion of the 
the full subcategory $D_{\frak L^+G_{n,\bbH}}(\Gr_{n,\bbH})^{c}$ 
 of compact objects. 
We have a natural inclusion $D_{\frak L^+G_{n,\bbH}}(\Gr_{n,\bbH})^c\subset 
D^b_{\frak L^+G_{n,\bbH}}(\Gr_{n,\bbH})$ but in general it is not an equivalence. In fact, $D^b_{\frak L^+G_{n,\bbH}}(\Gr_{n,\bbH})$
is the full subcategory of $D_{\frak L^+G_{n,\bbH}}(\Gr_{n,\bbH})$
consisting those objects that become compact after applying 
the forgetful functor $D_{\frak L^+G_{n,\bbH}}(\Gr_{n,\bbH})\to
D_{}(\Gr_{n,\bbH})$.
Denote by $\on{Ind}(D^b_{\frak L^+G_{n,\bbH}}(\Gr_{n,\bbH}))$ the ind-completion of $D^b_{\frak L^+G_{n,\bbH}}(\Gr_{n,\bbH})$. 
}
Denote by
$D^{G_n}(\on{Sym}(\fg_n[-4]))$ be the dg-category of  $G_n$-equivariant dg-modules 
over the dg-algebra $\on{Sym}(\fg_n[-4])$ (equipped with trivial differential).
It is known that $D^{G_n}(\on{Sym}(\fg_n[-4]))$ is compactly generated and the full subcategory 
$D^{G_n}(\on{Sym}(\fg_n[-4]))^c$ of compact objects 
coincides with 
be the full subcategory $D^{G_n}(\on{Sym}(\fg_n[-4]))^c=D^{G_n}_{\on{perf}}(\on{Sym}(\fg_n[-4]))$
consisting of perfect modules.
Denote by $D^{G_n}(\on{Sym}(\fg_n[-4]))_{\on{Nilp}(\fg_n)}$
and  $D^{G_n}_{\on{perf}}(\on{Sym}(\fg_n[-4]))_{\on{Nilp}(\fg_n)}$
the full subcategory of $D^{G_n}(\on{Sym}(\fg_n[-4]))$
and $D^{G_n}_{\on{perf}}(\on{Sym}(\fg_n[-4]))$ respectively
consisting of modules that are set theoretically supported on the 
nilpotent cone $\on{Nilp}(\fg_n)$ of $\fg_n$.

Note that the category $D^{G_n}(\on{Sym}(\fg_n[-4]))$ 
(resp $D^{G_n}_{\on{perf}}(\on{Sym}(\fg_n[-4]))$, $
D^{G_n}(\on{Sym}(\fg_n[-4]))_{\on{Nip}(\fg_n)}$, 
$
D^{G_n}_{\on{perf}}(\on{Sym}(\fg_n[-4]))_{\on{Nip}(\fg_n)}$)
has a natural
monoidal structure  given by the
(derived) tensor product: 
$(\mF_1,\mF_2)\to \mF_1\otimes\mF_2:=
\mF_1\otimes^L_{\on{Sym}(\fg_n[-4])}\mF_2$.

\begin{thm}\label{main}
\

\begin{enumerate}
\item There is a canonical equivalence of monoidal categories
\[\on{Ind}(D^b_{}(\frak L^+G_{n,\bbH}\backslash\Gr_{n,\bbH}))\is D^{G_n}_{}(\on{Sym}(\fg_n[-4]))\]
which induces a monoidal equivalence 
\[D^b_{}(\frak L^+G_{n,\bbH}\backslash\Gr_{n,\bbH})\is D^{G_n}_{\on{perf}}(\on{Sym}(\fg_n[-4]))\]
between the corresponding (non-cocomplete) full subcategory of compact objects.
\item
There is a canonical equivalence of monoidal categories
\[D_{}(\frak L^+G_{n,\bbH}\backslash\Gr_{n,\bbH})\is D^{G_n}_{}(\on{Sym}(\fg_n[-4]))_{\on{Nilp}(\fg_n)}\]
which induces 
a monoidal equivalence \[D_{}(\frak L^+G_{n,\bbH}\backslash\Gr_{n,\bbH})^c\is D^{G_n}_{\on{perf}}(\on{Sym}(\fg_n[-4]))_{\on{Nilp}(\fg_n)}\]
between the corresponding (non-cocomplete) full subcategory of compact objects.

\end{enumerate}
\end{thm}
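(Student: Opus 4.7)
The plan is to follow the Bezrukavnikov--Finkelberg strategy \cite{BF}, now that the two essential inputs are in hand: Proposition~\ref{computation of Ext} identifies $\on{Ext}^*(\IC_0,\IC_0\star\mO(G_n))$ with $\on{Sym}(\fg_n[-4])$ as a $G_n$-equivariant graded algebra, and Proposition~\ref{formality} shows that the dg-algebra $\mathcal R := \on{RHom}(\IC_0,\IC_0\star\mO(G_n))$ is formal. Together these yield a $G_n$-equivariant quasi-isomorphism $\mathcal R \simeq \on{Sym}(\fg_n[-4])$ of graded dg-algebras.

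First I would invoke the abelian quaternionic Satake equivalence $\on{Rep}(G_n)\simeq\on{Perv}(\Gr_{n,\bbH})$ of Proposition~\ref{abelian Satake}(1) to promote convolution with the regular representation to a $G_n$-equivariant endofunctor $(-)\star\mO(G_n)$ of $\on{Ind}(D^b(\frak L^+G_{n,\bbH}\backslash\Gr_{n,\bbH}))$, and then define
\begin{equation*}
F:\on{Ind}(D^b(\frak L^+G_{n,\bbH}\backslash\Gr_{n,\bbH})) \longrightarrow D^{G_n}(\on{Sym}(\fg_n[-4])),\qquad \mathcal F\mapsto \on{RHom}(\IC_0,\mathcal F\star\mO(G_n)),
\end{equation*}
which takes values in $G_n$-equivariant $\mathcal R$-modules and hence, via the quasi-isomorphism above, in $G_n$-equivariant $\on{Sym}(\fg_n[-4])$-modules. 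Fully faithfulness of $F$ on the compact generators $\{\IC_\lambda\}_{\lambda\in\Lambda_S^+}$ is immediate from Proposition~\ref{faithfulness}: the natural map
\begin{equation*}
\on{Ext}^*(\IC_\lambda,\IC_\mu\star\mO(G_n)) \longrightarrow \on{Hom}^*_{\on{Sym}(\fg_n[-4])}\bigl(F(\IC_\lambda),F(\IC_\mu)\bigr)
\end{equation*}
is an isomorphism of $G_n$-equivariant graded modules, and essential surjectivity onto $D^{G_n}_{\on{perf}}(\on{Sym}(\fg_n[-4]))$ follows because the free modules $F(\IC_\lambda)\simeq V_\lambda\otimes\on{Sym}(\fg_n[-4])$ generate the target as $\lambda\in\Lambda_S^+$ varies. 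Monoidality of $F$ reduces to a Tannakian reconstruction argument, since both convolution in the source and tensor product of modules in the target are encoded by the $\on{Rep}(G_n)$-action. Passing to ind-completions then gives part~(1).

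For part~(2), I would match the smaller category $D(\frak L^+G_{n,\bbH}\backslash\Gr_{n,\bbH})$ with $D^{G_n}(\on{Sym}(\fg_n[-4]))_{\on{Nilp}(\fg_n)}$ by matching compact generators. By the discussion in Section~\ref{sheaves}, a compact object of $D(\frak L^+G_{n,\bbH}\backslash\Gr_{n,\bbH})$ is a finite $!$-extension of shifts of constant sheaves on finite-type spherical strata $\Gr^\lambda_{n,\bbH}$; under $F$ these correspond to perfect $\on{Sym}(\fg_n[-4])$-modules on which the graded subalgebra $\on{Sym}(\fc_n[-4]) \subset \on{Sym}(\fg_n[-4])^{G_n}$ acts locally nilpotently, equivalently, perfect modules set-theoretically supported on $\on{Nilp}(\fg_n)$. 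Re-ind-completing both sides gives the full equivalence. The main obstacle is the rigorous formulation of Barr--Beck/Tannakian reconstruction in the semi-analytic constructible framework of Section~\ref{sheaves}, together with the nilpotent-support matching; however, formality and the Ext computation reduce both questions to standard facts about equivariant modules over graded polynomial algebras, so the argument from \cite{BF} adapts with essentially no new input.
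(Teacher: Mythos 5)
Your proposal follows the same overall strategy as the paper: use the abelian Satake equivalence to equip $\on{Ind}(D^b(\frak L^+G_{n,\bbH}\backslash\Gr_{n,\bbH}))$ with a $\on{Rep}(G_n)$-action, de-equivariantize, and use Proposition~\ref{computation of Ext} together with the formality result Proposition~\ref{formality} to identify the de-equivariantized endomorphism dg-algebra of $\IC_0$ with $\on{Sym}(\fg_n[-4])$. The functor $\mathcal F\mapsto\on{RHom}(\IC_0,\mathcal F\star\mO(G_n))$ you write down is exactly the one appearing in the paper's proof. The route you choose afterwards, however, is not quite the same and has a gap worth flagging. You claim fully faithfulness of $F$ on the generators $\{\IC_\lambda\}$ ``is immediate from Proposition~\ref{faithfulness},'' but that proposition concerns a different target: it identifies $\on{Ext}^\bullet(\IC_\lambda,\IC_\mu)$ with Hom of graded modules over the equivariant cohomology ring $H^*_{\frak L^+G_{n,\bbH}}(\Gr_{n,\bbH})$, not over $\on{Sym}(\fg_n[-4])$. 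Bridging that gap requires first establishing the ring identification (which is what Proposition~\ref{computation of Ext} does, but only at $\lambda=\mu=0$) and, more importantly, upgrading a cohomology-level isomorphism of graded Homs to a quasi-isomorphism of the derived Hom complexes, which is precisely where formality must intervene. The paper avoids this two-step bookkeeping by observing that $\IC_0$ is a compact generator of the de-equivariantized category and invoking Barr--Beck--Lurie, which produces the equivalence $\calC_{\on{deeq}}\simeq D(\on{End}(\IC_0)^{\on{op}})$ directly; formality and the Ext computation then identify $\on{End}(\IC_0)^{\on{op}}$ with $\on{Sym}(\fg_n[-4])$, and taking $G_n$-equivariant objects recovers $\calC$. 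Your closing remark that rigorously formulating Barr--Beck is the main obstacle is apt --- it is not really an obstacle so much as the mechanism the paper uses to avoid the fully-faithfulness argument you attempt. For the monoidal structure, the paper does not rely on Tannakian reconstruction as you suggest, but rather deduces it from the commutativity of the square in Theorem~\ref{spectral nearby cycle} together with a rigidity lemma in Section~\ref{monoidal} showing the monoidal structure on $D^{G_n}(\on{Sym}(\fg_n[-4]))$ compatible with the $\on{Rep}(G_{2n})$-module structure via $\Phi$ is unique. For part~(2), the paper simply cites the general singular-support machinery of \cite[Section~12]{AG}; your explicit matching of compact generators and nilpotent support is a reasonable unpacking but would need more care (in particular, the description of compact objects of $D(\frak L^+G_{n,\bbH}\backslash\Gr_{n,\bbH})$ and the locally nilpotent action of the center are asserted rather than proved).
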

\begin{proof}
Proof of (1).
Write $\calC:=\on{Ind}(\D^b(\frak L^+G_{n,\bbH}\backslash\Gr_{n,\bbH}))$.  The 
dg-category $\calC$
is a module category for the dg-category 
$D(\on{QCoh}(BG_n))$ of quasi-coherent sheaves on $BG_n$
and
we can 
form the de-equivariantized category $\calC_{\on{deeq}}:=\calC\times_{BG}\{\on{pt}\}$ 
with objects $\on{Ob}(\calC_{\on{deeq}})=\on{Ob}(\calC)$ and (dg)-morphisms
\[\on{Hom}_{\calC_{\on{deeq}}}(\mF_1,\mF_2)=\on{Hom}_{\calC}(\mF_1,\mF_2\star\mO(G_n))=
\on{RHom}_{\on{Ind}(D^b_{}(\frak L^+G_{n,\bbH}\backslash\Gr_{n,\bbH}))}(\mF_1,\mF_2\star\mO(G_n)).\]
Every object $\mF\in\calC_{\on{deeq}}$ carries a natural action of $G_n$ and we can recover $\calC$ by taking $G_n$-equivariant objects in $\calC_{\on{deeq}}$.
The fact that $\IC_0$ is compact and generates $\calC$ under the action of $D\on{QCoh}(BG_n)$
implies that $\IC_0$, viewed as an object in $\calC_{\on{deeq}}$, is a compact generator.
Hence the Barr-Beck-Lurie theorem implies 
that the assignment $\mF\to \on{Hom}_{\calC_{\on{deeq}}}(\IC_0,\mF)$
defines  an equivalence of categories 
\[\calC_{\on{deeq}}\is D(\on{Hom}_{\calC_{\on{deeq}}}(\IC_0,\IC_0)^{\on{op}})\ \ \  \ (resp.\ \  \calC_{}
\is D^{G_n}(\on{Hom}_{\calC_{\on{deeq}}}(\IC_0,\IC_0)^{\on{op}}))
\]
where $\on{Hom}_{\calC_{\on{deeq}}}(\IC_0,\IC_0)^{op}$ is the opposite of the dg-algebra of endomorphism 
of $\IC_0$ and 
$D(\on{Hom}_{\calC_{\on{deeq}}}(\IC_0,\IC_0)^{\on{op}})$
(resp. $D^{G_n}(\on{Hom}_{\calC_{\on{deeq}}}(\IC_0,\IC_0)^{\on{op}})$)
are the corresponding dg-categories of dg-modules (resp. $G_n$-equivariant dg-modules).
Now Proposition \ref{computation of Ext} and Proposition \ref{formality} implies 
that the dg-algebra $\on{Hom}_{\calC_{\on{deeq}}}(\IC_0,\IC_0)^{\on{op}}$ is formal and there is 
$G_n$-equivariant
isomorphism
\[\on{Hom}_{\calC_{\on{deeq}}}(\IC_0,\IC_0)^{\on{op}}\is\on{RHom}_{\on{Ind}(\D^b(\frak L^+G_{n,\bbH}\backslash\Gr_{n,\bbH}))}(\IC_0,\IC_0\star\mO(G_n))^{op}\is\]
\[\is\on{Ext}^*_{\on{Ind}(\D^b(\frak L^+G_{n,\bbH}\backslash\Gr_{n,\bbH}))}(\IC_0,\IC_0\star\mO(G_n))^{op}
\is\on{Sym}(\fg_n[-4])^{op}\is\on{Sym}(\fg_n[-4])\footnote{The last isomorphism 
follows from the fact that the $\on{Sym}(\fg_n[-4])$ is commutative with grading in even degree.}\]
 and hence we conclude that there is an equivalence
\[\on{Ind}(D^b_{}(\frak L^+G_{n,\bbH}\backslash\Gr_{n,\bbH}))\is D^{G_n}(\on{Sym}(\fg_n[-4])), \ \ \ \mF \to \on{RHom}_{\on{Ind}(D^b(\frak L^+G_{n,\bbH}\backslash\Gr_{n,\bbH}))}(\IC_0,\mF\star\mO(G_n))\]
The monoidal structure on the constructed equivalence will be proved in Section \ref{monoidal}.
This finishes the proof of Part (1).
Part (2) follows from the general discussion in \cite[Section 12]{AG}.
\end{proof}

\subsection{Spectral description of  nearby cycles functors}

\subsubsection{Shift of grading}\label{shift of grading}
Let $(A=\oplus_{} A^i,d)$ be a dg-algebra 
equipped with an action of  $G=H\times\bG_m$.
We will write $A^i=\oplus A^i_j$ where 
the lower index $j$ refers to the $\bG_m$-weights coming from the $\bG_m$-action. 
Assume the $\bG_m$-weights are  even, that is, 
we have $A^i_j=0$ if $j\in 2\bZ+1$.
Following \cite[Appendix A.2]{AG}, 
one can introduce a new 
dg-algebra $(\tilde A=\bigoplus \tilde A^i_j,d)$ where 
\[\tilde A^{i}_j=A^{i+j}_j\]
such that the map 
sending a $G$-equivariant dg-module $(M=\bigoplus M^i_j,d)$ 
over $A$ 
to the dg-module
$(\tilde M=\bigoplus\tilde M^i_j,d)$
over $\tilde A$
with
\[\tilde M^i_j=M^{i+j}_j\]
induces 
an equivalence of triangulated categories
\[D^G_{}(A)\is D^G_{}(\tilde A)\ \ \ (resp.\ \ 
D^G_{\on{perf}}(A)\is D^G_{\on{perf}}(\tilde A))\]

\begin{example}\label{shift example}
Consider the dg-algebra 
$(A=\on{Sym}(\fg_{2n}),d=0)$. 
The subgroup 
 $\check G_X\times\bG_m\subset G_{2n}$ acts on 
 the generators 
 $\fg_{2n}$ of $A$ via the adjoint action and 
if we write the elements in $\fg_{2n}$ 
in the form 
\[\fg_{2n}=\{\begin{pmatrix}
A&B\\
C&D
\end{pmatrix}|A,B,C,D\in\fg_n\}
\]
then
$A,D$ are of weight zero, $B$ is of wright $2$, and $C$ is of weight $-2$.
It follows that 
\[\tilde A\is
\on{Sym}(\tilde\fg_{2n})\]
where $\tilde\fg_{2n}$ consists of elements of the form
\[\tilde\fg_{2n}=\{\begin{pmatrix}
A[0]&B[-2]\\
C[2]&D[0]
\end{pmatrix}
|A,B,C,D\in\fg_n\}.\]

\end{example}

\subsubsection{}
It follows from Example \ref{shift example} that 
we have an equivalence of categories
\beq\label{shift equ}
D^{G_n\times\bG_m}_{}(\on{Sym}(\fg_{2n}[-2]))\is D^{G_n\times\bG_m}_{}(\on{Sym}(\tilde\fg_{2n}[-2]))
\eeq
On the other hand, the natural $G_n$-equivariant map 
$\fg_{n}[4]\to \tilde\fg_{2n}[2]$
sending 
\[C[4]\to\begin{pmatrix}
0&I_n\\
C[4]&0
\end{pmatrix}\ \ \ \ C\in\fg_n
\]
gives rise to 
a map of dg-algebras
\beq\label{pull-back map}
\on{Sym}(\tilde\fg_{2n}[-2])\is\mO(\tilde\fg_{2n}[2])\lra\mO(\fg_{n}[4])\is\on{Sym}(\fg_{n}[-4])
\eeq
(here we identify the graded duals of $\tilde\fg_{2n}[-2]$ and $\fg_{n}[4]$
with 
$\tilde\fg_{2n}[2]$ and $\fg_{n}[-4]$ via the trace form)
and hence a functor
\beq\label{res}
D^{\GL_n}_{}(\on{Sym}(\tilde\fg_{2n}[-2]))\to D^{\GL_n}_{}(\on{Sym}(\fg_{n}[-4]))\ \ \ \ 
M\to \on{Sym}(\fg_{n}[-4])\otimes^L_{\on{Sym}(\tilde\fg_{2n}[-2])}M
\eeq
Finally, let us consider the functor
\beq\label{spectral}
\Phi:D^{G_{2n}}_{}(\on{Sym}(\fg_{2n}[-2]))\stackrel{F}\to D^{G_n\times\bG_m}_{}(\on{Sym}(\fg_{2n}[-2]))\stackrel{\eqref{shift equ}}\is D^{\GL_n\times\bG_m}_{}(\on{Sym}(\tilde\fg_{2n}[-2]))\stackrel{F}\to
\eeq
\[\to D^{G_n}_{}(\on{Sym}(\tilde\fg_{2n}[-2]))\stackrel{\eqref{res}}\to D^{G_n}_{}(\on{Sym}(\fg_{n}[-4]))
\]
where $F$ are the natural forgetful functors.
\begin{thm}\label{spectral nearby cycle}
The following square is commutative
\[\xymatrix{\on{Ind}D^b(\frak L^+G_{2n}\backslash\Gr_{2n})
\ar[r]^{\mathrm R}\ar[d]_{\Psi}^{\simeq}&\on{Ind}D^b(\frak L^+G_{n,\bbH}\backslash\Gr_{n,\bbH})\ar[d]_{\Psi_\bbH}^{\simeq}\\
D^{G_{2n}}_{}(\on{Sym}(\fg_{2n}[-2]))
\ar[r]^{\Phi}&D^{G_n}_{}(\on{Sym}(\fg_{n}[-4]))
}
\]
where $\Psi$ and $\Psi_\bbH$ are the complex and quaternionic Satake equivalences respectively. It induces a  similar commutative diagram 
for the subcategories of compact objects.
\end{thm}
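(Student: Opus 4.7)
The plan is to leverage the fact that both $\Psi$ and $\Psi_\bbH$ are constructed by applying $\on{RHom}(\IC_0, -\star \mO(G))$ and invoking Barr--Beck--Lurie on the compact generator $\IC_0$. Since $\mathrm R$ is monoidal on the level of perverse cosets and preserves $\IC_0$ (by Proposition \ref{abelian Satake}), while $\Phi$ is evidently monoidal and unit-preserving, each of the two composites $\Psi_\bbH \circ \mathrm R$ and $\Phi \circ \Psi$ is a unit-preserving, $D^b(\frak L^+G_{2n}\backslash\Gr_{2n})$-linear monoidal functor. Such a functor is determined, via Barr--Beck--Lurie, by the $G_n$-equivariant map it induces between the de-equivariantized endomorphism dg-algebras of the unit. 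Hence it suffices to identify the two induced dg-algebra maps
\[
\on{Sym}(\fg_{2n}[-2]) \longrightarrow \on{Sym}(\fg_{n}[-4]).
\]

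First I would recall that both dg-algebras are formal, by \cite{BF} and Proposition \ref{formality}, so the problem reduces to identifying two $G_n \times \bG_m$-equivariant maps of graded commutative algebras. The map induced by $\Phi$, by construction (see \eqref{spectral}, \eqref{pull-back map}), is, after the shift-of-grading equivalence \eqref{shift equ} of Section \ref{shift of grading}, the pullback of functions along the $G_n$-equivariant embedding $\fg_n[4] \hookrightarrow \tilde\fg_{2n}[2]$, $C \mapsto \bigl(\begin{smallmatrix} 0 & I_n \\ C & 0 \end{smallmatrix}\bigr)$. On the other hand, the map induced by $\mathrm R$ was essentially computed in the proof of Proposition \ref{formality}: combining Proposition \ref{faithfulness} to pass to Hom's over equivariant cohomology with Proposition \ref{key prop} to identify the equivariant homologies of $\Gr_{2n}$ and $\Gr_{n,\bbH}$ with the regular centralizer group schemes $J_{2n}$ and $J_n$, one obtains that the induced map is the pullback $\mO(\fg_{2n}) \to \mO(\fg_n \times \bG_m)$ along
\[
\fg_n \times \bG_m \longrightarrow \fg_{2n}, \qquad (C,t) \mapsto \on{Ad}_{2\rho_L(t)^{-1}}\tau(C) = \begin{pmatrix} 0 & t^{-2}I_n \\ t^{2}C & 0 \end{pmatrix}.
\]
Under the change-of-grading equivalence \eqref{shift equ}, which absorbs the factors of $t^{\pm 2}$ into cohomological degree, this pullback is precisely the map \eqref{pull-back map} that defines $\Phi$.

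Once the two algebra maps are matched, Barr--Beck--Lurie (applied exactly as in the proof of Theorem \ref{main}) upgrades this identification to a canonical isomorphism of functors $\Psi_\bbH \circ \mathrm R \simeq \Phi \circ \Psi$ on the ind-completions. The second assertion of the theorem, the corresponding commutative square for compact objects, then follows because both $\mathrm R$ and $\Phi$ preserve compactness: the former since the specialization functor in Lemma \ref{left adjoint} is of bounded cohomological amplitude, and the latter since the pullback along $\tau$ sends perfect dg-modules to perfect dg-modules.

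The main obstacle will be the careful bookkeeping of cohomological degrees against internal $\bG_m$-weights under the shift-of-grading equivalence of Section \ref{shift of grading}. Specifically, one must check that the factors of $t^{\pm 2}$ appearing in $\on{Ad}_{2\rho_L(t)^{-1}}\tau(C)$ match exactly the degree shifts $[\pm 2]$ in the definition of $\tilde\fg_{2n}$, so that after the twist the constructible-side algebra map becomes the naive pullback along $\tau$. With this accounted for, the remaining steps are formal consequences of the Barr--Beck--Lurie formalism together with the results already established in the preceding sections.
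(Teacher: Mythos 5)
Your proposal identifies the same underlying computation as the paper---namely, identifying the dg-algebra map induced by $\mathrm R$ via Proposition \ref{formality}, Proposition \ref{key prop}, and the shift-of-grading equivalence---but packages it in a more abstract Barr--Beck--Lurie uniqueness argument rather than the paper's explicit construction of a natural transformation $\Phi\circ\Psi \to \Psi_\bbH\circ\mathrm R$. The paper builds the transformation by hand: it uses $\mathrm R$ to produce $\Psi(\mF) \to \RHom(\IC_0, \mathrm R(\mF)\star\mathrm R(\IC_{\mO(G_{2n})}))$, then applies the shift of grading, then composes with the map induced by the restriction $\on{Res}^{G_{2n}}_{G_n}\mO(G_{2n}) \to \mO(G_n)$, and finally checks the resulting transformation is an isomorphism when $\mF = \IC_V$ for $V \in \on{Rep}(G_{2n})$. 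Your abstract route is legitimate in spirit and shorter, but it needs more care on several points.

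First, there is a genuine circularity in your invocation of monoidality. You assert that $\Psi_\bbH\circ\mathrm R$ is a monoidal functor, appealing to Proposition \ref{abelian Satake} and the monoidality of $\Psi_\bbH$. But the paper proves the monoidality of $\Psi_\bbH$ in Section \ref{monoidal} \emph{using} Theorem \ref{spectral nearby cycle}---the proof of Theorem \ref{main} explicitly defers the monoidal structure to that section. Similarly, Proposition \ref{abelian Satake}(3) only gives monoidality of $^p\mathrm R$ on the perverse coset category, not of $\mathrm R$ on the full derived categories; in fact, monoidality of $\mathrm R$ on the derived level is itself a corollary of the theorem you are proving (since $\Phi$ is monoidal). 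What you actually need, and what can be established non-circularly, is that both composites are $\on{Rep}(G_{2n})$-\emph{module} functors preserving the unit, using the module structure on $\mathrm R$ from \cite{CN1} and the $\on{Rep}(G_n)$-module structure on $\Psi_\bbH$ built into the construction of Theorem \ref{main}. Even then, verifying that $\Psi_\bbH\circ\mathrm R$ is $\on{Rep}(G_{2n})$-linear for the \emph{same} module structure on the target as $\Phi\circ\Psi$ requires the abelian compatibility of Proposition \ref{abelian Satake}(3) together with careful bookkeeping of the $[j]$-shifts relative to the $\bG_m$-weights---the same bookkeeping you flag at the end.

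Second, your reduction "the problem reduces to identifying two $G_n\times\bG_m$-equivariant maps of graded commutative algebras" via formality elides a genuine point: formality of $A$ and $B$ does not automatically imply that a dg-map $\phi\colon A\to B$ is determined up to homotopy by $H^*(\phi)$. In the present situation one can argue this works because the algebras are polynomial in even degrees and so maps are determined by where generators go in cohomology, but this should be said. The paper sidesteps the issue by constructing a specific dg-level map (the composite $\tilde A \to \tilde A' \to B$) and observing directly that it agrees with~\eqref{pull-back map}; it then checks the induced natural transformation $\Phi\circ\Psi(\mF)\to\Psi_\bbH\circ\mathrm R(\mF)$ is an isomorphism by evaluating on the $\on{Rep}(G_{2n})$-generators $\IC_V$, where the comparison becomes a visible direct-sum decomposition into $\bG_m$-weight pieces. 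You would need an analogous check, since "same algebra map" only gives agreement after choosing consistent identifications of the images of the generator.

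In summary: the mathematical content is correct and you have correctly located the crux in Proposition \ref{formality} and the shift-of-grading identification, but the abstract-uniqueness packaging as written has a circularity with the monoidality of $\Psi_\bbH$, and leans on a formality-of-maps step that is true here but deserves justification. The paper's explicit construction of the natural transformation avoids both issues cleanly.
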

\begin{proof}
We shall construct a natural transformation 
$\Phi\circ\Psi\to\Psi_\bbH\circ\mathrm R $. 
Write $A=
\RHom(\IC_0,\IC_{\mO(G_{2n})})\is
\on{Sym}(\fg_{2n}[-2])$, $B=\RHom(\IC_0,\IC_{\mO(G_{n})})\is\on{Sym}(\fg_n[-4])$,
and $A'=
\RHom(\IC_0,\mathrm R(\IC_{\mO(G_{2n})}))$.
Since $\mathrm R(\IC_{\mO(G_{2n})})$ 
is an algebra object in $\on{Ind}(D(\frak L^+G_{n,\bbH}\backslash\Gr_{n,\bbH}))$, 
the (dg) Hom space 
$A'$ is naturally a dg-algebra.

For any $\mF$, we have a map 
dg-modules for the dg-algebra $A$
\[\Psi(\mF)\is
\RHom(\IC_0,\mF\star\IC_{\mO(G_{2n})})\stackrel{\mathrm R}\lra
\RHom(\IC_0,\mathrm R(\mF)\star\mathrm R(\IC_{\mO(G_{2n})})):=
\Psi(\mF)'\]
where 
$A$ acts on $\Psi(\mF)'$ via the 
 dg-algebra map 
\[A=\RHom(\IC_0,\IC_{\mO(G_{2n})})\stackrel{\mathrm R}\lra A'=\RHom(\IC_0,\mathrm R(\IC_{\mO(G_{2n})})).\]
The right regular $\bG_m$-action on 
$G_{2n}$ via the 
co-character $2\rho_L:\bG_m\to G_n\times\bG_m\subset G_{2n}$
induces a $\bG_m$-action on the dg-algebras
$A$ and $A'$ (with even  weights)
and also the dg-modules $\Psi(\mF)$ and $\Psi(\mF)'$. Thus we can 
perform the shift of grading operation in Section \ref{shift of grading}
and obtain a map of dg-modules
for the dg-algebra $\tilde A$ 
\beq\label{shift}
\widetilde{\Psi(\mF)}\to\widetilde{\Psi(\mF)'}
\eeq
where $\tilde A$ acts on $\widetilde{\Psi(\mF)'}$ via the map
$\tilde A\to\tilde A'$.
By Example \ref{shift example}, we have 
\[\tilde A\is\on{Sym}(\tilde\fg_{2n}[-2]).\]
On the other hand,
by Proposition \ref{abelian Satake} (3), we have\footnote{We have the shift $[j]$ instead of $[-j]$ because we consider right regular action of $\bG_m$.}  
\[\mathrm R(\IC_{\mO(G_{2n})})\is\bigoplus_{j\in\bZ}\IC_{\on{Res}^{G_{2n}}_{G_n}\mO(G_{2n})_j}[j]\]
 where 
 \[\on{Res}^{G_{2n}}_{G_n}(\mO(G_{2n}))\is\bigoplus_{j\in\bZ}\on{Res}^{G_{2n}}_{G_n\times\bG_m}\mO(G_{2n})_j
 \]
 is the $\bG_m$-weight decomposition of $\on{Res}^{G_{2n}}_{G_n}(\mO(G_{2n}))$, and it follows that 
\[\tilde A'\is\RHom(\IC_0,\IC_{\on{Res}^{G_{2n}}_{G_n}(\mO(G_{2n}))})\ \ \ \ \ \widetilde{\Psi(\mF)'}\is
\RHom(\IC_0,\mathrm R(\mF)\star\IC_{\on{Res}^{G_{2n}}_{G_n}(\mO(G_{2n}))}).\]
Since the natural algebra map 
$\on{Res}^{G_{2n}}_{G_n}(\mO(G_{2n}))\to\mO(G_n)$
of algebra objects in $\on{Rep}(G_n)$ coming from the 
embedding $G_n\to G_{n}\times\bG_m, g\to (g,e)$
induces a map 
\[\iota:\IC_{\on{Res}^{G_{2n}}_{G_n\times\bG_m}(\mO(G_{2n}))}\to\IC_{\mO(G_n)}\]
between the corresponding algebra objects in 
$\on{Perv}(\Gr_{n,\bbH})$, we obtain a map of 
dg-algebras
\beq\label{tilde A to B}
\tilde A\to \tilde A'\is\RHom(\IC_0,\IC_{\on{Res}^{G_{2n}}_{G_n}(\mO(G_{2n}))})\stackrel{\iota}\to\RHom(\IC_0,\IC_{\mO(G_n)})=B
\eeq
and a map
dg-modules
over the dg-algebra $\tilde A$
\beq\label{natural trans'}
\widetilde{\Psi(\mF)}\stackrel{\eqref{shift}}\to\widetilde{\Psi(\mF)'}\is
\RHom(\IC_0,\mathrm R(\mF)\star\IC_{\on{Res}^{G_{2n}}_{G_n}(\mO(G_{2n}))})\stackrel{\iota}\to\RHom(\IC_0,\mathrm R(\mF)\star\IC_{\mO(G_n)})\is\Psi_\bbH\circ\mathrm R(\mF)
\eeq
where $\tilde A$ acts on $\Psi_\bbH\circ\mathrm R(\mF)$
via the morphism~\eqref{tilde A to B}. 
Moreover, the proof of Proposition \ref{formality} implies that the map~\eqref{tilde A to B} is equal to the map in~\eqref{pull-back map}.
Thus by the universal property of the tensor product, the
map~\eqref{natural trans'} gives rise to a map of dg-modules over the dg-algebra $B$
\beq\label{natural trans}
\Phi\circ\Psi(\mF)\is B\otimes^L_{\tilde A}\widetilde{\Psi(\mF)}\to
\Psi_\bbH\circ\mathrm R(\mF)
\eeq
This finished the construction of the desired natural transformation map.

Now to finish the proof, it suffices to check that~\eqref{natural trans}
is an isomorphism when $\mF$ of the form 
$\mF\is\IC_{\on{V}}$ with $\on{V}\in\on{Rep}(G_{2n})$.
For this we observe that, if $\on{V}=\oplus_{j\in\bZ}\on{V}_j$
is the $\bG_m$-weight decomposition, then we have 
\beq\label{rhs}
\Psi_\bbH\circ\mathrm R(\IC_{\on{V}})\is\bigoplus_{j\in\bZ}
\Psi_\bbH(\IC_{\on{V}_j})[-j]\is \bigoplus_{j\in\bZ}B\otimes_\bC\on{V}_j[-j]\eeq
On the other hand, we have
\[\widetilde{\Psi(\IC_{\on{V}})}\is \widetilde{(A\otimes_\bC\on{V})}\is\bigoplus_{j\in\bZ}\tilde{A}\otimes_\bC\on{V}_j[-j]\]
and hence
\beq\label{lhs}
\Phi\circ\Psi(\IC_{\on{V}})\is
B\otimes_{\tilde A}\widetilde{\Psi(\IC_{\on{V}})}\is B\otimes_{\tilde A}(\bigoplus_{j\in\bZ}\tilde{A}\otimes_\bC\on{V}_j[-j])\is
\bigoplus_{j\in\bZ}B\otimes_\bC\on{V}_j[-j]
\eeq
It follows from the construction that
the map~\eqref{natural trans} 
is given by 
 $
 \Phi\circ\Psi(\IC_{\on{V}})\stackrel{\eqref{lhs}}\is \bigoplus_{j\in\bZ}B\otimes_\bC\on{V}_j[-j]\stackrel{\eqref{rhs}}\is\Psi_\bbH\circ\mathrm R(\IC_{\on{V}})$
and hence an isomorphism.
This completes the proof of the proposition.

\end{proof}

\subsection{Monoidal structures}\label{monoidal}
We construct a monoidal structure on the equivalence
$\Psi_\bbH:\on{Ind}(D^b_{}(\frak L^+G_{n,\bbH}\backslash\Gr_{n,\bbH}))\is D^{G_n}_{}(\on{Sym}(\fg_n[-4]))$
in Theorem \ref{main}. Consider the monoidal structure 
 on $D^{G_n}_{}(\on{Sym}(\fg_n[-4]))$:
\[M_1\otimes'M_2:=\Psi_\bbH(\Psi_\bbH^{-1}(M_1)\star\Psi_\bbH^{-1}(M_2))\]
induces from the monoidal structure 
on $\on{Ind}(D^b_{}(\frak L^+G_{n,\bbH}\backslash\Gr_{n,\bbH}))$
via the equivalence $\Psi_\bbH$. 
We would like to show that $\otimes'$ is isomorphic the 
natural tensor monoidal structure.
The square in Theorem \ref{spectral nearby cycle}
together with the fact that the
derived Satake equivalence $\Psi$ is monoidal implies 
the functor 
$\Phi:D^{G_{2n}}_{}(\on{Sym}(\fg_{2n}[-2]))\ra D^{G_n}_{}(\on{Sym}(\fg_{n}[-4]))$ in \emph{loc.cit.}
is monoidal with respect to the natural tensor monoidal structure 
on $D^{G_{2n}}_{}(\on{Sym}(\fg_{2n}[-2])$ and the above monoidal structure $\otimes'$ on  $D^{G_n}_{}(\on{Sym}(\fg_{n}[-4]))$.
Now the desired claim follows from the following lemma.

\begin{lemma}
Equip $D^{G_n}_{}(\on{Sym}(\fg_{n}[-4]))$ with its natural tensor monoidal structure.

Then  the natural tensor monoidal structure on  $D^{G_n}_{}(\on{Sym}(\fg_{n}[-4]))$ is the unique (up to equivalence) monoidal structure on  $D^{G_n}_{}(\on{Sym}(\fg_{n}[-4]))$
such that the $\on{Rep}(G_{2n})$-module functor
\[\xymatrix{
\Phi:D^{G_{2n}}_{}(\on{Sym}(\fg_{2n}[-2]))
\ar[r] &D^{G_n}_{}(\on{Sym}(\fg_{n}[-4]))
}
\]
may be compatibly lifted to a monoidal functor. Moreover, the compatible monoidal structure on $\Phi$ is unique  (up to equivalence).
\end{lemma}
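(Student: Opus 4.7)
The plan is to separate the claim into existence (the natural tensor monoidal structure works) and uniqueness (it is the only one that does). Existence is essentially automatic from the construction~\eqref{spectral}: $\Phi$ is the composition of (i) a forgetful functor $D^{G_{2n}}(\on{Sym}(\fg_{2n}[-2])) \to D^{G_n \times \bG_m}(\on{Sym}(\fg_{2n}[-2]))$, (ii) the grading-shift equivalence~\eqref{shift equ}, and (iii) the derived base change $- \otimes^L_{\on{Sym}(\tilde\fg_{2n}[-2])} \on{Sym}(\fg_n[-4])$ along the commutative dg-algebra map~\eqref{pull-back map}. Each factor is canonically symmetric monoidal when the target carries its natural tensor product, and the composite monoidal structure is $\on{Rep}(G_{2n})$-linear since all three constructions are carried out at the level of commutative rings equipped with their natural group actions.

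For uniqueness, suppose $(\otimes', \mathbf{1}')$ is another monoidal structure on $\mathcal{D} := D^{G_n}(\on{Sym}(\fg_n[-4]))$ admitting a compatible lift of $\Phi$ to a monoidal $\on{Rep}(G_{2n})$-module functor. First, the unit is forced: $\mathbf{1}' \simeq \Phi(\mathbf{1}_\mathcal{C}) \simeq \on{Sym}(\fg_n[-4])$. Monoidality combined with the module compatibility then makes the composite $\on{Rep}(G_{2n}) \to \mathcal{C} \xrightarrow{\Phi} (\mathcal{D}, \otimes')$ symmetric monoidal; this composite factors through restriction $\on{Rep}(G_{2n}) \to \on{Rep}(G_n) \hookrightarrow \mathcal{D}$, so for $V, W \in \on{Rep}(G_{2n})$ we obtain $(\on{Res} V \otimes \mathbf{1}') \otimes' (\on{Res} W \otimes \mathbf{1}') \simeq \on{Res}(V \otimes W) \otimes \mathbf{1}'$, matching the value under the natural tensor product $\otimes^L_{\on{Sym}(\fg_n[-4])}$.

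The key input is that $\on{Res}: \on{Rep}(G_{2n}) \to \on{Rep}(G_n)$ hits a generating collection: every irreducible $G_n$-representation is a direct summand of some $\on{Res} V$, because the diagonal embedding $\delta: G_n \hookrightarrow G_{2n}$ restricts the standard representation of $G_{2n}$ to two copies of the standard representation of $G_n$, whose tensor powers together with determinant characters exhaust $\on{Rep}(G_n)$ up to summands. Hence the compact generators $V \otimes \on{Sym}(\fg_n[-4])$ of $\mathcal{D}$, for $V \in \on{Rep}(G_n)$, are retracts of objects in the essential image of $\Phi$. On these objects $\otimes'$ and $\otimes^L_{\on{Sym}(\fg_n[-4])}$ agree by the previous paragraph, and by bi-continuity of both products together with closure under cofibers and retracts they agree throughout $\mathcal{D}$. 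The uniqueness of the monoidal structure on $\Phi$ itself follows along the same lines: the structure maps $\Phi(X) \otimes \Phi(Y) \to \Phi(X \otimes Y)$ are pinned down by unit compatibility and by $\on{Rep}(G_{2n})$-module linearity on the generators $V \otimes \mathbf{1}_\mathcal{C}$. The main obstacle is ensuring that the $\on{Rep}(G_{2n})$-module compatibility really forces $\otimes'$ beyond the essential image of $\Phi$; but because $\Phi$ is base change along a map of commutative rings and the image of $\on{Res}$ generates $\on{Rep}(G_n)$ as a tensor category, everything reduces to a standard continuity argument.
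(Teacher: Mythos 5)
Your proposal follows essentially the same strategy as the paper's proof: (i) the unit is pinned down by $\Phi(\mathbf 1)$, (ii) the category is compactly generated by $V\otimes\on{Sym}(\fg_n[-4])$ with $V\in\on{Rep}(G_n)$, (iii) every such $V$ is a retract of a restricted $G_{2n}$-representation, so $\on{Rep}(G_{2n})$-module linearity of $\Phi$ determines the tensor product on these generators, and (iv) continuity then propagates the identification everywhere and fixes the monoidal structure on $\Phi$ itself. The paper's writeup is terser (it does not explicitly spell out existence or the precise reason restriction generates $\on{Rep}(G_n)$), but the logic is identical, and your elaborations are correct. One small caution on your justification that $\on{Res}$ is ``tensor-dominant'': you should account for the inverse determinant $\det_n^{-1}$ as well — e.g. note that $\delta$ restricts the dual of the standard $G_{2n}$-representation to two copies of the dual of the standard $G_n$-representation, or simply observe that the closed embedding $\delta$ induces a surjection $\mO(G_{2n})\twoheadrightarrow\mO(G_n)$ of $G_n\times G_n$-modules, so every irreducible $G_n$-representation is a summand of some restriction.
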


\begin{proof}
Returning to its construction~\eqref{spectral}, recall $\Phi$ factors into the sheared forgetful functor
\[
D^{G_{2n}}_{}(\on{Sym}(\fg_{2n}[-2])) \to  D^{G_n}_{}(\on{Sym}(\tilde\fg_{2n}[-2])) 
\]
followed by the restriction
\[
D^{G_n}_{}(\on{Sym}(\tilde\fg_{2n}[-2])) \to D^{G_n}_{}(\on{Sym}(\fg_{n}[-4]))
\qquad
M  \mapsto \on{Sym}(\fg_{n}[-4])\otimes^L_{\on{Sym}(\tilde\fg_{2n}[-2])}M
\]

First,  $\on{Sym}(\fg_{2n}[-2]))$ is the unit of $D^{G_{2n}}_{}(\on{Sym}(\fg_{2n}[-2]))$, so 
$
\on{Sym}(\fg_{n}[-4]) \simeq \Phi(\on{Sym}(\fg_{2n}[-2]))
$
 must be the unit of  $D^{G_n}_{}(\on{Sym}(\fg_{n}[-4]))$.

Next, recall that $D^{G_n}_{}(\on{Sym}(\fg_{n}[-4]))$ is compactly generated by $V \otimes \on{Sym}(\fg_{n}[-4])$ where $V$ is a finite-dimensional representation of $G_n$. Note every such $V$ is a direct summand in the restriction of a 
a finite-dimensional representation of $G_{2n}$.
Since  $\Phi$ is a $\on{Rep}(G_{2n})$-module map, this determines the monoidal product  on  $D^{G_n}_{}(\on{Sym}(\fg_{n}[-4]))$ as well as its coherent associativity  structure.

Finally, since the  monoidal structures on $\Phi$ must be compatible with its $\on{Rep}(G_{2n})$-module structure, it is  determined by its restriction to the unit $\on{Sym}(\fg_{2n}[-2]))$ where there are no choices.
\end{proof}

{}

\end{document}